\documentclass[12pt]{article}
\usepackage[utf8]{inputenc}
\usepackage{setspace}
\usepackage{johd}
\usepackage[margin=1.25in]{geometry}
\usepackage{graphicx}
\graphicspath{ {./figures/} }
\usepackage{subcaption}
\usepackage{amsmath}
\usepackage{lineno}
\usepackage{hyperref}
\usepackage{algorithm}
\usepackage{algpseudocode}
\usepackage{amssymb}
\usepackage{amsthm}
\usepackage{comment}
\usepackage{mathrsfs}
\usepackage[mathcal]{eucal}
\renewcommand{\phi}{\varphi}
\usepackage{enumerate}
\usepackage{bbold}
\usepackage{booktabs}

\usepackage{romannum}
\usepackage{mathtools}
\newtheorem{theorem}{Theorem}[section]
\newtheorem{lemma}[theorem]{Lemma}
\newtheorem{assumption}{Assumption}
\newtheorem{prop}{Proposition}

\usepackage{tikz}
\usetikzlibrary{arrows.meta, positioning, calc}



\usepackage{stackengine,scalerel}

\usepackage{xcolor}

\title{Long Memory in Intrinsically Dynamic Factor Models}

\author{Qin Wen, Clifford M. Hurvich \\
        \small Stern School of Business, New York University \\\\
        \small $^{*}$Corresponding author: Qin Wen; \tt{qw936@stern.nyu.edu} \\
}
\date{}

\begin{document}
\pagenumbering{arabic}
\maketitle
\begin{abstract} 
\noindent 
We study the generalized dynamic factor model in a long-memory setting. Unlike most recent work, which assumes a finite-dimensional factor space and short memory, our framework allows the factor space to be infinite-dimensional and the common components to exhibit long memory. We employ the two-sided estimation method of \cite{dynamicfactorForni2sided} to recover the common component. The long memory structure of the common component poses a challenge, as it introduces unboundedness/discontinuity in the spectral density. We address this issue by leveraging two key facts: First, the estimated operator is a projection onto the leading eigenspace and thus the eigengap provides an intrinsic scaling that partially mitigates the blow-up. Second, we perform most of our estimation in $L^p$-norm, rather than pointwise. Experimental results are presented to provide evidence supporting the theory, as well as potential improvements to it.

\end{abstract}



\section{Introduction}

Large collections of observed variables often exhibit an underlying structure. Rather than treating the observed data as purely noisy or unorganized, one assumes that they can be decomposed into a systematic component and an idiosyncratic or noise component. When the systematic component can be represented through a linear structure, the resulting model is often both interpretable and useful for explaining patterns in the observed data. Factor models impose a further form of structure: the systematic component across a large collection of variables is driven by a small number of unobserved common sources. When the data are observed over time, as we assume here, these common factors and their effects may also vary over time.
Factor models have been widely used in economics and finance to explain the behavior of macroeconomic variables, financial assets, and other high-dimensional time series.


Dynamic factor models have been a recent focus of research nowadays. Unlike static factor models, dynamic factor models have a richer lead and lag structure, allowing the underlying factors to be time-varying, which makes it more flexible and better suited to capture the dynamics of real-world systems. It is worth noting that some dynamic factor models, by stacking the factors and their lags (e.g., \cite{Stock01122002}), can be written in a static form. This case is often referred to as a model with a finite-dimensional factor space in the existing literature (e.g., \cite{FORNI2015_dfm_infinite_method, FORNIinfinitedim}), and thus can be handled using the usual principal component analysis after the transformation.

However, models that do not have such a static representation require other methods to estimate the structure within. The generalized dynamic factor model (GDFM), proposed by \cite{dynamicfactorForni2sided}, gives flexibility to the factor filters and thus allows for a potentially infinite-dimensional factor space. Because of this, in such an intrinsically dynamic factor model it is not always possible to stack the data to obtain a static representation.

The existing literature provides two ways to explore the structure of a dynamic factor model without a static representation. \cite{dynamicfactorForni2sided} was the first to address this problem, whose framework largely depends on dynamic principal components, see \cite{brillinger}. This is also the approach we will be investigating in this paper. Another approach, \cite{FORNI2015_dfm_infinite_method} and \cite{FORNIinfinitedim}, imposes a stronger structure on the underlying filters of the factors: the operators are assumed to be rational, and after applying a one-sided filter, the filtered process has a static factor structure. This model is parametric over the common component part and nonparametric over the idiosyncratic error, whereas the approach we will investigate in this paper, \cite{dynamicfactorForni2sided}, admits a fully nonparametric structure.

In real-world applications, dynamic factor models are often used to study the latent structure underlying indicators such as GDP growth or stock returns. In many cases, these indicators exhibit long memory, meaning that shocks can have persistent effects and the dependence between observations decays slowly, as a power law, over time. For a more detailed discussion of long memory in factor models, we refer the reader to \cite{Cheunglongmem}.
\cite{Cheunglongmem} studies factor estimation for a long-memory factor model, but its scope is limited to static settings. \cite{ERGEMENstackableDFM} extends long memory factor estimation to dynamic cases, but only for those with static representations. To the best of our knowledge, there is no existing work on factor estimation for long-memory GDFMs, without (or even with) a parametric factor structure. Our paper is the first to address this problem in an intrinsically infinite-dimensional factor space.

For a long-memory factor model, the infinite-dimensional nature of the factor space becomes even more consequential. The long-memory component inherently contributes to an infinite-dimensional space of factors and allows for more variation in how the long-memory part manifests. For instance, all existing long-memory factor literature relies on the static representation nature of the model, and that only permits the long memory $d$ to vary across factors, but not across rows. Additional discussion and examples are provided in Section \ref{sec:model}. In any case, the distinction between the finite and infinite-dimensional dynamic factor space is essential and can lead to the failure of the usual PCA method, as used in \cite{Cheunglongmem}, \cite{BARIGOZZI2021455}, and \cite{ERGEMENstackableDFM}. Tables \ref{tab:time_domain_pca_spca_same_d}, and \ref{tab:dynamic_pca_truncation_same_d} support this viewpoint.

This paper provides the first theoretical guarantees for the dynamic principal components approach of \cite{dynamicfactorForni2sided} in a long-memory setting. The extension from short to long memory is not trivial. Whereas the existing literature focuses only on the short-memory factor, the long-memory structure of the common component fundamentally alters the frequency-domain behavior by introducing low-frequency singularity and weakening regularity, so the relevant operators may only be controlled in an $L^1$ space. This substantially changes the analysis underlying factor estimation. 




These features create at least two major obstacles when proposing an approach based on dynamic PCA in \cite{dynamicfactorForni2sided} and establishing theoretical guarantees. The first obstacle concerns spectral density estimation. Discrete periodogram smoothing, the standard nonparametric approach for estimating the spectral density, can introduce bias at each fixed frequency because the spectral density diverges at zero frequency and thus is not smooth there. In this paper, we continue to work with this estimator for three reasons. First, the structure of the factor model, comprising a common component plus noise, may degrade the regularity of the short-memory component of the spectral density, precluding efficient estimation of the memory parameters, especially when the long-memory parameter $d$ is heterogeneous. This makes it difficult to apply prewhitening techniques. Second, under Assumption \ref{assump: spec_den_specific_form} below, we can infer that the eigengap between the first $q$ eigenvalues and the rest is of order $\mathcal{O}(n|\theta|^{-2\Tilde{d}}), \Tilde{d} = \min_j d_j$. Again, we would like to emphasize that Assumption~\ref{assump: spec_den_specific_form}, while important for our analysis, is still much milder than what is assumed in the existing literature. Since the estimated operator is a projection onto the leading eigenspace, by applying perturbation theory to the projection operator at a fixed frequency, we show that the eigengap provides an intrinsic scaling (a form of “prewhitening”) that partially mitigates the blow-up, see (\ref{eq:perturbation_prewhiten_scale}). Third, we are primarily concerned with the integrability of the spectral density estimator rather than its pointwise behavior. We prove that discrete periodogram smoothing estimator provides $L^1$-consistent estimates of the spectral density, even though it may not perform well pointwise for each frequency.

Another obstacle we encountered is the data availability (or “edge”) effect. The population construction in \cite{dynamicfactorForni2sided} is infeasible as it requires data points at all times $t \in \mathbb{Z}$, while in practice, we will have data at only finitely many time points. \cite{dynamicfactorForni2sided} deals with this infeasibility by truncation of magnitude $M(T)$, which is often used as a standard remedy. While truncation might not cause too much loss when factors are sufficiently regular (such as in the short-memory case), it can have a more significant impact when the series exhibit long-range dependence across lags and leads. From a mathematical perspective, the regularity of the eigenvector function is difficult to infer directly due to the blow-up behavior of the spectral density near zero, and therefore the Fourier remainder requires more delicate control.

To the best of our knowledge, within the existing literature on GDFMs, this is the first work to provide a rigorous consistency rate that includes the
truncation error for the method of \cite{dynamicfactorForni2sided}, even in the benchmark short-memory case $d=0$. While \cite{FORNI2004231} developed
theoretical guarantees for \cite{dynamicfactorForni2sided} in a short-memory
setting, their analysis does not explicitly account for the truncation (edge)
term, which can lead to seemingly sharper but incomplete rates. For the homogeneous long-memory parameter scenario, the regularity is the
strongest, which allows us to obtain a sharp result. For the scenario where the long-memory parameters differ only across factors, as stated in Assumption \ref{assump: spec_den_specific_form} case (a), we are able to establish H\"older continuity and a cusp-like behavior near zero. This implies that the norm of the distributed lead and lag coefficients $\left\|\mathbf{K}_{n i, h}\right\|_2$ of the projection operator $\underline{\mathbf{K}}_{ni}(\theta)$ decay faster than $|h|^{-1}$. To the best of our knowledge, our analysis is the first result in the existing GDFM literature to study the regularity of the eigenprojection under memory-parameter heterogeneity across factors. 

In the other scenario, where the long-memory parameters differ only across rows, as stated in Assumption \ref{assump: spec_den_specific_form} case (b), our proof establishes a preliminary bound that applies under certain conditions, specifically when $4\Delta - 2d +1 < 0, \Delta = \max_j d_j - \min_j d_j, d = \max_j d_j$. We acknowledge that the edge term in the heterogeneous long-memory case is the most challenging part to control. Our theoretical bound, the resulting sufficient conditions, and consequently the convergence rate, although the first in the literature to accommodate memory parameter heterogeneity across rows, a feature not covered by models with static representations, are conservative relative to what we observe in simulations: the results in Section \ref{sec:simulation} suggest that $\left\|\mathbf{K}_{n i, h}\right\|_2$ likely exhibits the same order of decay as in Assumption \ref{assump: spec_den_specific_form} case (a).

The rest of the paper is organized as follows. Section \ref{sec:model}
introduces the model setup and assumptions, and discusses the difference between models with or without static representation. Section \ref{sec:DPCA_methodology} reviews the
dynamic PCA approach of \cite{dynamicfactorForni2sided}. Section
\ref{sec:main_results} presents the main theoretical results on estimator
consistency. Section \ref{sec:simulation} reports the simulation results. We conclude in Section \ref{sec:conclusion}. All proofs are collected in the Appendix \ref{sec:assump_proof}, \ref{sec:proof_main_prop}, \ref{sec:proof_lemma_for_main_prop}, \ref{sec:proof_lemma_for_main_lemma_1}, \ref{sec:proof_lemma_for_lemma_2}.




\section{The Model}\label{sec:model}
In this paper, we study a doubly infinite sequence $\{x_{it}, i \in \mathbb{N}, t \in \mathbb{Z}\}$, where 
\begin{align}\label{eq:model0}
    x_{it} &= \chi_{it} + \xi_{it} ,\\
    \chi_{it} &= b_{i1}(L)u_{1t} + b_{i2}(L)u_{2t} + ... + b_{iq}(L)u_{qt} 
\end{align}

Here, $L$ denotes the lag operator, and $b_{i1}(L),\ldots, b_{iq}(L)$ are filters of potentially infinite order, not necessarily one-sided. The variables $u_{lt}, l = 1,\ldots,q$, are referred to as common shocks, while $\chi_{it}$ and $\xi_{it}$ denote the common component and the idiosyncratic component of $x_{it}$, respectively. In practice, only a finite amount of data may be observed, with $1 \leq i \leq n$ and $1 \leq t \leq T$. Our objective is to estimate $\chi_{it}$ using the observed sequence $\{x_{it}, 1 \le i \le n,, 1 \le t \le T\}$. 

\subsection{Stackable v.s.\ Non‑Stackable}
As discussed in the Introduction, there are two types of dynamic factor models. One can be reformulated as a static model by stacking the factors into a finite-dimensional factor vector; we call such models stackable. Another type does not admit such a representation, and we call it non-stackable. In \cite{FORNI2015_dfm_infinite_method} and \cite{FORNIinfinitedim}, the stackable/non-stackable distinction is framed in terms of whether the (static) factor space is finite-dimensional. However, the restriction of having a static representation, i.e., in a finite-dimensional factor space, which most papers focus on, can be quite limiting. As noted in \cite{FORNI2015_dfm_infinite_method}, even a simple specification such as
\begin{align}\label{eq:basic_ar_factor}
    x_{it} = \frac{1}{1-\alpha_i L} u_t + \xi_{it}
\end{align}
falls outside the class with a static representation. In this setup, for any fixed $t$, the collection of common components $\{\chi_{it}\}_{i\in\mathbb{N}}$ spans an  infinite-dimensional (static) factor space unless the coefficients $\{\alpha_i\}$ take only finitely many distinct values.

More specifically, the factor model dynamics in \eqref{eq:model0} can be written in the form
\begin{align}\label{eq:DFM_general_form}
  x_{it} &= \Phi_i(L)\,f_t  +  \xi_{it},   &&\text{(cross–sectional structure)}\\
  \Psi(L)\,f_t &= u_t,   &&\text{(factor dynamics)}
\end{align}
where $f_t$ denotes the factor process, $\Phi_i(L)$ may depend on $i$ but not on $t$, and $\Psi(L)$ depends only on the factors but not on $i$.

Such a model is stackable if one of the following holds:
\begin{enumerate}
  \item $\displaystyle \Phi_i(L)$ depends on $i$, and each entry is a finite‐degree polynomial in~$L$ of degree at most $K_{\text{degree}}$, where $K_{\text{degree}}$ does not depend on $i$.
  \item There are only finitely many distinct filters $\Phi_i(L)$. That is, there exist filters $\Phi^{(1)}(L),\dots,\Phi^{(M)}(L)$
  such that $\Phi_i(L)\in\{\Phi^{(1)}(L),\dots,\Phi^{(M)}(L)\}$ for all $i$. In this case, $\displaystyle \Phi_i(L)$ can be the same for all $i$, $M=1$, i.e.\ $\Phi_i(L)\equiv\Phi(L)$.
\end{enumerate}

A sufficient condition for a model to be non-stackable is that $\displaystyle \Phi_i(L)$ depends on $i$, the family $\{\Phi_i(L)\}_{i\in\mathbb{N}}$ contains infinitely many distinct filters, and each entry is an infinite-degree polynomial in~$L$.  

This provides a concrete perspective on what finite- and infinite-dimensional factor spaces actually mean. The basic AR factor model \eqref{eq:basic_ar_factor} does not have a static representation, because it violates the finite-degree and finitely-many-filters restrictions on $\Phi_i(L)$. The infinite lag length makes it impossible to apply time domain PCA, and thus models exhibiting long memory will further undermine the low-dimensional static factor structure.  

\cite{Cheunglongmem} and \cite{ERGEMENstackableDFM} both address factor models with long memory, but their frameworks rely on the assumption of finite-dimensionality in a static factor space mentioned above. More precisely, \cite{Cheunglongmem} studies static factor models, while \cite{ERGEMENstackableDFM} considers dynamic factor models with a static representation; both assume that memory parameters are shared across all rows. In contrast, our model not only allows different memory parameters across factors, but also permits each row $i$ to have its own long-memory parameter, attributing persistence to the loadings rather than solely to the factors.

Last but not least, we mention a point put forward by \cite{FORNI2015_dfm_infinite_method}, showing that stacking then ordinary PCA is a fragile way of estimating factor structures. In their Example~(1.6), 
\(x_{1t}=u_t+a u_{t-1}+\xi_{1t}\), and \(x_{it}=u_t+\xi_{it}\) for \(i>1\).
This is an example where the model does have a static representation by stacking \((u_t,u_{t-1})\); however, it still cannot be estimated correctly by the usual static (time-domain) PCA. In the stackable factor model, stacking the factors increases the dimension of factor space and thus requires pervasiveness for a larger $q$, for example, in this example, $q = 2$. The issue is that the lagged term \(u_{t-1}\) is non-pervasive, loading only on the first series, and therefore does not generate a second static diverging eigenvalue (in the covariance matrix); as a result, it is absorbed into the idiosyncratic component.
Consequently, the impulse response of \(x_{1t}\) to the common shock \(u_t\), namely \(1+aL\), cannot be recovered by the static PCA approach.
In contrast, dynamic (frequency-domain) PCA treats lags as part of the transfer function, rather than as additional stacked factors. Thus, it does not increase the dimension and only requires pervasiveness for $q=1$ in the spectral density.
Thus, the frequency-domain dynamic PCA method correctly recovers the common component in this example.

The following examples illustrate which class each dynamic factor model falls into:
\begin{enumerate}
  \item $\Phi_i(L) = \bigl(1-\alpha_{1i}L,\;1-\alpha_{2i}L\bigr),\quad
      |\alpha_{1i}|,|\alpha_{2i}|<1,\quad
      \Psi(L)=1-0.8L,$
    then one may define
    \[
      \widetilde f_t =
       \bigl(f_{1t},f_{1,t-1},f_{2t},f_{2,t-1}\bigr)',
       \quad
      \widetilde\gamma_i = (1,\alpha_{1i},1,\alpha_{2i})',
    \]
    so that $x_{it}=\widetilde\gamma_i'\,\widetilde f_t$ is a static representation of the model.
  \item $\Phi_i(L)=\frac1{1-\alpha_iL},\;|\alpha_i|<1,$ with $\Psi(L)=1-0.8L$, and the $\alpha_i$'s take infinitely many values. Each entry has infinite lag length and the model is non-stackable.
  \item $\Phi_i(L)=(1-L)^{-d},\ d<\tfrac12,$ then one can absorb the fractional filter into $\Psi(L)$ and obtain a static representation.
  \item If $\Phi_i(L)=(1-L)^{-d_i},\ d_i<\tfrac12,$ with $d_i$ taking infinitely many values, the model is non-stackable. 
  \item $\Phi_i(L)=\frac{(1-L)^{-d}}{1-\alpha_iL},\;|\alpha_i|<1$. Even though in this case one can again absorb the fractional filter into the factor dynamics, since $\frac{1}{1-\alpha_iL}$ has infinite degree, this model still does not have a static representation.
  \item $\Phi_i(L) = \left[(1-L)^{-d_{il}} \frac{\Theta_{il}(L)}{\Phi_{il}(L)}\right]_{l = 1,\ldots,q}$; whether $d_{il}$ varies in both indices or only across rows ($d_i$) or factors ($d_l$), these models are all non-stackable.
\end{enumerate}


The distinction between the finite and infinite-dimensional
dynamic factor space is essential and can lead to the failure of the usual PCA method. Consider a simple one-factor model 
\begin{align*}
    x_{it} &= \chi_{it} + \xi_{it}, \\
    \chi_{it} &= \frac{(1-L)^{-d}}{1-\alpha_i L}u_t
    \quad |\alpha_i|<1, \quad d = 0.4
\end{align*}

We compare the performance of the ordinary PCA and the dynamic PCA from \cite{dynamicfactorForni2sided} in estimating the common component $\chi_{it}$, and use the global criterion
\begin{align}\label{eq:global_criterion}
R(\hat{\chi},\chi)
=
\frac{\sum_{i,t}\bigl(\hat{\chi}_{it}-\chi_{it}\bigr)^2}{\sum_{i,t}\chi_{it}^2}
\end{align}
Table~\ref{tab:time_domain_pca_spca_same_d}, \ref{tab:dynamic_pca_truncation_same_d} report the average and the standard deviation (in brackets) of this statistic across the experiments.

\begin{table}[htbp]
\centering
\caption{Time domain PCA, \cite{BARIGOZZI2021455}, \cite{Cheunglongmem}, \cite{ERGEMENstackableDFM}}
\label{tab:time_domain_pca_spca_same_d}
\begin{tabular}{lcccccc}
\toprule
$n\backslash T$ & 20 & 50 & 100 & 200 & 500 & 800 \\
\midrule
20  & 0.515(0.184) & 0.669(0.184) & 0.683(0.158) & 0.775(0.120) & 0.734(0.135) & 0.736(0.150) \\
50  & 0.589(0.143) & 0.688(0.122) & 0.744(0.087) & 0.788(0.109) & 0.676(0.118) & 0.760(0.118) \\
100 & 0.590(0.165) & 0.630(0.118) & 0.759(0.075) & 0.744(0.071) & 0.800(0.059) & 0.782(0.077) \\
200 & 0.562(0.170) & 0.779(0.065) & 0.765(0.077) & 0.801(0.075) & 0.773(0.029) & 0.832(0.034) \\
\bottomrule
\end{tabular}
\end{table}

\begin{table}[htbp]
\centering
\caption{Dynamic(frequency domain) PCA from \cite{dynamicfactorForni2sided}}
\label{tab:dynamic_pca_truncation_same_d}
\begin{tabular}{lcccccc}
\toprule
$n\backslash T$ & 20 & 50 & 100 & 200 & 500 & 800 \\
\midrule
20  & 0.124(0.067) & 0.129(0.079) & 0.062(0.018) & 0.040(0.017) & 0.043(0.019) & 0.026(0.010) \\
50  & 0.211(0.095) & 0.089(0.022) & 0.064(0.026) & 0.040(0.013) & 0.035(0.014) & 0.028(0.014) \\
100 & 0.223(0.108) & 0.113(0.034) & 0.067(0.008) & 0.050(0.018) & 0.041(0.012) & 0.032(0.007) \\
200 & 0.243(0.106) & 0.144(0.067) & 0.068(0.026) & 0.053(0.013) & 0.045(0.008) & 0.031(0.010) \\
\bottomrule
\end{tabular}
\end{table}

It is clear that time domain PCA does not provide any meaningful estimation of the common component, while frequency domain dynamic PCA performs well even when $d$ is relatively large (e.g., $d=0.4$).


\subsection{Assumptions}

Throughout the paper, $\Pi = (-\pi, \pi]$.

\begin{assumption}\label{assump:model}
(Model assumption)
    \begin{enumerate}[(i)]
    \item $(u_{1t}, u_{2t}, ..., u_{qt})'$ is orthonormal white noise with with finite fourth-order moments.
    \item $\xi_n = \{(\xi_{1t}, \xi_{2t}, ..., \xi_{nt})', t \in \mathbb{Z}\}$  is a zero-mean stationary vector process for each $n$, and $\xi_{it}$ is independent of $ u_{l, t-k}$ for any $i, l, t$, and $k$. Furthermore, the idiosyncratic component admits the representation
\begin{align}\label{eq:xi_representation}
    \xi_t = B_\xi(L)\epsilon_t
= \sum_{m \in \mathbb{Z}}B_{\xi,m}\epsilon_{t-m},
\end{align}
where \(\epsilon_t\) is an \(n\)-dimensional second-order white noise process.
    \item $b_{il}(L)$s are filters of potentially infinite order, not necessarily one-sided. The coefficients are square summable for each $i,l$.  
\end{enumerate}
\end{assumption}

\begin{assumption}\label{assump:semiparametric_long_memory}
    The common component $\chi_{it}$ may exhibit long memory characteristics, meaning that the transfer function $b_{il}(L)$ in (\ref{eq:model0}) admits the decomposition
\[
b_{il}(L) = (1-L)^{-d_{il}} g_{il}(L),
\]
where $g_{il}(L)$ represents the remaining short-memory component. The memory parameters satisfy $d_{il}\in [0,0.5)$.
\end{assumption}

\begin{assumption}\label{assump:pervasive_short_mem} Define an $n\times q$ matrix $G(\theta) = \left[g_{il}(e^{-\iota \theta})\right]_{i=1,...,n;l = 1,...,q}$, where $g_{il}(L)$ is the remaining short memory component defined in Assumption \ref{assump:semiparametric_long_memory}.
Let $S(\theta) = \frac{1}{n}G^*(\theta)G(\theta)$.
    There exist constants $0 < s^{-} \le s^{+} < \infty$ such that the eigenvalues of $S(\theta)$ satisfies $s^{-}\leq \lambda_{q}^S(\theta)\leq \cdots\leq\lambda_{1}^S(\theta) \leq s^{+}$ for all $\theta \in \Pi$. 

    Equivalently, 
    \begin{align}\label{eq:pervasive_S_GG}
        s^- \mathbf{I}_{q\times q} \preceq S(\theta) \preceq s^+ \mathbf{I}_{q\times q}
    \end{align}

    Moreover, we assume the regularity of $G(\theta)$, $G(\theta)$ is second differentiable and thus Lipschitz continuous: for all $\theta_1, \theta_2 \in \Pi$,
    \begin{align}\label{eq:lipschitz_short_memory}
    &\left\|G^{(r)}(\theta)\right\|_2 \leq C_{G,r}\sqrt{n}, \quad r = 1, 2\\
        &\left\|G(\theta_2) - G(\theta_1)\right\| \leq \sqrt{n}L_G\left|\theta_2 - \theta_1  \right|.
    \end{align}
\end{assumption}

We briefly illustrate that Assumption~\ref{assump:semiparametric_long_memory}, the Lipschitz condition~(\ref{eq:lipschitz_short_memory}), and the upper bound in~(\ref{eq:pervasive_S_GG}) are satisfied by a standard ARFIMA-type specification under the usual stability/invertibility conditions. 
In particular, consider the model below 
\begin{align}
    x_{it} &= \chi_{it} + \xi_{it},\\
    \chi_{it} &= b_{i1}(L)u_{1t} + b_{i2}(L)u_{2t} + \cdots + b_{iq}(L)u_{qt},\\
    b_{il}(L) &= (1-L)^{-d_{il}} \frac{\Theta_{il}(L)}{\Phi_{il}(L)}, \qquad l=1,\dots,q,
\end{align}
where
\[
\Theta_{il}(z) = \sum_{k=0}^{p_{il}} \theta_{il,k} z^k,
\qquad
\Phi_{il}(z) = 1 - \sum_{k=1}^{r_{il}}\phi_{il,k}z^k.
\]
Assume the usual ARMA regularity condition that $\Phi_{il}(z)$ has no zeros on $|z|\leq 1$, so that it is uniformly bounded away from $0$ on the unit circle; or equivalently, there exists $\delta>0$ such that $\inf_{\theta\in\Pi}\big|\Phi_{il}(e^{-\iota\theta})\big| \ge \delta$ for all $i,l$.

Then $g_{il}(L) = {\Theta_{il}(L)}/{\Phi_{il}(L)}$. Furthermore, if 
\[
\max_{i,l}\max_{k=0,...,p_{il}} |\theta_{il,k}| \leq C_{\Theta}, \max_{il}p_{il} \leq p
\] where $p$ and $C_{\Theta}$ are independent of $n, T$. This is also standard in an ARFIMA setting. Thus, $\max_{i,l,\theta} |g_{il}(e^{-\iota \theta})| \leq pC_{\Theta}/\delta \leq M_g$, where $M_g$ is independent of $n, T$. Consequently, $\max_{\theta}\left\|S(\theta)\right\|_2 \leq \frac{1}{n}\max_{\theta}\left\|G(\theta)\right\|_2^2 \leq \frac{1}{n}\max_{\theta}\left\|G(\theta)\right\|_F^2 \leq q \max_{i,l,\theta}|g_{il}(e^{-\iota \theta})|^2 \leq qM_g^2$.

Moreover, under the additional standard uniform coefficient bounds
\[
\sup_{i,l}\sum_k k\,|\theta_{il,k}| < L_{\Theta} < \infty,
\qquad
\sup_{i,l}\sum_k k\,|\phi_{il,k}| < L_{\Phi} < \infty,
\]
it follows that the derivatives of the transfer functions are uniformly controlled. In particular, there exists a constant $L_g<\infty$, independent of $n$ and $T$, such that $\max_{i,l}\sup_{\theta\in\Pi}
\left|
\frac{d}{d\theta} g_{il}(e^{-\iota\theta})
\right|
\leq L_g.$
Since $g_{il}$ has no poles on the unit circle and is smooth in $\theta$, the mean value theorem yields
\begin{align*}
    \left\|  G(\theta_2) - G(\theta_1) \right\|_2 \leq \left\|  G(\theta_2) - G(\theta_1) \right\|_F \leq \sqrt{nq} \max_{il}{|g_{il}(\theta_2) - g_{il}(\theta_1)|} \leq \sqrt{nq}L_g|\theta_2 - \theta_1|.
\end{align*}
Therefore, inequality~(\ref{eq:lipschitz_short_memory}) holds.

Here we emphasize that the lower inequality in (\ref{eq:pervasive_S_GG}) is imposed as an assumption. It rules out some cases. In particular, $S(\theta) \succeq s^- \mathbf{I}_{q\times q}$ implies that for any $\alpha \in \mathbb{R}^q$, $\alpha^* S(\theta) \alpha \geq s^- \|\alpha\|_2^2$, or equivalently, $\|G(\theta)\alpha\|_2^2 \geq ns^- \|\alpha\|_2^2$. 

There are two simple situations that might break this lower bound. First, if one column of $G(\theta)$, say the $l_{th}$ column $g_l(\theta)$, is too small and does not grow with $n$, then $e_l^* S(\theta) e_l = \frac{1}{n}\left\|g_l(\theta)\right\|_2^2 \leq s^-$. This breaks the lower bound of (\ref{eq:pervasive_S_GG}). That said, the lower inequality of (\ref{eq:pervasive_S_GG}) requires at least $\|g_l(\theta)\|_2 \geq \sqrt{ns^-}$ for all $l = 1,...,n$ and $\theta \in \Pi$. Second, if there exists some $\alpha_0 \in \mathbb{R}^q, \|\alpha_0\|_2 = 1$, such that $G(\theta)\alpha_0 \approx 0$, then the columns of $G(\theta)$ are nearly collinear. This case again violates  $\|G(\theta)\alpha_0\|_2^2 \geq ns^-$.

Such an lower bound guarantees an eigengap of order $n$ for the common spectral density $\Sigma_\chi(\theta)$ between the first $q$ eigenvalues and the remaining $n-q$ eigenvalues, for each fixed $\theta$. Under Assumption~\ref{assump:bdd_error_eval}, this separation is preserved for the full spectral density $\Sigma_x(\theta)$ as well. See Lemma \ref{lemma:div_component_eval_specific_form} below for a more precise formulation.

\begin{assumption}\label{assump: spec_den_specific_form}

In this paper, we mainly consider two cases:
\begin{enumerate}[(a)]
    \item $d_{il} = d_l \in [0, 0.5)$. Define a $q\times q$ diagonal matrix $D(\theta) = diag\left( (1-e^{-\iota \theta})^{-d_l} \right)_{l = 1,...,q}. $ By Assumption \ref{assump:model}, the spectral density of $u_t=(u_{1t}, u_{2t}, ..., u_{qt})'$ is $\sigma_u^2 \mathbf{I}_{q\times q}$. Without loss of generality, set $\sigma_u=1$.  Then $\Sigma_{\chi}(\theta) =  G(\theta) D(\theta) D^{\ast}(\theta) G^{\ast}(\theta) = G(\theta) \Bar{D}^{2}(\theta) G^{\ast}(\theta)$ where $\Bar{D}(\theta) = diag\left( |1-e^{-\iota \theta}|^{-d_l} \right)_{l = 1,...,q}$.
    \item $d_{il} = d_i \in [0, 0.5)$. Define an $n\times n$ diagonal matrix $D_n(\theta) = diag\left( (1-e^{-\iota \theta})^{-d_i} \right)_{i = 1,...,n}. $   Then $\Sigma_{\chi}(\theta) =  D_n(\theta)G(\theta) G^{\ast}(\theta)D^{\ast}_n(\theta)$.
\end{enumerate}

Throughout the paper, we let $d = \max_{il} d_{il}$, and $\tilde d = \min_{il} d_{il}$.

\end{assumption}


\begin{assumption}[Boundness and regularity of idiosyncratic component]\label{assump:bdd_error_eval}
There exist constants $\Lambda_{\xi}^{-}, \Lambda_{\xi}^{+}$ such that the eigenvalues of the idiosyncratic spectral density matrix $\Sigma_{\xi}(\theta)$ satisfy $\Lambda_{\xi}^{-}\leq \lambda_{nn}^\xi(\theta)\leq \cdots\leq\lambda_{n1}^\xi(\theta) \leq \Lambda_{\xi}^{+}$ for all $\theta \in \Pi$. 
Furthermore, we assume regularity on $\Sigma_{\xi}(\theta)$, $\Sigma_{\xi}(\theta)$ is second differentiable and thus Lipschitz continuous on $\Pi$.

Furthermore, we assume that the fourth-order cumulant of $\epsilon_t$ in \eqref{eq:xi_representation} is uniformly bounded in multilinear operator norm:
\begin{align}\label{eq:eps_Q_finite}
    \left\|Q^\epsilon(0,0,0)\right\|_{\mathrm{op}} < \infty,
\end{align}
where
\[
\left\|Q^\epsilon(0,0,0)\right\|_{\mathrm{op}}
:=
\sup_{\|v_1\|_2=\cdots=\|v_4\|_2=1}
\left|
\operatorname{cum}
(v_1'\epsilon_t, v_2'\epsilon_t, v_3'\epsilon_t, v_4'\epsilon_t)
\right|.
\]
\end{assumption}

\textbf{Remark.} 
Condition \eqref{eq:eps_Q_finite} is imposed to obtain a sharper rate in Lemma \ref{lm:var_conv}. 
A sufficient condition for \eqref{eq:eps_Q_finite} is that the coordinates of \(\epsilon_t\) are mutually independent and have uniformly bounded fourth cumulants. 
Moreover, if \(\epsilon_t\) is Gaussian, then \eqref{eq:eps_Q_finite} holds trivially as \(Q^\epsilon(0,0,0)=0\).

Here we impose an extra structure on $G(\theta)$, that is besides Assumption \ref{assump:pervasive_short_mem} we further assume

\begin{assumption}\label{assump:row_boundness_G} Assumption \ref{assump:pervasive_short_mem} holds. Let $G_i(\theta)$ to be the $i_{th}$ row of $G(\theta)$, then we assume $\left\| G_i(\theta)\right\|^2 \leq C$, C is independent of $n$ and $T$.
\end{assumption}

Notice that such an assumption is compatible with Assumption \ref{assump:pervasive_short_mem}.

We further impose a gap dominance assumption to obtain a better consistency rate 
\begin{assumption}\label{assump:gap_dominance} In Assumption \ref{assump: spec_den_specific_form} case (a), let
\[
\rho^{(q)}_m = \min \{\rho^{(q)}_{m-1}, \alpha_{m,m+1}, \min_{l\leq m-1}\{2\rho^{(q)}_l - \alpha_{l,m}\} \}
\]
for all $m \leq q$, $\rho^{(q)}_0 = 1$, $\alpha_{j_1j_2} \coloneqq 2|d_{(j_1)}-d_{(j_2)}|>0$, $\alpha_{0,j} = 1, j = 1, \dots, q$, $d_{(q+1)}\coloneqq 0$. We assume
\[
2\rho^{(q)}_l > \alpha_{l,m}
\] for all $l = 1, \dots, m-1$, and $m= 2,\dots, q$, if $q\geq 2$.
\end{assumption}

\textbf{Remark:} This assumption imposes a dominance condition on the gaps among the memory parameters. In particular, it requires each relevant gap to be sufficiently large relative to the accumulated effect of the remaining gaps. For example, the condition holds automatically when \(q=1\) or \(q=2\). When \(q=3\), it requires that the first gap dominates the second gap, namely $d_{(1)} - d_{(2)} > d_{(2)} - d_{(3)}$.

Under Assumption \ref{assump:pervasive_short_mem} and \ref{assump: spec_den_specific_form}, the following lemma holds. See Appendix \ref{sec:assump_proof} for the proof.

\begin{lemma}\label{lemma:div_component_eval_specific_form}
Under Assumptions~\ref{assump:pervasive_short_mem} and~\ref{assump: spec_den_specific_form}, the diverging eigenvalues of $\Sigma^{\chi}(\theta)$ diverge at rate $n$, $\theta-$a.e.; that is, there exist functions $\alpha^{\chi}_q(\theta),  \beta^{\chi}_{0}(\theta) \in L^1(\Pi)$ such that 
    \begin{align*}
        \beta^{\chi}_0(\theta) \geq \frac{\lambda^{\chi}_{n1}(\theta)}{n}, \dots, \frac{\lambda^{\chi}_{nq}(\theta)}{n} \geq \alpha^{\chi}_q(\theta) >  \beta^{\chi}_q(\theta) =0 
    \end{align*}

In the case where
\begin{enumerate}
    \item $d_{il} = d_l, l = 1,\dots, q$, assume $d^{(1)}>\cdots>d^{(q)}$, 
\begin{equation}\label{eq:alpha_beta_defn}
\alpha_q^{\chi}(\theta)=c_q^{-}\,|\theta|^{-2d^{(q)}},
\qquad
\beta_0^{\chi}(\theta)=c_{1}^{+}\,|\theta|^{-2d^{(1)}} ,
\end{equation}
\item $d_{il} = d_i, i = 1, \dots, n$, assume $d^{(1)}>\cdots>d^{(n)}$, 
\begin{align}\label{eq:alpha_beta_defn_across_row}
&\alpha_q^{\chi}(\theta)=c_q^{-}\,\min\!\big\{ |\theta|^{-2d_{(1)}},\, |\theta|^{-2d_{(n)}}\big\},\\
&\beta_0^{\chi}(\theta)=c_1^{+}\,\max\!\big\{ |\theta|^{-2d_{(1)}},\, |\theta|^{-2d_{(n)}}\big\}.
\end{align}
\end{enumerate}
for some constants $0 <c_q^{-} \leq c_1^{+}<\infty$.

Moreover, if every row and series shares the same the long memory parameter $d$, there exists $ c_{q}^{-}\leq c_{1}^{+} $ such that the eigenvalue $\lambda^{b}_{j,\chi}(\theta)$ of $|\theta|^{2d}\Sigma^{\chi}(\theta)$, satisfies
    \[
    c_1^{+}\geq \frac{\lambda^{b}_{1,\chi}(\theta)}{n}, \dots \frac{\lambda^{b}_{q,\chi}(\theta)}{n}\geq c_q^{-}
    \]
    For $\theta$-a.e., $\lambda^{\chi}_{nj}(\theta) = |\theta|^{-2d}\lambda^{b}_{j,\chi}(\theta)$, $j = 1,\dots, q$. $\alpha_q^{\chi}(\theta) = c_q^{-}|\theta|^{-2d}, \beta_0^{\chi}(\theta) = c_{1}^{+}|\theta|^{-2d}$.
\end{lemma}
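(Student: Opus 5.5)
The plan is to use the factorizations from Assumption~\ref{assump: spec_den_specific_form} together with the Courant--Fischer min-max characterization. First I use the elementary estimate
\[
c_0|\theta| \le |1-e^{-\iota\theta}| = 2|\sin(\theta/2)| \le |\theta|, \qquad \theta\in\Pi,
\]
with $c_0 = 2/\pi$. This turns every factor $|1-e^{-\iota\theta}|^{-2d}$ into $|\theta|^{-2d}$, up to constants depending only on $d<1/2$.

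\textbf{Case (a).} Write $\Sigma_\chi(\theta) = G(\theta)\bar D^2(\theta)G^*(\theta)$. Its nonzero eigenvalues coincide with those of the $q\times q$ matrix $\bar D(\theta) G^*(\theta)G(\theta)\bar D(\theta) = n\,\bar D S \bar D$. Since $\operatorname{rank}\Sigma_\chi\le q$, we get $\lambda^\chi_{nj}=0$ for $j>q$, which gives $\beta^\chi_q=0$. By Assumption~\ref{assump:pervasive_short_mem},
\[
n s^-\bar D^2 \preceq n\bar D S\bar D \preceq n s^+\bar D^2 .
\]
Hence, by monotonicity of eigenvalues under the Loewner order, $\lambda_1/n \le s^+\max_l|1-e^{-\iota\theta}|^{-2d_l}$ and $\lambda_q/n \ge s^-\min_l |1-e^{-\iota\theta}|^{-2d_l}$.

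On $\Pi$ we have $|\theta|\le\pi$. So $\max_l|\theta|^{-2d_l}\le C|\theta|^{-2d^{(1)}}$ and $\min_l|\theta|^{-2d_l}\ge c|\theta|^{-2d^{(q)}}$, with constants such as $\pi^{2d}$ absorbed into them. This gives \eqref{eq:alpha_beta_defn} with $c_q^-=s^-c$ and $c_1^+=s^+C$. Integrability follows from $2d^{(1)}<1$, and strict positivity holds for $\theta\ne0$, hence $\theta$-a.e.

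\textbf{Case (b).} Here $\Sigma_\chi = D_nGG^*D_n^*$ has the same nonzero eigenvalues as
\[
G^*D_n^*D_nG = G^*\bar D_n^2 G, \qquad \bar D_n=\operatorname{diag}\big(|1-e^{-\iota\theta}|^{-d_i}\big).
\]
Now $\min_i|1-e^{-\iota\theta}|^{-2d_i}\,I\preceq \bar D_n^2\preceq \max_i|1-e^{-\iota\theta}|^{-2d_i}\,I$. Conjugating by $G$ and using $ns^-I\preceq G^*G\preceq ns^+I$ yields the analogous bounds. The extreme values of $|\theta|^{-2d_i}$ over $i$ are attained at $d_{(1)}$ or $d_{(n)}$ depending on whether $|\theta|<1$ or $|\theta|>1$. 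This is exactly why the statement involves the min/max of the two powers, and it gives \eqref{eq:alpha_beta_defn_across_row}.

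\textbf{Homogeneous case.} When every $d_{il}=d$, we have $\Sigma_\chi = |1-e^{-\iota\theta}|^{-2d}GG^*$. Then $|\theta|^{2d}\Sigma_\chi = r(\theta)\,GG^*$ with $r(\theta)=\big(|\theta|/|1-e^{-\iota\theta}|\big)^{2d}\in[1,(\pi/2)^{2d}]$. Its top $q$ eigenvalues are $r(\theta)$ times those of $nS$, so they lie in $[ns^-, n s^+(\pi/2)^{2d}]$. The identity $\lambda^\chi_{nj}=|\theta|^{-2d}\lambda^b_{j,\chi}$ is immediate for $\theta\neq0$.

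The only delicate point is bookkeeping of constants, namely keeping them independent of $n$ and $\theta$; this holds since all $d_i\in[0,1/2)$ and $|\theta|\le\pi$. I expect no genuine obstacle here.
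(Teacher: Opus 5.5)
Your proposal is correct and follows essentially the same route as the paper: pass to the $q\times q$ matrix ($\bar D G^*G\bar D$ in case (a), $G^*\bar D_n^2 G$ in case (b)), sandwich it in the Loewner order using $s^-I\preceq S\preceq s^+I$, and invoke monotonicity of eigenvalues. Your explicit use of $\tfrac{2}{\pi}|\theta|\le|1-e^{-\iota\theta}|\le|\theta|$ and of $|\theta|\le\pi$ to absorb constants makes rigorous a step the paper glosses over, namely that the ordering of $|1-e^{-\iota\theta}|^{-2d_l}$ reverses for $|\theta|>\pi/3$. In case (a) the paper also records the slightly stronger per-eigenvalue bound $\lambda^\chi_{nr}\asymp n|\theta|^{-2d^{(r)}}$, which later lemmas rely on and which the same sandwich gives if you compare $\lambda_r$ rather than only the extreme eigenvalues.
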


\textbf{Remark.} Since our paper mainly focuses on the behavior of the spectral density near zero frequency, for simplicity we take the lower bound to be $c_q^- |\theta|^{-2d_{(n)}}$ and the upper bound to be $c_1^+ |\theta|^{-2d_{(1)}}$.

\section{Dynamic PCA Approach} \label{sec:DPCA_methodology}
\cite{dynamicfactorForni2sided} proposed a method based on dynamic principal components, originally introduced in \cite{brillinger} as dynamic principal component analysis. In this section, we briefly review the dynamic PCA approach of \cite{dynamicfactorForni2sided} for estimating the common component.

Let $\{X_t\}_{t\in\mathbb{Z}}$ denote the $n$-dimensional observed time series, $X_t = (X_{1t},\ldots,X_{nt})'$.
Let $\Sigma_X(\theta)$ denote the $n\times n$ spectral density matrix of $\{X_t\}$.

Denote by
\[
\Sigma_X(\theta) = P(\theta)\Lambda(\theta)P(\theta)'
\]
the eigen-decomposition of $\Sigma_X(\theta)$, where $P(\theta)=\big(\mathbf{p}_1(\theta),\ldots,\mathbf{p}_n(\theta)\big)$
is the orthogonal matrix of eigenvectors and $\Lambda(\theta)=\mathrm{diag}(\lambda_1(\theta),\ldots,\lambda_n(\theta))$
contains the eigenvalues ordered decreasingly. $p_{j,i}(\theta)$ denote the $i$th component of eigenvector $\mathbf{p}_j(\theta)$.

Following \cite{dynamicfactorForni2sided}, the oracle filter for recovering the $i$th common component is defined as

\begin{equation}
\chi_{it,n} = \underline{\mathbf{K}}_{ni}(L)X_t,
\end{equation}

where $\underline{\mathbf{K}}_{ni}(L)$ is the linear filter with transfer function

\begin{equation}
\underline{\mathbf{K}}_{ni}(L)
=
\frac{1}{2\pi}\sum_{h=-\infty}^{\infty}
\mathbf{K}_{ni,h}L^h, \quad \mathbf{K}_{ni,h} = \int_{\Pi}\underline{\mathbf{K}}_{ni}(\theta)e^{-\iota h\theta}\,d\theta
\end{equation}
with
\begin{equation}
\underline{\mathbf{K}}_{ni}(\theta)
=
\tilde{p}_{1,i}(\theta)\mathbf{p}_1(\theta)
+
\tilde{p}_{2,i}(\theta)\mathbf{p}_2(\theta)
+\cdots+
\tilde{p}_{q,i}(\theta)\mathbf{p}_q(\theta).
\end{equation}

Here $\tilde p_{j,i}(\theta)$ denotes the complex conjugate of $p_{j,i}(\theta)$.

This estimator is called an oracle estimator because it depends on the unknown population spectral density $\Sigma_X(\theta)$, as well as an infinite amount of data.

\bigskip

To construct a feasible estimator, we continue to adopt the spectral density estimator based on the observed data $\{X_t\}_{t=1}^T$ as used in \cite{dynamicfactorForni2sided}. Specifically, the spectral density matrix is estimated by the smoothed periodogram
\begin{align}\label{eq:smoothed_periodogram}
     \widehat{\Sigma}_{n}(\theta) &= \frac{2\pi}{B_T T}\sum_{j=-T_0}^{T_0} W\left(\frac{\theta - \lambda_j}{B_T}\right)\mathbf{I}_{XX}(\lambda_j) \mathbb{1}(j \neq 0) \\\nonumber
     &= \frac{2\pi}{B_T T}\sum_{j=1}^{T_0} \left(W\left(\frac{\theta - \lambda_j}{B_T}\right) + W\left(\frac{\theta + \lambda_j}{B_T}\right) \right)\mathbf{I}_{XX}(\lambda_j)  \\
     \mathbf{I}_{XX}(\omega) &:=\frac{1}{2 \pi T}\left[\sum_{t=0}^{T-1} X_t \exp (-i \omega t)\right]\left[\sum_{t=0}^{T-1} X_t^{ \prime} \exp (i \omega t)\right]
 \end{align}
where $T_0 = \lfloor (T-1)/2 \rfloor$. Moreover, if we further denote 
\[
\widetilde{W}_{B_T}(x,y) \coloneqq W\left(\frac{x-y}{B_T}\right) + W\left(\frac{x+y}{B_T}\right)
\]
Then 
\[
 \widehat{\Sigma}_{n}(\theta) = \frac{2\pi}{B_T T}\sum_{j=1}^{T_0} \widetilde{W}_{B_T}\left(\theta, \lambda_j\right) \mathbf{I}_{XX}(\lambda_j)
\]

$W(\cdot)$ is called the spectral window that satisfies the assumption below.
\begin{assumption}\label{assump:W_kernel}
    $W(\cdot)$ is a symmetric, non-negative kernel function satisfying \[
    \int_{-\infty}^{\infty} W(\lambda) d\lambda = 1, \text{ and } \int_{-\infty}^{\infty} \lambda^2 W(\lambda) d\lambda < \infty
    \]
    $W(\cdot)$ has a compact support on $[-\rho, \rho]$, and is Lipschitz continuous.
\end{assumption}

Let
\[
\widehat{\Sigma}_n(\theta)
=
\widehat P_n(\theta)
\widehat\Lambda_n(\theta)
\widehat P_n(\theta)'
\]
denote the eigen-decomposition of $\widehat{\Sigma}_n(\theta)$, where $\widehat{P}_n(\theta)=\big(\widehat{\mathbf{p}}_{n1}(\theta),\ldots,\widehat{\mathbf{p}}_{nn}(\theta)\big)$.
Define
\begin{align}
    \underline{\widehat{\mathbf{K}}}_{ni}(\theta)
=
\widetilde{\widehat{p}}_{n1,i}(\theta)\widehat{\mathbf{p}}_{n1}(\theta)
+
\widetilde{\widehat{p}}_{n2,i}(\theta)\widehat{\mathbf{p}}_{n2}(\theta)
+\cdots+
\widetilde{\widehat{p}}_{nq,i}(\theta)\widehat{\mathbf{p}}_{nq}(\theta).
\end{align}

The feasible filter is obtained by truncating the infinite filter
\begin{equation}
\underline{\widehat{\mathbf{K}}}_{ni}(L)
=
\sum_{h=\max\{t-T,-M(T)\}}^{\min\{t-1,M(T)\}}
\underline{\widehat{\mathbf{K}}}_{ni,h}L^h,
\end{equation}

where

\begin{align}
    \underline{\widehat{\mathbf{K}}}_{ni,h}
=
\frac{1}{2\pi}\int_{-\pi}^{\pi}
\underline{\widehat{\mathbf{K}}}_{ni}(\theta)e^{-ih\theta}\,d\theta,
\end{align}

Finally, the feasible estimator of the common component is
\[
\widehat\chi_{it}
=
\widehat K_{ni}(L)X_t.
\]

$M(T)\to\infty$ as $T\to\infty$. For the moment, we postpone the discussion of how the truncation parameter $M(T)$ should vary with $d$, $n$, $T$, and $\Delta$ in order to achieve a consistency rate. This point will be revisited in Section~\ref{subsec:feasible_estimator} after the consistency theorem is established.

\section{Consistency of the Long Memory DFM}\label{sec:main_results}
\subsection{Consistency of the Oracle Estimator}\label{subsec:oracle}
\begin{prop}\label{proposition_oracle}
    Suppose that Assumptions \ref{assump:model}, \ref{assump:semiparametric_long_memory}, \ref{assump:pervasive_short_mem}, and \ref{assump:bdd_error_eval}
    hold, then
        for all $\eta>0$, there exists $B_{\eta}, N_{\eta}$, such that for any $n \geq N_{\eta}$,
    \[
P\left[n^{\frac{1}{2}}\left|{\underline{\mathbf{K}}}_{n i}(L)X_t -\chi_{i t}\right|>B_{\eta}\right] \leq \eta
    \]
\end{prop}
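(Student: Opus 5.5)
We reduce the statement to a second-moment bound and compute it in the frequency domain. Write $\mathbf{e}_i$ for the $i$th canonical vector, let $\mathbf{P}_q(\theta)=\sum_{j\le q}\mathbf{p}_j(\theta)\mathbf{p}_j^*(\theta)$ be the projection onto the leading $q$-dimensional eigenspace of $\Sigma_X(\theta)$, and note that $\underline{\mathbf{K}}_{ni}(\theta)=\mathbf{e}_i^*\mathbf{P}_q(\theta)$, with the row/column convention fixed by the definition. Since $\chi_{it}=\mathbf{e}_i^*\chi_t$, we have
\[
\underline{\mathbf{K}}_{ni}(L)X_t-\chi_{it}=\mathbf{e}_i^*\big(\mathbf{P}_q(L)-\mathbf{I}\big)\chi_t+\mathbf{e}_i^*\mathbf{P}_q(L)\xi_t .
\]
By Assumption~\ref{assump:model}, $\chi$ and $\xi$ are mutually independent stationary processes, so the variance of this error equals
\[
V_{ni}=\int_\Pi \mathbf{e}_i^*(\mathbf{I}-\mathbf{P}_q)\Sigma_\chi(\mathbf{I}-\mathbf{P}_q)\mathbf{e}_i\,d\theta+\int_\Pi \mathbf{e}_i^*\mathbf{P}_q\Sigma_\xi\mathbf{P}_q\mathbf{e}_i\,d\theta .
\]
The plan is to show $V_{ni}=O(n^{-1})$ uniformly in $i$. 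Chebyshev's inequality then gives the claim with $B_\eta^2=C/\eta$.

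\emph{The idiosyncratic term.} Since $\|\mathbf{P}_q\mathbf{e}_i\|^2=\mathbf{e}_i^*\mathbf{P}_q\mathbf{e}_i$, the second integrand is at most $\Lambda_\xi^+\,\|\mathbf{P}_q(\theta)\mathbf{e}_i\|^2$. We control $\|\mathbf{P}_q\mathbf{e}_i\|^2=\sum_{j\le q}|p_{j,i}|^2$ using the eigen-equation $\lambda_j\mathbf{p}_j=\Sigma_X\mathbf{p}_j$. This gives $|p_{j,i}|\le \lambda_j^{-1}\,\|\mathbf{e}_i^*\Sigma_X\|$. We have $\|\mathbf{e}_i^*\Sigma_\chi\|\le \|\mathbf{e}_i^*\Sigma_\chi^{1/2}\|\,\|\Sigma_\chi^{1/2}\|\le \sqrt{\sigma_{\chi,ii}(\theta)\,\lambda^\chi_{n1}(\theta)}$. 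Weyl's inequality together with Assumption~\ref{assump:bdd_error_eval} gives $\lambda_j\ge\lambda^\chi_{nj}$ for $j\le q$. Hence
\[
\|\mathbf{P}_q\mathbf{e}_i\|^2\lesssim \frac{\sigma_{\chi,ii}\,\lambda^\chi_{n1}+(\Lambda_\xi^+)^2}{(\lambda^\chi_{nq})^2}.
\]
This is where Lemma~\ref{lemma:div_component_eval_specific_form} enters. When both eigenvalues grow at the same power of $|\theta|$, the ratio $\lambda^\chi_{n1}/(\lambda^\chi_{nq})^2$ is of order $n^{-1}|\theta|^{2d}$. In the general heterogeneous case we bound it by $\beta_0^\chi/(n(\alpha_q^\chi)^2)$. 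We also have $\sigma_{\chi,ii}\lesssim|\theta|^{-2d}$, because row $i$ of $G$ is bounded; this is natural under Assumption~\ref{assump:row_boundness_G}, and otherwise I would use only square-summability of $b_{il}$ together with the integrability of $\sigma_{\chi,ii}$. Combining, the integrand is $O(n^{-1})$ times an $L^1$ function. Near $\theta=0$ the bound for $\|\mathbf{P}_q\mathbf{e}_i\|^2$ is actually small, and $\|\mathbf{P}_q\mathbf{e}_i\|\le1$ always, so integrability is never an issue.

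\emph{The common-component term.} We have $\mathbf{e}_i^*(\mathbf{I}-\mathbf{P}_q)\Sigma_\chi(\mathbf{I}-\mathbf{P}_q)\mathbf{e}_i\le \lambda_{q+1}\big(\Sigma_\chi^{(\mathbf{I}-\mathbf{P}_q)}\big)$, and Weyl's inequality on the $(\mathbf{I}-\mathbf{P}_q)$-compression gives a cleaner route. Write $\Sigma_X=\Sigma_\chi+\Sigma_\xi$. Then $(\mathbf{I}-\mathbf{P}_q)\Sigma_X(\mathbf{I}-\mathbf{P}_q)\preceq\lambda_{q+1}(\Sigma_X)\le\Lambda_\xi^+$, because $\Sigma_\chi$ has rank $q$. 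Therefore
\[
\|\Sigma_\chi^{1/2}(\mathbf{I}-\mathbf{P}_q)\mathbf{e}_i\|^2\le \Lambda_\xi^+\,\|(\mathbf{I}-\mathbf{P}_q)\mathbf{e}_i\|^2\le\Lambda_\xi^+ .
\]
This is only $O(1)$, not $O(n^{-1})$. To gain the factor $n^{-1}$ we use that $\mathbf{e}_i$ is nearly in the range of $\Sigma_\chi$. Write $\mathbf{e}_i^*\Sigma_\chi=\mathbf{e}_i^*\Sigma_\chi\Sigma_\chi^{+}\Sigma_\chi$ and apply a Davis--Kahan $\sin\Theta$ bound between the range of $\Sigma_\chi$ and the range of $\mathbf{P}_q$. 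This gives $\|(\mathbf{I}-\mathbf{P}_q)\Pi_\chi\|\le \Lambda_\xi^+/\lambda^\chi_{nq}$, where $\Pi_\chi$ is the projection onto the range of $\Sigma_\chi$. Hence
\[
\mathbf{e}_i^*(\mathbf{I}-\mathbf{P}_q)\Sigma_\chi(\mathbf{I}-\mathbf{P}_q)\mathbf{e}_i\le \lambda^\chi_{n1}\,(\Lambda_\xi^+/\lambda^\chi_{nq})^2\,\|\Pi_\chi\mathbf{e}_i\|^2 .
\]
Here $\|\Pi_\chi\mathbf{e}_i\|^2\le\sigma_{\chi,ii}/\lambda^\chi_{nq}$. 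Multiplying out, this is $O(n^{-2})$ times a power of $|\theta|$, and in the worst arrangement it is $O(n^{-1})$ times an integrable function.

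\emph{Main obstacle.} The hard step is integrability in $\theta$ under heterogeneous memory parameters. The eigenvalue ratios $\beta_0^\chi/\alpha_q^\chi$ may blow up like $|\theta|^{-2(d-\tilde d)}$, so I would track the exact powers of $|\theta|$ in each bound above and check that the exponents stay strictly above $-1$ because $d<1/2$. Where the exponents are borderline, I would use the trivial bounds $\|\mathbf{P}_q\|\le1$ and $\sigma_{\chi,ii}\in L^1$ near the origin.
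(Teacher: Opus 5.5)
Your overall plan is the paper's: bound the spectral density of $\underline{\mathbf{K}}_{ni}(L)X_t-\chi_{it}$ pointwise by $n^{-1}$ times an integrable function, integrate, and apply Chebyshev. Your orthogonal split into $(\mathbf{I}-\mathbf{P}_q)$ acting on $\chi_t$ and $\mathbf{P}_q$ acting on $\xi_t$ is a legitimate alternative to the paper's decomposition $\mathbf{K}_i=\mathbf{H}_i+\mathbf{G}_i$ from Lemma~4.1 of \cite{FORNI2004231}. However, two steps do not hold as written. Your idiosyncratic bound is too lossy to give $n^{-1}$ in the heterogeneous-memory regime you flag as the main obstacle, and your common-term inequality is false.

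\emph{Idiosyncratic term.} You bound $|p_{j,i}|\le\lambda_j^{-1}\|\mathbf{e}_i^*\Sigma_X\|$ and $\|\mathbf{e}_i^*\Sigma_\chi\|\le(\sigma_{\chi,ii}\lambda^\chi_{n1})^{1/2}$. With $\lambda^\chi_{n1}\lesssim n|\theta|^{-2d}$, $\lambda^\chi_{nq}\gtrsim n|\theta|^{-2\tilde d}$ and $\sigma_{\chi,ii}\lesssim|\theta|^{-2d}$, this gives an integrand bound of order $n^{-1}|\theta|^{-4\Delta}$, with $\Delta=d-\tilde d$. The exponent is $-4\Delta$, which $d<1/2$ does not control. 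For $\Delta\ge 1/4$ (e.g.\ $d=0.45$, $\tilde d=0.1$) the bound is not integrable. Capping with $\|\mathbf{P}_q\mathbf{e}_i\|\le 1$ does not rescue the rate, because $\int_\Pi\min\{1,n^{-1}|\theta|^{-4\Delta}\}\,d\theta\asymp n^{-1/(4\Delta)}$, which only yields $n^{1/(8\Delta)}$-consistency.

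The repair is the inequality the paper uses in the proof of Lemma~\ref{lemma:d_differ_K_fourier_coef_decay_rate_old}: $\sigma_{ii}=\sum_k\lambda_k|p_{k,i}|^2\ge\lambda_q\sum_{j\le q}|p_{j,i}|^2$. Hence $\|\mathbf{P}_q\mathbf{e}_i\|^2\le\sigma_{ii}(\theta)/\lambda_q(\theta)\le C\sigma_{ii}(\theta)/n$, since $\lambda_q\ge n\alpha^\chi_q\ge cn$. The idiosyncratic contribution is then at most $C\Lambda_\xi^+\operatorname{var}(x_{it})/n$, and you no longer need row-boundedness of $G$.

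\emph{Common-component term.} Your last display does not follow from $\|(\mathbf{I}-\mathbf{P}_q)\Pi_\chi\|\lesssim\Lambda_\xi^+/\lambda^\chi_{nq}$. The vector $\Pi_\chi(\mathbf{I}-\mathbf{P}_q)\mathbf{e}_i$ also contains $-\Pi_\chi\mathbf{P}_q(\mathbf{I}-\Pi_\chi)\mathbf{e}_i$, whose norm is of order $\|\sin\Theta\|$ regardless of $\|\Pi_\chi\mathbf{e}_i\|$. So the claimed $O(n^{-2})$ is false. Take $q=1$, $d=0$, $\Sigma_\chi=gg^*$ with $g=\mathbf{1}$, $\hat g=g/\sqrt n$, and $\Sigma_\xi=\mathbf{I}+\epsilon(\mathbf{e}_i\hat g^*+\hat g\mathbf{e}_i^*)$. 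A $2\times 2$ computation gives $|\mathbf{e}_i^*(\mathbf{I}-\mathbf{P}_q)g|^2\approx\epsilon^2/n$, while your right-hand side is about $(1+2\epsilon)^2/n^2$.

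The correct bound is $\mathbf{e}_i^*(\mathbf{I}-\mathbf{P}_q)\Sigma_\chi(\mathbf{I}-\mathbf{P}_q)\mathbf{e}_i\le\lambda^\chi_{n1}\|\Pi_\chi(\mathbf{I}-\mathbf{P}_q)\|^2\lesssim\lambda^\chi_{n1}(\Lambda_\xi^+/\lambda^\chi_{nq})^2\lesssim n^{-1}|\theta|^{4\tilde d-2d}$. Since $4\tilde d-2d>-1$, this term is $O(n^{-1})$ after integration, which is all you need.

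With these two repairs your argument is complete and genuinely different from the paper's. The paper writes down the $\mathbf{H}_i+\mathbf{G}_i$ split and then asserts, by ``repeating the same steps'' of \cite{FORNI2004231} with $\sigma_{\chi,ii}\lesssim|\theta|^{-2d}$, that the error spectral density is $O(n^{-1}|\theta|^{-2d})$. Your version makes the mechanism explicit: an eigenvector-entry bound for the noise part, and a $\sin\Theta$ bound between the ranges of $\Sigma_\chi$ and $\mathbf{P}_q$ for the signal part. It also gives sharper $\theta$-exponents, namely $n^{-1}|\theta|^{2\tilde d}\sigma_{ii}$ and $n^{-1}|\theta|^{4\tilde d-2d}$. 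Both versions rely on Lemma~\ref{lemma:div_component_eval_specific_form}, hence on Assumption~\ref{assump: spec_den_specific_form}, which is not among the proposition's stated hypotheses.
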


\subsection{Feasible Estimation of the Common Component}\label{subsec:feasible_estimator}

\begin{prop}\label{thm:main_theorem}
Assumptions \ref{assump:model}, \ref{assump:semiparametric_long_memory}, \ref{assump:pervasive_short_mem}, \ref{assump: spec_den_specific_form}, \ref{assump:bdd_error_eval}, \ref{assump:W_kernel}, and \ref{assump:row_boundness_G} hold. Then for all $\eta>0$, there exist $B_\eta$ and $N_\eta$ such that
    \[
P\left[r^{(n)}\left|\widehat{\chi}_{it} -\chi_{i t}\right|>B_{\eta}\right] \leq \eta, \quad \widehat{\chi}_{it} = \widehat{\underline{\mathbf{K}}}_{n i}(L)X_t
    \] for all $t = t^*(T)$ satisfying 
    \begin{align}\label{eq:center_part_T}
    0<a \leq \liminf _{T \rightarrow \infty} \frac{t^*(T)}{T} \leq \limsup _{T \rightarrow \infty} \frac{t^*(T)}{T} \leq b<1 
    \end{align}
    and for all $n\geq N_{\eta}, T \geq T(n)$, where $T(n):\mathbb{N}\to\mathbb{N}$ is an increasing function such that $T(n)\to\infty$ as $n\to\infty$.
    Then $\widehat{\chi}_{it}$ achieves the consistency rate $r^{(n)}$, where
    \begin{equation}\label{eq:consistency_rate}
\begin{aligned}
    r^{(n)} &= \min \left(r_0^{(n)},r_1^{(n)},r_2^{(n)}\right) \\
    r_0^{(n)} &= n^{1/2};\\
    r_1^{(n)} &= M(T)^{-1}\left(\delta^{(T,n)}\right)^{-1}n^{-\frac{1}{2}};\\
    \delta^{(T,n)} &= B_T^{1-2\Delta} + \frac{\log T}{TB_T} 
\end{aligned}
\end{equation}
and that\begin{enumerate}
        \item when all rows and shocks share the same $d$
        \begin{align}\label{eq:common_d_edge_rate}
            r_2^{(n)} \asymp \frac{1}{\log M(T)}\begin{cases}\dfrac{M(T)}{\sqrt{n}}, & d=0,\\[6pt] \min\!\left(\dfrac{M(T)}{\sqrt{n}},\; \sqrt{n}\, M(T)^{2d}\right), & d>0.\end{cases}, 
        \end{align}
\begin{enumerate}[(i)]
    \item When the idiosyncratic error $\xi_t$ is orthonormal white noise,  \begin{align*}
            r_2^{(n)} \asymp \frac{M(T)}{\log M(T) \sqrt{n}}, 
        \end{align*}
\end{enumerate}
\item $d_{il}$ differs, and it satisfies case(a) in Assumption \ref{assump: spec_den_specific_form}
\begin{align}\label{eq:hetero_d_across_factor_edge_rate}
    r_2^{(n)} \asymp n^{-1/2}\,M(T)^{\frac{1}{2} +\rho^{(q)}_q -d}.
\end{align}
where
     \begin{align}\label{eq:defn_rho_q_q}
     \rho^{(q)}_q = \min \{\rho^{(q)}_{q-1}, \alpha_{q,q+1}, \min_{0 \leq l\leq m-1}\{2\rho^{(q)}_l - \alpha_{l,q}\} \}
    \end{align}
     $\alpha_{j_1j_2} = 2|d_{(j_1)}-d_{(j_2)}|$, $d_{(q+1)} \coloneqq 0$, $\rho^{(j)}_0 \coloneqq 1$, $\alpha_{0,j} \coloneqq 1, j = 1,\dots,q$.
        \item $d_{il}$ differs, and it satisfies case(b) in Assumption \ref{assump: spec_den_specific_form}. 
        When $4\Delta + 2d - 1 <0$, $\Delta = d_{max} - d_{min}$,
        \begin{align}\label{eq:hetero_d_edge_rate}
    r_2^{(n)} \asymp n^{-\frac{1}{2}}M(T)^{\frac{1}{2} - 2\Delta - d} .
\end{align}
    \end{enumerate}
The proposition provides a consistency rate only if $\left(r^{(n)}\right)^{-1} = o(1)$.
\end{prop}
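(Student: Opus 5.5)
We split the error into three pieces and bound each one separately:
\[
\widehat{\chi}_{it}-\chi_{it}
=\bigl(\underline{\mathbf{K}}_{ni}(L)X_t-\chi_{it}\bigr)
+\bigl(\underline{\mathbf{K}}^{M}_{ni}(L)X_t-\underline{\mathbf{K}}_{ni}(L)X_t\bigr)
+\bigl(\widehat{\underline{\mathbf{K}}}_{ni}(L)X_t-\underline{\mathbf{K}}^{M}_{ni}(L)X_t\bigr).
\]
Here $\underline{\mathbf{K}}^{M}_{ni}(L)=\frac{1}{2\pi}\sum_{|h|\le M(T)}\mathbf{K}_{ni,h}L^h$ is the truncated oracle filter. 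Because $t=t^*(T)$ satisfies (\ref{eq:center_part_T}), for $T$ large the summation limits of the feasible filter reduce to $|h|\le M(T)$.

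\textbf{Oracle term.} The first term is $O_P(n^{-1/2})$ by Proposition~\ref{proposition_oracle}, which gives $r_0^{(n)}$.

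\textbf{Estimation term.} For the third term we write $\sum_{|h|\le M}(\widehat{\mathbf{K}}_{ni,h}-\mathbf{K}_{ni,h})X_{t-h}$. Each coefficient difference is bounded by $\int_\Pi\|\widehat{\underline{\mathbf{K}}}_{ni}(\theta)-\underline{\mathbf{K}}_{ni}(\theta)\|\,d\theta$. We will control this integral by Davis--Kahan perturbation of the rank-$q$ eigenprojection at each fixed $\theta$:
\[
\|\widehat{\mathbf{P}}_q(\theta)-\mathbf{P}_q(\theta)\|\lesssim \|\widehat\Sigma_n(\theta)-\Sigma_X(\theta)\|\big/\bigl(n\alpha_q^\chi(\theta)\bigr),
\]
with the eigengap taken from Lemma~\ref{lemma:div_component_eval_specific_form} together with Assumption~\ref{assump:bdd_error_eval}. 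The factor $|\theta|^{2\tilde d}$ in the gap partly cancels the $|\theta|^{-2d}$ blow-up of the spectral error; this is the prewhitening scaling.

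The spectral error is then bounded in $L^1$, and in $L^2$ where needed, rather than pointwise. The bias of the smoothed periodogram near zero is at most of order $B_T^{1-2\Delta}$ after the scaling, using the fact that $|\theta|^{-2d}$ is integrable. The variance and leakage part is of order $\log T/(TB_T)$, obtained from a fourth-cumulant bound via (\ref{eq:eps_Q_finite}) and from Lipschitz continuity of $W$ across Fourier frequencies. We also use that the $i$th row of the projection has norm $O(n^{-1/2})$ by Assumption~\ref{assump:row_boundness_G}, and that $\|X_{t-h}\|_2=O_P(\sqrt n)$. Summing over at most $2M+1$ lags then gives a bound of order $M\,\delta^{(T,n)}\sqrt n$ on the third term. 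This is the rate $r_1^{(n)}$.

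\textbf{Truncation term.} The second term is the tail $\sum_{|h|>M}\mathbf{K}_{ni,h}X_{t-h}$, and its variance equals $\int_\Pi R_M(\theta)^*\Sigma_X(\theta)R_M(\theta)\,d\theta$, where $R_M$ is the Fourier remainder of $\underline{\mathbf{K}}_{ni}$. We split $\Sigma_X=\Sigma_\chi+\Sigma_\xi$. The idiosyncratic part is bounded by $\Lambda_\xi^+\sum_{|h|>M}\|\mathbf{K}_{ni,h}\|^2$. The common part carries the weight $|\theta|^{-2d}$ and needs the decay of the coefficients together with their behaviour near $0$. Everything therefore reduces to the decay rate of $\|\mathbf{K}_{ni,h}\|_2$ in $h$, which requires regularity of $\theta\mapsto\underline{\mathbf{K}}_{ni}(\theta)$.
\begin{enumerate}[(1)]
\item Common $d$: the projection coincides with that of $|\theta|^{2d}\Sigma_\chi+|\theta|^{2d}\Sigma_\xi$. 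Here the first summand is smooth by Assumption~\ref{assump:pervasive_short_mem}, and the second is a perturbation of size $|\theta|^{2d}/n$ that vanishes at zero. The resulting $C^{1}$-type regularity, with a mild singularity, gives $\|\mathbf{K}_{ni,h}\|\lesssim n^{-1/2}\log h/h$. The $\log M$ factors in (\ref{eq:common_d_edge_rate}) and the $\min$ with $\sqrt n M^{2d}$ both come out of this, the latter from how $|\theta|^{2d}$ interacts with the remainder near zero. If $\xi_t$ is white noise, $\Sigma_\xi$ is a multiple of the identity and does not change the eigenvectors, so the $d$-dependence disappears.
\item Case (a): we perform a block decomposition of the $q$ leading eigenvectors according to the ordered $d_{(l)}$. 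We prove Hölder continuity with a cusp at zero, with exponent $\rho_q^{(q)}$, by an induction on $m$ that follows the recursion defining $\rho_m^{(q)}$. The induction peels off one eigen-block at a time; each relative gap $|\theta|^{\alpha_{l,m}}$ loses regularity, and the dominance condition of Assumption~\ref{assump:gap_dominance} keeps each step well defined. Hölder-$\rho$ regularity gives coefficient decay $h^{-1/2-\rho}$ in $\ell^2$ tails, which produces (\ref{eq:hetero_d_across_factor_edge_rate}).
\item Case (b): we bound the derivative of the projection directly by $|\theta|^{-1-2\Delta}$ times the gap ratio. Integrating by parts gives a tail of order $M^{-(1/2-2\Delta-d)}$, and this is finite and small only when $4\Delta+2d-1<0$.
\end{enumerate}

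\textbf{Conclusion and main obstacle.} Combining the three terms by Chebyshev's inequality and choosing $T\ge T(n)$ gives the claimed rate, with $r^{(n)}$ the minimum of $r_0^{(n)},r_1^{(n)},r_2^{(n)}$. I expect the main obstacle to be the regularity of the eigenprojection in the edge term, especially the inductive Hölder argument in case (a) and the derivative control in case (b). The projection is defined only implicitly through a spectral density that is discontinuous at zero, so standard smoothness arguments do not apply.
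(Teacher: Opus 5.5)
Your architecture matches the paper's. You use the same three-term split, Proposition~\ref{proposition_oracle} for the oracle term, Davis--Kahan with gap $n\alpha_q^\chi(\theta)$ plus an $L^1$ bound and Chebyshev over $2M(T)+1$ lags for the estimation term, and $\operatorname{var}\le\int_\Pi\lambda_1(\theta)\|\underline{\mathbf K}^{tail}_{ni}(\theta)\|_2^2\,d\theta$ for the truncation term. The problems are in turning regularity of $\underline{\mathbf K}_{ni}$ into a tail bound.

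\emph{Common $d$.} The bound $\|\mathbf K_{ni,h}\|\lesssim n^{-1/2}\log h/h$ is unsupported. Davis--Kahan controls increments of the whole projection, $\|\underline{\mathbf K}_{ni}(\theta_1)-\underline{\mathbf K}_{ni}(\theta_2)\|\lesssim|\theta_1-\theta_2|+n^{-1}|\theta_1-\theta_2|^{2d}$. Nothing in the assumptions gives an extra $n^{-1/2}$ for the $i$th row; that would need something like row bounds on $G'$. Even granting it, a non-summable $\log h/h$ decay gives no useful control of $\sup_\theta\|\underline{\mathbf K}^{tail}_{ni}(\theta)\|$, which is exactly what the stated $r_2^{(n)}$ encodes. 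Working with $\ell^2$ tails instead gives the wrong power of $M$: for $d=0$ you get a variance $n\log^2M/M$ instead of $n\log^2M/M^2$. The paper's step is different. It feeds the modulus $\omega(\delta)\lesssim\delta+n^{-1}\delta^{2d}$ of Lemma~\ref{lemma:holder_cont_K} into the Lebesgue-constant estimate $\sup_\theta\|f-S_Mf\|\lesssim\omega(1/M)\log M$ \cite{Zygmund_2003}, then uses $\int_\Pi\lambda_1\le Cn$. The $1/\log M$ is the Lebesgue constant, and the branch $\sqrt n\,M(T)^{2d}$ is the $n^{-1}M^{-2d}$ part of $\omega(1/M)$.

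\emph{Case (a).} The claim that H\"older-$\rho$ regularity gives $(\sum_{|h|>M}\|\mathbf K_{ni,h}\|^2)^{1/2}\lesssim M^{-1/2-\rho}$ is false for general H\"older functions; one only gets $M^{-\rho}$, which loses a power of $M$ in $r_2^{(n)}$. The missing ingredient is the quantified cusp $\|\underline{\mathbf K}^{(r)}_{ni}(\theta)\|_2\lesssim|\theta|^{\rho^{(q)}_q-r}$, $r=1,2$. The paper carries this through the same induction via first- and second-order eigenprojection perturbation bounds (Lemma~\ref{lemma:K_holder_factor_wise}). It then converts it, by two integrations by parts split at $|\theta|=|h|^{-1}$, into $\|\mathbf K_{ni,h}\|\lesssim|h|^{-1-\rho^{(q)}_q}$ (Lemma~\ref{lemma:d_differ_K_piecewise_cont_fourier_coef_decay_rate}). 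That one bound supplies both the sup tail $M^{-\rho^{(q)}_q}$ used on $|\theta|\le 1/M$ and $\sum_{|h|>M}\|\mathbf K_{ni,h}\|^2\lesssim M^{-1-2\rho^{(q)}_q}$ used on $|\theta|>1/M$. Like the paper, you also rely on Assumption~\ref{assump:gap_dominance}, which the proposition does not list.

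\emph{Case (b).} Your route (derivative $\lesssim|\theta|^{-1-2\Delta}$, then integration by parts) is a fine substitute for the paper's half-period shift. You must cut at $|\theta|\sim|h|^{-1}$ and use $\|\underline{\mathbf K}_{ni}\|\le 1$ inside, since that derivative bound is not integrable at $0$. With the cut it reproduces $\|\mathbf K_{ni,h}\|\lesssim|h|^{2\Delta-1}$.

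\emph{Estimation term.} There is a further gap here, which the paper shares. Lemma~\ref{lm:var_conv} gives $\max_{kl}\operatorname{var}(\hat\sigma_{kl}(\theta))\lesssim|\theta|^{-4d}\log^2T/(TB_T)$ away from the origin. The $L^1$ bound goes through $\mathbb{E}|\cdot|\le\sqrt{\operatorname{var}}$, so after weighting by $|\theta|^{2\tilde d}$ and integrating you get $n\log T/\sqrt{TB_T}$, not $n\log T/(TB_T)$. Only the discretization part of the bias is $O(1/(TB_T))$. This is not an artifact of the bound. By Weyl's inequality, $\|\widehat\Sigma_n(\theta)-\mathbb{E}\widehat\Sigma_n(\theta)\|_{op}$ dominates the fluctuation of the top eigenvalue. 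That eigenvalue has size $\asymp n|\theta|^{-2d}$ and is driven by a kernel average of $\asymp TB_T$ periodogram ordinates of the common shocks, so (e.g.\ for Gaussian data) it fluctuates with relative error $\asymp(TB_T)^{-1/2}$. The proof of Lemma~\ref{lemma:main_theorem_of_spectral_estimate} writes $1/(TB_T)$ where its own display yields $1/\sqrt{TB_T}$. So neither your argument nor the paper's supports $\delta^{(T,n)}=B_T^{1-2\Delta}+\log T/(TB_T)$. What the method gives is $r_1^{(n)}$ with $\delta^{(T,n)}\asymp B_T^{1-2\Delta}+\log T/\sqrt{TB_T}$ plus the near-origin terms, which also changes the optimal bandwidth exponent. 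Your remark that the $i$th row of the projection is $O(n^{-1/2})$ does not help here, since it bounds $\underline{\mathbf K}_{ni}$, not $\widehat{\underline{\mathbf K}}_{ni}-\underline{\mathbf K}_{ni}$.
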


\textbf{Remark:} To the best of our knowledge, within the existing literature on GDFM, this is the first work to provide a rigorous consistency rate for the method of \cite{dynamicfactorForni2sided} that explicitly incorporates the truncation error, even in the benchmark short-memory case $d=0$. While \cite{FORNI2004231} developed theoretical guarantees for \cite{dynamicfactorForni2sided} in a short-memory setting, their analysis does not explicitly account for the truncation (edge) term, which can lead to seemingly sharper but incomplete rates. Specifically, in the case $d=0$, \eqref{eq:common_d_edge_rate} implies that $r^{(n)}$ is at least $\frac{M(T)}{\log M(T)\sqrt{n}}$, which slows down the rate derived in \cite{FORNI2004231}. Second, since \cite{FORNI2004231} does not explicitly account for truncation, no $M(T)$ term appears in the main term $r_1^{(n)}$ or in their theorem.


Throughout, asymptotic statements are understood along paths $\{(n,T(n)): n \in \mathbb{N}\}$. To obtain a consistency rate, we examine what these paths look like and under what growth conditions such a rate can be achieved. \paragraph{How to pick $(b,m)$ (plain power-law logic) and what $\kappa(d)$ becomes.}

To understand Proposition~\ref{thm:main_theorem} under polynomial tuning, set
\[
M(T)=T^{m},\qquad B_T=T^{-b},\qquad T=n^{\kappa},
\]
with $m,b\in(0,1)$. Since $r^{(n)}=\mathcal{O}(\min(r_0^{(n)},r_1^{(n)},r_2^{(n)}))$ and $r_0^{(n)}=n^{1/2}$ never collapses, the overall rate stays non-degenerate (up to logarithms) once we enforce
\[
\left(r_1^{(n)}\right)^{-1}=o(1)\qquad\text{and}\qquad \left(r_2^{(n)}\right)^{-1}=o(1).
\]

\smallskip
\noindent Step 1: rewrite $r_1$ in terms of powers of $T$.
Recall
\[
r_1^{(n)}=M(T)^{-1}(\delta^{(T,n)})^{-1}n^{-1/2},
\qquad
\delta^{(T,n)}=B_T^{1-2\Delta}+\frac{\log T}{TB_T}.
\]
Ignoring the polylog factors (they only affect the final choice by an arbitrarily small $\varepsilon$), the three terms of $\delta^{(T,n)}$ have polynomial orders
\[
B_T^{1-2\Delta}=T^{-b(1-2\Delta)},\qquad
(B_TT)^{-1}=T^{\,b-1}
\]
Hence, up to logs,
\[
\delta^{(T,n)} \asymp T^{\max\{-b(1-2\Delta),\,b - 1\}},
\qquad
(\delta^{(T,n)})^{-1}\asymp T^{\gamma(b,d)},
\]
where 
\[
\gamma(b,\Delta)\coloneqq
\min\Big\{\,b(1-2\Delta),\;1-b\Big\}.
\]
Therefore,
\[
r_1^{(n)} \asymp n^{-1/2}\,T^{-m}\,T^{\gamma(b,\Delta)}
= n^{-1/2+\kappa(\gamma(b,\Delta)-m)}.
\]
Thus $\left(r_1^{(n)}\right)^{-1}=o(1)$ is equivalent to
\begin{equation}\label{eq:kappa_r1_plain}
-\frac12+\kappa(\gamma(b,\Delta)-m)> 0
\quad\Longleftrightarrow\quad
\kappa > \frac{1}{2(\gamma(b,\Delta)-m)}.
\end{equation}

\smallskip
\noindent Step 2: choose $b$.

The best $b$ is the one that maximizes $\gamma(b,\Delta)$:
\[
b^*(\Delta)\in\arg\max_{b\in(0,1)}\ \gamma(b,\Delta)
=\arg\max_{b\in(0,1)}\ \min\Big\{b(1-2\Delta),\ 1-b\Big\}.
\]
Thus,
\[
\boxed{\ b^*(\Delta)=\frac{1}{2(1-\Delta)}, d \neq 0\ }.
\]

In practice (because of the $\log T$ factors) one takes $b=b^*(\Delta)\pm\varepsilon$ for an arbitrarily small $\varepsilon>0$.

\smallskip
\noindent Step 3: combine with $r_2$ and choose $m$ by balancing.
\paragraph{Optimal polynomial tuning in the common $d$ cases} 
In the common-$d$ case (including the orthonormal white-noise specialization), the bottleneck part of $r_2$ is
\[
r_2^{(n)}\asymp \frac{M(T)}{\sqrt n}
= n^{-1/2}T^{m}
= n^{-1/2+\kappa m},
\]
when $M(T)/\sqrt{n} \lesssim \sqrt{n}M(T)^{2d}$, i.e. $\kappa < \frac{1}{m(1-2d)}$. When $M(T)/\sqrt{n} \lesssim \sqrt{n}M(T)^{2d}$, 
\[
r_2^{(n)}\asymp \sqrt{n}M(T)^{2d}
\] always blows up so it never collapses.

Thus, $\left(r_2^{(n)}\right)^{-1}=o(1)$ is equivalent to
\begin{equation}\label{eq:kappa_r2_plain}
 -\frac12+\kappa m > 0
\quad\Longleftrightarrow\quad
\kappa > \frac{1}{2m}.
\end{equation} 
Since $\Delta = 0$ for the common $d$ case, now fix $b=b^*(0)$ so that $\gamma(b,0)=\gamma^*(0)\coloneqq \gamma(b^*(0),0) = \frac{1}{2}$. The two lower bounds \eqref{eq:kappa_r1_plain}--\eqref{eq:kappa_r2_plain} become
\[
\kappa > \frac{1}{2(\gamma^*(0)-m)} = \frac{1}{1-2m}
\qquad\text{and}\qquad
\kappa > \frac{1}{2m}.
\]
Hence the optimal $m$ is obtained by balancing the two constraints:
\[
\frac{1}{1-2m}=\frac{1}{2m}
\quad\Longleftrightarrow\quad
m^*(0,d)=\frac{1}{4}.
\]
Plugging back gives
\[
\kappa^*(d)=\frac{1}{2m^*(0,d)}=\frac{1}{\gamma^*(0)}=2,
\]
i.e.
\[
\boxed{
b^*(0) = \frac{1}{2},
\qquad
m^*(0, d)=\frac{1}{4},
\qquad
\kappa^*(d)=
2.
}
\]

\paragraph{Optimal polynomial tuning in the heterogeneous-$d_{il}$ cases: across factors} 
For the truncation remainder, we now use
\[
r_2^{(n)}
\asymp
n^{-1/2}M(T)^{\frac12+\rho_q^{(q)}-d}.
\]

Thus, \((r_2^{(n)})^{-1}=o(1)\) requires
\[
\kappa>
\frac{1}{2m\left(\frac12+\rho_q^{(q)}-d\right)}.
\]

Now fixing \(b=b^*(\Delta)\), the optimal \(m\) is obtained by balancing the two constraints:
\[
\kappa>
\frac{1}{2(\gamma^*(\Delta)-m)}
\qquad\text{and}\qquad
\kappa>
\frac{1}{2m\left(\frac12+\rho_q^{(q)}-d\right)}.
\]
that is,
\[
\frac{1}{2(\gamma^*(\Delta)-m)}
=
\frac{1}{2m\left(\frac12+\rho_q^{(q)}-d\right)}.
\]
Equivalently,
\[
m^*(d,\Delta,\rho_q^{(q)})
=
\frac{\gamma^*(\Delta)}{\frac32+\rho_q^{(q)}-d}.
\]
Plugging this back, as well as $\gamma^*(\Delta)=\frac{1-2\Delta}{2(1-\Delta)}$ gives
\[
\boxed{
\kappa^*(d,\Delta,\rho_q^{(q)})
=
\frac{(1-\Delta)\left(\frac32+\rho_q^{(q)}-d\right)}
{(1-2\Delta)\left(\frac12+\rho_q^{(q)}-d\right)}}
\]

For illustration, we fix $\rho_q^{(q)}=0.25.$
Since the recursive definition of $\rho_q^{(q)}$ implies $\rho_q^{(q)}\leq 2\Delta,$
we impose the necessary restriction $\Delta \geq \frac{\rho_q^{(q)}}{2}=0.125.$ We also impose $d\geq \Delta.$

The corresponding values of $\kappa^*(d,\Delta,0.25)$ are reported in the following table:
\[
\begin{array}{c|ccccccc}
\Delta\backslash d
&0.15&0.20&0.25&0.30&0.35&0.40&0.45\\
\hline
0.15&3.238&3.422&3.643&3.913&4.250&4.684&5.262\\
0.20&-&3.758&4.000&4.296&4.667&5.143&5.778\\
0.25&-&-&4.500&4.833&5.250&5.786&6.500\\
0.30&-&-&-&5.639&6.125&6.750&7.583\\
0.35&-&-&-&-&7.583&8.357&9.389\\
0.40&-&-&-&-&-&11.571&13.000\\
0.45&-&-&-&-&-&-&23.833
\end{array}
\]

For the case where $\rho_q^{(q)}=0.1$, $\Delta \geq 0.05, d \geq \Delta$,
\[
\begin{array}{c|ccccccccc}
\Delta\backslash d
&0.05&0.10&0.15&0.20&0.25&0.30&0.35&0.40&0.45\\
\hline
0.05&2.975&3.167&3.401&3.694&4.071&4.574&5.278&6.333&8.093\\
0.10&-&3.375&3.625&3.938&4.339&4.875&5.625&6.750&8.625\\
0.15&-&-&3.913&4.250&4.684&5.262&6.071&7.286&9.310\\
0.20&-&-&-&4.667&5.143&5.778&6.667&8.000&10.222\\
0.25&-&-&-&-&5.786&6.500&7.500&9.000&11.500\\
0.30&-&-&-&-&-&7.583&8.750&10.500&13.417\\
0.35&-&-&-&-&-&-&10.833&13.000&16.611\\
0.40&-&-&-&-&-&-&-&18.000&23.000\\
0.45&-&-&-&-&-&-&-&-&42.167
\end{array}
\]

When $\rho_q^{(q)}=0.01$, $\Delta \geq 0.005, d \geq \Delta$,

\[
\begin{array}{c|ccccccccccc}
\Delta\backslash d
&0.005&0.01&0.05&0.10&0.15&0.20&0.25&0.30&0.35&0.40&0.45\\
\hline
0.005&2.995&3.015&3.190&3.456&3.797&4.247&4.871&5.791&7.287&10.142&17.756\\
0.01&-&3.031&3.206&3.474&3.816&4.269&4.896&5.821&7.324&10.194&17.847\\
0.05&-&-&3.350&3.630&3.988&4.461&5.115&6.082&7.653&10.652&18.648\\
0.10&-&-&-&3.869&4.250&4.754&5.452&6.482&8.156&11.352&19.875\\
0.15&-&-&-&-&4.587&5.131&5.885&6.997&8.804&12.253&21.452\\
0.20&-&-&-&-&-&5.634&6.462&7.683&9.667&13.455&23.556\\
0.25&-&-&-&-&-&-&7.269&8.643&10.875&15.136&26.500\\
0.30&-&-&-&-&-&-&-&10.083&12.687&17.659&30.917\\
0.35&-&-&-&-&-&-&-&-&15.708&21.864&38.278\\
0.40&-&-&-&-&-&-&-&-&-&30.273&53.000\\
0.45&-&-&-&-&-&-&-&-&-&-&97.167
\end{array}
\]

\paragraph{Optimal polynomial tuning in the heterogeneous-$d_{il}$ cases: across rows} 
We continue to use the polynomial parametrization
\[
M(T)=T^{m},\qquad B_T=T^{-b},\qquad T=n^{\kappa},
\]
and the same logic as in the common-$d$ case.

In this case, \[
r_2^{(n)}\asymp n^{-\frac{1}{2}}M(T)^{\frac{1}{2} - 2\Delta - d} = n^{(\frac{1}{2} - 2\Delta - d)\kappa m - \frac{1}{2}}
\]
Thus, when $\frac{1}{2} - 2\Delta - d > 0$ $\left(r_2^{(n)}\right)^{-1}=o(1)$ is equivalent to
\begin{equation}\label{eq:kappa_r2_plain_hetero}
\kappa > \frac{1}{(1-4\Delta - 2d)m}.
\end{equation}
Then again, plugging back in $\gamma(b,\Delta)=\gamma^*(\Delta)\coloneqq \gamma(b^*(\Delta),\Delta) = \frac{1-2\Delta}{2(1-\Delta)}$, the optimal $m$ is obtained by balancing the two constraints \eqref{eq:kappa_r1_plain}, and \eqref{eq:kappa_r2_plain_hetero} 
\[
\frac{1}{2(\gamma^*(\Delta)-m)}= \frac{1}{(1-4\Delta - 2d)m}
\Longleftrightarrow
m^*(\Delta,d)=\frac{2\gamma^*(\Delta)}{3-4\Delta - 2d} = \frac{1-2\Delta}{(1-\Delta)(3-4\Delta - 2d)}.
\]
Plugging back gives
\[
\boxed{
\kappa^*(d)=\frac{(1-\Delta)(3-4\Delta-2d)}{(1-2\Delta)(1-4\Delta -2d)}.
}
\]



\begin{table}[!htbp]
\centering
\caption{$b_h^*(\Delta)=\dfrac{1}{2(1-\Delta)}$.}
\label{tab:bh_grid}
\resizebox{\textwidth}{!}{%
\begin{tabular}{c|cccccccccc}
\toprule
$\Delta$ & 0.00 & 0.05 & 0.10 & 0.15 & 0.20 & 0.25 & 0.30 & 0.35 & 0.40 & 0.45 \\
\midrule
$b_h^*(\Delta)$
& 0.5000 & 0.5263 & 0.5556 & 0.5882 & 0.6250 & 0.6667 & 0.7143 & 0.7692 & 0.8333 & 0.9091 \\
\bottomrule
\end{tabular}}
\end{table}

\begin{table}[!htbp]
\centering
\caption{$m_h^*(d,\Delta)$ restricted to valid pairs:
$\Delta<d$ and $1-4\Delta-2d>0$.}
\label{tab:mh_grid}
\resizebox{\textwidth}{!}{%
\begin{tabular}{c|cccccccccc}
\toprule
$\Delta\backslash d$
& 0.00 & 0.05 & 0.10 & 0.15 & 0.20 & 0.25 & 0.30 & 0.35 & 0.40 & 0.45 \\
\midrule
0.00 & -- & 0.3448 & 0.3571 & 0.3704 & 0.3846 & 0.4000 & 0.4167 & 0.4348 & 0.4545 & 0.4762 \\
0.05 & -- & -- & 0.3644 & 0.3789 & 0.3947 & 0.4119 & 0.4306 & 0.4511 & -- & -- \\
0.10 & -- & -- & -- & 0.3865 & 0.4040 & 0.4233 & -- & -- & -- & -- \\
\bottomrule
\end{tabular}}
\end{table}

\begin{table}[!htbp]
\centering
\caption{$\kappa_h^*(d,\Delta)$ restricted to valid pairs:
$\Delta<d$ and $1-4\Delta-2d>0$.}
\label{tab:kappah_grid}
\resizebox{\textwidth}{!}{%
\begin{tabular}{c|cccccccccc}
\toprule
$\Delta\backslash d$
& 0.00 & 0.05 & 0.10 & 0.15 & 0.20 & 0.25 & 0.30 & 0.35 & 0.40 & 0.45 \\
\midrule
0.00 & -- & 3.2222 & 3.5000 & 3.8571 & 4.3333 & 5.0000 & 6.0000 & 7.6667 & 11.0000 & 21.0000 \\
0.05 & -- & -- & 4.5741 & 5.2778 & 6.3333 & 8.0926 & 11.6111 & 22.1667 & -- & -- \\
0.10 & -- & -- & -- & 8.6250 & 12.3750 & 23.6250 & -- & -- & -- & -- \\
\bottomrule
\end{tabular}}
\end{table}

\section{Simulations}\label{sec:simulation}

In our analysis for proving Proposition \ref{thm:main_theorem}, we decompose the probability into the main part \eqref{eq:main_prob} and the remainder part \eqref{eq:remainder_prob}. These two parts were separated because of the data infeasibility at infinite lags and leads, as well as the introduction of \(M(T)\), whose necessity we have not yet fully understood intuitively. That is, at this stage, we do not yet have a clear intuition for why \(M(T)\) is needed; however, within our current framework, it still serves as a useful proxy for carrying out the proof. We welcome discussion of possible alternative approaches to proving these theorems. To provide better intuition for what the structure may look like, in this section we present simulation evidence on some of its inner components. 

\subsection{Simulation corroborating Lemma \ref{lemma:div_component_eval_specific_form} }
Since the operator $\underline{\mathbf{K}}_{ni}(L)$ is a projection matrix, its behavior is governed by perturbation results such as the Davis--Kahan theorem (or, more generally, existing eigenspace perturbation results). In particular, the stability of such an operator depends crucially on whether there exists a nonvanishing eigengap separating the first $q$ eigenvalues from the rest of the spectrum. In some cases, it is even helpful to understand the behavior of each individual eigengap.

Therefore, as a first step, we examine whether the key theoretical result on the eigengap, namely Lemma \ref{lemma:div_component_eval_specific_form}, holds as claimed in our paper.

\paragraph{Case 1: heterogeneous memory across factors.}
We consider a rank-\(2\) factor model where
\begin{align}\label{eq:rank_2_heter_factor_simulation_model}
x_{it}
=
\frac{(1-L)^{-0.40}}{1-\alpha_{i1}L}\,u_{1t}
+
\frac{(1-L)^{-0.25}}{1-\alpha_{i2}L}\,u_{2t}
+
\xi_{it},
\qquad i=1,\dots,n.
\end{align}

Figure \ref{fig:eigengap_rank_2_factor_d} shows that indeed, the eigengap are of order $\mathcal{O}(n|\theta|^{-2d_2}), d_2 = 0.25$. 

\begin{figure}[htbp]
    \centering
    \includegraphics[width=1.2\textwidth]{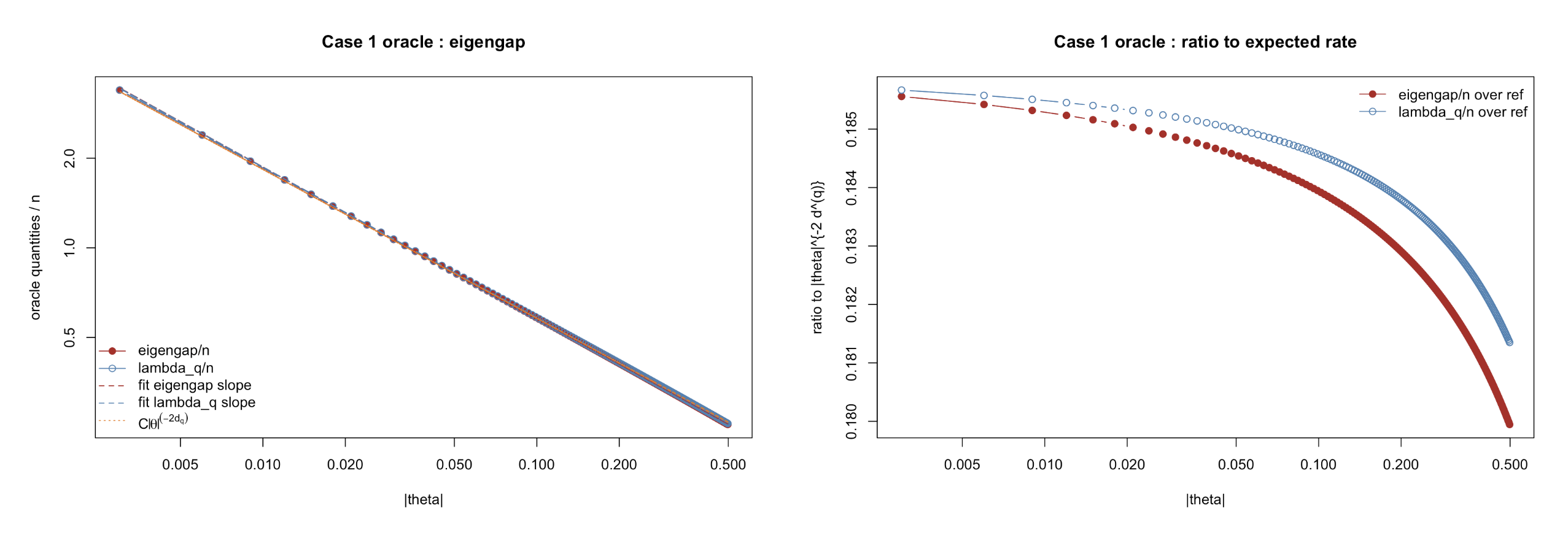}
    \caption{Oracle eigengap check with \(q=2\).}
    \label{fig:eigengap_rank_2_factor_d}
\end{figure}

\subsection{Simulation of the rate of the main part}
\paragraph{Case 1: heterogeneous memory across factors.} 

First of all, we would like to provide simulation evidence supporting the rates of the inner structure, i.e. the \(L^1\) convergence rate of \(\widehat{\underline{\mathbf{K}}}_{n i}(\theta)\) in Lemma \ref{lemma:main_theorem_of_spectral_estimate}. 

\begin{figure}[htbp]
    \centering
    \includegraphics[width=1.2\textwidth]{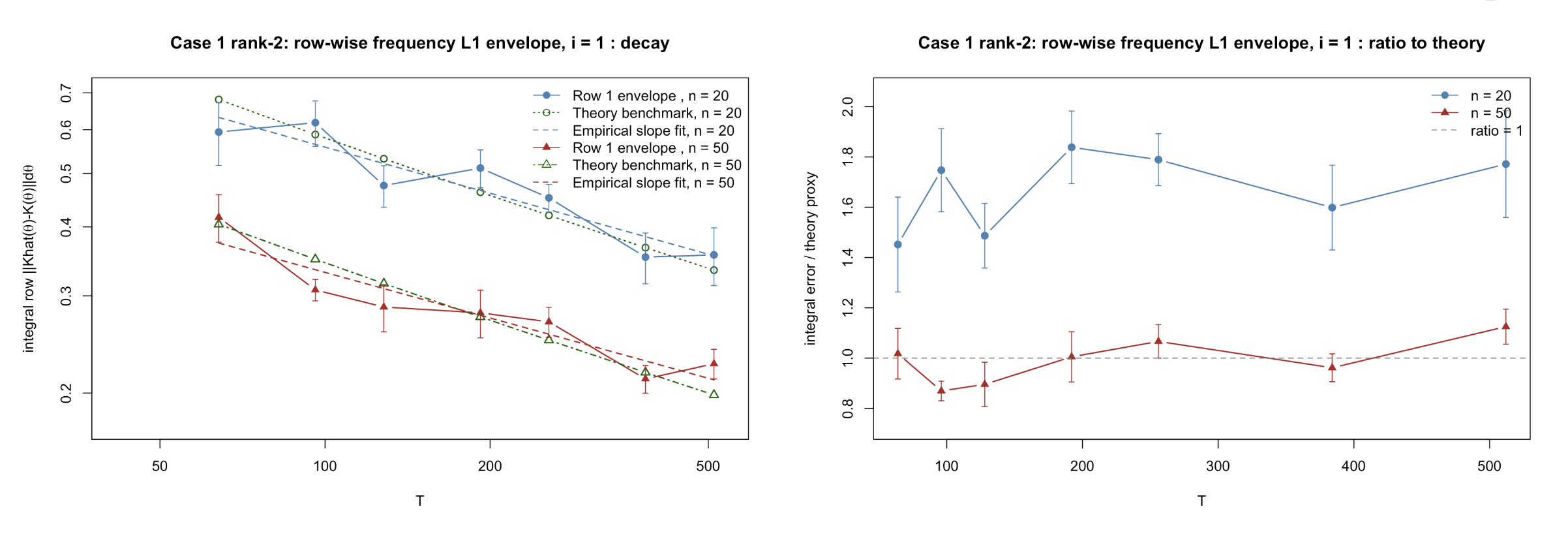}
    \caption{$L_1$ convergence rate for $\widehat{\underline{\mathbf{K}}}_{n i}(\theta)$ }
    \label{fig:L1_conv_rate_for_K_est_factor_differ}
\end{figure}

Figure \ref{fig:L1_conv_rate_for_K_est_factor_differ} shows the approximated value of  $\mathbb{E} \left[\int_{\Pi} \left\| \widehat{\underline{\mathbf{K}}}_{n i}(\theta) - \underline{\mathbf{K}}_{n i}(\theta)  \right\|_{op}   d\theta  \right]$ compares to the theoretical \(L^1\) convergence rate of \(\widehat{\underline{\mathbf{K}}}_{n i}(\theta)\) in Lemma \ref{lemma:main_theorem_of_spectral_estimate}, that is,
\begin{align*}
    &\delta^{(T,n)} = B_T^{1-2\Delta} + \frac{1}{TB_T} + \frac{ \log^{\frac{3}{2}}T}{\sqrt{T}}.
\end{align*}
According to the simulation, our bound to \(L^1\) convergence rate of \(\widehat{\underline{\mathbf{K}}}_{n i}(\theta)\) is tight.

Secondly, we examine, under the model \eqref{eq:rank_2_heter_factor_simulation_model}, whether our \(r^{(n)}\) captures the variance of the main term
\[
Z_{i,t} = \left|\sum_{h=-M(T)}^{M(T)}\left(\widehat{\mathbf{K}}_{n i, h}-\mathbf{K}_{n i, h}\right) L^h X_t\right|,
\]
where, in the simulation below, we manually fix \(i = 1, t = 50\) and let \(T = 160, 192, 256, 384, 512, 768, 1024\), \(n = 20, 50\), and \(B_T = T^{-b}\), with \(b = 0.4\). For the choice of \(M(T)\), we apply the truncation rule
\[
M(T,n)
=
\left\lfloor
\left\{
\frac{C_M}{\delta^{(T,n)}\sqrt n}
\right\}^{\beta}
\right\rfloor, \qquad \beta \in (0,1),
\]
and since \(n\) is fixed in the numerical experiment, up to an \(n\)-dependent constant, such an \(M(T,n)\) is equivalent to $M(T)\asymp \{\delta^{(T,n)}\}^{-\beta}.$
In the present design, \(\delta^{(T,n)}\approx B_T^{(1-2\Delta)} = T^{-b(1-2\Delta)} = T^{-0.28}\).

We report the moment approximation result for \(\sqrt{\mathbb E Z_{i,t}^2}\), and examine whether it aligns with our rate $(r^{(n)})^{-1} = M(T)\,\delta^{(T,n)}\sqrt{n}.$
We also include \(\delta^{(T,n)}\sqrt{n}\) as a benchmark, and explore whether the bound should depend on \(M(T)\).

Specifically, for each pair \((T,n)\), let \(Z_{i,t}^{(r)}\) denote the value of \(Z_{i,t}\) from the \(r\)-th replication, \(r=1,\ldots,R\). We estimate \(\sqrt{\mathbb E Z_{i,t}^2}\) by
\[
\left\{
\frac{1}{R}
\sum_{r=1}^R
\left(Z_{i,t}^{(r)}\right)^2
\right\}^{1/2}.
\]

Figure \ref{fig:sample_var_vs_T_beta_06} shows the result when \(\beta = 0.6\).
\begin{figure}[htbp]
    \centering
    \includegraphics[width=1.2\textwidth]{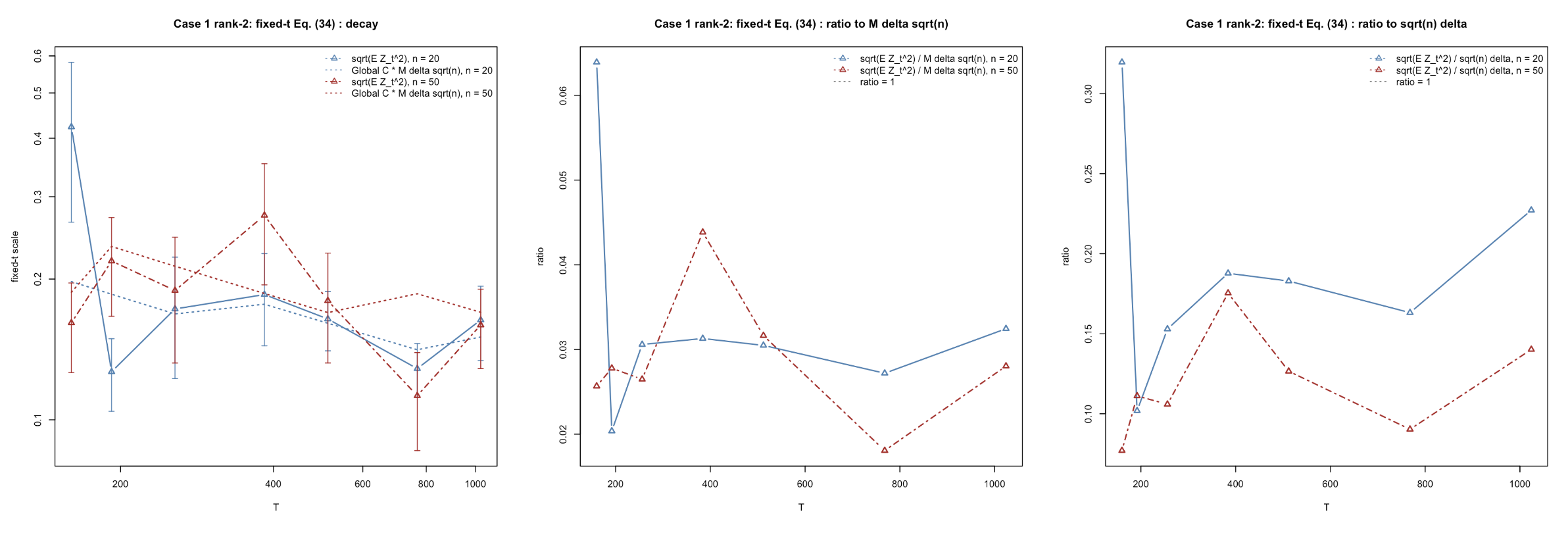}
    \caption{Sample variance of the main term}
    \label{fig:sample_var_vs_T_beta_06}
\end{figure}

The second plot shows the ratio relative to the rate \(M(T)\delta^{(T,n)}\sqrt{n}\). 
The third plot shows the ratio relative to the rate \(\delta^{(T,n)}\sqrt{n}\).



Based on Figure \ref{fig:sample_var_vs_T_beta_06}, we do not get a clear picture of whether this bound can be improved. In any case, it would be interesting to see whether there is an alternative route to obtain the rate suggested by this structure.

\subsection{Simulation to check the decay rate of $\left\|\mathbf{K}_{ni,h}\right\|_2$}
\subsubsection{Long-memory parameters vary factor-wise}

For Assumption \ref{assump: spec_den_specific_form} case (a), where the memory parameters differ across factors, we present simulations matching the theory in Lemma \ref{lemma:d_differ_K_piecewise_cont_fourier_coef_decay_rate}. 

We consider a rank-2 factor-memory model
\[
x_t = B(L)u_t + \xi_t,
\]
where 
\[
B_{i\ell}(\theta)=c_{i\ell}\frac{(1-e^{-i\theta})^{-d_\ell}}{1-\alpha_i e^{-i\theta}},
\qquad \ell=1,2,
\]
with factor-specific memory parameters \(d_1=0.35\) and \(d_2=0.1\), and row-specific short-memory parameters \(\alpha_i\) equally spaced between \(0.2\) and \(0.8\). We set \(n=80\). The idiosyncratic component \(\xi_t\) is modeled as a dense short-memory AR(1)-type process with cross-sectional covariance
\[
\Sigma_\xi(\theta)=\frac{\Sigma_0}{|1-\phi_\varepsilon e^{-i\theta}|^2},
\qquad
(\Sigma_0)_{ij}=\sigma_\varepsilon r_{cs}^{|i-j|},
\]
with \(\sigma_\varepsilon=1\), \(\phi_\varepsilon=0.4\), and \(r_{cs}=0.6\). See Figure \ref{fig:K_h_factor_pert_rank_2}.

\begin{figure}[htbp]
    \centering
    \includegraphics[width=0.8\textwidth]{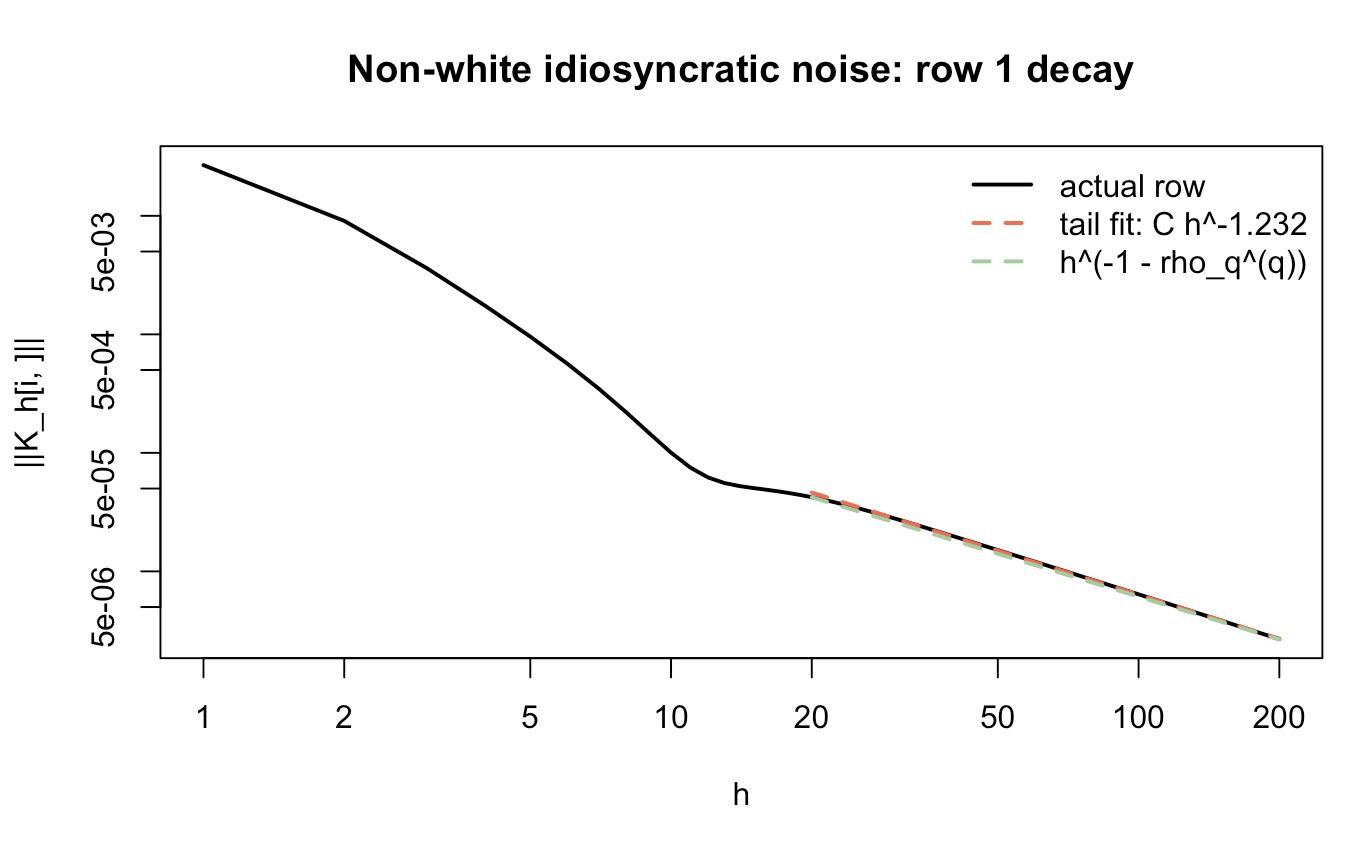}
    \caption{\(\left\|\mathbf{K}_{ni,h}\right\|\) decay rate: \(\left\|\mathbf{K}_{ni,h}\right\|\) vs. \(|h|\) on a log-log scale}
    \label{fig:K_h_factor_pert_rank_2}
\end{figure}

We also consider a rank-3 factor-memory model
\[
x_t = B(L)u_t + \xi_t,
\]
where 
\[
B_{i\ell}(\theta)=c_{i\ell}\frac{(1-e^{-i\theta})^{-d_\ell}}{1-\alpha_i e^{-i\theta}},
\qquad \ell=1,2,3,
\]
with factor-specific memory parameters \(d_1=0.45\), \(d_2=0.25\), and \(d_3=0.2\). The other parameters are taken to be the same as in the rank-2 example. See Figure \ref{fig:K_h_factor_pert_rank_3}

\begin{figure}[htbp]
    \centering
    \includegraphics[width=0.8\textwidth]{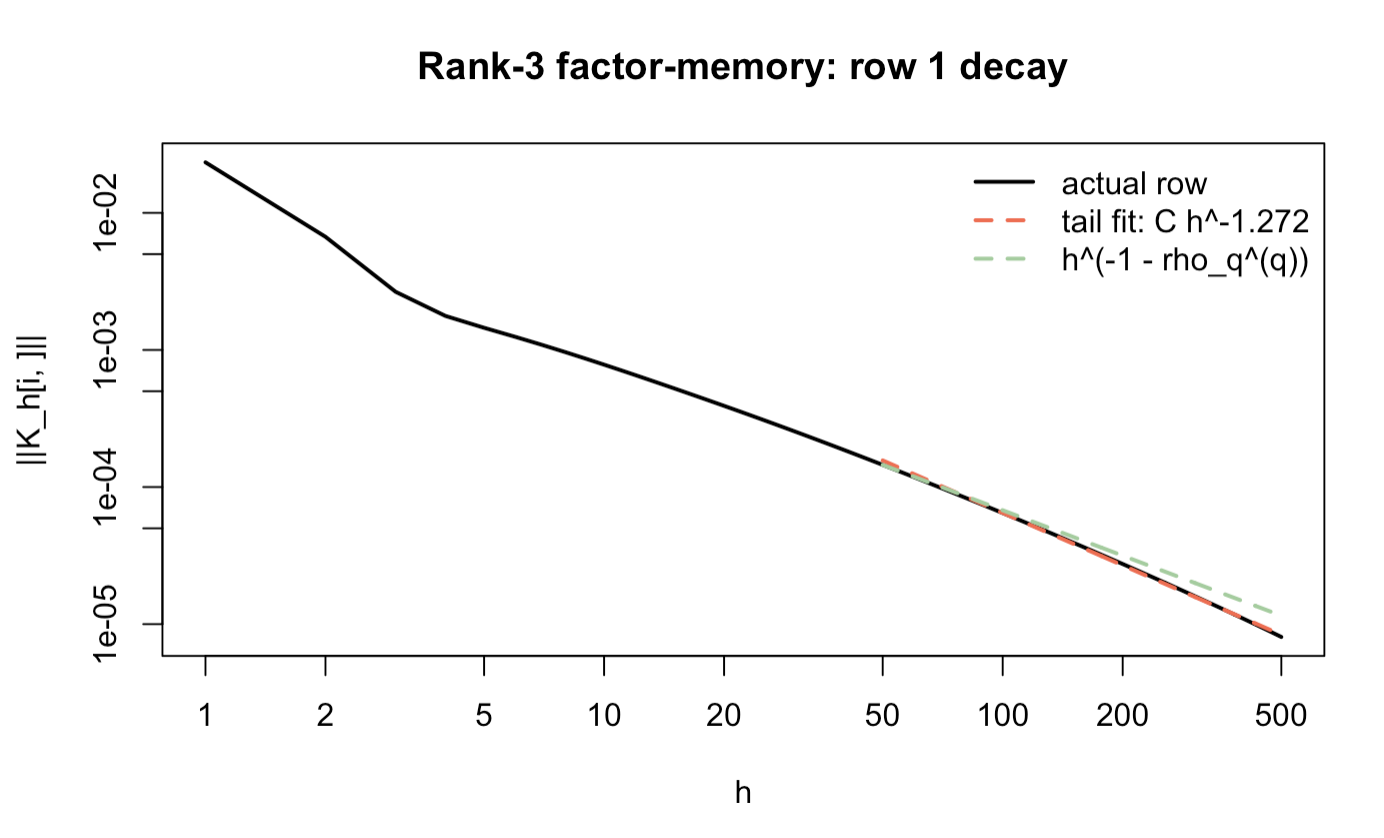}
    \caption{\(\left\|\mathbf{K}_{ni,h}\right\|\) decay rate: \(\left\|\mathbf{K}_{ni,h}\right\|\) vs. \(|h|\) on a log-log scale}
    \label{fig:K_h_factor_pert_rank_3}
\end{figure}

\subsubsection{Long-memory parameters vary row-wise}

For Assumption \ref{assump: spec_den_specific_form} case (b), where the memory parameters differ across rows, we present simulations suggesting that the decay rate proved in \eqref{eq:decay_rate_K_nih} may be further improved. Specifically, $\left\|\mathbf{K}_{ni,h}\right\|_2$ likely exhibits the same order of decay as in Assumption \ref{assump: spec_den_specific_form} case (a), namely, the rate established in Lemma \ref{lemma:d_differ_K_piecewise_cont_fourier_coef_decay_rate}.


Consider a simple rank-1 common-component case, where 
\[
x_t = B(L)u_t + \xi_t,
\]
where
\[
B(\theta)=\bigl[B_{i1}(\theta)\bigr]_{i=1,\dots,n},
\qquad
B_{i1}(\theta)=\frac{(1-e^{-\iota\theta})^{-d_i}}{1-\alpha_{i}e^{-\iota\theta}},
\]
with
\[
d_i=
\begin{cases}
0.35, & i\neq 10,\\[4pt]
0.25, & i=10.
\end{cases}
\]
and $\alpha_i$ are taken to be equally spaced between 0.2 and 0.8.

Figure \ref{fig:K_h_one_pert_rank_1} shows that as $h \rightarrow \infty$, $\left\|\mathbf{K}_{ni,h}\right\|$ decreases at the rate of $h^{-1.095}$, which is much faster than $h^{2\Delta -1} = h^{-0.8}$.

\begin{figure}[htbp]
    \centering
    \includegraphics[width=0.8\textwidth]{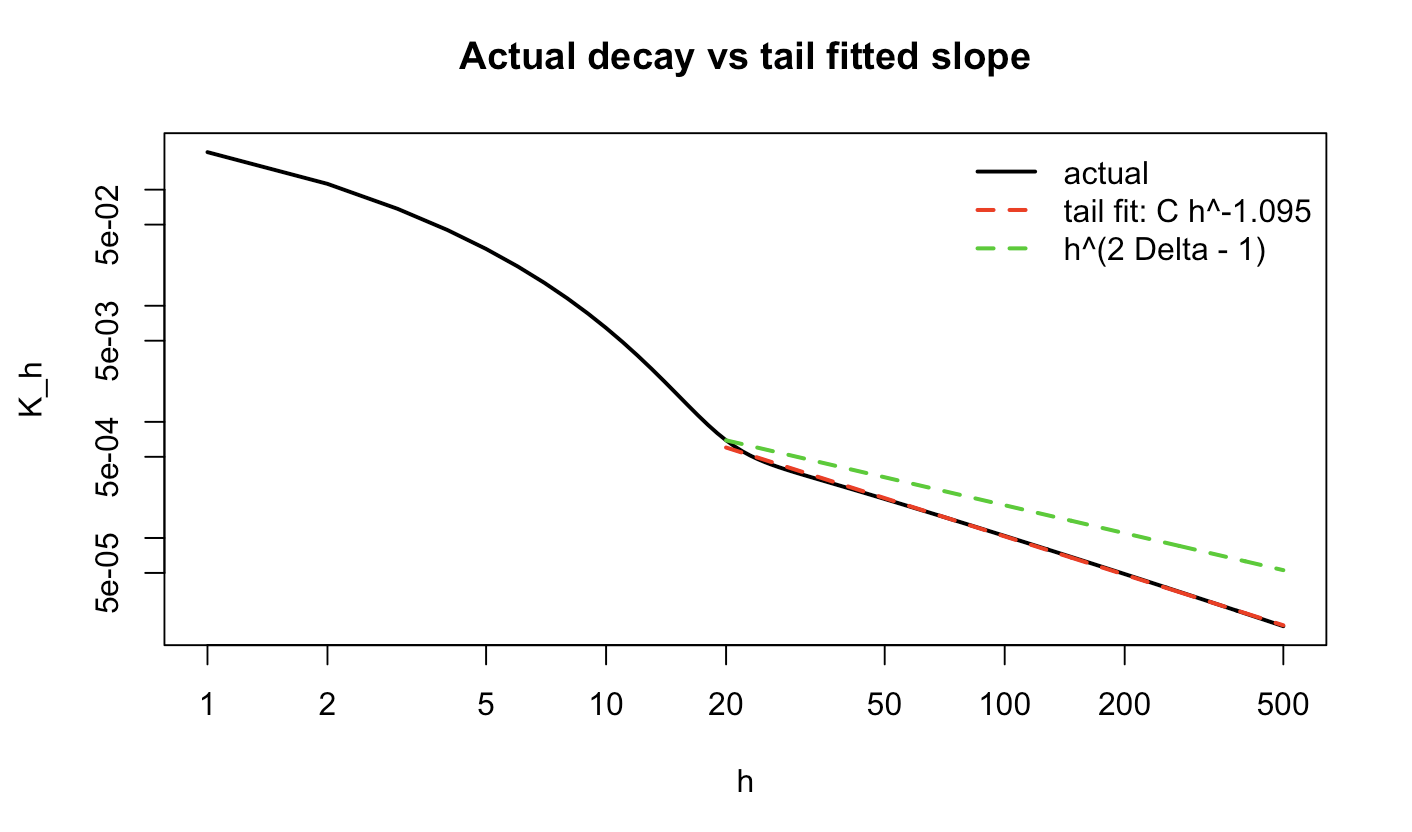}
    \caption{$\left\|\mathbf{K}_{ni,h}\right\|$ decay rate, $\left\|\mathbf{K}_{ni,h}\right\|$ vs. $|h|$ on a log-log scale}
    \label{fig:K_h_one_pert_rank_1}
\end{figure}

We then examine the effect of increasing \(\Delta\). Using the same rank-1 model, we vary the perturbation size over the grid
\[
\Delta \in \{0,\,0.001,\,0.005,\,0.01,\,0.02,\,0.05,\,0.1,\,0.15,\,0.2,\,0.25,\,0.3\}.
\]
For each value of \(\Delta\), we compute \(\left\|\mathbf{K}_{ni,h}\right\|\) and estimate its tail decay rate by regressing 
\(\log \left\|\mathbf{K}_{ni,h}\right\|\) on \(\log h\), using only the large-\(h\) region \(h\ge 80\). The results are shown in Figure \ref{fig:tail_slope_one_row_pert_h_80}. 

\begin{figure}[htbp]
    \centering
    \includegraphics[width=0.8\textwidth]{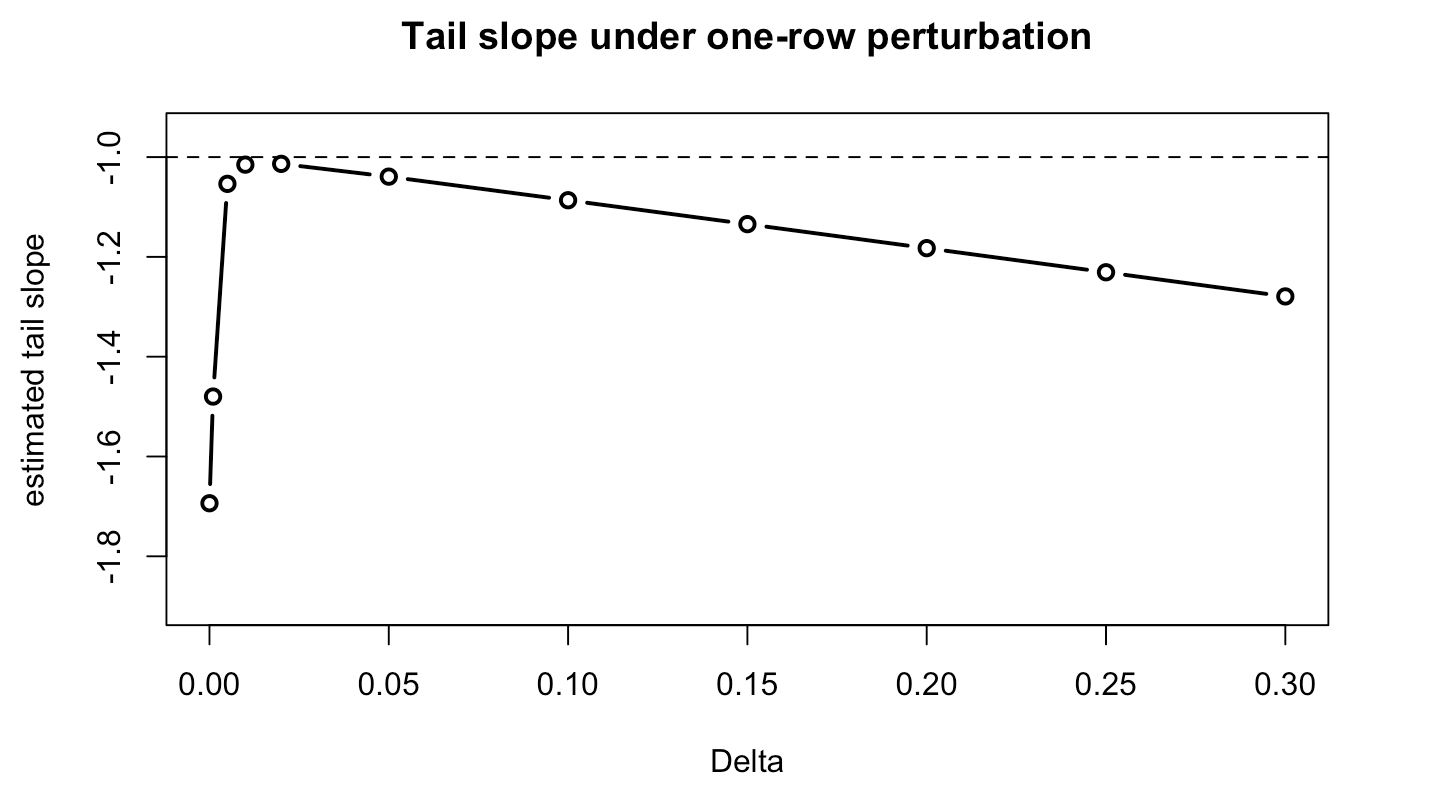}
    \caption{tail slope vs. perturbation size}
    \label{fig:tail_slope_one_row_pert_h_80}
\end{figure}

\subsubsection{Long-memory parameters vary entry-wise}

Although this is not the case included in Assumption \ref{assump: spec_den_specific_form}, we still present some results to show that the bound in Lemma \ref{lemma:d_differ_K_fourier_coef_decay_rate_old} holds in certain scenarios, while in some situations, when the model admits a more organized structure, it has a faster decay rate.

Consider the rank-2 matrix below, with only one entry perturbed:
\[
x_t = B(L)u_t + \xi_t,
\]
where
\[
B(\theta)=\bigl[B_{i\ell}(\theta)\bigr]_{i=1,\ldots,n;\,\ell=1,2},
\qquad
B_{i\ell}(\theta)=\frac{(1-e^{-\iota\theta})^{-d_{i\ell}}}{1-\alpha_i e^{-\iota\theta}},
\]
with
\[
d_{i\ell}=
\begin{cases}
0.25, & (i,\ell)=(10,1),\\[4pt]
0.35, & \text{otherwise}.
\end{cases}
\]
and \(\alpha_i\) are taken to be equally spaced between \(0.2\) and \(0.8\). The error component \(\xi_t\) is an orthonormal white noise.

Figure \ref{fig:K_h_one_pert_rank_2} shows that as \(h \rightarrow \infty\), \(\left\|\mathbf{K}_{ni,h}\right\|\) decreases at the rate of \(h^{-0.754}\). In this case, where the long-memory parameters vary entry-wise and do not admit either case in Assumption \ref{assump: spec_den_specific_form}, the result aligns with the bound in Lemma \ref{lemma:d_differ_K_fourier_coef_decay_rate_old}.

\begin{figure}[htbp]
    \centering
    \includegraphics[width=0.8\textwidth]{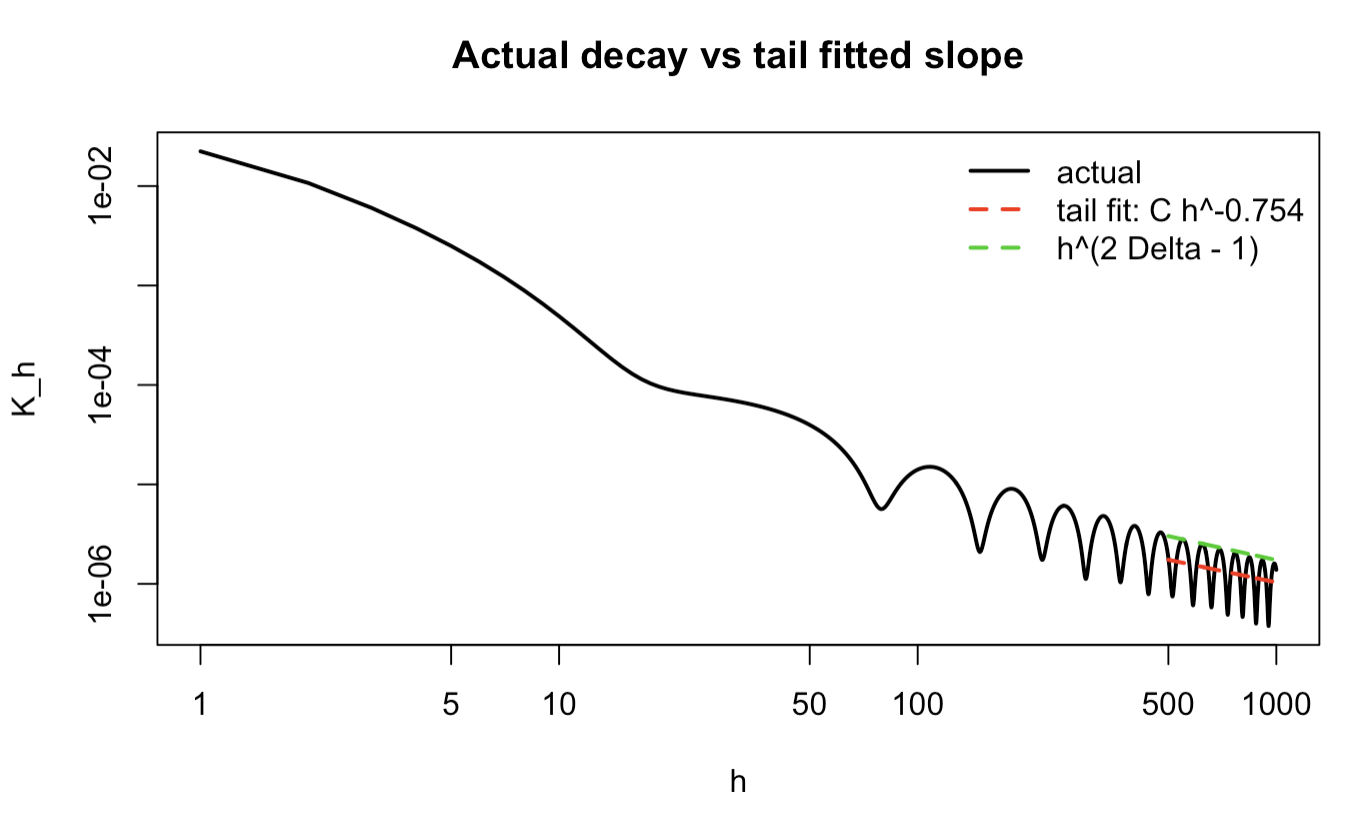}
    \caption{$\left\|\mathbf{K}_{ni,h}\right\|\) decay rate, $\left\|\mathbf{K}_{ni,h}\right\|\) vs. $|h|$ on a log-log scale}
    \label{fig:K_h_one_pert_rank_2}
\end{figure}

Now we show that, even in the entry-wise case, once it admits some structure, it is very likely that \(\left\|\mathbf{K}_{ni,h}\right\|\) decays at a faster rate. Consider the rank-2 matrix below:
\[
x_t = B(L)u_t + \xi_t,
\]
where
\[
B(\theta) = [B_{i\ell}(\theta)]_{i=1,\ldots,n;\,\ell=1,2},
\]
with
\[
B_{i1}(\theta)
=
\frac{(1-e^{-i\theta})^{-d_{i1}}}{1-\alpha_{i1}e^{-i\theta}},
\qquad
B_{i2}(\theta)
=
c\,
\frac{(1-e^{-i\theta})^{-d_{i2}}}{1-\alpha_{i2}e^{-i\theta}},
\]
and \(d_{i1}\) are taken to be equally spaced between \(0.1\) and \(0.3\); \(d_{i2}\) are taken to be equally spaced between \(0.15\) and \(0.35\); while \(\alpha_{i1}\) are taken to be equally spaced between \(0.2\) and \(0.7\); and \(\alpha_{i2}\) are taken to be equally spaced between \(0.5\) and \(0.9\). The idiosyncratic error component \(\xi_t\) is taken to be a diagonal short-memory AR(1) process.

Figure \ref{fig:K_h_equal_pert_rank_2}, \ref{fig:K_h_equal_pert_rank_2_zoomin} show that as \(h \rightarrow \infty\), \(\left\|\mathbf{K}_{ni,h}\right\|\) decreases at the rate of \(h^{-1.042}\), and this is much faster than \(h^{2\Delta -1} = h^{-0.8}\).

\begin{figure}[htbp]
    \centering
    \includegraphics[width=0.8\textwidth]{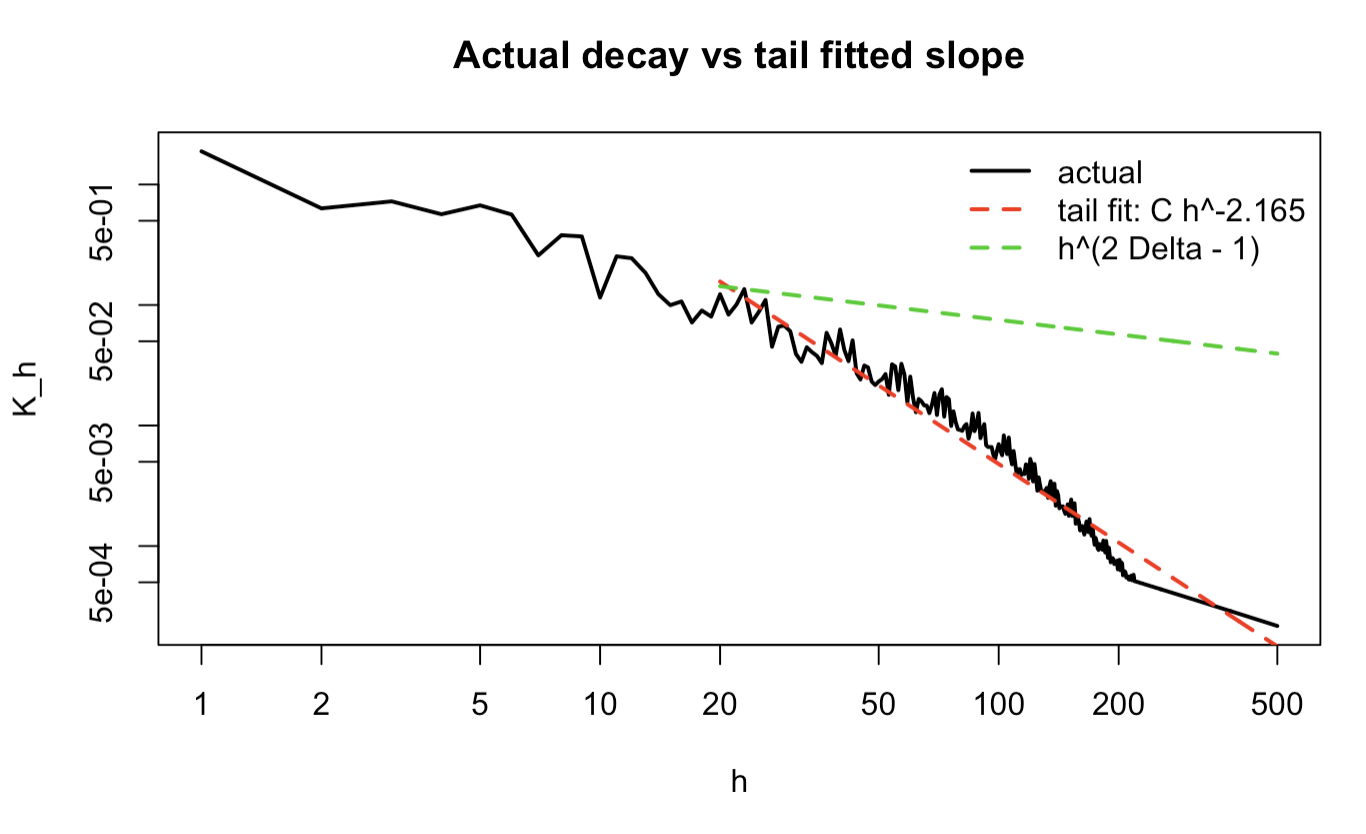}
    \caption{$\left\|\mathbf{K}_{ni,h}\right\|$ decay rate, $\left\|\mathbf{K}_{ni,h}\right\|$ vs. $|h|$ on a log-log scale}
    \label{fig:K_h_equal_pert_rank_2}
\end{figure}

\begin{figure}[htbp]
    \centering
    \includegraphics[width=0.8\textwidth]{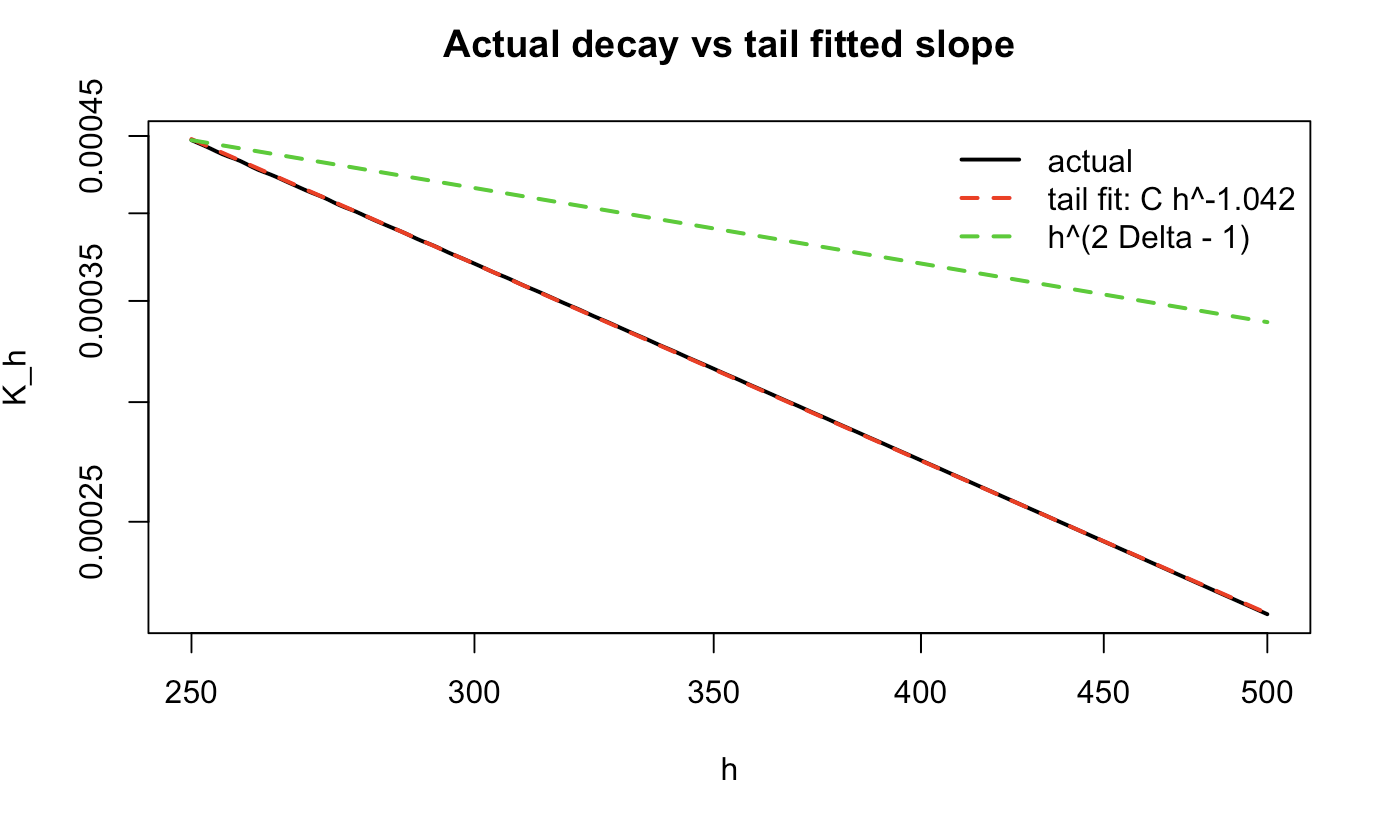}
    \caption{Figure \ref{fig:K_h_equal_pert_rank_2} zoom in. $\left\|\mathbf{K}_{ni,h}\right\|$ decay rate, $\left\|\mathbf{K}_{ni,h}\right\|$ vs. $|h|$ on a log-log scale}
    \label{fig:K_h_equal_pert_rank_2_zoomin}
\end{figure}

\section{Conclusion}\label{sec:conclusion}

In this paper, we examine the methodology of \cite{dynamicfactorForni2sided} under a long-memory assumption. The model we study is the generalized dynamic factor model (GDFM) proposed by \cite{dynamicfactorForni2sided}, which provides a flexible framework for modeling high-dimensional time series. The long-memory structure of the common component creates substantial technical challenges, since it introduces unboundedness and discontinuity in the spectral density. We address these challenges and establish convergence rates for the estimated common component.

Our analysis is based on the traditional discrete periodogram smoothing estimator for the spectral density. This choice is motivated by two considerations. First, in the factor model structure, where the observed process consists of a common component plus noise, the regularity of the short-memory component of the spectral density may be degraded. This makes efficient estimation of the memory parameters difficult, especially when different rows have different memory parameters, and complicates the use of prewhitening techniques. Second, our analysis mainly requires integrability properties of the spectral density estimator rather than sharp pointwise control. We show that the discrete periodogram smoothing estimator provides $L^1$-consistent estimates of the spectral density, even though its pointwise behavior may be less satisfactory near the singularity.

A key part of our proof is the control of the variation component of the estimator. To this end, we extend the results of \cite{kim_cumulant_long_memory} to the multivariate setting. These results may also be useful in other problems involving long-memory multivariate time series and may be of independent interest. In addition, we establish regularity properties of the eigenvectors and eigenprojections of the spectral density matrix function in the long-memory factor model setting. This regularity is essential for controlling the truncation remainder term in the two-sided estimation procedure.

Several questions remain open. For example, when the long-memory parameters differ across rows, the regularity of the corresponding eigenstructure has not yet been fully characterized. A sharper understanding of this case may lead to improved convergence rates and a more complete theory for generalized dynamic factor models with heterogeneous long-memory behavior. We leave this direction for future research.

\clearpage
\appendix
We provide a roadmap for the proof of Proposition~\ref{thm:main_theorem} in Figure~\ref{fig:proof-roadmap}.

\begin{figure}[htbp]
\hspace*{-0.5cm}
\begin{tikzpicture}[
    x=0.95cm,
    y=1.25cm,
    >=Stealth,
    font=\small,
    box/.style={
        draw,
        rounded corners=2pt,
        align=center,
        inner sep=5pt,
        line width=0.4pt,
        fill=gray!5,
        minimum height=0.95cm
    },
    arr/.style={
        -{Stealth[length=2mm]},
        line width=0.5pt
    }
]

\node[box, text width=3.0cm] (prop2) at (0,0)
{Proposition \ref{thm:main_theorem}\\Common component\\estimation rate};

\node[box, text width=2.6cm] (prop1) at (-3.9,-2.6)
{Proposition \ref{proposition_oracle}\\Oracle estimator\\rates};

\node[box, text width=3.0cm] (b2) at (0,-2.8)
{Lemma \ref{lemma:consistency_main_part_estimator}\\Consistency rate for the main part};

\node[box, text width=2.9cm] (b3) at (5.6,-2.7)
{Lemma \ref{lemma:consistency_residual_part_estimator}\\Truncation error\\rates};

\node[box, text width=3.0cm] (b1) at (0,-5.9)
{Lemma \ref{lemma:main_theorem_of_spectral_estimate}\\Spectral density\\estimator\\converges in $L^1$};

\node[box, text width=2.4cm] (c3) at (-3.2,-9.0)
{Lemma \ref{lemma:bias_conv}\\Bias control};

\node[box, text width=2.9cm] (c4) at (2.6,-9.1)
{Lemma \ref{lm:var_conv}\\Variance control\\via cumulant bounds Lemma \ref{lm:cum_property_long_mem}};

\node[box, text width=2.6cm] (same) at (3.7,-6.0)
{Lemma \ref{lemma:holder_cont_K}\\Homogeneous $d$:\\eigenprojection\\continuous};

\node[box, text width=2.4cm] (diff) at (7.2,-6.0)
{Heterogeneous $d$:\\Fourier\\coefficients\\decay};

\node[box, text width=2.2cm] (extra1) at (6.0,-9.0)
{$d$ varies across factors:\\Lemma \ref{lemma:K_holder_factor_wise}:\\cusp-like\\regularity };

\node[box, text width=2.2cm] (extra2) at (9.4,-8.8)
{$d$ varies across rows:\\Lemma \ref{lemma:d_differ_K_fourier_coef_decay_rate_old}};

\draw[arr] (prop2) -- (prop1);
\draw[arr] (prop2) -- (b2);
\draw[arr] (prop2) -- (b3);

\draw[arr] (b2) -- (b1);

\draw[arr] (b1) -- (c3);
\draw[arr] (b1) -- (c4);

\draw[arr] (b3) -- (same);
\draw[arr] (b3) -- (diff);
\draw[arr] (diff) -- (extra1);
\draw[arr] (diff) -- (extra2);

\end{tikzpicture}
\caption{Roadmap for the proof of Proposition 2.}
\label{fig:proof-roadmap}
\end{figure}

\section{Proof of Lemma in Assumption Section}\label{sec:assump_proof}
\begin{proof}
\begin{enumerate}
    \item $d_{il}=d_l$.

$\Sigma_{\chi}(\theta) =  G(\theta) D(\theta) D^*(\theta) G^*(\theta) = G(\theta) \Bar{D}^{2}(\theta) G^*(\theta)$ where $\Bar{D}(\theta) = diag\left( |1-e^{-\iota \theta}|^{-d_l} \right)_{l = 1,...,q}$. Notice that $G(\theta) \Bar{D}^{2}(\theta) G^*(\theta)$ shares the same eigenvalues as $\Bar{D}(\theta)G^*(\theta)G(\theta)\Bar{D}(\theta)$.

Under Assumption \ref{assump:pervasive_short_mem} we have, 
\begin{align*}
        s^- \Bar{D}^2(\theta) \preceq \Bar{D}(\theta) S(\theta) \Bar{D}(\theta) \preceq s^+ \Bar{D}^2(\theta)
    \end{align*}
Multiplying by a diagonal matrix preserves the PSD order,
\begin{align*}
    n s^- \Bar{D}^2(\theta) \preceq \Sigma_{\chi}(\theta) \preceq n s^+ \Bar{D}^2(\theta)
\end{align*}
Eigenvalues are monotone under $\preceq$, thus $n s^- |1-e^{-\iota \theta}|^{-2d^{(r)}} \leq \lambda^{\chi}_{nr}(\theta) \leq n s^+ |1-e^{-\iota \theta}|^{-2d^{(r)}} $, for $r = 1,...,q.$

In this case,  \begin{equation*}
\alpha_q^{\chi}(\theta)=c_q^{-}\,|\theta|^{-2d^{(q)}},
\qquad
\beta_0^{\chi}(\theta)=c_{1}^{+}\,|\theta|^{-2d^{(1)}} .
\end{equation*}

\item $d_{il}=d_i$.\\
 
$\Sigma_{\chi}(\theta) =  D_n(\theta)G(\theta) G^*(\theta)D^*_n(\theta)$, where $D_n(\theta) = diag\left( (1-e^{-\iota \theta})^{-d_i} \right)_{i = 1,...,n}. $ 

Notice that $D_n(\theta)G(\theta) G^*(\theta)D^*_n(\theta)$ shares the same eigenvalues as $G^*(\theta) \Bar{D}_n^{2}(\theta) G(\theta)$ where $\Bar{D}_n(\theta) = diag\left( |1-e^{-\iota \theta}|^{-d_i} \right)_{i = 1,...,n} = diag\left( |2 \sin(\frac{\theta}{2})|^{-d_i} \right)_{i = 1,...,n}$. Since
\begin{align*}
    |2 \sin(\frac{\theta}{2})|^{-2d^{(n)}} \preceq \Bar{D}_n^{2}(\theta) \preceq |2 \sin(\frac{\theta}{2})|^{-2d^{(1)}}, |\theta| < \pi/3 
\end{align*}
then
\begin{align*}
    n\left|2 \sin\left({\theta}/{2}\right)\right|^{-2d^{(n)}}S(\theta) \preceq G^*(\theta) \Bar{D}_n^{2}(\theta) G(\theta) \preceq n\left|2 \sin\left({\theta}/{2}\right)\right|^{-2d^{(1)}} S(\theta)
\end{align*}
Thus, under Assumption \ref{assump:pervasive_short_mem} the eigenvalues of $\Sigma_{\chi}(\theta)$ satisfies
\begin{align*}
    & n  |1-e^{-\iota \theta}|^{-2d^{(n)}}\lambda_S^{(r)}(\theta) \leq \lambda^{\chi}_{nr}(\theta) \leq n |1-e^{-\iota \theta}|^{-2d^{(1)}} \lambda_S^{(r)}(\theta) ,\\
    &n s^{-} \left|2 \sin\left({\theta}/{2}\right)\right|^{-2d^{(n)}} \leq \lambda^{\chi}_{nr}(\theta)  \leq n s_r^{+} \left|2 \sin\left({\theta}/{2}\right)\right|^{-2d^{(1)}}, |\theta| < \pi/3
\end{align*}
For $|\theta|>\pi/3$, the inequality is 
\begin{align*}
    &n s^{-} \left|2 \sin\left({\theta}/{2}\right)\right|^{-2d^{(1)}} \leq \lambda^{\chi}_{nr}(\theta)  \leq n s_r^{+} \left|2 \sin\left({\theta}/{2}\right)\right|^{-2d^{(n)}}, |\theta| > \pi/3
\end{align*}

Since our paper mainly focuses on the low frequency around zero. For simplicity, we generally take the lower bound to be $c_q^- |\theta|^{-2d_{(n)}}$, and the upper bound to be $c_1^+ |\theta|^{-2d_{(1)}}$.
\end{enumerate}

\end{proof}

\section{Proofs of Proposition \ref{proposition_oracle} and \ref{thm:main_theorem}}\label{sec:proof_main_prop}
\subsection{Proof of Proposition \ref{proposition_oracle}}

\begin{proof}

We adopt the same proof framework as in Lemma~4.1 and Corollary~4.1 in \cite{FORNI2004231}. Denote by \(\mathbf B^{(n)}(\theta)\) the \(n\times q\) matrix with entries 
\(b_{ij}(e^{-i\theta})\), by \(\mathbf I^{(n)}_{i}\) the \(i\)th \(n\)-dimensional unit row vector, by $\mathbf{P}^{(n)}(\theta)$ the 
$q \times n$ matrix with rows $\mathbf{p}_j^{(n)}(\theta)$, by $\boldsymbol{\Lambda}^{(n)}(\theta)$ the 
$q \times q$ diagonal matrix with diagonal elements 
$\lambda_j^{(n)}(\theta)$. Let $\mathbf{M}^{(n)}(\theta) := \big(\boldsymbol{\Lambda}^{(n)}(\theta)\big)^{-1/2}$, since $1/\sqrt{\lambda_j^{(n)}(\theta)}$ is a.e.\ correctly defined. The decomposition in the proof of Lemma 4.1 in \cite{FORNI2004231} shows
\begin{align}\label{eq:K_decompose_signal_noise}
    \mathbf{K}_i^{(n)}
  &= \mathbf{I}_i^{(n)} \boldsymbol{\Sigma}_\xi^{(n)} 
     \tilde{\mathbf{P}}^{(n)} \mathbf{M}^{(n)} \mathbf{M}^{(n)} \mathbf{P}^{(n)}
   + \mathbf{I}_i^{(n)} \mathbf{B}^{(n)} \tilde{\mathbf{B}}^{(n)} 
     \tilde{\mathbf{P}}^{(n)} \mathbf{M}^{(n)} \mathbf{M}^{(n)} \mathbf{P}^{(n)} \\\nonumber
  &=: \mathbf{H}_i^{(n)} + \mathbf{G}_i^{(n)}. \nonumber
\end{align} Hence, $\chi_{it}^{(n)} = \mathbf{K}_i^{(n)}(L) X_t^{(n)}$ decomposes into $\chi_{it}^{(n)} = \mathbf{H}_i^{(n)}(L) X_t^{(n)} + \mathbf{G}_i^{(n)}(L) X_t^{(n)}$.

We only need to notice that, in the long-memory setting, the spectral density of \(\chi_{it}\),
\(\mathbf I^{(n)}_{i}\,\mathbf B^{(n)}(\theta)\,\tilde{\mathbf B}^{(n)}(\theta)\,\tilde{\mathbf I}^{(n)}_{i}\),
is bounded by \(C^{2}|\theta|^{-2d}\) by Assumption~\ref{assump: spec_den_specific_form}. 
Then, repeating the same steps as in the proof of Lemma~4.1 in \cite{FORNI2004231}, the spectral density of 
\(\underline{\mathbf K}_{ni}(L)X_t-\chi_{it}\) is upper-bounded by 
\(\mathcal{O}\!\left(n^{-1}|\theta|^{-2d}\right)\) (up to a constant). 
Therefore, the variance of ${\underline{\mathbf{K}}}_{n i}(L)X_t - \chi_{it}$ is of order $\mathcal{O}\left(n^{-1}\right)$. 

As a result, for any \(\eta>0\) there exist \(B_{\eta},N_{\eta}>0\) such that for \(n>N_{\eta}\),
\[
\mathbb{P}\!\left[r^{(n)}\big|\underline{\mathbf K}_{ni}(L)\mathbf x_{nt}-\chi_{it}\big|>B_{\eta}\right]
\le \frac{(r^{(n)})^{2}}{B_{\eta}^{2}}\,
\mathrm{var}\!\left(\underline{\mathbf K}_{ni}(L)\mathbf x_{nt}-\chi_{it}\right)
\le \frac{(r^{(n)})^{2}}{B_{\eta}^{2}}\,n^{-1}\le \eta,
\]
It suffices to take \(r^{(n)}\asymp\!n^{1/2}\).

\end{proof}

\subsection{Proof of Proposition \ref{thm:main_theorem}}


\begin{proof}[Proof of Proposition \ref{thm:main_theorem}]
    Notice that for any $B>0,$ 
    \begin{align*}
P\left[r^{(n)}\left|\widehat{\underline{\mathbf{K}}}_{n i}(L) X_{t}-\chi_{i t}\right|>B\right] &\leq P\left[r^{(n)}\left|\left(\widehat{\underline{\mathbf{K}}}_{n i}(L)-\underline{\mathbf{K}}_{n i}(L)\right) X_t\right|>B / 2\right] \\
& +P\left[r^{(n)}\left|\underline{\mathbf{K}}_{n i}(L) X_t-\chi_{i t}\right|>B / 2\right]
\end{align*}

Furthermore, 
\begin{align*}
&P\left[r^{(n)}\left|\left(\widehat{\underline{\mathbf{K}}}_{n i}(L)-\underline{\mathbf{K}}_{n i}(L)\right) X_t\right|>B / 2\right]\\
&\leq P\left[r^{(n)}\left|\sum_{h=-M(T)}^{M(T)}\left(\widehat{\mathbf{K}}_{n i, h}-\mathbf{K}_{n i, h}\right) L^h X_t\right|>\frac{B}{4}\right] \\
&+P\left[r^{(n)}\left|\left( -\sum_{h=-\infty}^{-M(T)-1} \mathbf{K}_{n i, h} L^h-\sum_{h=M(T)+1}^{\infty} \mathbf{K}_{n i, h} L^h\right)X_t\right|>\frac{B}{4}\right]\\
&= P\left[r^{(n)}\left|\sum_{h=-M(T)}^{M(T)}\left(\widehat{\mathbf{K}}_{n i, h}-\mathbf{K}_{n i, h}\right) L^h X_t\right|>\frac{B}{4}\right] + P\left[r^{(n)}\left|\sum_{|h|>M(T)} \mathbf{K}_{n i, h} X_{t-h}\right|>\frac{B}{4}\right]
\end{align*}

By Proposition \ref{proposition_oracle}, for all $\eta>0$, there exists $B_{0,\eta}, N_{0,\eta}$ such that 
\[
P\left[r_0^{(n)}\left|\underline{\mathbf{K}}_{n i}(L) \mathbf{x}_{n t}-\chi_{i t}\right|> \frac{B_{0,\eta} }{2}\right] \leq \frac{\eta}{2}, \text{ where }r_0^{(n)} \asymp n^{1/2}
\]

Lemma \ref{lemma:consistency_main_part_estimator} gives that for $0<\eta<4$, there exists a $B_{1,\eta}$ and $N_{1,\eta}$ such that for all $n \geq N_{1,\eta}$,
    \begin{align}
        P\left[r_1^{(n)}\left|\sum_{h=-M(T)}^{M(T)}\left(\widehat{\mathbf{K}}_{n i, h}-\mathbf{K}_{n i, h}\right) L^h X_t\right|>\frac{B_{1,\eta}}{4}\right] \leq \frac{\eta}{4}
    \end{align}
    where 
    \begin{align*}
        r_1^{(n)} \asymp M(T)^{-1}\left(\delta^{(T,n)}\right)^{-1}n^{-\frac{1}{2}} , \delta^{(T,n)} = B_T^{1-2\Delta} + \frac{\log T}{TB_T}     \end{align*}

Meanwhile, Lemma \ref{lemma:consistency_residual_part_estimator} shows that if the rows and factors shares the same $d$, then there exists a $B_{2,\eta}$ and $N_{2,\eta}$ such that for all $n \geq N_{2,\eta}$,
    \begin{align}
        P\left[r_2^{(n)}\left|\sum_{|h|>M(T)} \mathbf{K}_{n i, h} X_{t-h}\right|>\frac{B_{2,\eta}}{4}\right]<\frac{\eta}{4}
    \end{align}
where $r_2^{(n)}$ is defined in Lemma \ref{lemma:consistency_residual_part_estimator}.

Finally, letting $B_{\eta} = \max(B_{0,\eta}, B_{1,\eta}, B_{2,\eta})$ and $N_{\eta} = \max(N_{0,\eta},$ $ N_{1,\eta}, N_{2,\eta})$, we have that for all $n \geq N_{\eta}$,
\begin{align*}
    P\left[r^{(n)}\left|\widehat{\underline{\mathbf{K}}}_{n i}(L) X_{t}-\chi_{i t}\right|>B_{\eta}\right] 
    &\leq \frac{\eta}{2} + \frac{\eta}{4} + \frac{\eta}{4} = \eta
\end{align*}
and that
\begin{align*}
    r^{(n)} &= \min \left(r_0^{(n)},r_1^{(n)},r_2^{(n)}\right)\\
    r_0^{(n)} &= n^{1/2};\\
    r_1^{(n)} &= M(T)^{-1}\left(\delta^{(T,n)}\right)^{-1}n^{-\frac{1}{2}};\\
    \delta^{(T,n)} &= B_T^{1-2\Delta} + \frac{\log T}{TB_T}
\end{align*}
and $r_2^{(n)}$ is such that 

\begin{enumerate}
        \item when all rows and series share the same $d$
        \begin{align*}
            r_2^{(n)} \asymp \frac{1}{\log M(T)}\begin{cases}\dfrac{M(T)}{\sqrt{n}}, & d=0,\\[6pt] \min\!\left(\dfrac{M(T)}{\sqrt{n}},\; \sqrt{n}\, M(T)^{2d}\right), & d>0.\end{cases}, 
        \end{align*}
\begin{enumerate}[(i)]
    \item When the idiosyncratic error $\xi_t$ is an orthonormal white noise,  \begin{align*}
            r_2^{(n)} \asymp \frac{M(T)}{\log M(T) \sqrt{n}}, 
        \end{align*}
\end{enumerate}
\item 
$d_{il}$ differs only across factors, i.e. it satisfies case (a) in Assumption \ref{assump: spec_den_specific_form}.
\begin{align*}
    r_2^{(n)} \asymp n^{-1/2}\,M(T)^{\frac{1}{2} +\rho^{(q)}_q -d},
\end{align*} where \begin{align*}
     \rho^{(q)}_m = \min \{\rho^{(q)}_{m-1}, \alpha_{m,m+1}, \min_{0 \leq l\leq m-1}\{2\rho^{(q)}_l - \alpha_{l,m}\} \}, m  = 1, \dots, q,
    \end{align*}
     $\alpha_{j_1j_2} = 2|d_{(j_1)}-d_{(j_2)}|$, $d_{(q+1)} \coloneqq 0$, $\rho^{(j)}_0 \coloneqq 1$, $\alpha_{0,j} \coloneqq 1, j = 1,\dots,q$.
        \item $d_{il}$ differs only across rows, i.e. it satisfies case (b) in Assumption \ref{assump: spec_den_specific_form}. 

When $4\Delta + 2d - 1 <0$, $\Delta = d_{max} - d_{min}$,
\begin{align*}
    r_2^{(n)} \asymp n^{-\frac{1}{2}}M(T)^{\frac{1}{2} - 2\Delta - d} .
\end{align*}
    \end{enumerate}
It provides a consistency rate only if $\left(r^{(n)}\right)^{-1} = o(1)$.

\end{proof}

\section{Lemmas Used in Appendix \ref{sec:proof_main_prop}}\label{sec:proof_lemma_for_main_prop}
\subsection{$L^1$ Consistency of the Spectral Density Estimator}\label{subsec:L_1_consistency_periodogram_smoothing}
In order to prove Lemma~\ref{lemma:consistency_main_part_estimator}, 
we first establish the consistency of the smoothed periodogram estimator~(\ref{eq:smoothed_periodogram}). A takeaway of the theorem below is that it can be extended to \(L^p\) spaces, due to the continuity of translation in \(L^p\) and the fact that the variance vanishes as \(T\to\infty\), but for simplicity and to obtain a better rate, we present consistency only in \(L^1\) here.


\begin{lemma}\label{lemma:main_theorem_of_spectral_estimate}
    Assume that assumptions \ref{assump:model}, \ref{assump:semiparametric_long_memory}, \ref{assump:pervasive_short_mem}, \ref{assump: spec_den_specific_form}, \ref{assump:bdd_error_eval}, \ref{assump:W_kernel}, and \ref{assump:row_boundness_G} 
hold, then 
    \begin{align}\label{eq:conv_rate_expectation_sigma_hat_in_L1}
        \mathbb{E}\left[ \int_{\Pi} |\theta|^{2\tilde{d}} \left\|\widehat{\Sigma}_{n}(\theta) - \Sigma(\theta)\right\|_{op} d\theta \right] =\mathcal{O}\left(nB_T^{1-2\Delta} + \frac{n\log T}{TB_T} \right)
    \end{align}
    where $\Delta = \max_j d_j - \min_j d_j$.
\end{lemma}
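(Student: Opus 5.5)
I would split the estimation error into a bias part and a stochastic part,
\[
\widehat{\Sigma}_n(\theta)-\Sigma(\theta)=\bigl(\widehat{\Sigma}_n(\theta)-\mathbb{E}\widehat{\Sigma}_n(\theta)\bigr)+\bigl(\mathbb{E}\widehat{\Sigma}_n(\theta)-\Sigma(\theta)\bigr).
\]
I would bound the weighted $L^1$ norm of each term separately, using the two lemmas on the roadmap: Lemma \ref{lemma:bias_conv} for the bias and Lemma \ref{lm:var_conv} (with the cumulant bounds of Lemma \ref{lm:cum_property_long_mem}) for the variance. The weight $|\theta|^{2\tilde d}$ is the key device. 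By Lemma \ref{lemma:div_component_eval_specific_form}, $\|\Sigma(\theta)\|_{op}\lesssim n|\theta|^{-2d}$. After weighting, the singularity at zero is at most of order $n|\theta|^{-2\Delta}$, which is integrable because $2\Delta<1$. The idiosyncratic part contributes only a bounded, Lipschitz term by Assumption \ref{assump:bdd_error_eval}, so the analysis concentrates on $\Sigma_\chi$.

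For the bias I would proceed in two steps.

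\emph{Periodogram bias at Fourier frequencies.} First I would replace $\mathbb{E}\mathbf I_{XX}(\lambda_j)$ by $\Sigma(\lambda_j)$. For long-memory series the relative error is known to satisfy $\|\mathbb{E}\mathbf I(\lambda_j)-\Sigma(\lambda_j)\|\lesssim n|\lambda_j|^{-2d}\log j/j$ (Robinson-type bounds, extended entrywise via the row bound of Assumption \ref{assump:row_boundness_G}). After weighting and summing against the kernel, this produces the $\log T/(TB_T)$ term.

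\emph{Smoothing bias.} Next I would compare the discrete kernel average of $\Sigma(\lambda_j)$ with $\Sigma(\theta)$. I would split $\Pi$ into $|\theta|\le C B_T$ and $|\theta|>CB_T$.
\begin{itemize}
\item On the near-zero region, I would bound both terms crudely by $n|\theta|^{-2\Delta}$ or its averaged analogue. Integrating over a region of length $B_T$ gives $nB_T^{1-2\Delta}$.
\item Away from zero, I would use a mean-value argument on $\Sigma_\chi=G\bar D^2G^*$ (case (a)) or $D_nGG^*D_n^*$ (case (b)). The Lipschitz bound on $G$ in Assumption \ref{assump:pervasive_short_mem} and the derivative bound $|\partial_\theta |\theta|^{-2d}|\lesssim|\theta|^{-2d-1}$ give a pointwise error of order $nB_T|\theta|^{-2d-1}$. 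After weighting, $\int_{CB_T}^{\pi} nB_T\theta^{-2\Delta-1}\,d\theta\asymp nB_T^{1-2\Delta}$.
\item The Riemann-sum discretization error is $O(n/(TB_T))$.
\end{itemize}

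For the stochastic part, I would bound $\mathbb{E}\|\cdot\|_{op}$ by $(\mathbb{E}\|\cdot\|_{op}^2)^{1/2}$. To control the operator norm I would not pass through the Frobenius norm, since that costs an extra factor of $n$. Instead I would use the structure of the model: the common part has rank $q$, so $v^*\widehat\Sigma_\chi v$ reduces to a $q$-dimensional smoothed periodogram of $u$ filtered by $G^*\bar D$, whose fluctuation is $O\bigl(n|\theta|^{-2d}(TB_T)^{-1/2}\bigr)$. The idiosyncratic part is handled through the cumulant condition \eqref{eq:eps_Q_finite}. The required multivariate fourth-cumulant bounds under long memory come from Lemma \ref{lm:cum_property_long_mem}, which gives variance of order $n^2|\theta|^{-4d}/(TB_T)$ up to logarithms. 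After the $|\theta|^{2\tilde d}$ weighting and integration, the resulting square-root rate is dominated by, or absorbed into, the stated terms; the paper's simulation display additionally lists a $\log^{3/2}T/\sqrt T$ term.

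The main obstacle I expect is this variance step: obtaining an operator-norm bound that is linear in $n$ rather than quadratic, uniformly near $\theta=0$ where the kernel window straddles the singularity. For the cross terms between $\chi$ and $\xi$ I would first try a Cauchy--Schwarz bound $\|\widehat\Sigma_{\chi\xi}\|\le\|\widehat\Sigma_\chi\|^{1/2}\|\widehat\Sigma_\xi\|^{1/2}$, which gives only $\sqrt n$ growth.
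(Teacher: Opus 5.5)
Your bias/variance split and your handling of the bias follow the paper's Lemma~\ref{lemma:bias_conv} and are fine. That covers the periodogram bias at the Fourier frequencies, the near-zero region giving $nB_T^{1-2\Delta}$, the mean-value bound $nB_T|\theta|^{-2d-1}$ away from zero, and the $n/(TB_T)$ discretization error.

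The gap is the claim that the stochastic term is ``dominated by, or absorbed into, the stated terms''. Taking the square root of your own variance bound and integrating gives
\[
\int_{|\theta|>2\rho B_T}|\theta|^{2\tilde d}\,\frac{n|\theta|^{-2d}\log T}{\sqrt{TB_T}}\,d\theta\asymp\frac{n\log T}{\sqrt{TB_T}}.
\]
The region $|\theta|\le 2\rho B_T$ adds only $n\log T\,B_T^{1/2-2\Delta}T^{-1/2}+n\log T\,B_T^{2\tilde d}T^{2d-1}$. The term $n\log T/\sqrt{TB_T}$ always dominates $n\log T/(TB_T)$. It also dominates $nB_T^{1-2\Delta}$ whenever $B_T^{3/2-2\Delta}=o(T^{-1/2})$, in particular at $B_T=T^{-1/(2(1-\Delta))}$, the bandwidth that balances the two stated terms. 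The $\log^{3/2}T/\sqrt{T}$ term you cite does not help, since it is $o\bigl((TB_T)^{-1/2}\bigr)$ once $B_T\log^3T\to0$.

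No sharper variance argument can close this. Let $v$ be a unit leading eigenvector of $\Sigma(\theta)$. Then $\|\widehat\Sigma_n(\theta)-\Sigma(\theta)\|_{op}\ge|v^*(\widehat\Sigma_n(\theta)-\Sigma(\theta))v|$. Moreover, $v^*\widehat\Sigma_n(\theta)v$ is the smoothed periodogram of the scalar series $v^*X_t$, whose spectral density at $\theta$ is $\lambda_1(\theta)\ge cn$ by Lemma~\ref{lemma:div_component_eval_specific_form}. For $\theta$ bounded away from $0$ and $\pm\pi$, its standard deviation is of order $\lambda_1(\theta)(TB_T)^{-1/2}$. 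Its mean absolute deviation is comparable, e.g.\ for Gaussian data, which your assumptions allow.

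So in the regime above, the bias is $o\bigl(n(TB_T)^{-1/2}\bigr)$, while the left side of \eqref{eq:conv_rate_expectation_sigma_hat_in_L1} is at least of order $n(TB_T)^{-1/2}$. The stated rate therefore cannot hold. What can be proved is $\mathcal{O}\bigl(nB_T^{1-2\Delta}+n\log T/\sqrt{TB_T}\bigr)$. This moves the balancing bandwidth to $B_T\asymp T^{-1/(3-4\Delta)}$ and changes $\delta^{(T,n)}$ downstream.

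The paper's own proof contains the same slip. It bounds $\mathbb{E}\|\cdot\|_{op}\le(\mathbb{E}\|\cdot\|_F^2)^{1/2}\le n\max_{k,l}\mathrm{var}(\hat\sigma_{kl}(\theta))^{1/2}$ and inserts Lemma~\ref{lm:var_conv}. It then records the integrated square root as $n\log T/(TB_T)$ instead of $n\log T/\sqrt{TB_T}$.

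That same chain shows that the ``main obstacle'' you anticipate is not one. Lemma~\ref{lm:var_conv} bounds every entry uniformly in $(k,l)$ and $n$, so the Frobenius route costs only a factor $n$. By the lower bound above, that is already the true order of the operator-norm fluctuation. The rank-$q$ reduction and the Cauchy--Schwarz treatment of the $\chi$--$\xi$ cross terms are therefore unnecessary.
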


\begin{proof}[Proof of Theorem \ref{lemma:main_theorem_of_spectral_estimate}]
Since the integrand below on the left side is measurable and nonnegative, by Tonelli’s theorem we may interchange expectation and integral.

\begin{align*}
    \mathbb{E}\left[ \int_{\Pi} |\theta|^{2\tilde{d}} \left\|\widehat{\Sigma}_{n}(\theta) - \mathbb{E} \widehat{\Sigma}_{n}(\theta) \right\|_{op} d\theta \right] &= \int_{\Pi} |\theta|^{2\tilde{d}} \mathbb{E}\left[  \left\|\widehat{\Sigma}_{n}(\theta) - \mathbb{E} \widehat{\Sigma}_{n}(\theta) \right\|_{op} \right] d\theta\\
    &\leq \int_{\Pi} |\theta|^{2\tilde{d}} \sqrt{\mathbb{E} \left\|\widehat{\Sigma}_{n}(\theta) - \mathbb{E} \widehat{\Sigma}_{n}(\theta) \right\|_{op}^2  } d\theta\\
    &\leq \int_{\Pi} |\theta|^{2\tilde{d}} \sqrt{\mathbb{E} \left\|\widehat{\Sigma}_{n}(\theta) - \mathbb{E} \widehat{\Sigma}_{n}(\theta) \right\|_{F}^2  } d\theta\\
    &= \int_{\Pi} |\theta|^{2\tilde{d}} \sqrt{\mathbb{E}\left[ \sum_{k,l} \left| \hat{\sigma}_{kl}(\theta) - \mathbb{E}\left(\hat{\sigma}_{kl}(\theta)\right)\right|^2  \right]} d\theta\\
    &\leq \int_{\Pi} |\theta|^{2\tilde{d}} \sqrt{n^2\max_{kl}\mathbb{E}\left[ \left| \hat{\sigma}_{kl}(\theta) - \mathbb{E}\left(\hat{\sigma}_{kl}(\theta)\right)\right|^2  \right]} d\theta\\
    &\leq n\int_{\Pi} |\theta|^{2\tilde{d}} \max_{kl}\sqrt{var\left(\hat{\sigma}_{kl}(\theta)\right)}  d\theta
\end{align*}
Combining with the rate from Lemma \ref{lm:var_conv} that 
\begin{align*}
    \max_{kl} var\left(\hat{\sigma}_{kl}(\theta)\right) = \mathcal{O}\left(  \frac{|\theta|^{-4d}\log^2 T}{TB_T}\mathbb{1}(|\theta|>2\rho B_T) + \left(\frac{\log^2 T}{TB_T^{1+4d}} + \frac{\log^2 T}{T^{2-4d}B_T^{2}}\right)\mathbb{1}(|\theta|\leq 2\rho B_T) \right)
\end{align*}
$\theta\in \Pi$ a.e., 
we obtain
\begin{align*}
    \mathbb{E}\left[ \int_{\Pi} |\theta|^{2\tilde{d}} \left\|\widehat{\Sigma}_{n}(\theta) - \mathbb{E} \widehat{\Sigma}_{n}(\theta) \right\|_{op} d\theta\right]  = \mathcal{O}\left( n\log T \left( \frac{1}{TB_T} + \frac{B_T^{\frac{1}{2} - 2\Delta}}{\sqrt{T}} + \frac{B_T^{2\tilde{d}}}{T^{1-2d}} \right) \right)
\end{align*}

Combining with the rate for bias term from Lemma \ref{lemma:bias_conv} yields, 
    \begin{align*}
        &\mathbb{E}\left[  \int_{\Pi} |\theta|^{2\tilde{d}} \left\|\widehat{\Sigma}_{n}(\theta) - \Sigma(\theta)\right\|_{op} d\theta  \right]\\ 
        &\leq \mathbb{E}\left[ \int_{\Pi} |\theta|^{2\tilde{d}} \left\|\widehat{\Sigma}_{n}(\theta) - \mathbb{E} \widehat{\Sigma}_{n}(\theta) \right\|_{op} d\theta\right] +  \int_{\Pi} |\theta|^{2\tilde{d}}\left\|\mathbb{E}\left(\widehat{\Sigma}_{n}(\theta)\right) - \Sigma(\theta)\right\|_{op}  d\theta  \\
        &\leq \mathcal{O}\left( n\log T \left( \frac{1}{TB_T} + \frac{B_T^{\frac{1}{2} - 2\Delta}}{\sqrt{T}} + \frac{B_T^{2\tilde{d}}}{T^{1-2d}} \right)  + nB_T^{1-2\Delta} + \frac{n}{TB_T}\right)\\
        &= \mathcal{O}\left( nB_T^{1-2\Delta} + \frac{n\log T}{TB_T}\right)
    \end{align*}
    
\end{proof}

\subsection{Consistency of the main part of the estimator}
\begin{lemma}\label{lemma:consistency_main_part_estimator}
For all $0<\eta<4$, there exists a $B_{\eta}$ and $N_{\eta}$ such that for all $n \geq N_{\eta}$,
    \begin{align}\label{eq:main_prob}
        P\left[r^{(n)}\left|\sum_{h=-M(T)}^{M(T)}\left(\widehat{\mathbf{K}}_{n i, h}-\mathbf{K}_{n i, h}\right) L^h X_t\right|>\frac{B_{\eta}}{4}\right] \leq \frac{\eta}{4}
    \end{align}
    where 
    \begin{align*}
        r^{(n)} = \mathcal{O}\left( M(T)^{-1}\left(\delta^{(T,n)}\right)^{-1}n^{-\frac{1}{2}} \right)
    \end{align*}
\end{lemma}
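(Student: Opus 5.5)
Conditioning on the estimated coefficients is not available because $\widehat{\mathbf K}_{ni,h}$ and $X_t$ are built from the same sample. I will therefore bound the main term deterministically by a product of a filter-error factor and a data factor, and then use Markov's inequality. Write
\[
Z_{i,t}=\sum_{h=-M(T)}^{M(T)}\bigl(\widehat{\mathbf K}_{ni,h}-\mathbf K_{ni,h}\bigr)X_{t-h}.
\]
Since $\widehat{\mathbf K}_{ni,h}-\mathbf K_{ni,h}=\frac{1}{2\pi}\int_\Pi(\underline{\widehat{\mathbf K}}_{ni}(\theta)-\underline{\mathbf K}_{ni}(\theta))e^{-\iota h\theta}d\theta$, we get for every $h$
\[
\|\widehat{\mathbf K}_{ni,h}-\mathbf K_{ni,h}\|_2\le \frac{1}{2\pi}\int_\Pi\|\underline{\widehat{\mathbf K}}_{ni}(\theta)-\underline{\mathbf K}_{ni}(\theta)\|\,d\theta=:\frac{1}{2\pi}E_{n,T}.
\]
Cauchy--Schwarz then gives
\[
|Z_{i,t}|\le \frac{1}{2\pi}\,E_{n,T}\,(2M(T)+1)\max_{|h|\le M(T)}\|X_{t-h}\|_2 .
\]
Because $t=t^*(T)$ lies in the central range \eqref{eq:center_part_T}, all lags $t-h$ with $|h|\le M(T)$ are observed once $M(T)=o(T)$. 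It then suffices to show two things: $E_{n,T}=O_P(\delta^{(T,n)})$, and $\|X_s\|_2=O_P(\sqrt n)$ uniformly over the relevant $s$.

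\textbf{The data factor.} $\mathbb E\|X_s\|_2^2=\int_\Pi\operatorname{tr}\Sigma_X(\theta)d\theta$ does not depend on $s$, by stationarity. By Assumption~\ref{assump:row_boundness_G}, each diagonal entry of $\Sigma_\chi$ is at most $C|\theta|^{-2d}$, which is integrable since $d<1/2$. By Assumption~\ref{assump:bdd_error_eval}, the diagonal entries of $\Sigma_\xi$ are bounded. Hence $\mathbb E\|X_s\|_2^2=O(n)$. The maximum over $2M(T)+1$ lags is not needed at full strength: I will instead write $|Z_{i,t}|\le \frac{1}{2\pi}E_{n,T}\sum_{|h|\le M}\|X_{t-h}\|_2$ and apply Markov to the product. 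One could use Hölder on the product, but it is cleaner to first place $E_{n,T}$ on an event of probability at least $1-\eta/8$ where $E_{n,T}\le C_\eta\delta^{(T,n)}$. On that event, $\mathbb E\sum_h\|X_{t-h}\|\le (2M+1)\sqrt{Cn}$, and Markov gives $|Z_{i,t}|=O_P(M(T)\sqrt n\,\delta^{(T,n)})$. This is exactly $(r^{(n)})^{-1}$.

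\textbf{The filter-error factor, which is the main obstacle.} $E_{n,T}=O_P(\delta^{(T,n)})$ has to be transferred from Lemma~\ref{lemma:main_theorem_of_spectral_estimate}, which controls $|\theta|^{2\tilde d}\|\widehat\Sigma_n-\Sigma\|$ in $L^1$ at rate $n\delta^{(T,n)}$. The key observation is that $\underline{\mathbf K}_{ni}(\theta)$ is the $i$th row of the spectral projector $\Pi_q(\theta)$ onto the top-$q$ eigenspace. By a Davis--Kahan type bound, pointwise in $\theta$,
\[
\|\underline{\widehat{\mathbf K}}_{ni}(\theta)-\underline{\mathbf K}_{ni}(\theta)\|\le\|\widehat\Pi_q-\Pi_q\|\le \min\Bigl\{2,\ \frac{C\|\widehat\Sigma_n(\theta)-\Sigma(\theta)\|}{\mathrm{gap}(\theta)}\Bigr\}.
\]
By Lemma~\ref{lemma:div_component_eval_specific_form} together with Assumption~\ref{assump:bdd_error_eval}, the eigengap satisfies $\mathrm{gap}(\theta)\ge c\,n|\theta|^{-2\tilde d}$ for $n$ large, uniformly a.e. This is the "intrinsic prewhitening": dividing by the gap converts the estimation error into exactly the weighted quantity $|\theta|^{2\tilde d}\|\widehat\Sigma_n-\Sigma\|/n$. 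Taking expectations and applying Lemma~\ref{lemma:main_theorem_of_spectral_estimate} yields $\mathbb E E_{n,T}=O(B_T^{1-2\Delta}+\log T/(TB_T))=O(\delta^{(T,n)})$, and Markov gives the required event.

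The delicate points are that the sample eigenvalue ordering near the gap need not match the population ordering, and that $\widehat\Pi_q$ must be well defined. I would handle both by using the Davis--Kahan form that depends only on the population gap, with the bound capped at $2$, so no sample-gap condition is needed. Choosing $B_\eta$ large then completes \eqref{eq:main_prob} with $r^{(n)}\asymp M(T)^{-1}(\delta^{(T,n)})^{-1}n^{-1/2}$.
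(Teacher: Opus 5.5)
Your proposal is correct and follows essentially the same route as the paper. Davis--Kahan with the population eigengap $\gtrsim n|\theta|^{-2\tilde d}$ turns the weighted $L^1$ bound of Lemma~\ref{lemma:main_theorem_of_spectral_estimate} into $\mathbb E\int_\Pi\|\underline{\widehat{\mathbf K}}_{ni}-\underline{\mathbf K}_{ni}\|\,d\theta=O(\delta^{(T,n)})$; Markov then gives a high-probability event on which $\max_h\|\widehat{\mathbf K}_{ni,h}-\mathbf K_{ni,h}\|\lesssim\delta^{(T,n)}$; and $\mathbb E\|X_t\|_2^2=O(n)$ controls the data factor. The only difference is cosmetic: the paper uses Cauchy--Schwarz, second moments and $\mathbb E[Y\mid A]\le \mathbb E[Y]/P(A)$ on that event, while you use the triangle inequality, first moments and a union bound, and both give the same rate $M(T)\sqrt n\,\delta^{(T,n)}$.
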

\begin{proof}
    Recall that 
    \begin{align*}
    \underline{\mathbf{K}}_{n i}(\theta) &=  \tilde{p}_{n1,i}(\theta) \mathbf{p}_{n1}(\theta) + \tilde{p}_{n2,i}(\theta) \mathbf{p}_{n2}(\theta) + ... + \tilde{p}_{nq,i}(\theta) \mathbf{p}_{nq}(\theta)\\
        \widehat{\underline{\mathbf{K}}}_{n i}(\theta) &=  \tilde{\hat{p}}_{n1,i}(\theta) \mathbf{\hat{p}}_{n1}(\theta) + \tilde{\hat{p}}_{n2,i}(\theta) \mathbf{\hat{p}}_{n2}(\theta) + ... + \tilde{\hat{p}}_{nq,i}(\theta) \mathbf{\hat{p}}_{nq}(\theta)
    \end{align*}
    Let $V(\theta) = \left( \mathbf{p}_{n1}^{H}(\theta), ...,  \mathbf{p}_{nq}^{H}(\theta) \right)$, $\hat{V}(\theta) = \left( \mathbf{\hat{p}}_{n1}^{H}(\theta), ...,  \mathbf{\hat{p}}_{nq}^{H}(\theta) \right)$,
    then by the first statement in \cite{DavisKahan}, Theorem 2, we have 
    \begin{align*}
        \left\|  \sin \Theta \left(\hat{V}(\theta), V(\theta)\right)  \right\|_F \leq  \frac{C \left\|\hat{\Sigma}(\theta) - \Sigma(\theta) \right\|_{op}}{\lambda_q(\theta) - \lambda_{q+1}(\theta)}
    \end{align*}
    \cite{DavisKahan} treats real symmetric matrices, but the first statement of Theorem~2 extends straightforwardly to Hermitian matrices over $\mathbb{C}$. Proof details are omitted. The definition of the canonical angles between two complex subspaces is given in \cite{sunstewart1990matrix} Definition~I.5.3. 

    Weyl inequalities and $\Sigma(\theta) = \Sigma^{\chi}(\theta) + \Sigma^{\xi}(\theta)$ imply that
    \begin{align*}
        \lambda_{q}^{\chi}(\theta) + \lambda_{n}^{\xi}(\theta) \leq \lambda_{q}^{x}(\theta) \leq \lambda_{q}^{\chi}(\theta) + \lambda_{1}^{\xi}(\theta).
    \end{align*}
    Further by Assumption \ref{assump:bdd_error_eval} and Lemma \ref{lemma:div_component_eval_specific_form}, there exists some $n^x$, for $n \geq n^{x}$,
    \begin{align*}
        \lambda_{q}^{x}(\theta) - \lambda_{q+1}^{x}(\theta) &\geq n(\alpha_q^{\chi}(\theta) - \beta_q^{\chi}(\theta)) - (\lambda_{1}^{\xi}(\theta) - \lambda_{n}^{\xi}(\theta)) \\
        &\geq n(\alpha_q^{\chi}(\theta) - \beta_q^{\chi}(\theta)) - \Lambda^{\xi}\\
        &\geq Cn(\alpha_q^{\chi}(\theta) - \beta_q^{\chi}(\theta)) \geq Cn|\theta|^{2\tilde{d}}
    \end{align*} where $\tilde{d} = \min_{il} d_{il}$.

    Meanwhile, by \cite{sunstewart1990matrix} Theorem~I.5.5, we have
    \begin{align*}
        \left\|V(\theta)V^H(\theta) - \hat{V}(\theta)\hat{V}^H(\theta)\right\|_{op} = \left\|\sin \Theta \left(\hat{V}(\theta), V(\theta)\right)\right\|_{op} = q\left\|\sin \Theta \left(\hat{V}(\theta), V(\theta)\right)\right\|_{F}
    \end{align*}

    Then use the relation that $\underline{\mathbf{K}}_{n i}(\theta) - \widehat{\underline{\mathbf{K}}}_{n i}(\theta) = e_i \left( V(\theta)V^H(\theta) - \hat{V}(\theta)\hat{V}^H(\theta)  \right)$, where $e_i$ is the standard basis vector, we have
    \begin{align}\label{eq:perturbation_prewhiten_scale}
        \left\| \widehat{\underline{\mathbf{K}}}_{n i}(\theta) - \underline{\mathbf{K}}_{n i}(\theta)  \right\| &\leq \frac{C\left\|\widehat{\Sigma}_{n}(\theta) - \Sigma(\theta)\right\|_{op}}{n\alpha_q^{\chi}(\theta) }\\
        &\leq \frac{Cc_j |\theta|^{2\tilde{d}}}{n} \left\|\widehat{\Sigma}_{n}(\theta) - \Sigma(\theta)\right\|_{op}
    \end{align}

    From Lemma \ref{lemma:main_theorem_of_spectral_estimate}, we know that 
    \begin{align*}
    &\mathbb{E} \left[\int_{\Pi} |\theta|^{2\tilde{d}} \left\|\widehat{\Sigma}_{n}(\theta) - \Sigma(\theta)\right\|_{op}   d\theta  \right] = \mathcal{O}\left(n\delta^{(T,n)}\right),\\
    &\delta^{(T,n)} = B_T^{1-2\Delta} + \frac{1}{TB_T} + \frac{ \log^{\frac{3}{2}}T}{\sqrt{T}}
    \end{align*}
    where $\Delta = \max_j d_j - \min_j d_j$.
    Then by Markov's inequality,
    for all $\eta>0$, there exists $ N_{3,\eta}$ such that for $n\geq N_{3,\eta}$,
    \begin{align*}
        \mathbb{P}\left( \max_h \left\| \widehat{\mathbf{K}}_{n i, h}-\mathbf{K}_{n i, h} \right\| > \frac{8}{\eta}\delta^{(T,n)}  \right)&\leq \mathbb{P}\left(\int_{\Pi} \left\|\widehat{\underline{\mathbf{K}}}_{n i}(\theta) -\underline{\mathbf{K}}_{n i}(\theta) \right\| d\theta  > \frac{8}{\eta}\delta^{(T,n)}  \right)\\
        &\leq \mathbb{P}\left( \int_{\Pi} |\theta|^{2\tilde{d}} \left\|\widehat{\Sigma}_{n}(\theta) - \Sigma(\theta)\right\|_{op}   d\theta > \frac{8}{\eta}n\delta^{(T,n)}  \right)\\
        &\leq \mathbb{E} \left[\int_{\Pi} |\theta|^{2\tilde{d}} \left\|\widehat{\Sigma}_{n}(\theta) - \Sigma(\theta)\right\|_{op}   d\theta  \right]\frac{\eta}{8n\delta^{(T,n)}} \leq \frac{\eta}{8}
    \end{align*}
    For $\eta \leq 4,$ moreover we have 
    \begin{align*}
        \mathbb{P}\left( \max_h \left\| \widehat{\mathbf{K}}_{n i, h}-\mathbf{K}_{n i, h} \right\| \leq \frac{8}{\eta}\delta^{(T,n)}  \right) 
 = 1- \mathbb{P}\left( \max_h \left\| \widehat{\mathbf{K}}_{n i, h}-\mathbf{K}_{n i, h} \right\| > \frac{8}{\eta}\delta^{(T,n)}  \right) \geq \frac{1}{2}
    \end{align*}
    \begin{align*}
        &\text{LHS of }\eqref{eq:main_prob} \\
        &\leq \mathbb{P}\left[r^{(n)}\left|\sum_{h=-M(T)}^{M(T)}\left(\widehat{\mathbf{K}}_{n i, h}-\mathbf{K}_{n i, h}\right) L^h X_t\right|>\frac{B}{4}\Bigg| \max_h \left\| \widehat{\mathbf{K}}_{n i, h}-\mathbf{K}_{n i, h} \right\| \leq \frac{8}{\eta}\delta^{(T,n)}\right]\\
        &+ \mathbb{P}\left[ \max_h \left\| \widehat{\mathbf{K}}_{n i, h}-\mathbf{K}_{n i, h} \right\| > \frac{8}{\eta}\delta^{(T,n)}  \right]\\
    &\leq \frac{16}{B^2} \mathbb{E}\left[\left(r^{(n)}\right)^2
  \left|\sum_{h=-M(T)}^{M(T)}\left(\widehat{\mathbf{K}}_{n i, h}-\mathbf{K}_{n i, h}\right)X_{t-h}\right|^2 \Bigg| \max_h \left\| \widehat{\mathbf{K}}_{n i, h}-\mathbf{K}_{n i, h} \right\| \leq \frac{8}{\eta}\delta^{(T,n)}\right] + \frac{\eta}{8}\\
&\leq \frac{16\left(r^{(n)}\right)^2}{B^2} \mathbb{E} \left[
  \left(\sum_{h=-M(T)}^{M(T)} \left\| \widehat{\mathbf{K}}_{n i, h}-\mathbf{K}_{n i, h} \right\|^2\right) 
  \left( \sum_{h=-M(T)}^{M(T)} \left\|{X}_{t-h} \right\|^2 \right)\Bigg|\cdots\right] + \frac{\eta}{8}\\
&\leq \frac{16\left(r^{(n)}\right)^2}{B^2}(2M(T)+1) \mathbb{E} \left[
  \max_h \left\| \widehat{\mathbf{K}}_{n i, h}-\mathbf{K}_{n i, h} \right\|^2
  \left( \sum_{h=-M(T)}^{M(T)} \left\|{X}_{t-h} \right\|^2 \right) \Bigg| \cdots \right]+ \frac{\eta}{8}\\
&\leq \frac{16\left(\frac{8}{\eta}r^{(n)}\right)^2}{B^2}(2M(T)+1)^2 \left(\delta^{(T,n)}\right)^2 \mathbb{E} \left[\left\|{X}_{t} \right\|^2  \right]/\mathbb{P}\left[ \left\| \widehat{\mathbf{K}}_{n i, h}-\mathbf{K}_{n i, h} \right\| \leq \frac{8}{\eta}\delta^{(T,n)} \right]+ \frac{\eta}{8}\\
&\leq \frac{32\left(r^{(n)}\right)^2}{(B/\frac{8}{\eta})^2}(2M(T)+1)^2 \left(\delta^{(T,n)}\right)^2 \mathbb{E} \left[\left\|{X}_{t} \right\|^2  \right]+ \frac{\eta}{8}
    \end{align*}
    Notice that
    \begin{align*}
        \mathbb{E} \left[\left\|{X}_{t} \right\|^2\right] &= \mathbb{E} \left[ trace\left( X_t X_t'  \right) \right] = trace\left(\Gamma_0\right) = trace \int_{\Pi} \Sigma(\theta) d\theta\\
        &= \sum_{j=1}^q \int_{\Pi} \lambda_j(\theta) d\theta + \sum_{j=q+1}^n \int_{\Pi} \lambda_j(\theta) d\theta \leq Cnq + \Lambda (n-q) = \mathcal{O}(n)
    \end{align*}
    Then, for all $0<\eta<4$, there exists a $B_{1,\eta}$ and $N_{1,\eta}$ such that for all $n \geq \max(N_{1,\eta}, N_{2,\eta},N_{3,\eta})$, 
    \[
    \frac{128}{B_{1,\eta}} n\left(\delta^{(T,n)}r^{(n)}M(T)\right)^2 \leq \left(\frac{\eta}{8}\right)^3
    \]

which holds whenever
\[
r^{(n)} \le C_\eta\, M(T)^{-1}\,(\delta^{(T,n)})^{-1}\,n^{-1/2}.
\]
In particular, we may take
\[
r^{(n)} \asymp M(T)^{-1}(\delta^{(T,n)})^{-1}n^{-1/2}.
\]
    In this case, $R_1 \leq \eta/8 + \eta/8 \leq \eta/4$.
\end{proof}

\subsection{Consistency of the remainder part of the oracle estimator}
\begin{lemma}For all $0<\eta<4$, there exists a $B_{\eta}$ and $N_{\eta}$ such that for all $n \geq N_{\eta}$,
    \begin{align}\label{eq:remainder_prob}
        P\left[r^{(n)}\left|\sum_{|h|>M(T)} \mathbf{K}_{n i, h} X_{t-h}\right|>\frac{B}{4}\right]<\frac{\eta}{4}
    \end{align}
    where
    \begin{enumerate}
        \item when all rows and series share the same $d$
        \begin{align*}
            r_2^{(n)} \asymp \frac{1}{\log M(T)}\begin{cases}\dfrac{M(T)}{\sqrt{n}}, & d=0,\\[6pt] \min\!\left(\dfrac{M(T)}{\sqrt{n}},\; \sqrt{n}\, M(T)^{2d}\right), & d>0.\end{cases}, 
        \end{align*}
\begin{enumerate}[(i)]
    \item When the idiosyncratic error $\xi_t$ is an orthonormal white noise,  \begin{align*}
            r_2^{(n)} \asymp \frac{M(T)}{\log M(T) \sqrt{n}}, 
        \end{align*}
\end{enumerate}
\item $d_{il}$ differs, and it satisfies case(a) in Assumption \ref{assump: spec_den_specific_form}
\begin{align*}
    r_2^{(n)} \asymp n^{-1/2}\,M(T)^{\frac{1}{2} +\rho^{(q)}_q -d}.
\end{align*}
        \item $d_{il}$ differs, and it satisfies case(b) in Assumption \ref{assump: spec_den_specific_form}. 
        When $4\Delta + 2d - 1 <0$, $\Delta = d_{max} - d_{min}$
        \begin{align*}
    r_2^{(n)} \asymp n^{-\frac{1}{2}}M(T)^{\frac{1}{2} - 2\Delta - d} .
\end{align*}
    \end{enumerate}
\end{lemma}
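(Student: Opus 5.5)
The plan is to bound the second moment of the tail $R_{it}:=\sum_{|h|>M(T)}\mathbf{K}_{ni,h}X_{t-h}$ and then apply Chebyshev's inequality, as in the proof of Proposition~\ref{proposition_oracle}. Since $\underline{\mathbf{K}}_{ni}$ is deterministic, $R_{it}$ is a linear filter of the stationary process $X_t$. Its transfer function is the Fourier tail $\underline{\mathbf{K}}^{>M}_{ni}(\theta):=\frac{1}{2\pi}\sum_{|h|>M(T)}\mathbf{K}_{ni,h}e^{\iota h\theta}$. Hence
\[
\mathbb{E}|R_{it}|^2=\int_\Pi \underline{\mathbf{K}}^{>M}_{ni}(\theta)\,\Sigma_X(\theta)\,\underline{\mathbf{K}}^{>M}_{ni}(\theta)^{*}\,d\theta .
\]
I split $\Sigma_X=\Sigma_\chi+\Sigma_\xi$. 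The idiosyncratic part is at most $\Lambda_\xi^+\sum_{|h|>M}\|\mathbf{K}_{ni,h}\|_2^2$ by Assumption~\ref{assump:bdd_error_eval} and Parseval. For the common part I write $\Sigma_\chi=B(\theta)B^*(\theta)$ with $B=GD$ in case (a) or $B=D_nG$ in case (b). The term then equals $\sum_{l}\int_\Pi|\underline{\mathbf{K}}^{>M}_{ni}(\theta)b_l(\theta)|^2d\theta$, and each integrand is the squared modulus of the Fourier tail of the convolution $\mathbf{K}_{ni,\cdot}\ast b_{l,\cdot}$, where $b_{l,\cdot}$ are the coefficients of the long-memory filter. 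Chebyshev then gives $r_2^{(n)}\asymp(\mathbb{E}|R_{it}|^2)^{-1/2}$. So the whole lemma reduces to \emph{coefficient decay} estimates for $\|\mathbf{K}_{ni,h}\|_2$, together with the size $n$ entering through $\|B\|^2\lesssim n|\theta|^{-2d}$ and through the row bound of Assumption~\ref{assump:row_boundness_G}.

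\emph{Common $d$.} Here $|\theta|^{2d}\Sigma_\chi = G G^*$ shares its eigenvectors with $\Sigma_\chi$, and the eigengap is $\asymp n|\theta|^{-2d}$. I will invoke Lemma~\ref{lemma:holder_cont_K} (continuity/regularity of the eigenprojection) to differentiate the projection $VV^H$ via the resolvent formula $\frac{1}{2\pi\iota}\oint(z-\Sigma_X)^{-1}dz$. The derivative is controlled by $\|\Sigma_\xi'\|/(\text{gap})\lesssim |\theta|^{2d}/n$ plus a term of order $1$ from the Lipschitz bound on $G$. Integration by parts then gives $\|\mathbf{K}_{ni,h}\|_2\lesssim h^{-1}(n^{-1/2}+\dots)$. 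Squaring and summing the tail against the spectral weight $|\theta|^{-2d}$ gives the $M(T)^{-1}\sqrt n$-type term, with a $\log M$ coming from the boundary term at $\theta=0$. The perturbation of the row $e_i VV^H$ near $0$ contributes $n^{-1}M^{-2d}$ from $\int_{|\theta|<1/M}|\theta|^{-2d}\cdot|\theta|^{4d}/n$-type pieces. This yields the minimum in \eqref{eq:common_d_edge_rate}. When $\xi$ is white noise, $\Sigma_\xi=\mathbf I$ commutes with everything, the projection equals that of $GG^*$, and only the first term survives.

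\emph{Heterogeneous cases.} For case (a) I will use Lemma~\ref{lemma:K_holder_factor_wise}: the eigenprojection is Hölder with a cusp at $0$ of exponent $\rho_q^{(q)}$. This is proved inductively over the ordered eigenvalues, with ratios $|\theta|^{\alpha_{l,m}}$ between successive gaps and the recursion for $\rho^{(q)}_m$ under Assumption~\ref{assump:gap_dominance}. It gives $\|\mathbf{K}_{ni,h}\|_2\lesssim n^{-1/2}h^{-1/2-\rho_q^{(q)}}$ up to constants. Inserting this in the tail sum with the weight $|\theta|^{-2d}$ (whose Fourier coefficients decay like $h^{2d-1}$) gives $\mathbb{E}|R|^2\lesssim n\,M^{-1-2\rho_q^{(q)}+2d}$, i.e.\ the stated $r_2^{(n)}$. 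For case (b) I use Lemma~\ref{lemma:d_differ_K_fourier_coef_decay_rate_old}, i.e.\ $\|\mathbf{K}_{ni,h}\|_2\lesssim h^{2\Delta-1}$. Convolving with $b$ and summing, the tail is finite only if $4\Delta+2d-1<0$, and it gives $M^{4\Delta+2d-1}$.

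The main obstacle is the decay of $\mathbf{K}_{ni,h}$, i.e.\ the regularity of the eigenprojection near the singularity. In case (a) this means tracking the separate eigengaps $n|\theta|^{-2d_{(m)}}$ whose ratios degenerate at $0$, which is exactly what the recursion $\rho^{(q)}_m$ encodes. I would first try a contour-integral representation localized to each group of eigenvalues, and bound the derivative by $|\theta|^{\rho-1}$.
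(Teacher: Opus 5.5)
\paragraph{Overall.} Your frame matches the paper: Chebyshev, the spectral representation of $\mathbb{E}|R_{it}|^2$, and $\lambda_1(\theta)\lesssim n|\theta|^{-2d}$. In case (b) your handling of the weight is a legitimate alternative to the paper's argument. You pair the tail coefficients $|h|^{2\Delta-1}$ from Lemma~\ref{lemma:d_differ_K_fourier_coef_decay_rate_old} with the $|k|^{2d-1}$ Fourier coefficients of $|\theta|^{-2d}$. The paper instead splits at $\delta_0=M(T)^{-1}$: a uniform bound $M^{2\Delta}$ on the tail near $0$, and Parseval $M^{4\Delta-1}$ away from $0$. Both give $nM^{4\Delta+2d-1}$, and yours makes $4\Delta+2d-1<0$ appear naturally as convergence of the double sum.

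\paragraph{The gap: common $d$.} The gap is the reduction ``everything is pointwise coefficient decay,'' which fails in the common-$d$ case. One integration by parts gives at best $\|\mathbf{K}_{ni,h}\|_2\lesssim|h|^{-1}$. From that alone, $\sum_{|h|>M}\|\mathbf{K}_{ni,h}\|_2^2$ is only $O(M(T)^{-1})$, and this is attained by functions with a jump. Against the weight $|\theta|^{-2d}$, your own double sum then gives $nM(T)^{2d-1}$, i.e.\ $r_2^{(n)}\asymp n^{-1/2}M(T)^{1/2-d}$. That is far from $M(T)/(\sqrt n\log M(T))$, which needs the tail to be $O(\log M/M)$ \emph{uniformly} in $\theta$.
\begin{itemize}
\item The paper gets this from the modulus of continuity, not from coefficient decay. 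Lemma~\ref{lemma:holder_cont_K} gives modulus $\max(n^{-1}\delta^{2d},\delta)$. Zygmund's theorem (Ch.~II, Thm.~10.8: $\|f-S_Mf\|_\infty\lesssim\omega(1/M)\log M$, with $S_M$ the partial sum) then gives $\sup_\theta\|\underline{\mathbf{K}}^{>M}_{ni}(\theta)\|\lesssim\log M\,\max(M^{-1},n^{-1}M^{-2d})$. Squaring and multiplying by $\int_\Pi\lambda_1\le Cn$ produces both branches of the minimum.
\item So the $\log M$ is a Lebesgue-constant factor, not a boundary term at $\theta=0$.
\item Your heuristic $\int_{|\theta|<1/M}|\theta|^{-2d}|\theta|^{4d}/n\asymp n^{-1}M^{-1-2d}$ does not yield the $\sqrt n\,M^{2d}$ branch.
\item Your derivative estimate also omits the dominant singular piece. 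After rescaling by $|\theta|^{2d}$ (gap $\asymp n$), differentiating $|\theta|^{2d}\Sigma_\xi$ produces $2d|\theta|^{2d-1}\Sigma_\xi$. Hence $\|\underline{\mathbf{K}}_{ni}'\|\lesssim 1+n^{-1}|\theta|^{2d-1}$, and this term is exactly what generates the $n^{-1}\delta^{2d}$ modulus.
\end{itemize}

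\paragraph{The gap: case (a).} The decay you attribute to Lemma~\ref{lemma:K_holder_factor_wise}, $n^{-1/2}|h|^{-1/2-\rho^{(q)}_q}$, is both unjustified and insufficient.
\begin{itemize}
\item H\"older continuity of exponent $\rho^{(q)}_q$ alone gives only $|h|^{-\rho^{(q)}_q}$.
\item Inserting $|h|^{-1/2-\rho}$ into your weighted double sum gives $M^{2d-2\rho}$ (and needs $\rho>d$), not $nM^{2d-1-2\rho}$.
\item Attaching $n^{-1/2}$ to the coefficients is also inconsistent with keeping the factor $n$ in the variance.
\end{itemize}
What is actually needed is $\|\mathbf{K}_{ni,h}\|_2=O(|h|^{-1-\rho^{(q)}_q})$, which is Lemma~\ref{lemma:d_differ_K_piecewise_cont_fourier_coef_decay_rate}. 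That lemma rests on the derivative bounds \eqref{eq:first_second_derivative_K_bnd}, $\|\underline{\mathbf{K}}^{(r)}_{ni}(\theta)\|\le C|\theta|^{\rho^{(q)}_q-r}$ for $r=1,2$, and on two integrations by parts with a split at $|\theta|=|h|^{-1}$. With that input, both your double-sum argument and the paper's split at $M(T)^{-1}$ give $\mathbb{E}|R_{it}|^2\lesssim nM(T)^{2d-1-2\rho^{(q)}_q}$. The paper's split uses a uniform tail bound $\lesssim M^{-\rho}$ and Parseval $M^{-1-2\rho}$.
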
\label{lemma:consistency_residual_part_estimator}
\begin{proof}
\begin{enumerate}
    \item  we first show the case when all rows and series share the same $d$
    Denote $\mathbf{K}^{tail}_{ni,h} = \mathbf{K}_{n i, h} \mathbb{I}(|h|>M(T)), \underline{\mathbf{K}}^{tail}_{n i}(\theta) = \sum_{h}\mathbf{K}^{tail}_{ni,h} e^{\iota h \theta}$\\ $= \sum_{|h|>M(T)}\mathbf{K}_{ni,h} e^{\iota h \theta}$, then by Chebyshev's inequality,
\begin{align*}
    & P\left[r^{(n)}\left|\sum_{|h|>M(T)} \mathbf{K}_{n i, h} X_{t-h}\right|>\frac{B}{4}\right]\leq \frac{16\left(r^{(n)}\right)^2}{B^2}\mathbb{E}\left[ \left|\sum_{|h|>M(T)} \mathbf{K}_{n i, h} X_{t-h}\right|^2 \right]\\
    &= \frac{16\left(r^{(n)}\right)^2}{B^2} var\left( \sum_{h} \mathbf{K}_{n i, h}^{tail} X_{t-h}  \right) = \frac{16\left(r^{(n)}\right)^2}{B^2} \int_{\Pi} \underline{\mathbf{K}}^{tail}_{n i}(\theta) \Sigma(\theta) \widetilde{\underline{\mathbf{K}}}^{tail}_{n i}(\theta) d\theta\\
    &\leq \frac{16\left(r^{(n)}\right)^2}{B^2} \int_{\Pi} \lambda_1(\theta)\left\|\underline{\mathbf{K}}^{tail}_{n i}(\theta)\right\|_2^2 d\theta
\end{align*}

Meanwhile, since by Lemma~\ref{lemma:holder_cont_K}, $\underline{\mathbf{K}}_{ni}(\theta)$ is a H\"older continuous function and according to Theorem 10.8 in \cite{Zygmund_2003} Chapter II, page 64, we obtain that
\begin{align*}
    \sup_{\theta} \left\|\underline{\mathbf{K}}^{tail}_{n i}(\theta)\right\|_2 &= \sup_{\theta} \left\|\sum_{|h|>M(T)} \mathbf{K}_{n i, h}e^{\iota h \theta}\right\|_2\\ 
    &= \mathcal{O}\left(\max\left(\frac{\log M(T)}{M(T)},\frac{\log M(T)}{n\left(M(T)\right)^{2d}}\mathbb{1}(d>0)\right)\right)
\end{align*}
Then 
for all $\eta > 0$, there exists a $B_{\eta}$ and $N_{\eta}$ such that for all $n \geq N_{\eta}$,
\begin{align*}
    &P\left[r_{1,1}^{(n)}\left|\sum_{|h|>M(T)} \mathbf{K}_{n i, h} X_{t-h}\right|>\frac{B_{\eta}}{4}\right] \leq \frac{16\left(r_{1,1}^{(n)}\right)^2}{B_{\eta}^2} \int_{\Pi} \lambda_1(\theta)\left\|\underline{\mathbf{K}}^{tail}_{n i}(\theta)\right\|_2^2 d\theta\\
    &\leq \frac{16C\left(r_{1,1}^{(n)}\right)^2n}{B_{\eta}^2} \left[\left(\frac{\log M(T)}{M(T)}\right)^2 + \left(\frac{\log M(T)}{n\left(M(T)\right)^{2d}}\right)^2 \mathbb{1}(d>0)\right] \leq \frac{\eta}{4}
\end{align*}
where 
\begin{align*}
            r_2^{(n)} \asymp \frac{1}{\log M(T)}\begin{cases}\dfrac{M(T)}{\sqrt{n}}, & d=0,\\[6pt] \min\!\left(\dfrac{M(T)}{\sqrt{n}},\; \sqrt{n}\, M(T)^{2d}\right), & d>0.\end{cases}, 
        \end{align*}

\begin{enumerate}[(i)]
    \item Furthermore, when the idiosyncratic error $\xi_t$ is an orthonormal white noise, that is when $\Sigma_{\xi}(\theta) = \sigma_{\xi}^2 I$, by the second part of Lemma ~\ref{lemma:holder_cont_K}, $\underline{\mathbf{K}}_{ni}(\theta)$ is a Lipschitz function, and that results in 
    \begin{align*}
    \sup_{\theta} \left\|\underline{\mathbf{K}}^{tail}_{n i}(\theta)\right\|_2 &= \sup_{\theta} \left\|\sum_{|h|>M(T)} \mathbf{K}_{n i, h}e^{\iota h \theta}\right\|_2 
    = \mathcal{O}\left(\frac{\log M(T)}{M(T)}\right)
\end{align*}
as well as
\begin{align*}
    r_{1,1}^{(n)} \asymp \frac{M(T)}{\log M(T) \sqrt{n}} 
\end{align*}
\end{enumerate}



\item Case (a) in Assumption \ref{assump: spec_den_specific_form}: $d$ differs across factors. 

By Lemma \ref{lemma:d_differ_K_piecewise_cont_fourier_coef_decay_rate}, the decay rate for $\left\|\mathbf{K}_{ni,h}\right\|_2$ is $\mathcal{O}\left(|h|^{-1-\rho^{(q)}_q}\right)$. 

 Then 
 \begin{align*}
     \left\|\underline{\mathbf{K}}^{tail}_{n i}(\theta)\right\|_2 &\leq  \sum_{|h|\leq M(T)}\left\|\mathbf{K}_{n i, h}\right\|_2 \leq \sum_{|h|\leq M(T)}|h|^{-1-\rho^{(q)}_q} \leq C M(T)^{-\rho^{(q)}_q}
 \end{align*}
and by Parseval's inequality
\begin{align*}
    \frac{1}{2\pi}\int_{\Pi}\left\|\underline{\mathbf{K}}^{tail}_{n i}(\theta)\right\|_2^2 d\theta  = \sum_{|h|\geq M(T)} \left\|\mathbf{K}_{n i, h}\right\|^2 \leq C \left(M(T)\right)^{-1-2\rho^{(q)}_q}.
\end{align*}
 
Let $\delta_0 = M(T)^{-1}$,
 \begin{align*}
    & P\left[r^{(n)}\left|\sum_{|h|>M(T)} \mathbf{K}_{n i, h} X_{t-h}\right|>\frac{B}{4}\right] \leq \frac{16\left(r^{(n)}\right)^2}{B^2} \int_{\Pi} \lambda_1(\theta)\left\|\underline{\mathbf{K}}^{tail}_{n i}(\theta)\right\|_2^2 d\theta\\
    &= \frac{16n\left(r^{(n)}\right)^2}{B^2} \int_{|\theta|\leq \delta_0} |\theta|^{-2d}\left\|\underline{\mathbf{K}}^{tail}_{n i}(\theta)\right\|_2^2 d\theta + \frac{16n\left(r^{(n)}\right)^2}{B^2} \int_{|\theta|> \delta_0} |\theta|^{-2d}\left\|\underline{\mathbf{K}}^{tail}_{n i}(\theta)\right\|_2^2 d\theta\\
    &\leq C\frac{n\left(r^{(n)}\right)^2}{B^2}\left(M(T)\right)^{2d-1-2\rho^{(q)}_q }
\end{align*}

Then 
for all $\eta > 0$, there exists a $B_{\eta}$ and $N_{\eta}$ such that for all $n \geq N_{\eta}$,
\begin{align*}
    P\left[r_{2}^{(n)}\left|\sum_{|h|>M(T)} \mathbf{K}_{n i, h} X_{t-h}\right|>\frac{B_{\eta}}{4}\right]\leq \frac{C\left(r_{2}^{(n)}\right)^2n}{B_{\eta}^2} \left(M(T)\right)^{2d-1-2\rho^{(q)}_q} \leq \frac{\eta}{4}
\end{align*}
where 
\begin{align*}
            r_2^{(n)} \asymp n^{-1/2}\,M(T)^{\frac{1}{2} +\rho^{(q)}_q -d}.
        \end{align*}

\item Case (b) in Assumption \ref{assump: spec_den_specific_form}: $d$ differs across rows. 
 By Lemma \ref{lemma:d_differ_K_fourier_coef_decay_rate_old}, the decay rate for $\left\|\mathbf{K}_{ni,h}\right\|_2$ is $\mathcal{O}\left(|h|^{2\Delta-1}\right)$. 

 Then 
 \begin{align*}
     \left\|\underline{\mathbf{K}}^{tail}_{n i}(\theta)\right\| &\leq \left\|\underline{\mathbf{K}}_{n i}(\theta)\right\| + \sum_{|h|\leq M(T)}\left\|\mathbf{K}_{n i, h}\right\|\\
     &\leq 1 + \sum_{|h|\leq M(T)}|h|^{2\Delta-1} \leq C M(T)^{2\Delta}
 \end{align*}
and by Parseval's inequality
\begin{align*}
    \frac{1}{2\pi}\int_{\Pi}\left\|\underline{\mathbf{K}}^{tail}_{n i}(\theta)\right\|_2^2 d\theta  = \sum_{|h|\geq M(T)} \left\|\mathbf{K}_{n i, h}\right\|^2 \leq C \left(M(T)\right)^{4\Delta-1}, \Delta \leq \frac{1}{4}.
\end{align*}
 
Let $\delta_0 = M^{-1}(T)$,
 \begin{align*}
    & P\left[r^{(n)}\left|\sum_{|h|>M(T)} \mathbf{K}_{n i, h} X_{t-h}\right|>\frac{B}{4}\right] \leq \frac{16\left(r^{(n)}\right)^2}{B^2} \int_{\Pi} \lambda_1(\theta)\left\|\underline{\mathbf{K}}^{tail}_{n i}(\theta)\right\|_2^2 d\theta\\
    &= \frac{16n\left(r^{(n)}\right)^2}{B^2} \int_{|\theta|\leq \delta_0} |\theta|^{-2d}\left\|\underline{\mathbf{K}}^{tail}_{n i}(\theta)\right\|_2^2 d\theta + \frac{16n\left(r^{(n)}\right)^2}{B^2} \int_{|\theta|> \delta_0} |\theta|^{-2d}\left\|\underline{\mathbf{K}}^{tail}_{n i}(\theta)\right\|_2^2 d\theta\\
    &\leq \frac{16n\left(r^{(n)}\right)^2}{B^2} \left[ \left(M(T)\right)^{4\Delta} \delta_0^{1-2d} + \delta_0^{-2d}\left(M(T)\right)^{4\Delta-1} \right]\\
    &\leq C\frac{n\left(r^{(n)}\right)^2}{B^2}\left(M(T)\right)^{4\Delta+2d-1}
\end{align*}

Then 
for all $\eta > 0$, there exists a $B_{\eta}$ and $N_{\eta}$ such that for all $n \geq N_{\eta}$,
\begin{align*}
    P\left[r_{2}^{(n)}\left|\sum_{|h|>M(T)} \mathbf{K}_{n i, h} X_{t-h}\right|>\frac{B_{\eta}}{4}\right]\leq \frac{C\left(r_{2}^{(n)}\right)^2n}{B_{\eta}^2} \left(M(T)\right)^{4\Delta+2d-1} \leq \frac{\eta}{4}
\end{align*}
where 
\begin{align*}
            r_2^{(n)} \asymp n^{-1/2}\,M(T)^{\frac{1}{2}-2\Delta-d},\quad \Delta = d_{max} - d_{min}.
        \end{align*}


\end{enumerate}

\end{proof}

\section{Lemmas Used in Appendix \ref{sec:proof_lemma_for_main_prop}}\label{sec:proof_lemma_for_main_lemma_1}
\subsubsection{Lemmas for $\underline{\mathbf{K}}_{n i}(\theta)$ and $\mathbf{K}_{n i, h}$}

\begin{lemma}\label{lemma:holder_cont_K} 
    Assumption \ref{assump:model}, \ref{assump:semiparametric_long_memory}, \ref{assump:pervasive_short_mem}, \ref{assump: spec_den_specific_form}, \ref{assump:bdd_error_eval}  hold.
    If long memory parameter $d$ remains the same across rows and series, then $\underline{\mathbf{K}}_{n i}(\theta)$ is a H\"older continuous function, i.e. for any $\theta_1, \theta_2 \in \Pi$,
    \begin{align}\label{eq:holder_K}
        \left\|\underline{\mathbf{K}}_{n i}(\theta_1) - \underline{\mathbf{K}}_{n i}(\theta_2)\right\| = \mathcal{O}\left(  \max\left( \frac{1}{n} |\theta_1 - \theta_2|^{2d}\mathbb{1}(d>0), |\theta_1 - \theta_2| \right)  \right) 
    \end{align}
\end{lemma}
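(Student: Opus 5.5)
Since $\underline{\mathbf{K}}_{ni}(\theta)=e_i\,\Pi(\theta)$, where $\Pi(\theta)=V(\theta)V^H(\theta)$ is the orthogonal projection onto the leading $q$-dimensional eigenspace of $\Sigma(\theta)=\Sigma_\chi(\theta)+\Sigma_\xi(\theta)$, it suffices to bound $\|\Pi(\theta_1)-\Pi(\theta_2)\|_{op}$. With a common $d$ we have $\Sigma_\chi(\theta)=|1-e^{-\iota\theta}|^{-2d}G(\theta)G^*(\theta)$, and the projection is invariant under positive rescaling. So I will work with the prewhitened matrix
\[
\Sigma^b(\theta):=|1-e^{-\iota\theta}|^{2d}\,\Sigma(\theta)=G(\theta)G^*(\theta)+w(\theta)\Sigma_\xi(\theta),\qquad w(\theta):=|1-e^{-\iota\theta}|^{2d}=|2\sin(\theta/2)|^{2d}.
\]
It has the same eigenprojection $\Pi(\theta)$. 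By the last part of Lemma~\ref{lemma:div_component_eval_specific_form} and Weyl's inequality, its eigengap satisfies $\lambda_q^b(\theta)-\lambda_{q+1}^b(\theta)\ge nc_q^- - w(\theta)\Lambda_\xi^+\ge Cn$ uniformly in $\theta$ for $n$ large, because $w$ is bounded.

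I will then apply the Davis--Kahan $\sin\Theta$ theorem, in its Hermitian extension as already used in the proof of Lemma~\ref{lemma:consistency_main_part_estimator}, together with \cite{sunstewart1990matrix} Theorem~I.5.5. This gives
\[
\|\Pi(\theta_1)-\Pi(\theta_2)\|_{op}\le \frac{C\,\|\Sigma^b(\theta_1)-\Sigma^b(\theta_2)\|_{op}}{n}.
\]
Next I split the numerator into three parts:
\[
\Sigma^b(\theta_1)-\Sigma^b(\theta_2)=\bigl[G(\theta_1)G^*(\theta_1)-G(\theta_2)G^*(\theta_2)\bigr]+w(\theta_1)\bigl[\Sigma_\xi(\theta_1)-\Sigma_\xi(\theta_2)\bigr]+\bigl[w(\theta_1)-w(\theta_2)\bigr]\Sigma_\xi(\theta_2).
\]
For the first bracket I write it as $(G_1-G_2)G_1^*+G_2(G_1-G_2)^*$. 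The Lipschitz bound (\ref{eq:lipschitz_short_memory}) together with $\|G\|_{op}\le\sqrt{ns^+}$ from (\ref{eq:pervasive_S_GG}) gives $O(n|\theta_1-\theta_2|)$. The second bracket is $O(|\theta_1-\theta_2|)$ by the Lipschitz regularity of $\Sigma_\xi$ in Assumption~\ref{assump:bdd_error_eval}. For the third, $\|\Sigma_\xi\|\le\Lambda_\xi^+$. Since $0<2d<1$ and $|2\sin(\theta/2)|$ is Lipschitz, subadditivity of $x\mapsto x^{2d}$ gives $|w(\theta_1)-w(\theta_2)|\le C|\theta_1-\theta_2|^{2d}$. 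Dividing by $n$ produces $O\bigl(|\theta_1-\theta_2|+n^{-1}|\theta_1-\theta_2|^{2d}\mathbb 1(d>0)\bigr)$, which is (\ref{eq:holder_K}). When $d=0$ we have $w\equiv1$, so the third term vanishes.

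Several technical points need care. The eigenprojection is well defined everywhere, including $\theta=0$, because the gap of $\Sigma^b$ is uniform; by contrast, $\Sigma$ itself blows up at $0$, which is exactly why the rescaling is needed. Applying Davis--Kahan between two different matrices requires a gap condition on one of them only, and the uniform gap of order $n$ supplies this. Periodicity across $\pm\pi$ is handled because $w$ and $G$ are $2\pi$-periodic.

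The main obstacle is the Hölder term coming from $w$. The singular factor cannot be removed in a Lipschitz way, because the idiosyncratic part does not share the $|\theta|^{-2d}$ scaling. The key observation is that this term enters only through $\Sigma_\xi$, which is $O(1)$, whereas the gap is of order $n$; this is what yields the favorable $1/n$ prefactor. The same computation shows that when $\Sigma_\xi=\sigma^2_\xi I$, the third term is a multiple of the identity and therefore leaves $\Pi$ unchanged, so $\underline{\mathbf{K}}_{ni}$ is Lipschitz. This is the second part used later for case (i).
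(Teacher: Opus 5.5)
Your proposal is correct and follows the paper's own argument: rescale $\Sigma(\theta)$ by the long-memory factor so the eigengap is uniformly of order $n$, apply Davis--Kahan, and split the increment into an $O(n|\theta_1-\theta_2|)$ common part and an $O(|\theta_1-\theta_2|^{2d})$ idiosyncratic part. Two of your steps are slightly tighter than the paper's version without changing the route: you rescale by the exact factor $|1-e^{-\iota\theta}|^{2d}$ rather than $|\theta|^{2d}$, and you bound $G(\theta_1)G^*(\theta_1)-G(\theta_2)G^*(\theta_2)$ directly, whereas the paper goes through $n\|S(\theta_2)-S(\theta_1)\|$, which controls the $q\times q$ Gram matrix rather than the $n\times n$ one.
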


\begin{proof}
    Denote $\Sigma_{\chi}^b(\theta) = |\theta|^{2d}\Sigma_{\chi}(\theta)$, $\Sigma_{\xi}^b(\theta) = |\theta|^{2d}\Sigma_{\xi}(\theta)$, and $\Sigma^b(\theta) = \Sigma_{\chi}^b(\theta) + \Sigma_{\xi}^b(\theta)$. It is easy to notice that $\mathbf{p}_{j}(\theta)$ is also the $j$th eigenvector of $\Sigma^b(\theta)$, and the corresponding eigenvalue is $\lambda_j^{x,b}(\theta) \coloneqq |\theta|^{2d}\lambda_j^x(\theta)$. 

    Analogous to the proof in Lemma \ref{lemma:consistency_main_part_estimator}, let $V(\theta) = \left( \mathbf{p}_{n1}^{H}(\theta), ...,  \mathbf{p}_{nq}^{H}(\theta) \right)$,
    then by the first statement in \cite{DavisKahan}, Theorem 2, we have 
    \begin{align*}
        \left\|  \sin \Theta \left(V(\theta_1), V(\theta_2)\right)  \right\|_F \leq  \frac{C \left\|\Sigma^b(\theta_2) - \Sigma^b(\theta_1) \right\|_{op}}{\lambda^{x,b}_q(\theta) - \lambda^{x,b}_{q+1}(\theta)}
    \end{align*}

    Again, \cite{DavisKahan} treats real symmetric matrices, but the first statement of Theorem~2 extends straightforwardly to Hermitian matrices over $\mathbb{C}$. Proof details are omitted. The definition of the canonical angles between two complex subspaces is given in \cite{sunstewart1990matrix} Definition~I.5.3. 


    By Assumption \ref{assump:bdd_error_eval} and Lemma \ref{lemma:div_component_eval_specific_form}, we have the eigengap 
    \begin{align*}
        \lambda_{q}^{x,b}(\theta) - \lambda_{q+1}^{x,b}(\theta) &\geq n|\theta|^{2d}\alpha_q^{\chi}(\theta) - |\theta|^{2d}(\lambda_{1}^{\xi}(\theta) - \lambda_{n}^{\xi}(\theta)) \\
        &\geq n|\theta|^{2d}\alpha_q^{\chi}(\theta)  - C\Lambda^{\xi}\\
        &\geq Cn
    \end{align*}
    Then 
    \begin{align*}
        \left\|\underline{\mathbf{K}}_{n i}(\theta_1) - \underline{\mathbf{K}}_{n i}(\theta_2)\right\| &\leq \left\| V(\theta_1) V^H(\theta_1) - V(\theta_2) V^H(\theta_2) \right\|_{op}\\
        &=  \left\| \sin \Theta \left( V(\theta_2) - V(\theta_1) \right)   \right\|_F\\
        &\leq \frac{C  \left\|\Sigma^b(\theta_2) - \Sigma^b(\theta_1) \right\|_{op}}{n}\\
        &\leq \frac{C}{n} \left\| \Sigma_{\chi}^b(\theta_1) - \Sigma_{\chi}^b(\theta_2) \right\|_{op} + \frac{C}{n} \left\| \Sigma_{\xi}^b(\theta_1) - \Sigma_{\xi}^b(\theta_2) \right\|_{op}.
    \end{align*}
    For the second term,
    \begin{align*}
        \left\| \Sigma_{\xi}^b(\theta_1) - \Sigma_{\xi}^b(\theta_2) \right\|_{op} &\leq |\theta_1|^{2d}\left\| \Sigma_{\xi}(\theta_1) - \Sigma_{\xi}(\theta_2) \right\|_{op} + \left\|\Sigma_{\xi}(\theta_2)\right\| \big||\theta_1|^{2d} - |\theta_2|^{2d}\big|\\ &= \mathcal{O}\left(|\theta_1 - \theta_2|^{2d}\right)
    \end{align*}
    For the first term, 
    \begin{align*}
    \left\| \Sigma_{\chi}^b(\theta_1) - \Sigma_{\chi}^b(\theta_2) \right\|_{op} &\leq n \left\| S(\theta_2) - S(\theta_1)  \right\| 
    \leq n|\theta_2 - \theta_1|
    \end{align*}
    where $S(\theta)$ is defined in Assumption \ref{assump:pervasive_short_mem}.
    Hence,
    \begin{align*}
        \left\|\underline{\mathbf{K}}_{n i}(\theta_1) - \underline{\mathbf{K}}_{n i}(\theta_2)\right\| = \mathcal{O}\left(  \max\left( \frac{1}{n} |\theta_1 - \theta_2|^{2d}, |\theta_1 - \theta_2| \right)  \right) 
    \end{align*}
\end{proof}

\begin{lemma}\label{lemma:d_differ_K_fourier_coef_decay_rate_old}
    Under Assumption \ref{assump:model}, \ref{assump:semiparametric_long_memory}, \ref{assump:pervasive_short_mem}, \ref{assump: spec_den_specific_form}, and \ref{assump:bdd_error_eval}, 
    \begin{align*}
        \left\|\mathbf{K}_{n i, h}\right\| = \mathcal{O}\left(|h|^{2\Delta-1}\right)
    \end{align*}
\end{lemma}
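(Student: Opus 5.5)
The plan is to bound the Fourier coefficients $\mathbf{K}_{ni,h}=\int_\Pi \underline{\mathbf{K}}_{ni}(\theta)e^{-\iota h\theta}\,d\theta$ through integration by parts away from the origin. Near the origin we will use only the crude bound $\|\underline{\mathbf{K}}_{ni}(\theta)\|\le 1$, which holds because $\underline{\mathbf{K}}_{ni}(\theta)=e_i V(\theta)V^H(\theta)$ is a row of an orthogonal projector. Fix $h\neq 0$ and a cutoff $\delta=|h|^{-1}$, and split $\Pi=\{|\theta|\le\delta\}\cup\{|\theta|>\delta\}$. The inner piece contributes $\mathcal{O}(\delta)=\mathcal{O}(|h|^{-1})$. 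On the outer piece we integrate by parts once; the boundary terms at $\pm\delta$ are $\mathcal{O}(|h|^{-1})$ (and the terms at $\pm\pi$ cancel by periodicity). This leaves
\[
\frac{1}{|h|}\int_{\delta<|\theta|\le\pi}\Big\|\tfrac{d}{d\theta}\underline{\mathbf{K}}_{ni}(\theta)\Big\|\,d\theta ,
\]
which we must control.

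The main step is a pointwise bound $\|\tfrac{d}{d\theta}(V V^H)(\theta)\|=\mathcal{O}(|\theta|^{2\Delta-1})$ for $\theta\neq0$. Differentiability of the projector off zero follows because the eigengap is strictly positive there, and $\Sigma(\theta)$ is $C^1$ on $\Pi\setminus\{0\}$ by Assumption~\ref{assump:pervasive_short_mem}, Assumption~\ref{assump:bdd_error_eval}, and the smoothness of $|1-e^{-\iota\theta}|^{-d_i}$ away from 0. To get the bound, we will use the standard resolvent formula
\[
\frac{d}{d\theta}P(\theta) = \frac{1}{2\pi\iota}\oint_\Gamma R(z)\,\Sigma'(\theta)\,R(z)\,dz ,
\]
or equivalently the first-order perturbation bound $\|P'\|\le C\|\Sigma'(\theta)\|_{op}/\mathrm{gap}(\theta)$, which is Davis--Kahan in differential form. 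Here we invoke Lemma~\ref{lemma:div_component_eval_specific_form} together with Weyl's inequality, exactly as in the proof of Lemma~\ref{lemma:consistency_main_part_estimator}, to get $\mathrm{gap}(\theta)\ge Cn|\theta|^{-2\tilde d}$ for $n$ large.

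For the numerator, in case (b) we write $\Sigma_\chi=D_nGG^*D_n^*$. Differentiating, the term $D_n'$ has entries of size $|\theta|^{-d_i-1}$, so $\|\Sigma_\chi'(\theta)\|\le Cn|\theta|^{-2d-1}$. Here we use $\|G\|\le C\sqrt n$, $\|G'\|\le C\sqrt n$, and $\|D_n\|\le|\theta|^{-d}$. The idiosyncratic part contributes only $\mathcal{O}(1)$. Hence $\|P'(\theta)\|\le C|\theta|^{-2d-1+2\tilde d}=C|\theta|^{-1-2\Delta}$ when $d=\max d_i$ and $\tilde d=\min d_i$. In case (a) the same computation with $\bar D$ gives the same exponent.

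This is the expected obstacle: this crude bound makes $\int_{|\theta|>\delta}\|P'\|$ of order $\delta^{-2\Delta}=|h|^{2\Delta}$. Combined with the factor $|h|^{-1}$, this gives precisely $\mathcal{O}(|h|^{2\Delta-1})$, matching the claim. The care needed is that the Davis--Kahan derivative bound be applied with the full gap rather than only the gap between individual eigenvalues. This is why we will work with the spectral projector $VV^H$ (whose derivative depends only on the gap between the top-$q$ block and the rest), not with the individual eigenvectors, which may cross. We must also check that $P$ is absolutely continuous on $[\delta,\pi]$ so that integration by parts is legitimate; this follows from analytic perturbation theory since the gap never closes on compacts away from 0.

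Collecting the three contributions (inner piece, boundary terms, and derivative integral), each of order at most $|h|^{2\Delta-1}$ since $\Delta\ge0$, yields $\|\mathbf{K}_{ni,h}\|=\mathcal{O}(|h|^{2\Delta-1})$ uniformly in $n\ge n^x$.
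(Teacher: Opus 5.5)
Your argument is correct. It has the same skeleton as the paper's proof: cut at $|\theta|\asymp|h|^{-1}$, bound the inner piece crudely, and control the outer piece by $|h|^{-1}$ times the variation of $\underline{\mathbf{K}}_{ni}$ away from the origin. A Davis--Kahan bound with gap $\gtrsim n|\theta|^{-2\tilde d}$ controls that variation at rate $|\theta|^{-2\Delta-1}$ per unit length.

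The outer step is implemented differently, however. The paper never differentiates the eigenprojection. It uses the half-period shift $e^{-\iota h\theta}=\tfrac12\bigl(e^{-\iota h\theta}-e^{-\iota h(\theta+\pi/|h|)}\bigr)$ to rewrite $I_2^{(h)}$ as integrals of finite differences $\underline{\mathbf{K}}_{ni}(\theta)-\underline{\mathbf{K}}_{ni}(\theta-\pi/|h|)$, plus pieces of length $O(|h|^{-1})$. The differences live on a set where both arguments lie on the same side of $0$, at distance at least $\pi/|h|$ from it. It then bounds them by the finite-difference Davis--Kahan inequality together with the increment estimate of Lemma~\ref{lm:increment_bound_for_Sigma}.

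That route needs no regularity of the projector beyond Davis--Kahan, at the price of fiddly interval bookkeeping. Yours replaces it by a single integration by parts plus the differential bound $\|P'\|\le\|\Sigma'\|/\mathrm{gap}$, the same device the paper uses for case (a) in Lemma~\ref{lemma:d_differ_K_piecewise_cont_fourier_coef_decay_rate}. You correctly supply the $C^1$ regularity of $\underline{\mathbf{K}}_{ni}$ off $0$ and the cancellation of the boundary terms at $\pm\pi$.

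Your inner bound $\|\underline{\mathbf{K}}_{ni}\|\le 1$ is also cleaner than the paper's pointwise bound $Cn^{-1/2}|\theta|^{-\Delta}$. That bound tacitly uses $\sigma_{ii}(\theta)\le C|\theta|^{-2d}$, i.e.\ Assumption~\ref{assump:row_boundness_G}, which is not among the lemma's hypotheses.

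One correction, which the paper's proof shares. Your final claim that each contribution is $O(|h|^{2\Delta-1})$ ``since $\Delta\ge 0$'' fails at $\Delta=0$. There $|h|^{-1}\int_{|h|^{-1}<|\theta|\le\pi}|\theta|^{-1}\,d\theta=2|h|^{-1}\log(\pi|h|)$, which is not $O(|h|^{-1})$; for $\Delta>0$ your constant carries a factor $1/\Delta$.

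The fix is easy. When $\Delta=0$ all memory parameters coincide and $\Sigma_\chi(\theta)=|1-e^{-\iota\theta}|^{-2d}G(\theta)G^*(\theta)$. Run your derivative bound on $|1-e^{-\iota\theta}|^{2d}\Sigma(\theta)$ instead, as in Lemma~\ref{lemma:holder_cont_K}. Its top-$q$ gap is of order $n$ and its derivative is $O(n+d\,|\theta|^{2d-1})$. Hence the projector's derivative is integrable over all of $\Pi$, and one integration by parts over the whole circle gives $O(|h|^{-1})$.
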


\begin{proof}
    Although we know that $\left\|\underline{\mathbf{K}}_{n i}(\theta)\right\|_2$ is bounded, we still need its pointwise behavior to get the decay rates. In the worst-case scenario, by Assumption \ref{assump: spec_den_specific_form} \ref{assump:pervasive_short_mem}, and thus Lemma \ref{lemma:div_component_eval_specific_form},
\begin{align*}
    \left|p_{ji}(\theta)\right|^2 \leq \frac{\sigma_{ii}(\theta)}{\lambda_j(\theta)} \leq \frac{C|\theta|^{-2d}}{n \alpha_q (\theta)}\leq \frac{C}{n}|\theta|^{-2\Delta}
\end{align*}
Then 
\begin{align}\label{eq:K_pointwise_behavior}
    \left\|  \underline{\mathbf{K}}_{n i}(\theta)  \right\|\leq \sum_{j=1}^q \left|p_{ji}(\theta)\right| \leq \frac{C}{\sqrt{n}}|\theta|^{-\Delta}
\end{align}

Meanwhile, 
\begin{align*}
    \mathbf{K}_{ni,h} &= \frac{1}{2\pi} \int_{\Pi} \underline{\mathbf{K}}_{n i}(\theta) e^{-\iota h \theta} d\theta \\
    &= \frac{1}{2\pi} \int_{|\theta|<\frac{\pi}{|h|}} \underline{\mathbf{K}}_{n i}(\theta) e^{-\iota h \theta} d\theta + \frac{1}{2\pi} \int_{\Pi\setminus \{|\theta|<\frac{\pi}{|h|}\}} \underline{\mathbf{K}}_{n i}(\theta) e^{-\iota h \theta} d\theta\\
    &\coloneqq I_1^{(h)} + I_2^{(h)}
\end{align*}

The first term $I_1^{(h)}$ can be controlled by the pointwise behavior of $\underline{\mathbf{K}}_{n i}(\theta)$, 
\begin{align*}
    \left\| I_1^{(h)} \right\| = \left\|\frac{1}{2\pi} \int_{|\theta|<\frac{\pi}{|h|}} \underline{\mathbf{K}}_{n i}(\theta) e^{-\iota h \theta} d\theta\right\| &\leq \frac{1}{2\pi} \int_{|\theta|<\frac{\pi}{|h|}} \left\|\underline{\mathbf{K}}_{n i}(\theta) \right\| d\theta \\
    &\leq \frac{1}{2\pi} \int_{|\theta|<\frac{\pi}{|h|}} \frac{C}{\sqrt{n}} |\theta|^{-\Delta} d\theta  \leq \frac{C}{\sqrt{n}} |h|^{\Delta -1}.
\end{align*}

To control the second term $I_2^{(h)}$, first notice that
\begin{align*}
    e^{-\iota h \theta} = \frac{e^{-\iota h \theta} - e^{-\iota h (\theta \pm \frac{\pi}{h})}}{1-e^{\mp \iota \pi}}, 1-e^{\mp \iota \pi} = 2
\end{align*}
Choosing that for $h>0$, we choose it to be $\theta + \frac{\pi}{h}$ and for $h<0$, we choose it to be $\theta - \frac{\pi}{h}$. Then 
plugging them into $I_2^{(h)}$ yields
\begin{align*}
     I_2^{(h)} = \frac{1}{4\pi} \int_{\Pi\setminus \{|\theta|<\frac{\pi}{|h|}\}} \underline{\mathbf{K}}_{n i}(\theta) \left(e^{-\iota h \theta} - e^{-\iota h (\theta + \frac{\pi}{|h|})}\right) d\theta
\end{align*}

Notice that $\underline{\mathbf K}_{ni}(\theta)=\mathbf I^{(n)}_{i}\,\mathbf P(\theta)$, 
where $\mathbf I^{(n)}_{i}$ is the $i$th $n$-dimensional unit row vector, and 
$\mathbf P(\theta) = \sum_{j=1}^q \mathbf{p}'_{j}(\theta) \mathbf{p}_{j}(\theta)$.
According to \cite{YousefSaad}, §3.1.3–§3.1.4 and Example 1.2, 
$\mathbf P(\theta)$ can be written as 
$\frac{1}{2\pi \iota}\!\int_{\Gamma}(z\mathbf I-\Sigma(\theta))^{-1}\,dz$, 
where $\Gamma$ is a contour that encloses the largest $q$ eigenvalues; 
see also \cite{kato}, Ch.~III, §6.4. 
Recall that Lemma~\ref{lemma:div_component_eval_specific_form} shows that a uniform eigengap between the first $q$ eigenvalues and the rest, so we may choose $\Gamma$ independent of $\theta$. Thus, $\underline{\mathbf K}_{ni}(\theta)$ has the same $2\pi$-periodicity as $\Sigma(\theta)$.

Throughout our paper, we have set that $\Pi = [-\pi, \pi)$. Then
\begin{align*}
4\pi I_2^{(h)}
&= \int_{[-\pi,\,-\frac{\pi}{|h|}] \,\cup\, [\frac{\pi}{|h|},\,\pi]}
    \underline{\mathbf{K}}_{ni}(\theta)
    \Bigl(e^{-\iota h \theta} - e^{-\iota h (\theta + \frac{\pi}{|h|})}\Bigr)\, d\theta \\
&= \int_{[-\pi,\,-\frac{\pi}{|h|}] \,\cup\, [\frac{\pi}{|h|},\,\pi]}
    \underline{\mathbf{K}}_{ni}(\theta)e^{-\iota h \theta}\, d\theta
   - \int_{[-\pi+\frac{\pi}{|h|},\,0] \,\cup\, [\frac{2\pi}{|h|},\,\pi+\frac{\pi}{|h|}]}
    \underline{\mathbf{K}}_{ni}\!\left(\theta - \frac{\pi}{|h|}\right)e^{-\iota h \theta}\, d\theta \\
&= \int_{[-\pi,\,-\frac{\pi}{|h|}] \,\cup\, [\frac{\pi}{|h|},\,\pi]}
    \underline{\mathbf{K}}_{ni}(\theta)e^{-\iota h \theta}\, d\theta
   - \int_{[-\pi,\,0] \,\cup\, [\frac{2\pi}{|h|},\,\pi]}
    \underline{\mathbf{K}}_{ni}\!\left(\theta - \frac{\pi}{|h|}\right)e^{-\iota h \theta}\, d\theta \\
&\qquad\text{(here we used $2\pi$–periodicity to replace $[\pi,\pi+\frac{\pi}{|h|}]$ by $[-\pi,-\pi+\frac{\pi}{|h|}]$)} \\
&= \int_{-\pi}^{-\frac{\pi}{|h|}} \underline{\mathbf{K}}_{ni}(\theta)e^{-\iota h \theta}\, d\theta
   - \int_{-\pi}^{0} \underline{\mathbf{K}}_{ni}\!\left(\theta - \frac{\pi}{|h|}\right)e^{-\iota h \theta}\, d\theta \\
&\quad + \int_{\frac{\pi}{|h|}}^{\frac{2\pi}{|h|}} \underline{\mathbf{K}}_{ni}(\theta)e^{-\iota h \theta}\, d\theta
   + \int_{\frac{2\pi}{|h|}}^{\pi}
      \left(\underline{\mathbf{K}}_{ni}(\theta)
            - \underline{\mathbf{K}}_{ni}\!\left(\theta - \frac{\pi}{|h|}\right)\right)
      e^{-\iota h \theta}\, d\theta \\
&= \int_{-\pi + \frac{\pi}{|h|}}^{-\frac{\pi}{|h|}} \left(\underline{\mathbf{K}}_{ni}(\theta)- \underline{\mathbf{K}}_{ni}\!\left(\theta - \frac{\pi}{|h|}\right)\right) e^{-\iota h \theta} d\theta + \int_{-\pi}^{-\pi + \frac{\pi}{|h|}}  \underline{\mathbf{K}}_{ni}(\theta) e^{-\iota h \theta} d\theta   \\
&- \int_{-\pi}^{-\pi + \frac{\pi}{|h|}} \underline{\mathbf{K}}_{ni}\!\left(\theta - \frac{\pi}{|h|}\right) d\theta - \int_{-\frac{\pi}{|h|}}^{0} \underline{\mathbf{K}}_{ni}\!\left(\theta - \frac{\pi}{|h|}\right) d\theta \\
&+ \int_{\frac{\pi}{|h|}}^{\frac{2\pi}{|h|}} \underline{\mathbf{K}}_{ni}(\theta)e^{-\iota h \theta}\, d\theta
   + \int_{\frac{2\pi}{|h|}}^{\pi}
      \left(\underline{\mathbf{K}}_{ni}(\theta)
            - \underline{\mathbf{K}}_{ni}\!\left(\theta - \frac{\pi}{|h|}\right)\right)
      e^{-\iota h \theta}\, d\theta\\
&= \int_{[-\pi + \frac{\pi}{|h|}, -\frac{\pi}{|h|}]\, \cup \, [\frac{2\pi}{|h|}, \pi]} \left(\underline{\mathbf{K}}_{ni}(\theta)
            - \underline{\mathbf{K}}_{ni}\!\left(\theta - \frac{\pi}{|h|}\right)\right)
      e^{-\iota h \theta}\, d\theta\\
&+ \int_{ [-\frac{2\pi}{|h|}, -\frac{\pi}{|h|}] \, \cup \, [\frac{\pi}{|h|}, \frac{2\pi}{|h|}] } \underline{\mathbf{K}}_{ni}(\theta) e^{-\iota h \theta}\, d\theta +  \int_{[-\pi, -\pi + \frac{\pi}{|h|}]  \, \cup \, [\pi - \frac{\pi}{|h|}, \pi]} \underline{\mathbf{K}}_{ni}(\theta) e^{-\iota h \theta}\, d\theta
\end{align*}

For the second term, applying the boundness of $\left\| \underline{\mathbf{K}}_{n i}(\theta)  \right\|$, we have 
\begin{align*}
    \left\|\int_{\frac{\pi}{|h|}}^{\frac{2\pi}{|h|}} \underline{\mathbf{K}}_{n i}(\theta)e^{-\iota h \theta} d\theta\right\| \leq  \int_{\frac{\pi}{|h|}}^{\frac{2\pi}{|h|}} \left\| \underline{\mathbf{K}}_{n i}(\theta)  \right\| d\theta \leq |h|^{-1}
\end{align*}
On the other hand, for $\theta \in [-\pi, -\pi + \frac{\pi}{|h|}]  \, \cup \, [\pi - \frac{\pi}{|h|}, \pi]$, $|h| > \frac{3}{2}$, $|\theta|^{-\Delta} < \left(\frac{\pi}{3}\right)^{\Delta}$,
\begin{align}\label{eq:I_2_third_term}
     \left\|\int_{-\pi}^{-\pi + \frac{\pi}{|h|}} \underline{\mathbf{K}}_{n i}(\theta)e^{-\iota h \theta} d\theta\right\| \leq \frac{C}{\sqrt{n}} \left(\frac{\pi}{3}\right)^{\Delta} \int_{-\pi}^{-\pi + \frac{\pi}{|h|}} d\theta = \frac{C}{\sqrt{n}} |h|^{-1}
\end{align}

As for the first term, we first notice that for $\theta \in A, A \coloneqq [-\pi + \frac{\pi}{|h|}, -\frac{\pi}{|h|}]\, \cup \, [\frac{2\pi}{|h|}, \pi]$, $|\theta|$ and $ \left|\theta - \frac{\pi}{|h|}\right| \in \left[\frac{\pi}{|h|}, \pi\right]$.

    By Lemma \ref{lm:increment_bound_for_Sigma}, we have for $|\theta|,\left|\theta - \frac{\pi}{|h|}\right|  > \frac{\pi}{|h|}$,
    \begin{align*}
        \left\| \Sigma(\theta) - \Sigma\left( \theta-\frac{\pi}{|h|}  \right) \right\|_{op} \lesssim n|h|^{-1}\left( |\theta|^{-2d}+ |\theta|^{-2d-1} + \left|\theta-\frac{\pi}{|h|}\right|^{-2d} +  \left|\theta-\frac{\pi}{|h|}\right|^{-2d-1} \right)
    \end{align*}

Moreover, by Lemma \ref{lemma:div_component_eval_specific_form}, we have the eigengap 
    \begin{align*}
        \lambda_{q}^{x}(\theta) - \lambda_{q+1}^{x}(\theta) &\geq \lambda_q^{\chi}(\theta) - (\lambda_{1}^{\xi}(\theta) - \lambda_{n}^{\xi}(\theta)) \\
        &\geq n\alpha_q^{\chi}(\theta)  - C\Lambda^{\xi}\\
        &\geq Cn|\theta|^{-2\tilde{d}}
    \end{align*}

Then proceeding with Davis-Kahan in \cite{sunstewart1990matrix}, \cite{DavisKahan}, 
\begin{align*}
    &\left\| \int_A \left( \underline{\mathbf{K}}_{n i}(\theta) - \underline{\mathbf{K}}_{n i}\left(\theta - \frac{\pi}{|h|} \right) \right)   d\theta  \right\| \leq  \int_A \left\|  \underline{\mathbf{K}}_{n i}(\theta) - \underline{\mathbf{K}}_{n i}\left(\theta - \frac{\pi}{|h|}  \right)  \right\|d\theta \\
    &\leq \int_{A} \frac{\left\| \Sigma(\theta) - \Sigma\left( \theta-\frac{\pi}{|h|}   \right) \right\|_{op}}{\lambda_{q}^{x}(\theta) - \lambda_{q+1}^{x}(\theta)} d\theta\\
    &\leq C |h|^{-1} \int_{A} |\theta|^{-2\Delta} + |\theta|^{-2\Delta-1} + |\theta|^{2\tilde{d}} \left|\theta-\frac{\pi}{|h|}\right|^{-2d} + |\theta|^{2\tilde{d}} \left|\theta-\frac{\pi}{|h|}\right|^{-2d-1} d\theta\\
    &\lesssim |h|^{2\Delta - 1}
\end{align*}
The last inequality comes from when $\theta \in A$, $\left|\theta-\frac{\pi}{|h|}\right|\geq |\theta|/2$,
\begin{align*}
    \int_A |\theta|^{2\tilde{d}} \left|\theta-\frac{\pi}{|h|}\right|^{-2d-1} d\theta \lesssim  \int_A |\theta|^{-2\Delta - 1} d\theta \lesssim |h|^{2\Delta} 
\end{align*}

Putting everything together yields
\begin{align*}
    \left\| I_2^{(h)}  \right\| &\leq C_1 |h|^{2\Delta - 1} + C_2|h|^{-1} = \mathcal{O}\left( |h|^{2\Delta - 1} \right)
\end{align*}
Thus, 
\begin{align}\label{eq:decay_rate_K_nih}
    \left\|\mathbf{K}_{ni,h}\right\|_2 \leq \left\| I_1^{(h)}  \right\| + \left\| I_2^{(h)}  \right\| \leq C|h|^{2\Delta-1}.
\end{align}
\end{proof}

\begin{lemma}\label{lemma:K_holder_factor_wise}
     Under Assumption \ref{assump:model}, \ref{assump:semiparametric_long_memory}, \ref{assump:pervasive_short_mem}, \ref{assump: spec_den_specific_form} case (a),  \ref{assump:bdd_error_eval}, \ref{assump:row_boundness_G}, and \ref{assump:gap_dominance}, $\underline{\mathbf{K}}_{n i}(\theta)$ is H\"older continuous with exponent $\rho^{(q)}_q$, where 
     \begin{align}\label{eq:defn_rho_q_m}
     \rho^{(q)}_m = \min \{\rho^{(q)}_{m-1}, \alpha_{m,m+1}, \min_{0 \leq l\leq m-1}\{2\rho^{(q)}_l - \alpha_{l,m}\} \}, m  = 1, \dots, q,
    \end{align}
     $\alpha_{j_1j_2} = 2|d_{(j_1)}-d_{(j_2)}|$, $d_{(q+1)} \coloneqq 0$, $\rho^{(j)}_0 \coloneqq 1$, $\alpha_{0,j} \coloneqq 1, j = 1,\dots,q$. The H\"older constant is independent of $n, T$.
     Furthermore, 
\begin{align}\label{eq:first_second_derivative_K_bnd}
    \left\|\underline{\mathbf{K}}_{n i}^{(r)}(\theta)\right\|_2 \leq C|\theta|^{\rho^{(q)}_q - r},\quad r = 1,2.
\end{align}
\end{lemma}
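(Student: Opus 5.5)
The argument proceeds by induction on the eigen-index $m=1,\dots,q$, working with the individual spectral projectors. Order the factors so that $d_{(1)}>\cdots>d_{(q)}$ and put $d_{(q+1)}\coloneqq 0$. Let $P_m(\theta)$ be the projector onto the span of $\mathbf p_1(\theta),\dots,\mathbf p_m(\theta)$. Then $\underline{\mathbf K}_{ni}(\theta)=e_i'P_q(\theta)$, and Hölder continuity of $P_q$ with exponent $\rho^{(q)}_q$, together with the derivative bounds \eqref{eq:first_second_derivative_K_bnd}, gives the lemma.

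\textbf{Step 1: separated eigenvalue scales.} Case (a) gives $\Sigma(\theta)=G\bar D^2G^*+\Sigma_\xi$. Using Assumption~\ref{assump:pervasive_short_mem} and the Courant--Fischer argument behind Lemma~\ref{lemma:div_component_eval_specific_form}, I would sharpen that lemma to individual eigenvalues:
\[
\lambda_m(\theta)\asymp n|\theta|^{-2d_{(m)}},\qquad m\le q .
\]
Near $0$, the consecutive gaps $\lambda_m-\lambda_{m+1}$ are then of order $n|\theta|^{-2d_{(m)}}$, since their ratio to $\lambda_m$ is $1-O(|\theta|^{\alpha_{m,m+1}})$. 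The index $m=q$ compares with the bounded idiosyncratic part; this is why $\alpha_{q,q+1}=2d_{(q)}$ enters.

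\textbf{Step 2: resolvent representation and first projector.} Write $P_m=\frac{1}{2\pi\iota}\oint_{\Gamma_m(\theta)}(z-\Sigma(\theta))^{-1}dz$, where the contour $\Gamma_m(\theta)$ has radius proportional to $n|\theta|^{-2d_{(m)}}$. Differentiating gives $P_m'=\frac{1}{2\pi\iota}\oint R\,\Sigma'\,R\,dz$. Rather than bounding $\|\Sigma'\|\sim n|\theta|^{-2d_{(1)}-1}$ crudely, I would split $\Sigma'$ into the blocks $P_lR\,\Sigma'\,RP_k$. In these blocks the large factor $n|\theta|^{-2d_{(l)}}$ is divided by the gap scale $n|\theta|^{-2d_{(m)}}$, and the cross terms between distinct scales $l,k$ pick up a factor $|\theta|^{\alpha_{l,k}/2}$ from the near-diagonal structure of $\bar D G^*G\bar D$ in the eigenbasis. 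For $m=1$, the top eigenvector is approximately the normalized $G$-column $g_{(1)}$ up to an $O(|\theta|^{\alpha_{1,2}})$ correction. This gives $\|P_1'(\theta)\|\lesssim|\theta|^{\rho_1-1}$ with $\rho_1=\min\{1,\alpha_{1,2}\}$.

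\textbf{Step 3: induction and Hölder continuity.} Assume $\|P_l'(\theta)\|\lesssim|\theta|^{\rho_l-1}$ for all $l<m$. Write $P_m-P_{m-1}$ as the spectral projector of the compression $(I-P_{m-1})\Sigma(I-P_{m-1})$, and redo the Step 2 computation. Here $P_{m-1}$ itself varies with $\theta$, and its derivative enters quadratically through the commutator terms $P_l'\,\Sigma\,P_l'$. Each such term costs $|\theta|^{2\rho_l-2}$ multiplied by the scale ratio $|\theta|^{-\alpha_{l,m}}$, i.e.\ $|\theta|^{2\rho_l-\alpha_{l,m}-2}$. Collecting the terms yields exactly the recursion \eqref{eq:defn_rho_q_m}:
\[
\|P_m'\|\lesssim |\theta|^{\rho^{(q)}_m-1}.
\]
Assumption~\ref{assump:gap_dominance} guarantees $\rho^{(q)}_m>0$, so $|\theta|^{\rho^{(q)}_m-1}$ is integrable. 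Hölder continuity then follows by integrating $P_q'$: for $|\theta_1-\theta_2|\le|\theta_1|/2$ use the mean value theorem, and otherwise integrate through the origin. This gives $\|P_q(\theta_1)-P_q(\theta_2)\|\lesssim|\theta_1-\theta_2|^{\rho_q}$. Differentiating the resolvent formula once more, and using the second derivatives in Assumptions~\ref{assump:pervasive_short_mem} and \ref{assump:bdd_error_eval}, gives the $r=2$ bound $|\theta|^{\rho_q-2}$. All constants depend only on $s^\pm$, $C_{G,r}$, $\Lambda_\xi^\pm$ and the bound in Assumption~\ref{assump:row_boundness_G}, because every factor $n$ cancels between $\Sigma'$ and the gap.

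\textbf{Main obstacle.} The hardest part will be the block estimates in Steps 2--3, namely showing that the off-diagonal blocks of $\Sigma'$ in the eigenbasis pick up the factor $|\theta|^{\alpha_{l,k}/2}$ and are not of full size. My first attempt would be a Schur-complement / Riccati expansion of the eigenvectors around the columns of $G\bar D$, controlling the mixing error by $S(\theta)$, its Lipschitz bound, and $\Sigma_\xi$. The price of this approach is the loss recorded by the terms $2\rho_l-\alpha_{l,m}$ in the recursion.
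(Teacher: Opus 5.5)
\textbf{Gap in Steps 2--3.} Your architecture (induction on the eigen-index, deflation by compressing with $I-P_{m-1}$, a recursion for the exponents) mirrors the paper's. However, the estimates that are supposed to generate the exponents are only asserted, and as written they do not generate them.

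In Step~2, a factor $|\theta|^{\alpha_{l,k}/2}$ on the $(l,k)$ block is exactly what Cauchy--Schwarz gives: $|\mathbf p_l\Sigma_\chi'\mathbf p_k^*|\lesssim|\theta|^{-1}\sqrt{\lambda_l\lambda_k}$. Dividing by $\lambda_1-\lambda_2\asymp n|\theta|^{-2d_{(1)}}$ yields only $\|P_1'\|\lesssim|\theta|^{\alpha_{1,2}/2-1}$, which is half the exponent you claim. Reaching $\alpha_{1,2}$ needs a cancellation you never identify. For $q=2$, the identity $\mathbf p_1\Sigma_\chi\mathbf p_2^*=-\mathbf p_1\Sigma_\xi\mathbf p_2^*$ removes the $d_{(1)}$-part of $(\bar D^2)'$ and leaves a block of size $n|\theta|^{-2d_{(2)}-1}$.

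In Step~3 the bookkeeping is off by one power of $|\theta|$. A contribution $|\theta|^{2\rho_l-\alpha_{l,m}-2}$ to $P_m'$ would force $\rho_m\le 2\rho_l-\alpha_{l,m}-1$, not \eqref{eq:defn_rho_q_m}. So the recursion is read off the statement rather than derived.

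In the paper, $2\rho_l-\alpha_{l,m}$ is a zeroth-order effect, not one quadratic in $P_l'$. Let $h_{l,k}=(I-P_k)g_{(l)}$ (with your cumulative $P_k$) and $R_l=(I-P_{l-1})\Sigma(I-P_{l-1})$. The normalized matrix $U_l=A_l/\lambda_1(A_l)$, $A_l=|\theta|^{2d_{(l)}}R_l$, tends to the rank-one projector onto $h_{l,l-1}(0)$. Hence $h_{l,l}(0)=0$ and $\|h_{l,m-1}(\theta)\|_2\lesssim\sqrt n|\theta|^{\rho_l}$. The singular pieces $|\theta|^{-\alpha_{l,m}}h_{l,m-1}h_{l,m-1}^*$ of $A_m$ are therefore of size $n|\theta|^{2\rho_l-\alpha_{l,m}}$; this is where Assumption~\ref{assump:gap_dominance} enters. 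Davis--Kahan is then applied to $U_m$, whose top gap is of order one. Note also that the gaps $\lambda_m-\lambda_{m+1}$, $m<q$, which any induction over $P_1,\dots,P_{q-1}$ needs, are controlled by Lemma~\ref{lemma:div_component_eval_specific_form} only near $\theta=0$.

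\textbf{A shorter way to close the gap.} You can drop the individual projectors altogether. $\underline{\mathbf K}_{ni}=e_i'P_q$ sees only the top-$q$ eigenspace, and the top-$q$ eigenspace of $\Sigma_\chi=G\bar D^2G^*$ is exactly $\operatorname{col}G(\theta)$, whatever the $d_{(l)}$ are. Heterogeneity only rotates eigenvectors inside a smooth subspace. Concretely, the argument runs as follows.
\begin{enumerate}
\item With $Q=I-P_q$, Weyl's inequality ($\operatorname{rank}\Sigma_\chi\le q$) gives $\|\Sigma Q\|=\lambda_{q+1}\le\Lambda_\xi^+$. Hence $\|Q\Sigma_\chi\|=\|\Sigma Q-Q\Sigma_\xi\|\le 2\Lambda_\xi^+$.
\item It follows that $\|QG\bar D^2\|=\|Q\Sigma_\chi G(G^*G)^{-1}\|\lesssim n^{-1/2}$.
\item Also $\bar D^2G^*\mathbf p_l^*=(G^*G)^{-1}G^*(\lambda_l-\Sigma_\xi)\mathbf p_l^*$, so $\|\bar D^2G^*\mathbf p_l^*\|\lesssim \lambda_l n^{-1/2}$.
\item Insert these into $\Sigma_\chi'=G'\bar D^2G^*+G\bar D^2EG^*+G\bar D^2(G')^*$, where $E=\operatorname{diag}(-d_{(l)}\cot(\theta/2))$. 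This gives $\|Q\Sigma'\mathbf p_l^*\|\lesssim\lambda_l+1+|\theta|^{-1}$.
\item Differentiating $Q\Sigma P_q=0$ gives $QP_q'\mathbf p_l^*=(\lambda_l-Q\Sigma Q)^{-1}Q\Sigma'\mathbf p_l^*$ on $\operatorname{ran}Q$. Dividing by $\lambda_l-\lambda_{q+1}\gtrsim\lambda_l\gtrsim n|\theta|^{-2d_{(q)}}$ yields $\|P_q'(\theta)\|\lesssim 1+n^{-1}|\theta|^{2d_{(q)}-1}$, with constants free of $n$.
\item The same bookkeeping in the second-order resolvent expansion gives $\|P_q''(\theta)\|\lesssim|\theta|^{2d_{(q)}-2}$.
\end{enumerate}
Since $\rho^{(q)}_q\le\alpha_{q,q+1}=2d_{(q)}$, this implies the lemma with no induction, no intermediate eigengaps, and no use of Assumption~\ref{assump:gap_dominance}.
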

\begin{proof}
In this proof, without loss of generality, we assume that $d_1 > d_2 > \cdots >d_q$, and in the following analysis, we take $|1- e^{-\iota \theta}|^{-2d_l} \asymp |\theta|^{-2d_l}$, for simplicity. 

\paragraph{First, we prove that $\mathbf{p}_1^*(\theta)\mathbf{p}_1(\theta)$ is H\"older continuous.} Here, $\mathbf{p}_1(\theta)$ denotes the row eigenvector of $\Sigma(\theta)$ associated with the largest eigenvalue. We first show that $\lim_{\theta\to 0}\mathbf{p}_1^*(\theta)\mathbf{p}_1(\theta)$ exists.



Denote $U_1(\theta) = {\Sigma(\theta)}/{\lambda_1(\theta)}$, where $\lambda_1(\theta)$ is the largest eigenvalue of $\Sigma(\theta)$. 

Since $\Sigma_\xi(\theta)$ is continuous, it is bounded on a neighborhood of zero. 
Moreover, for $\theta\neq 0$, $\Sigma(\theta)$ is continuous, and since the eigenvalues of a Hermitian matrix are continuous functions of its entries, $\lambda_1(\theta)$ is continuous. Moreover, $\Sigma_X(\theta)\succeq \Sigma_\xi(\theta)\succeq \Lambda_\xi^- I_n$, so $\lambda_1(\theta)\geq \Lambda_\xi^- >0$. Thus, $\Sigma_\xi(\theta)/\lambda_1(\theta)$ is continuous away from zero. Near zero, Lemma~\ref{lemma:div_component_eval_specific_form} and Weyl's inequality implies given $n$, $\lambda_1(\theta) \geq \lambda_1^{\chi}(\theta) - \Lambda_{\xi}^{+}\asymp n |\theta|^{-2d_1}$, 
\[
\left\|\frac{\Sigma_\xi(\theta)}{\lambda_1(\theta)}\right\|_{\mathrm{op}}
\leq C |\theta|^{2d_1}/n \to 0.
\]
Therefore, fixing $n$, by defining 
$\Sigma_\xi(0)/\lambda_1(0):=0$, the ratio admits a continuous extension at $\theta=0$. Thus, $\Sigma_{\xi}(\theta)/\lambda_1(\theta)$ is continuous. If we further use the second part of Assumption \ref{assump:bdd_error_eval}, namely that $\Sigma_{\xi}(\theta)$ is Lipschitz continuous, and the fact that $\lambda_1^{-1}(\theta)$ is regular away from zero, then $\Sigma_{\xi}(\theta)/\lambda_1(\theta)$ is H\"older continuous with exponent $2d_1$.

Let $M_1(\theta) \coloneqq G(\theta) diag(1, |\theta|^{2(d_1 - d_2)}, \dots, |\theta|^{2(d_1 - d_q)})G^*(\theta)$, $G(\theta) \coloneqq (g_1(\theta), \dots, g_q(\theta))$ then $\Sigma_{\chi}(\theta) = |\theta|^{-2d_1}M_1(\theta)$, $\lambda^{\chi}_j(\theta) = |\theta|^{-2d_1}\lambda_j\left(M_1(\theta)\right)$, 
\begin{align*}
    \frac{\Sigma_{\chi}(\theta)}{\lambda_1(\theta)} = \frac{|\theta|^{-2d_1}}{\lambda_1(\theta)} M_1(\theta) = \frac{|\theta|^{-2d_1}}{\lambda^{\chi}_1(\theta)} \frac{\lambda^{\chi}_1(\theta)}{\lambda_1(\theta)} M_1(\theta) = \frac{\lambda^{\chi}_1(\theta)}{\lambda_1(\theta)} \frac{M_1(\theta)}{\lambda_1\left(M_1(\theta)\right)}
\end{align*}
By Weyl's inequality, for $j = 1,...,q,$
\begin{align*}
    1 - \frac{\Lambda^{+}_{\xi}}{\lambda_j(\theta)} \leq 1 - \frac{\lambda^{\xi}_1(\theta)}{\lambda_j(\theta)} \leq \frac{\lambda^{\chi}_j(\theta)}{\lambda_j(\theta)} \leq 1 - \frac{\lambda^{\xi}_n(\theta)}{\lambda_j(\theta)} \leq 1 - \frac{\Lambda^{-}_{\xi}}{\lambda_j(\theta)}
\end{align*}
By Lemma \ref{lemma:div_component_eval_specific_form}, as $\theta \rightarrow 0$, $\lambda^{-1}_j(\theta) \asymp (\lambda^{\chi}_j(\theta))^{-1} \asymp C|\theta|^{2d_j}/n$. Thus, $\lambda^{\chi}_j(\theta)/\lambda_j(\theta) \asymp 1- C|\theta|^{2d_j}/n \rightarrow 1$, as $\theta \rightarrow 0$. Meanwhile, $M_1(\theta)$ is continuous and well-defined, and \begin{align}\label{eq:M_1_decomp}
    M_1(\theta) = g_1(\theta)g_1^*(\theta) + \sum_{j=2}^q g_j(\theta)g_j^*(\theta)|\theta|^{2(d_1 - d_j)}
\end{align} then $M_1(0) = G(0) diag(1,0,\dots, 0) G^*(0) = g_1(0)g_1^*(0)$, $\lambda_1\left(M_1(0)\right) = \left\|g_1(0)\right\|_2^2$. Therefore,
\begin{align}\label{eq:limit_U_1}
    \lim_{\theta \rightarrow 0}U_1(\theta) =\lim_{\theta \rightarrow 0}\frac{\Sigma_{\chi}(\theta)}{\lambda_1(\theta)} = \frac{M_1(0)}{\lambda_1(M_1(0))} = \frac{g_1(0)g_1^*(0)}{\left\|g_1(0)\right\|_2^2}
\end{align}
Thus, by defining $U_1(0) = \frac{g_1(0)g_1^*(0)}{\left\|g_1(0)\right\|_2^2}$, $U_1(\theta)$ admits a continuous extension at $\theta=0$. Furthermore, $\mathbf{p}^*_1(U_1(0)) = g_1(0)/\left\|g_1(0)\right\|_2$. Since $\mathbf{p}_1(U_1(\theta)) = \mathbf{p}_1(\theta),$ when $ \theta \neq 0$, by 
\cite{DavisKahan}, 
\begin{align*}
    \left\|\mathbf{p}^*_1(\theta)\mathbf{p}_1(\theta) - \mathbf{p}^*_1(U_1(0))\mathbf{p}_1(U_1(0))\right\| &= \left\|\mathbf{p}^*_1(U_1(\theta))\mathbf{p}_1(U_1(\theta)) - \mathbf{p}^*_1(U_1(0))\mathbf{p}_1(U_1(0))\right\|\\
    &\leq \frac{\left\|U_1(\theta) - U_1(0)\right\|}{\lambda_1(U_1(0))- \lambda_2(U_1(0))} = \left\|U_1(\theta) - U_1(0)\right\|
\end{align*}
where the last inequality is from \eqref{eq:limit_U_1} that $\lambda_1(U_1(0)) = 1$, $\lambda_2(U_1(0)) = \lambda_2(M_1(0))/\lambda_1(M_1(0)) = 0$. Since $U_1(\theta)$ is continuous at $\theta = 0$,
the same holds for $\mathbf{p}^*_1(\theta)\mathbf{p}_1(\theta)$, and we define $\mathbf{p}^*_1(0)\mathbf{p}_1(0) \coloneqq \lim_{\theta \rightarrow 0} \mathbf{p}^*_1(\theta)\mathbf{p}_1(\theta) = \mathbf{p}^*_1(U_1(0))\mathbf{p}_1(U_1(0)) = \frac{g_1(0)g_1^*(0)}{\left\|g_1(0)\right\|_2^2}$.

To see whether it is H\"older continuous on $\Pi$, let $\theta_1, \theta_2 \neq 0$, and notice that for any Hermitian matrix $S_1, S_2$, by Weyl's inequality,
\begin{align}\label{eq:S_divide_lambda_S}
    \left\|\frac{S_1}{\lambda_j(S_1)} - \frac{S_2}{\lambda_j(S_2)}\right\|_2&\leq \left\|\frac{S_1 - S_2}{\lambda_j(S_1)}\right\|_2 + \left\|\frac{S_2}{\lambda_j(S_1)} - \frac{S_2}{\lambda_j(S_2)}\right\|_2\\\nonumber
    &\leq \frac{\left\|S_1 - S_2\right\|_2}{\lambda_j(S_1)} + \left\|S_2\right\|_2\frac{\left\|S_1 - S_2\right\|_2}{\lambda_j(S_1)\lambda_j(S_2)}\\\nonumber
    &\leq \frac{1}{\lambda_j(S_1)}\left(1 + \frac{\lambda_1(S_2)}{\lambda_j(S_2)}\right)\left\|S_1 - S_2\right\|_2\nonumber
\end{align}
and thus, 
\begin{align*}
    \left\|\frac{S_1}{\lambda_1(S_1)} - \frac{S_2}{\lambda_1(S_2)}\right\|_2 \leq \frac{2\left\|S_1 - S_2\right\|_2}{\lambda_1(S_2)}.
\end{align*}
Therefore, since $\lambda_1(U_1(\theta_1)) = \lambda_1(\theta_1)/\lambda_1(\theta_1) = 1$, $\lambda_2(U_1(\theta_1)) = \lambda_2(\theta_1)/\lambda_1(\theta_1) \asymp C_{\lambda_2}|\theta|^{2(d_1 - d_2)}$,
\begin{align*}
    &\left\|\mathbf{p}^*_1(\theta_1)\mathbf{p}_1(\theta_1) - \mathbf{p}^*_1(\theta_2)\mathbf{p}_1(\theta_2)\right\| \leq \frac{\left\|U_1(\theta_1) - U_1(\theta_2)\right\|_2}{\lambda_1(U_1(\theta_1)) - \lambda_2(U_1(\theta_1))} \leq C\left\|U_1(\theta_1) - U_1(\theta_2)\right\|_2\\
    &= \left\|\frac{|\theta_1|^{-2d_1}M_1(\theta_1) + \Sigma_{\xi}(\theta_1)}{\lambda_1(\theta_1)} -  \frac{|\theta_2|^{-2d_1}M_1(\theta_2) + \Sigma_{\xi}(\theta_2)}{\lambda_1(\theta_2)}\right\|\\
    &=  \left\|\frac{M_1(\theta_1) + |\theta_1|^{2d_1}\Sigma_{\xi}(\theta_1)}{|\theta_1|^{2d_1}\lambda_1(\theta_1)} -  \frac{M_1(\theta_2) + |\theta_2|^{2d_1}\Sigma_{\xi}(\theta_2)}{|\theta_2|^{2d_1}\lambda_1(\theta_2)}\right\|\\
    &= \left\|\frac{M_1(\theta_1) + |\theta_1|^{2d_1}\Sigma_{\xi}(\theta_1)}{\lambda_1(M_1(\theta_1) + |\theta_1|^{2d_1}\Sigma_{\xi}(\theta_1))} -  \frac{M_1(\theta_2) + |\theta_2|^{2d_1}\Sigma_{\xi}(\theta_2)}{\lambda_1(M_1(\theta_2) + |\theta_2|^{2d_1}\Sigma_{\xi}(\theta_2))}\right\|\\
    &\leq \frac{2\left\|M_1(\theta_1) - M_1(\theta_2) \right\|_2}{\lambda_1(M_1(\theta_1) + |\theta_1|^{2d_1}\Sigma_{\xi}(\theta_1))} + \frac{2\left\||\theta_1|^{2d_1}\Sigma_{\xi}(\theta_1) - |\theta_2|^{2d_1}\Sigma_{\xi}(\theta_2)\right\|_2}{\lambda_1(M_1(\theta_1) + |\theta_1|^{2d_1}\Sigma_{\xi}(\theta_1))}\\
    &\leq \frac{C}{n}\left\|M_1(\theta_1) - M_1(\theta_2) \right\|_2 + \frac{C}{n}\left\||\theta_1|^{2d_1}\Sigma_{\xi}(\theta_1) - |\theta_2|^{2d_1}\Sigma_{\xi}(\theta_2)\right\|_2
\end{align*}
The last inequality is because $\lambda_1(M_1(\theta_1) + |\theta_1|^{2d_1}\Sigma_{\xi}(\theta_1))\geq \lambda_1(M_1(\theta_1)) = |\theta|^{2d_1}\lambda_1(\Sigma_{\chi}(\theta)) \asymp Cn$. By Assumption \ref{assump:pervasive_short_mem} and \ref{assump:row_boundness_G}, we have $\|g_1(\theta)\|_2^2=\sum_{i=1}^n|g_{i1}(\theta)|^2\le\sum_{i=1}^n \|G_i(\theta)\|_2^2 \le Cn$, and $\|g_1(\theta_1)-g_1(\theta_2)\|_2
\le
\|G(\theta_1)-G(\theta_2)\|_2
\le
\sqrt{n}L_G|\theta_1 - \theta_2|$. Consequently, 
\begin{align*}
    \left\|M_1(\theta_1) - M_1(\theta_2)\right\|_2 &\leq \left\|g_1(\theta_1)g^*_1(\theta_1) - g_1(\theta_2)g^*_1(\theta_2)\right\|_2\\
    &+ \sum_{j=2}^q\left||\theta_1|^{2(d_1 - d_j)}\left\|g_j(\theta_1)g^*_j(\theta_1)\right\|_2 - |\theta_2|^{2(d_1 - d_j)}\left\|g_j(\theta_2)g^*_j(\theta_2)\right\|_2\right|\\
    &\leq C_1n|\theta_1 - \theta_2| + C_2\sum_{j=2}^q|\theta_1|^{2(d_1 - d_j)}\left\|g_j(\theta_1)g^*_j(\theta_1) - g_j(\theta_2)g^*_j(\theta_2)\right\|_2\\
    &+C_3 \sum_{j=2}^q \left||\theta_1|^{2(d_1 - d_j)} - |\theta_2|^{2(d_1 - d_j)}\right| \left\| g_j(\theta_2)g^*_j(\theta_2) \right\|_2\\
    &\leq C_4n|\theta_1 - \theta_2|  + C_3 n \sum_{j=2}^q\left||\theta_1|^{2(d_1 - d_j)} - |\theta_2|^{2(d_1 - d_j)}\right| \\
    &\leq C_4n|\theta_1 - \theta_2|  + C_3 n \sum_{j=2}^q\left||\theta_1| - |\theta_2|\right|^{2(d_1 - d_j)}\\
    &\leq C_4n|\theta_1 - \theta_2|  + C_3 n \sum_{j=2}^q\left|\theta_1 - \theta_2\right|^{2(d_1 - d_j)}\\
    &\leq \begin{cases}
        Cn|\theta_1 - \theta_2|^{2(d_1 - d_2)}, & \text{ if }q \geq 2\\
        Cn|\theta_1 - \theta_2|,& \text{ if }q = 1
    \end{cases}
\end{align*}

The last two inequalities are because of the subadditivity of $t^{\alpha}, \alpha \in (0,1)$, and the triangle inequality. 


Meanwhile, following the same logic, we have
$\left\||\theta_1|^{2d_1}\Sigma_{\xi}(\theta_1) - |\theta_2|^{2d_1}\Sigma_{\xi}(\theta_2)\right\|_2 \leq C|\theta_1 - \theta_2|^{2d_1}$.
Summing up, for any $\theta_1,\theta_2\neq 0$,
\begin{align}\label{eq:P_holder_global}
\left\|\mathbf{p}^*_1(\theta_1)\mathbf{p}_1(\theta_1) - \mathbf{p}^*_1(\theta_2)\mathbf{p}_1(\theta_2)\right\|_2
\leq C|\theta_1 - \theta_2|^{\rho_1^{(q)}}, \rho_1^{(q)} = \alpha_{12}, d_{q+1}= 0. 
\end{align}
For the case where $\theta_1\theta_2 = 0$, since $U_1(\theta)$ admits a continuous extension at $\theta = 0$, the same analysis applies, and we obtain
\begin{align}\label{eq:P_holder_at_0}
\left\|\mathbf{p}^*_1(\theta)\mathbf{p}_1(\theta) - \mathbf{p}^*_1(0)\mathbf{p}_1(0)\right\|_2
\leq C|\theta|^{\rho_1^{(q)}}, \rho_1^{(q)} = \alpha_{12}, d_{q+1}= 0.
\end{align}Therefore, $\mathbf{p}_1^*(\theta)\mathbf{p}_1(\theta)$ is H\"older continuous on $\Pi$ with H\"older exponent $\rho_1^{(q)} = \alpha_{12}$, $d_{q+1}= 0$.

Moreover, denote $P_1(\theta) = \mathbf{p}_1^*(\theta)\mathbf{p}_1(\theta)$, since for any $\theta \neq 0$, $P_1(\theta) = P_1(U_1(\theta))$, then by Lemma \ref{lemma:first_second_derivative_eigenvec_projection}, since $\lambda_1(U_1(\theta)) - \lambda_2(U_1(\theta)) \geq 1 - C_{\lambda}|\theta|^{2(d_1 - d_2)} \geq c$, the first derivative $\left\|P^{(1)}_1(\theta)\right\|_2 \leq C\left\|U^{(1)}_1(\theta)\right\|_2$.

Furthermore, \[
U_1(\theta) = \frac{A_1(\theta)}{\lambda_1(A_1(\theta))},\quad  A_1(\theta) = |\theta|^{2d_1}\Sigma(\theta) = M_1(\theta) + |\theta|^{2d_1}\Sigma_{\xi}(\theta),
\] 
\[
\left\|A^{(1)}_1(\theta)\right\|_2 \leq  \left\|M^{(1)}_1(\theta)\right\|_2 + C|\theta|^{2d_1 - 1}\Lambda_{\xi}^{+}
\]

Notice that $M_1(\theta)$ is a power law combination, taking derivatives once on \eqref{eq:M_1_decomp}, by Assumption \ref{assump:pervasive_short_mem}, $\left\|g_j^{(1)}\right\|_2 \leq C\sqrt{n}$, then $\left\|M^{(1)}_1(\theta)\right\|_2 \leq Cn|\theta|^{\min_{j = 1,\dots, q}2(d_1 - d_j) - 1}$, $\left\|A^{(1)}_1(\theta)\right\|_2 \leq C_A n |\theta|^{2(d_1 - d_2) - 1}$. $\lambda_1(A_1(\theta)) \geq \lambda_1(M_1(\theta)) \asymp C_1n$, by Lemma \ref{lemma:bound_for_first_second_order_U},  $\left\|P^{(1)}_1(\theta)\right\|_2 \leq C\left\|U^{(1)}_1(\theta)\right\|_2 \leq \frac{2}{n}\left\|A^{(1)}_1(\theta)\right\|_2 \leq C|\theta|^{2(d_1 - d_2) - 1}$.

Taking derivatives twice on \eqref{eq:M_1_decomp}, combining with $\left\|g_j^{(2)}\right\|_2 \leq C_2\sqrt{n}$, yields $\left\|M^{(2)}_1(\theta)\right\|_2 \leq Cn|\theta|^{2(d_1 - d_2) - 2}$. 
Consequently, $\left\|A_1^{(2)}(\theta)\right\|_2 \leq Cn|\theta|^{2(d_1 - d_2) - 2}$.
According to Lemma \ref{lemma:first_second_derivative_eigenvec_projection} and \ref{lemma:bound_for_first_second_order_U}, with $\lambda_2(A_1(\theta)) \leq C_2n|\theta|^{2(d_1 - d_2)}$, 
\begin{align*}
    \left\|P^{(2)}_1(\theta)\right\|_2 \leq C\left\|U^{(2)}_1(\theta)\right\|_2 \leq \frac{\left\|A_1^{(1)}(\theta)\right\|_2^2}{n^2} + \frac{\left\|A_1^{(1)}(\theta)\right\|_2 + \left\|A_1^{(2)}(\theta)\right\|_2}{n} \leq C|\theta|^{2(d_1 - d_2) - 2}.
\end{align*}

\paragraph{Second, we prove that $\mathbf{p}_k^*(\theta)\mathbf{p}_k(\theta)$s are H\"older continuous, for all $k$.} 
In this part, we aim to prove two statements. Denote $h_{jk}^{(0)}(\theta) \coloneqq \left(\prod_{i=1}^{k}\Pi_{i}(\theta)\right)g_j(\theta)$, $h_{jk}^{(r)}(\theta)$ is the $r_{th}$ derivative of $h_{jk}^{(0)}(\theta)$.  $H_{jk}^{(0)}(\theta) \coloneqq h_{jk}^{(0)}(\theta)\left(h_{jk}^{(0)}(\theta)\right)^*$. Consequently, $h_{jk}^{(0)}(\theta) = \Pi_{k}(\theta)h_{j,k-1}^{(0)}(\theta) = \left(I - P_{k}(\theta)\right)h_{j,k-1}^{(0)}(\theta)$. Under Assumption \ref{assump:gap_dominance}, for any $m  = 1, \dots, q$,
\begin{enumerate}
    \item \label{stmt:Pk_limit} \textbf{Statement 1(m):} $\mathbf{p}_m^*(0)\mathbf{p}_m(0) = P_m(0)$ exists, and 
    \begin{align}\label{eq:rank_1_P_m_0}
    P_m(0) = \frac{h_{m, m-1}^{(0)}(0)\left(h_{m, m-1}^{(0)}(0)\right)^*}{\left\|h_{m, m-1}^{(0)}(0)\right\|^2_2}.
    \end{align} then 
    \begin{align}\label{eq:h_m_m_0}
        h_{m,m}^{(0)}(0) = \left(I - P_m(0)\right)h_{m,m-1}^{(0)}(0)= \left(I - \frac{h_{m,m-1}^{(0)}(0)\left(h_{m,m-1}^{(0)}(0)\right)^*}{\left\|h_{m,m-1}^{(0)}(0)\right\|_2^2}\right)h_{m,m-1}^{(0)}(0) = 0.
    \end{align}
    \item \label{stmt:Pk_holder} \textbf{Statement 2(m):} \(P_m(\theta)\) is \(\rho^{(q)}_m\)-H\"older continuous for all \(m\leq q\), where
\[
    \rho^{(q)}_m
    \coloneqq
    \min \left\{
    \rho^{(q)}_{m-1},
    \alpha_{m,m+1},
    \min_{l\leq m-1}\left(2\rho^{(q)}_l-\alpha_{l,m}\right)
    \right\},
\]
with
\[
    \alpha_{j_1j_2}
    \coloneqq
    2|d_{j_1}-d_{j_2}|>0,
    \qquad
    d_{q+1} \coloneqq 0,
    \qquad
    \rho^{(j)}_0 \coloneqq 1, 
    \qquad
    \alpha_{0,j} \coloneqq 1,
    \qquad 
    j = 1,\dots,q.
\]
Furthermore, 
\begin{align}\label{eq:first_second_derivative_P_bnd}
    \left\|P^{(1)}_m(\theta)\right\|_2 \leq C|\theta|^{\rho^{(q)}_m - 1}, \quad \left\|P^{(2)}_m(\theta)\right\|_2 \leq C|\theta|^{\rho^{(q)}_m - 2}
\end{align}
\end{enumerate}
We have proved that these two statements hold for $m = 1$. 
By induction, we aim to prove the claim below,\\
\textbf{Claim:} For any $k\leq q$, suppose Statement 1(i) and 2(i) hold for all $i \leq k-1$, 
then Statement 1(k) and 2(k) hold.

To prove such a claim, we aim to solve another claim below first:\\
\textbf{Claim 1:} For any $k\leq q$, suppose Statement 1(i) and 2(i) hold for all $i \leq k-1$, then Statement 1(k) hold.

Consider 
\begin{align*}
    R_{k}(\theta) = \Pi_{k-1}(\theta)R_{k-1}(\theta)\Pi_{k-1}(\theta), k \geq 2,
\end{align*}
$R_1(\theta) \coloneqq \Sigma(\theta)$, 
then $R_{k}(\theta) = \sum_{j=k}^n \lambda_j(\theta)\mathbf{p}_j^*(\theta)\mathbf{p}_j(\theta)$. Hence, $\lambda_{k}(\theta), \dots, \lambda_n(\theta)$ are the nonzero eigenvalues of $R_{k}(\theta),$ and $\mathbf{p}_j(\theta), j = k, \dots, n$ are the row eigenvectors of $R_{k}(\theta)$ associated with nonzero eigenvalues. Further, let \begin{align*}
    U_{k}(\theta) = \frac{R_{k}(\theta)}{\lambda_{k}(\theta)}, \quad M_{k}(\theta) = |\theta|^{2d_{k}}\Sigma_{\chi}(\theta),\quad A_k(\theta) = |\theta|^{2d_{k}}R_k(\theta)
\end{align*} then since $|\theta|^{2d_k}\lambda_k(\theta) = |\theta|^{2d_k}\lambda_1(R_k(\theta)) = \lambda_1(|\theta|^{2d_k}R_k(\theta))$, \[
U_k(\theta) =  \frac{|\theta|^{2d_k}R_{k}(\theta)}{|\theta|^{2d_k}\lambda_{k}(\theta)} = \frac{|\theta|^{2d_k}R_{k}(\theta)}{\lambda_1(|\theta|^{2d_k}R_k(\theta))}
\]

From the definition of $R_k$, we have that for $\theta \neq 0$,
\begin{align}\label{eq:Rk_decomp}
    A_k(\theta) &= |\theta|^{2d_k}R_{k}(\theta) \\\nonumber
    &= \left(\prod_{i=1}^{k-1}\Pi_i(\theta)\right) {M_k(\theta)} \left(\prod_{i=1}^{k-1}\Pi_i(\theta)\right)+ \left(\prod_{i=1}^{k-1}\Pi_i(\theta)\right) |\theta|^{2d_k}  \Sigma_{\xi}(\theta) \left(\prod_{i=1}^{k-1}\Pi_i(\theta)\right) \\\nonumber
    \prod_{i=1}^{k-1}\Pi_i(\theta) &\coloneqq \Pi_{k-1}(\theta)\cdots \Pi_{1}(\theta).\nonumber
\end{align}

The second term 
\begin{align}\label{eq:Pi_xi_Pi_decomp}
    \left\|\left(\prod_{i=1}^{k-1}\Pi_i(\theta)\right) |\theta|^{2d_k} \Sigma_{\xi}(\theta) \left(\prod_{i=1}^{k-1}\Pi_i(\theta)\right)\right\|_2 \leq \Lambda_{\xi}^{+} {|\theta|^{2d_k}}\left\|\prod_{i=1}^{k-1}\Pi_i(\theta)\right\|_2^2 \leq \Lambda_{\xi}^{+} {|\theta|^{2d_k}} \rightarrow 0, 
\end{align} as $\theta \rightarrow 0$.

To investigate 
the first term in \eqref{eq:Rk_decomp}, 
\begin{align}\label{eq:Pi_Mk_Pi_decomp}
    \left(\prod_{i=1}^{k-1}\Pi_i(\theta)\right) M_k(\theta) \left(\prod_{i=1}^{k-1}\Pi_i(\theta)\right) &= \sum_{j=1}^{k-1}|\theta|^{-\alpha_{jk}} h_{j,k-1}^{(0)}(\theta)\left(h_{j,k-1}^{(0)}(\theta)\right)^*\\\nonumber
    &+ h_{k,k-1}^{(0)}(\theta)\left(h_{k,k-1}^{(0)}(\theta)\right)^*\\\nonumber
    &+ \sum_{j=k+1}^{q}|\theta|^{\alpha_{jk}} h_{j,k-1}^{(0)}(\theta)\left(h_{j,k-1}^{(0)}(\theta)\right)^*\nonumber
\end{align}
The third term $\rightarrow 0$, as $\theta \rightarrow 0$, since $\left\|h_{j,k-1}^{(0)}(\theta)\right\|_2 \leq \prod_{i=1}^{k-1}\left\|\Pi_i(\theta)\right\|_2\left\|g_j(\theta)\right\|_2\leq C\sqrt{n}$,
and that $\Pi_i(0)$ exists for $i \leq k-1$ as $P_i(0)$ exists for $i \leq k-1$.

Under Statement 2(k-1), Lemma \ref{lemma:holder_successive_h} holds for $m = k-1$. By Lemma \ref{lemma:holder_successive_h}, the second term 
\begin{align*}
    \lim_{\theta \rightarrow 0} h_{k,k-1}^{(0)}(\theta)\left(h_{k,k-1}^{(0)}(\theta)\right)^* = h_{k,k-1}^{(0)}(0)\left(h_{k,k-1}^{(0)}(0)\right)^*
\end{align*}

For the first term,  notice that for $j \leq k-1$,
\begin{align*}
    h_{j,j}^{(0)}(0) = \lim_{\theta \rightarrow 0}h_{j,j}^{(0)}(\theta) = \lim_{\theta \rightarrow 0}\Pi_j(\theta)h_{j,j-1}^{(0)}(\theta) = \lim_{\theta \rightarrow 0}\left(I - P_j(\theta)\right)h_{j,j-1}^{(0)}(\theta).
\end{align*}
For $j \leq k-1$, $P_j(0)$ and $h_{j,j-1}^{(0)}(0) = \left(\prod_{i=1}^{j-1}\Pi_{i}(0)\right)g_j(0)$ exist. Thus, by Statement 1(j), $h_{j,j}^{(0)}(0) = 0$.
Under Statement 2(j), Lemma \ref{lemma:holder_successive_h} holds for $m = j$, then 
\begin{align}\label{eq:h_j_j_0}
    \left\|h_{j,j}^{(0)}(\theta)\right\|_2 = \left\|h_{j,j}^{(0)}(\theta) - h_{j,j}^{(0)}(0)\right\|_2 \leq C\sqrt{n}|\theta|^{\rho^{(q)}_j},
\end{align} for $j \leq k-1$,
\begin{align}\label{eq:rate_h_j_k-1}
    \left\|h_{j,k-1}^{(0)}(\theta)\right\|_2 = \left\|\prod_{i = j+1}^{k-1}\Pi_i(\theta) h_{j,j}^{(0)}(\theta)\right\|_2 \leq \left\|h_{j,j}^{(0)}(\theta)\right\|_2 \leq C\sqrt{n}|\theta|^{\rho^{(q)}_j} 
\end{align}
The first term in \eqref{eq:Pi_Mk_Pi_decomp} $\leq \sum_{j=1}^{k-1} |\theta|^{-\alpha_{jk}}\left\|h_{j,k-1}^{(0)}(\theta)\right\|_2^2 \leq Cn\sum_{j=1}^{k-1} |\theta|^{2\rho^{(q)}_{j}-\alpha_{jk}}$. Under Assumption \ref{assump:gap_dominance}, $2\rho^{(q)}_{j}-\alpha_{jk} > 0$, the first term vanishes as $\theta \rightarrow 0$. Plugging back into \eqref{eq:Rk_decomp} yields \[
\lim_{\theta \rightarrow 0} A_k(\theta) = h_{k,k-1}^{(0)}(0)\left(h_{k,k-1}^{(0)}(0)\right)^*,
\] and
\[
\lambda_1(A_k(\theta)) = \left\|h_{k,k-1}^{(0)}(0)\right\|_2^2.
\]

Then define
\begin{align*}
    U_k(0) \coloneqq \lim_{\theta\rightarrow 0}U_k(\theta) = \lim_{\theta\rightarrow 0} \frac{A_k(\theta)}{\lambda_1(A_k(\theta))} = \frac{h_{k,k-1}^{(0)}(0)\left(h_{k,k-1}^{(0)}(0)\right)^*}{\left\|h_{k,k-1}^{(0)}(0)\right\|^2_2}.
\end{align*}
Since $U_k(0)$ is a rank-1 matrix, and $\lambda_1(U_k(0)) = 1$,
\begin{align*}
     \mathbf{p}_{k}^*(0)\mathbf{p}_{k}(0) = P_k(0) = \frac{U_k(0)}{\lambda_1(U_k(0))} = \frac{h_{k,k-1}^{(0)}(0)\left(h_{k,k-1}^{(0)}(0)\right)^*}{\left\|h_{k,k-1}^{(0)}(0)\right\|^2_2}
\end{align*}
Statement 1(k) holds.

Second, we aim to prove Statement 2(k) holds. 
By induction, we aim to prove the claim below:\\
\textbf{Claim 2:} Suppose Statement \ref{stmt:Pk_limit}(i) and \ref{stmt:Pk_holder}(i) hold for $i = 1, \dots, k-1$. Furthermore, Statement \ref{stmt:Pk_limit}(k) holds as well, then Statement \ref{stmt:Pk_holder}(k) holds.



For any $\theta_1, \theta_2 \in \Pi$,
\begin{align*}
    &\left\|P_{k}(\theta_1) - P_{k}(\theta_2) \right\|_2=\left\|\mathbf{p}^*_{k}(\theta_1)\mathbf{p}_{k}(\theta_1) - \mathbf{p}^*_{k}(\theta_2)\mathbf{p}_{k}(\theta_2)\right\|_2 \\
    &=\left\|\mathbf{p}^*_1(U_{k}(\theta_1))\mathbf{p}_1(U_{k}(\theta_1)) - \mathbf{p}^*_1(U_{k}(\theta_2))\mathbf{p}_1(U_{k}(\theta_2))\right\|_2\\
    &\leq \frac{\left\|U_k(\theta_1) - U_k(\theta_2)\right\|_2}{\lambda_1(U_k(\theta_1)) - \lambda_2(U_k(\theta_1))}= \frac{\left\|U_k(\theta_1) - U_k(\theta_2)\right\|_2}{\lambda_1(R_k(\theta_1))/\lambda_k(\theta_1) - \lambda_2(R_k(\theta_1))/\lambda_k(\theta_1)}\\
    &= \frac{\left\|U_k(\theta_1) - U_k(\theta_2)\right\|_2}{\lambda_k(\theta_1)/\lambda_k(\theta_1) - \lambda_{k+1}(\theta_1)/\lambda_k(\theta_1)}\leq C\left\|U_k(\theta_1) - U_k(\theta_2)\right\|_2
\end{align*}

If $\theta_1, \theta_2 \neq 0$, since $|\theta|^{2d_k}\lambda_k(\theta) = |\theta|^{2d_k}\lambda_1(R_k(\theta)) = \lambda_1(|\theta|^{2d_k}R_k(\theta))$,
\begin{align*}
    \left\|U_k(\theta_1) - U_k(\theta_2)\right\|_2 &= \left\|\frac{R_k(\theta_1)}{\lambda_k(\theta_1)} - \frac{R_k(\theta_2)}{\lambda_k(\theta_2)}\right\|_2\\
    &= \left\|\frac{|\theta_1|^{2d_k}R_k(\theta_1)}{\lambda_1(|\theta_1|^{2d_k}R_k(\theta_1))} - \frac{|\theta_2|^{2d_k}R_k(\theta_2)}{\lambda_1(|\theta_2|^{2d_k}R_k(\theta_2))}\right\|_2\\
    &\leq \frac{2}{\lambda_1(|\theta_1|^{2d_k}R_k(\theta_1))}\left\||\theta_1|^{2d_k}R_k(\theta_1) - |\theta_2|^{2d_k}R_k(\theta_2)\right\|_2\\
    &\leq \frac{C}{n}\left\||\theta_1|^{2d_k}R_k(\theta_1) - |\theta_2|^{2d_k}R_k(\theta_2)\right\|_2 \leq \frac{C}{n}\left\|A_k(\theta_1) - A_k(\theta_2)\right\|_2
\end{align*}
where the last two inequalities are from \eqref{eq:S_divide_lambda_S}, and by Lemma \ref{lemma:div_component_eval_specific_form}, $\lambda_1(|\theta_1|^{2d_k}R_k(\theta_1)) = |\theta_1|^{2d_k}\lambda_1(R_k(\theta_1)) = |\theta_1|^{2d_k}\lambda_k(\Sigma(\theta_1)) \asymp Cn|\theta_1|^{-2d_k}|\theta_1|^{2d_k} \asymp Cn$.

Notice that 
\begin{align}\label{eq:theta_2dk_Rk}
    A_k(\theta) &= \left(\prod_{i=1}^{k-1}\Pi_i(\theta)\right) M_k(\theta) \left(\prod_{i=1}^{k-1}\Pi_i(\theta)\right) + \left(\prod_{i=1}^{k-1}\Pi_i(\theta)\right) |\theta|^{2d_k} \Sigma_{\xi}(\theta) \left(\prod_{i=1}^{k-1}\Pi_i(\theta)\right) 
\end{align}

Since each term in $\left(\prod_{i=1}^{k-1}\Pi_i(\theta)\right) |\theta|^{2d_k} \Sigma_{\xi}(\theta) \left(\prod_{i=1}^{k-1}\Pi_i(\theta)\right)$ is bounded, by the product of bounded H\"older functions is H\"older with the minimum exponent, $\left(\prod_{i=1}^{k-1}\Pi_i(\theta)\right) |\theta|^{2d_k} \Sigma_{\xi}(\theta) \left(\prod_{i=1}^{k-1}\Pi_i(\theta)\right)$ is H\"older continous with exponent $\min\{2d_k, \rho^{(q)}_{k-1}\}$.


\begin{align}\label{eq:decomp_Pi_Mk_Pi}
    &\left\|\left(\prod_{i=1}^{k-1}\Pi_i(\theta_1)\right) M_k(\theta_1) \left(\prod_{i=1}^{k-1}\Pi_i(\theta_1)\right) - \left(\prod_{i=1}^{k-1}\Pi_i(\theta_2)\right) M_k(\theta_2) \left(\prod_{i=1}^{k-1}\Pi_i(\theta_2)\right)\right\|_2\\\nonumber
    &\leq \sum_{j = 1}^{k-1}\left\| |\theta_1|^{-\alpha_{jk}} H_{j,k-1}^{(0)}(\theta_1) - |\theta_2|^{-\alpha_{jk}} H_{j,k-1}^{(0)}(\theta_2) \right\|_2\\\nonumber
    &+ \left\| H_{k,k-1}^{(0)}(\theta_1) - H_{k,k-1}^{(0)}(\theta_2)  \right\|_2\\\nonumber
    &+ \sum_{j=k+1}^q \left\| |\theta_1|^{\alpha_{jk}} H_{j,k-1}^{(0)}(\theta_1) - |\theta_2|^{\alpha_{jk}} H_{j,k-1}^{(0)}(\theta_2) \right\|_2\nonumber
\end{align}

By Lemma \ref{lemma:holder_successive_h}, $H_{j,k-1}^{(0)}(\theta)$ is $\rho^{(q)}_{k-1}$-H\"older continuous, 
for all $j = 1, \dots, q$. Therefore, the second term becomes
\begin{align*}
    \left\| H_{k,k-1}^{(0)}(\theta_1) - H_{k,k-1}^{(0)}(\theta_2)  \right\|_2 \leq Cn|\theta_1 - \theta_2|^{\rho^{(q)}_{k-1}}.
\end{align*}
Meanwhile, the third term, $j \geq k+1$,
\begin{align*}
    &\left\||\theta_1|^{\alpha_{jk}}H_{j,k-1}^{(0)}(\theta_1) - |\theta_2|^{\alpha_{jk}}H_{j,k-1}^{(0)}(\theta_2)\right\|_2 \\
    &\leq |\theta_1|^{\alpha_{jk}}
\left\|H_{j,k-1}^{(0)}(\theta_1)-H_{j,k-1}^{(0)}(\theta_2)\right\|_2
+
\left||\theta_1|^{\alpha_{jk}}-|\theta_2|^{\alpha_{jk}}\right|
\left\|H_{j,k-1}^{(0)}(\theta_2)\right\|_2\\
&\leq Cn|\theta_1 - \theta_2|^{\rho^{(q)}_{k-1}} + Cn|\theta_1 - \theta_2|^{\alpha_{jk}} \leq Cn|\theta_1 - \theta_2|^{\min\{\rho^{(q)}_{k-1},\alpha_{jk}\}}
\end{align*} where the second last inequality is by the subadditivity of the concave function $t^{\alpha_{jk}}$, given that $\alpha_{jk} \in (0,1)$. Then, the third term can be controlled by 
\begin{align}\label{eq:third_term_rate_j_geq_k+1}
    &\left\||\theta_1|^{\alpha_{jk}}H_{j,k-1}^{(0)}(\theta_1) - |\theta_2|^{\alpha_{jk}}H_{j,k-1}^{(0)}(\theta_2)\right\|_2\\\nonumber
    &\leq Cn|\theta_1 - \theta_2|^{\min\{\rho^{(q)}_{k-1}, \min_{j\geq k+1}\alpha_{jk}\}} = Cn|\theta_1 - \theta_2|^{\min\{\rho^{(q)}_{k-1},\alpha_{k,k+1}\}}\nonumber
\end{align}
For the first term in \eqref{eq:decomp_Pi_Mk_Pi}, for each $j \leq k-1$, $|\theta_1| \leq |\theta_2|$,
\begin{align*}
    &\left\||\theta_1|^{-\alpha_{jk}} H_{j,k-1}^{(0)}(\theta_1) - |\theta_2|^{-\alpha_{jk}} H_{j,k-1}^{(0)}(\theta_2)\right\|_2 \\
    &\leq 
|\theta_2|^{-\alpha_{jk}}
\left\|H_{j,k-1}^{(0)}(\theta_1)-H_{j,k-1}^{(0)}(\theta_2)\right\|_2
+
\left||\theta_1|^{-\alpha_{jk}}-|\theta_2|^{-\alpha_{jk}}\right|
\left\|H_{j,k-1}^{(0)}(\theta_1)\right\|_2
\end{align*}
By \eqref{eq:h_j_m_upperbnd} in Lemma \ref{lemma:upperbnd_h_j_m}, when $j \leq k-1$,
\begin{align*}
    &|\theta_2|^{-\alpha_{jk}}
\left\|H_{j,k-1}^{(0)}(\theta_1)-H_{j,k-1}^{(0)}(\theta_2)\right\|_2\\
&\leq |\theta_2|^{-\alpha_{jk}}\left(\left\|h_{j,k-1}^{(0)}(\theta_1)\right\|_2 + \left\|h_{j,k-1}^{(0)}(\theta_2)\right\|_2\right)\left\|h_{j,k-1}^{(0)}(\theta_1) - h_{j,k-1}^{(0)}(\theta_2)\right\|_2\\
&\leq Cn|\theta_2|^{- \alpha_{jk}}\left(|\theta_1|^{\rho^{(q)}_j} + |\theta_2|^{\rho^{(q)}_j}\right)\left[
|\theta_1-\theta_2|^{\rho^{(q)}_j}
+
\left(|\theta_1|^{\rho^{(q)}_j}+|\theta_2|^{\rho^{(q)}_j}\right)
|\theta_1-\theta_2|^{\rho^{(q)}_{k-1}}
\right]\\
&\leq Cn|\theta_2|^{\rho^{(q)}_j - \alpha_{jk}}|\theta_1-\theta_2|^{\rho^{(q)}_j} + Cn|\theta_2|^{2\rho^{(q)}_j - \alpha_{jk}}|\theta_1-\theta_2|^{\rho^{(q)}_{k-1}}
\end{align*}
where the last inequality is from $|\theta_1| \leq |\theta_2|$. By Assumption \ref{assump:gap_dominance}, $Cn|\theta_2|^{2\rho^{(q)}_j - \alpha_{jk}}|\theta_1-\theta_2|^{\rho^{(q)}_{k-1}} \leq Cn|\theta_1-\theta_2|^{\rho^{(q)}_{k-1}}$. The first term $\leq Cn|\theta_1 - \theta_2|^{\rho^{(q)}_j}$ if $\rho^{(q)}_j \geq \alpha_{jk}$. If $\rho^{(q)}_j < \alpha_{jk}$, using $|\theta_2|\geq |\theta_1 - \theta_2|/2$, $|\theta_2|^{\rho^{(q)}_j - \alpha_{jk}} \leq C|\theta_1 - \theta_2|^{\rho^{(q)}_j - \alpha_{jk}}$. Then 
\begin{align*}
    |\theta_2|^{-\alpha_{jk}}
\left\|H_{j,k-1}^{(0)}(\theta_1)-H_{j,k-1}^{(0)}(\theta_2)\right\|_2 \leq Cn|\theta_1 - \theta_2|^{\min\{\rho^{(q)}_{k-1}, 2\rho^{(q)}_j - \alpha_{jk}\}}.
\end{align*}

Meanwhile, for $t \in (0,1), \alpha \in (0,1)$, by subadditivity of concave function $t^{\alpha}$, we have $1 = (t+1-t)^{\alpha} \leq t^{\alpha} + (1-t)^{\alpha}$, $|1-t^{\alpha}| = (1-t^{\alpha})\leq (1-t)^{\alpha} = |1-t|^{\alpha}$,
\begin{align*}
    &\left||\theta_1|^{-\alpha_{jk}}-|\theta_2|^{-\alpha_{jk}}\right|
\left\|H_{j,k-1}^{(0)}(\theta_1)\right\|_2 \leq Cn\left||\theta_1|^{-\alpha_{jk}}-|\theta_2|^{-\alpha_{jk}}\right| |\theta_1|^{2\rho^{(q)}_j}\\
&= Cn\left|1 - \left|\frac{\theta_1}{\theta_2}\right|^{\alpha_{jk}}\right||\theta_1|^{2\rho^{(q)}_j - \alpha_{jk}}\\
&\leq Cn\left|1 - \left|\frac{\theta_1}{\theta_2}\right|\right|^{\alpha_{jk}}|\theta_1|^{2\rho^{(q)}_j - \alpha_{jk}}\\
&\leq Cn \left|1 - \left|\frac{\theta_1}{\theta_2}\right|\right|^{\min\{\alpha_{jk}, 2\rho^{(q)}_j - \alpha_{jk}\}}|\theta_1|^{\min\{\alpha_{jk}, 2\rho^{(q)}_j - \alpha_{jk}\}}\\
&\leq Cn \left|\frac{\theta_1}{\theta_2}\right|^{\min\{\alpha_{jk}, 2\rho^{(q)}_j - \alpha_{jk}\}}\left|1 - \left|\frac{\theta_2}{\theta_1}\right|\right|^{\min\{\alpha_{jk}, 2\rho^{(q)}_j - \alpha_{jk}\}}|\theta_1|^{\min\{\alpha_{jk}, 2\rho^{(q)}_j - \alpha_{jk}\}}\\
&\leq Cn\left|1 - \left|\frac{\theta_2}{\theta_1}\right|\right|^{\min\{\alpha_{jk}, 2\rho^{(q)}_j - \alpha_{jk}\}}|\theta_1|^{\min\{\alpha_{jk}, 2\rho^{(q)}_j - \alpha_{jk}\}}\\
&\leq Cn\left||\theta_1| - |\theta_2|\right|^{\min\{\alpha_{jk}, 2\rho^{(q)}_j - \alpha_{jk}\}} \leq Cn\left|\theta_1 - \theta_2\right|^{\min\{\alpha_{jk}, 2\rho^{(q)}_j - \alpha_{jk}\}}.
\end{align*}Therefore, for $|\theta_1| \leq |\theta_2|$, we have $\left\||\theta_1|^{-\alpha_{jk}}H_{j,k-1}^{(0)}(\theta_1) - |\theta_2|^{-\alpha_{jk}}H_{j,k-1}^{(0)}(\theta_2)\right\|_2 \leq Cn|\theta_1 - \theta_2|^{\min\{\alpha_{jk}, \rho^{(q)}_{k-1}, 2\rho^{(q)}_j - \alpha_{jk}\}}$.

For $|\theta_1| > |\theta_2|$, decomposing in another direction yields
\begin{align*}
    &\left\||\theta_1|^{-\alpha_{jk}} H_{j,k-1}^{(0)}(\theta_1) - |\theta_2|^{-\alpha_{jk}} H_{j,k-1}^{(0)}(\theta_2)\right\|_2 \\
    &\leq 
|\theta_1|^{-\alpha_{jk}}
\left\|H_{j,k-1}^{(0)}(\theta_1)-H_{j,k-1}^{(0)}(\theta_2)\right\|_2
+
\left||\theta_1|^{-\alpha_{jk}}-|\theta_2|^{-\alpha_{jk}}\right|
\left\|H_{j,k-1}^{(0)}(\theta_2)\right\|_2\\
&\leq Cn|\theta_1 - \theta_2|^{\min\{\alpha_{jk}, \rho^{(q)}_{k-1}, 2\rho^{(q)}_j - \alpha_{jk}\}} 
\end{align*} where the last inequality is a change of variables from the previous analysis.

Overall, the first term in \eqref{eq:decomp_Pi_Mk_Pi} can be controlled by
\begin{align*}
    \sum_{j = 1}^{k-1}\left\| |\theta_1|^{-\alpha_{jk}} H_{j,k-1}^{(0)}(\theta_1) - |\theta_2|^{-\alpha_{jk}} H_{j,k-1}^{(0)}(\theta_2) \right\|_2&\leq Cn \sum_{j=1}^{k-1}|\theta_1 - \theta_2|^{\min\{\alpha_{jk}, \rho^{(q)}_{k-1}, 2\rho^{(q)}_j - \alpha_{jk}\}}\\
    &\leq Cn|\theta_1 - \theta_2|^{\min\{ \rho^{(q)}_{k-1}, \alpha_{k-1,k}, \min_{j \leq k-1} \{2\rho^{(q)}_j - \alpha_{jk}\}\}}
\end{align*}
Therefore, combining with \eqref{eq:third_term_rate_j_geq_k+1}, if $k \leq q-1$,
\begin{align*}
    &\left\|A_k(\theta_1) - A_k(\theta_2)\right\|_2 \leq  Cn|\theta_1 - \theta_2|^{\rho_k^{(q)}}, \\
    &\rho_k^{(q)} = \min\{ \rho_{k-1}^{(q)}, \alpha_{k-1,k}, \alpha_{k,k+1}, \min_{j \leq k-1} \{2\rho_j^{(q)} - \alpha_{jk}\}\}
\end{align*}
If $k = q,$ $\rho_q^{(q)} = \min\{ \rho_{q-1}^{(q)}, \alpha_{q-1,q}, 2d_q, \min_{j \leq q-1} \{2\rho_j^{(q)} - \alpha_{jq}\}\}$. Finally, $\left\|P_{k}(\theta_1) - P_{k}(\theta_2) \right\|_2 \leq C|\theta_1 - \theta_2|^{\rho_k^{(q)}}$. 

If one of $\theta_i = 0$, 
\begin{align*}
    \left\|U_k(\theta) - U_k(0)\right\|_2 &= \left\|\frac{R_k(\theta)}{\lambda_k(\theta)} - \frac{h_{k,k-1}^{(0)}(0)\left(h_{k,k-1}^{(0)}(0)\right)^*}{\left\|h_{k,k-1}^{(0)}(0)\right\|^2_2} \right\|_2\\
    &=\left\|\frac{A_k(\theta)}{\lambda_1(A_k(\theta))} - \frac{h_{k,k-1}^{(0)}(0)\left(h_{k,k-1}^{(0)}(0)\right)^*}{\lambda_1\left(h_{k,k-1}^{(0)}(0)\left(h_{k,k-1}^{(0)}(0)\right)^*\right)} \right\|_2\\
    &\leq \frac{2}{\lambda_1(A_k(\theta))}\left\|A_k(\theta) - h_{k,k-1}^{(0)}(0)\left(h_{k,k-1}^{(0)}(0)\right)^*\right\|_2\\
    &\leq \frac{C}{n}\left\|A_k(\theta) - h_{k,k-1}^{(0)}(0)\left(h_{k,k-1}^{(0)}(0)\right)^*\right\|_2
\end{align*} where the last two inequalities are from \eqref{eq:S_divide_lambda_S}, and by Lemma \ref{lemma:div_component_eval_specific_form}, $\lambda_1(|\theta|^{2d_k}R_k(\theta)) = |\theta|^{2d_k}\lambda_1(R_k(\theta)) = |\theta|^{2d_k}\lambda_k(\Sigma(\theta)) \asymp Cn|\theta|^{-2d_k}|\theta|^{2d_k} \asymp Cn$.

From \eqref{eq:Rk_decomp}, \eqref{eq:Pi_xi_Pi_decomp} and \eqref{eq:Pi_Mk_Pi_decomp} we obtain
\begin{align*}
    \left\|A_k(\theta) - h_{k,k-1}^{(0)}(0)\left(h_{k,k-1}^{(0)}(0)\right)^*\right\|_2 &\leq \Lambda_{\xi}^{+}|\theta|^{2d_k} + \sum_{j=1}^{k-1}|\theta|^{-\alpha_{jk}} \left\| h_{j,k-1}^{(0)}(\theta)\right\|_2^2\\\nonumber
    &+ \left\|h_{k,k-1}^{(0)}(\theta)\left(h_{k,k-1}^{(0)}(\theta)\right)^* - h_{k,k-1}^{(0)}(0)\left(h_{k,k-1}^{(0)}(0)\right)^*\right\|_2\\\nonumber
    &+ \sum_{j=k+1}^{q}|\theta|^{\alpha_{jk}} \left\| h_{j,k-1}^{(0)}(\theta)\right\|_2^2\nonumber
\end{align*}
By Lemma \ref{lemma:upperbnd_h_j_m}, 
\[
\sum_{j=1}^{k-1}|\theta|^{-\alpha_{jk}} \left\| h_{j,k-1}^{(0)}(\theta)\right\|_2^2 \leq Cn \sum_{j=1}^{k-1} |\theta|^{2\rho^{(q)}_j - \alpha_{jk}} \leq Cn |\theta|^{\min_{j\leq k-1} 2\rho^{(q)}_j - \alpha_{jk}}.
\]
By Lemma \ref{lemma:holder_successive_h}, we have 
\[
\left\|h_{k,k-1}^{(0)}(\theta)\left(h_{k,k-1}^{(0)}(\theta)\right)^* - h_{k,k-1}^{(0)}(0)\left(h_{k,k-1}^{(0)}(0)\right)^*\right\|_2 \leq Cn|\theta|^{\rho_{k-1}^{(q)}},
\] and $\sum_{j=k+1}^{q}|\theta|^{\alpha_{jk}} \left\| h_{j,k-1}^{(0)}(\theta)\right\|_2^2$ is at least $\alpha_{k,k+1}$-H\"older continuous.

To see the first and second derivatives of $P_k(\theta)$, we first investigate the derivatives of $h_{j,k-1}^{(r)}(\theta), r = 1, 2$. By definition, for all $ j \leq q$,
\begin{align*}
    h_{j,k-1}^{(1)}(\theta)
=
\sum_{\ell=1}^{k-1}
\left(\prod_{i=\ell+1}^{k-1}\Pi_i(\theta)\right)
\Pi_\ell^{(1)}(\theta)
\left(\prod_{i=1}^{\ell-1}\Pi_i(\theta)\right)
g_j(\theta)
+
\left(\prod_{i=1}^{k-1}\Pi_i(\theta)\right)
g_j^{(1)}(\theta),
\end{align*}
then from Statement \ref{stmt:Pk_holder}($\ell$), $\left\|\Pi_\ell^{(1)}(\theta)\right\|_2 = \left\|P_\ell^{(1)}(\theta)\right\|_2 \leq |\theta|^{\rho_{\ell}^{(q)}- 1}$,
\begin{align}\label{eq:h_j_k_1_first_order}
    \left\|h_{j,k-1}^{(1)}(\theta)\right\|_2 \leq C\sqrt{n}\sum_{\ell=1}^{k-1}|\theta|^{\rho_\ell^{(q)}- 1} \leq C\sqrt{n}|\theta|^{\rho_{k-1}^{(q)}- 1}, j \leq q
\end{align}
Particularly, if $j \leq k-1$, by relation $h_{j,k-1}^{(0)}(\theta) = \prod_{i = j+1}^{k-1}\Pi_i(\theta) h_{j,j}^{(0)}(\theta)$, 
\begin{align}\label{eq:h_j_k_1_first_order_j_leq_k_1_decomp}
    h_{j,k-1}^{(1)}(\theta)
=
\sum_{\ell=j+1}^{k-1}
\left(
\prod_{i=\ell+1}^{k-1}\Pi_i(\theta)
\right)
\Pi_\ell^{(1)}(\theta)
\left(
\prod_{i=j+1}^{\ell-1}\Pi_i(\theta)
\right)
h_{j,j}^{(0)}(\theta)
+
\left(
\prod_{i=j+1}^{k-1}\Pi_i(\theta)
\right)
h_{j,j}^{(1)}(\theta).
\end{align} then combining with \eqref{eq:h_j_j_0}, 
\begin{align}\label{eq:h_j_k_1_first_order_j_leq_k_1}
    \left\|h_{j,k-1}^{(1)}(\theta)\right\|_2 \leq C\sqrt{n}\sum_{\ell=j+1}^{k-1}|\theta|^{\rho_\ell^{(q)}- 1}|\theta|^{\rho_j^{(q)}} \leq C\sqrt{n}|\theta|^{\rho_{k-1}^{(q)}- 1}|\theta|^{\rho_j^{(q)}} \leq C\sqrt{n}|\theta|^{\rho_{j}^{(q)}- 1}, j \leq k-1.
\end{align}

Similarly, 
\[
\begin{aligned}
h_{j,k-1}^{(2)}(\theta)
=&
\sum_{\ell=1}^{k-1}
\left(\prod_{i=\ell+1}^{k-1}\Pi_i(\theta)\right)
\Pi_\ell^{(2)}(\theta)
\left(\prod_{i=1}^{\ell-1}\Pi_i(\theta)\right)
g_j(\theta) \\
&+
2\sum_{\ell=1}^{k-1}
\left(\prod_{i=\ell+1}^{k-1}\Pi_i(\theta)\right)
\Pi_\ell^{(1)}(\theta)
\left(\prod_{i=1}^{\ell-1}\Pi_i(\theta)\right)
g_j^{(1)}(\theta) \\
&+
\left(\prod_{i=1}^{k-1}\Pi_i(\theta)\right)
g_j^{(2)}(\theta) \\
&+
2\sum_{1\leq \ell_1<\ell_2\leq k-1}
\left(\prod_{i=\ell_2+1}^{k-1}\Pi_i(\theta)\right)
\Pi_{\ell_1}^{(1)}(\theta)
\left(\prod_{i=\ell_1+1}^{\ell_2-1}\Pi_i(\theta)\right)
\Pi_{\ell_2}^{(1)}(\theta)
\left(\prod_{i=1}^{\ell_1-1}\Pi_i(\theta)\right)
g_j(\theta).
\end{aligned}
\]
then from Statement \ref{stmt:Pk_holder}($\ell$), $\left\|\Pi_\ell^{(2)}(\theta)\right\|_2 = \left\|P_\ell^{(2)}(\theta)\right\|_2 \leq |\theta|^{\rho_\ell^{(q)}- 2}, \ell= 1,\dots, k-1$, \begin{align}
    \left\|h_{j,k-1}^{(2)}(\theta)\right\|_2 \leq C\sqrt{n}\sum_{\ell=1}^{k-1}|\theta|^{\rho_\ell^{(q)}- 2} \leq C\sqrt{n}|\theta|^{\rho_{k-1}^{(q)}- 2}, j \leq q
\end{align}
Particularly, differentiating on \eqref{eq:h_j_k_1_first_order_j_leq_k_1_decomp} again, for $j \leq k-1$,
\begin{align*}
h_{j,k-1}^{(2)}(\theta)
&=
2\sum_{j+1\le r<\ell\le k-1}
\left(
\prod_{i=\ell+1}^{k-1}\Pi_i(\theta)
\right)
\Pi_r^{(1)}(\theta)
\left(
\prod_{i=r+1}^{\ell-1}\Pi_i(\theta)
\right)
\Pi_\ell^{(1)}(\theta)
\left(
\prod_{i=j+1}^{r-1}\Pi_i(\theta)
\right)
h_{j,j}^{(0)}(\theta)
\\
&\quad+
\sum_{\ell=j+1}^{k-1}
\left(
\prod_{i=\ell+1}^{k-1}\Pi_i(\theta)
\right)
\Pi_\ell^{(2)}(\theta)
\left(
\prod_{i=j+1}^{\ell-1}\Pi_i(\theta)
\right)
h_{j,j}^{(0)}(\theta)
\\
&\quad+
2\sum_{\ell=j+1}^{k-1}
\left(
\prod_{i=\ell+1}^{k-1}\Pi_i(\theta)
\right)
\Pi_\ell^{(1)}(\theta)
\left(
\prod_{i=j+1}^{\ell-1}\Pi_i(\theta)
\right)
h_{j,j}^{(1)}(\theta)
\\
&\quad+
\left(
\prod_{i=j+1}^{k-1}\Pi_i(\theta)
\right)
h_{j,j}^{(2)}(\theta).
\end{align*}
Applying $\left\|h_{j,j}^{(2)}(\theta)\right\|_2 \leq C\sqrt{n}|\theta|^{\rho_{j}^{(q)}- 2}$, $\left\|h_{j,j}^{(1)}(\theta)\right\|_2 \leq C\sqrt{n}|\theta|^{\rho_{j}^{(q)}- 1}$, $\left\|h_{j,j}^{(0)}(\theta)\right\|_2 \leq C\sqrt{n}|\theta|^{\rho_{j}^{(q)}}$ yields, for $j \leq k-1$,
\begin{align*}
    \frac{1}{\sqrt{n}}\left\|h_{j,k-1}^{(2)}(\theta)\right\|_2 &\leq \sum_{r,\ell} |\theta|^{\rho^{(q)}_j + \rho^{(q)}_r + \rho^{(q)}_{\ell} - 2} + \sum_{\ell} |\theta|^{\rho^{(q)}_{\ell} - 2} |\theta|^{\rho^{(q)}_{j}} + \sum_{\ell, j} |\theta|^{\rho^{(q)}_{\ell} - 1} |\theta|^{\rho^{(q)}_{j}-1} + |\theta|^{\rho_{j}^{(q)} - 2}\\
    &\leq |\theta|^{\rho_{j}^{(q)} - 2}
\end{align*}

Combining all the bounds of $h_{j,m}^{(r)}(\theta)$, $r = 0,1,2$, we have
\begin{align}\label{eq:h_j_m_all_derivative_both_way}
    \left\|h_{j,m}^{(r)}(\theta)\right\|_2 \leq C \sqrt{n}|\theta|^{\max\{\rho_{j}^{(q)}, \rho_{m}^{(q)}\} - r}
\end{align}

Now we take a look at the derivatives of $P_k(\theta)$. Since for any $\theta \neq 0$, $P_k(\theta) = P_1(R_k(\theta)) = P_1(U_k(\theta))$, then by Lemma \ref{lemma:first_second_derivative_eigenvec_projection}, since $\lambda_1(U_k(\theta)) - \lambda_2(U_k(\theta)) \geq 1 - C_{\lambda}|\theta|^{2(d_k - d_{k+1})} \geq c$, the first derivative $\left\|P^{(1)}_k(\theta)\right\|_2 \leq C\left\|U^{(1)}_k(\theta)\right\|_2$. 

By  
\[
U_k(\theta) = \frac{A_k(\theta)}{\lambda_1(A_k(\theta))},
\] 
Lemma \ref{lemma:bound_for_first_second_order_U} implies 
\[
\left\|U^{(1)}_k(\theta)\right\|_2 \leq \frac{2}{n}\left\|A^{(1)}_k(\theta)\right\|_2
\]and that the derivative of $A^{(1)}_k(\theta)$ is the sum of derivative of \eqref{eq:Pi_Mk_Pi_decomp} and \eqref{eq:Pi_xi_Pi_decomp}. 
\begin{align}\label{eq:Pi_Mk_Pi_decomp_first_derivative}
    &\frac{\partial}{\partial \theta}\left(\prod_{i=1}^{k-1}\Pi_i(\theta)\right) M_k(\theta) \left(\prod_{i=1}^{k-1}\Pi_i(\theta)\right) \\\nonumber
    &= \sum_{j=1}^{k-1}-\alpha_{jk}|\theta|^{-\alpha_{jk}-1} h_{j,k-1}^{(0)}(\theta)\left(h_{j,k-1}^{(0)}(\theta)\right)^* + 2\sum_{j=1}^{k-1} |\theta|^{-\alpha_{jk}}\operatorname{Re}\left(h_{j,k-1}^{(1)}(\theta)\left(h_{j,k-1}^{(0)}(\theta)\right)^*\right)\\\nonumber
    &+ 2\operatorname{Re}\left(h_{k,k-1}^{(1)}(\theta)\left(h_{k,k-1}^{(0)}(\theta)\right)^*\right)\\\nonumber
    &+ \sum_{j=k+1}^{q}\alpha_{jk}|\theta|^{\alpha_{jk}-1} h_{j,k-1}^{(0)}(\theta)\left(h_{j,k-1}^{(0)}(\theta)\right)^* +2\sum_{j=k+1}^{q} |\theta|^{\alpha_{jk}}\operatorname{Re}\left(h_{j,k-1}^{(1)}(\theta)\left(h_{j,k-1}^{(0)}(\theta)\right)^*\right)\nonumber
\end{align}

By \eqref{eq:h_j_m_all_derivative_both_way} and \eqref{eq:defn_rho_q_m}, the operator norm of \eqref{eq:Pi_Mk_Pi_decomp_first_derivative} $\leq Cn|\theta|^{\rho_k^{(q)} -1}$. Similarly, the operator norm of the first derivative of \eqref{eq:Pi_xi_Pi_decomp} can be controlled by $Cn|\theta|^{\rho_k^{(q)} -1}$ as well.

The analysis for the second derivative follows the same logic. From Lemma \ref{lemma:bound_for_first_second_order_U}, 
\begin{align*}
&\left\|U_k^{(2)}(\theta)\right\|_2\leq
C\left(
\frac{\left\|A_k^{(2)}(\theta)\right\|_2}{n}
+
\frac{\left\|A_k^{(1)}(\theta)\right\|_2^2}{n^2}
\right)
\end{align*} then by Lemma \ref{lemma:first_second_derivative_eigenvec_projection}, \begin{align*}
    &\left\|P^{(2)}_k(\theta)\right\|_2 \leq C\left\|U^{(2)}_k(\theta)\right\|_2 +C\left\|U^{(1)}_k(\theta)\right\|_2^2 \leq C\left(
\frac{\left\|A_k^{(2)}(\theta)\right\|_2}{n}
+
\frac{\left\|A_k^{(1)}(\theta)\right\|_2^2}{n^2}
\right)
\end{align*}
\[
\begin{aligned}
&\frac{\partial^2}{\partial\theta^2}
\left[
\left(\prod_{i=1}^{k-1}\Pi_i(\theta)\right)
M_k(\theta)
\left(\prod_{i=1}^{k-1}\Pi_i(\theta)\right)
\right]  \\
&=
\sum_{j=1}^{k-1}
\Big[
\alpha_{jk}(\alpha_{jk}+1)|\theta|^{-\alpha_{jk}-2}
h_{j,k-1}^{(0)}(h_{j,k-1}^{(0)})^*
-4\alpha_{jk}|\theta|^{-\alpha_{jk}-1}
\operatorname{Re}\{h_{j,k-1}^{(1)}(h_{j,k-1}^{(0)})^*\}  \\
&\hspace{4.5cm}
+2|\theta|^{-\alpha_{jk}}
\operatorname{Re}\{h_{j,k-1}^{(2)}(h_{j,k-1}^{(0)})^*\}
+2|\theta|^{-\alpha_{jk}}
h_{j,k-1}^{(1)}(h_{j,k-1}^{(1)})^*
\Big] \\
&\quad
+2\operatorname{Re}\{h_{k,k-1}^{(2)}(h_{k,k-1}^{(0)})^*\}
+2h_{k,k-1}^{(1)}(h_{k,k-1}^{(1)})^* \\
&\quad
+\sum_{j=k+1}^{q}
\Big[
\alpha_{jk}(\alpha_{jk}-1)|\theta|^{\alpha_{jk}-2}
h_{j,k-1}^{(0)}(h_{j,k-1}^{(0)})^*
+4\alpha_{jk}|\theta|^{\alpha_{jk}-1}
\operatorname{Re}\{h_{j,k-1}^{(1)}(h_{j,k-1}^{(0)})^*\} \\
&\hspace{4.5cm}
+2|\theta|^{\alpha_{jk}}
\operatorname{Re}\{h_{j,k-1}^{(2)}(h_{j,k-1}^{(0)})^*\}
+2|\theta|^{\alpha_{jk}}
h_{j,k-1}^{(1)}(h_{j,k-1}^{(1)})^*
\Big],
\end{aligned}
\]
Thus, $\left\|A_k^{(2)}(\theta)\right\|_2 \leq Cn|\theta|^{\rho_k^{(q)} - 2}$.

Statement \ref{stmt:Pk_holder}(k) holds.

With everything, since $\rho_q^{(q)} \leq \cdots \leq \rho_1^{(q)}$, we now obtain $\underline{\mathbf{K}}_{n i}(\theta)$ is H\"older continuous with exponent $\rho_q^{(q)}$ where \begin{align*}
    \rho_k^{(q)}= \min \{\rho_{k-1}^{(q)}, \alpha_{k,k+1}, \min_{l\leq k-1}\{2\rho_l^{(q)} - \alpha_{l,k}\} \}, \rho^{(q)}_0 \coloneqq 1; \alpha_{0,1} \coloneqq 1, d_{q+1} \coloneqq 0.
\end{align*}
since $\rho_{k-1}^{(q)} = \min\{\rho_{k-2}^{(q)}, \alpha_{k-2,k-1}, \alpha_{k-1,k}, \min_{j \leq k-2} \{2\rho_j^{(q)} - \alpha_{j,k-1}\}\} \leq \alpha_{k-1,k}$. Furthermore, $\left\|\underline{\mathbf{K}}_{n i}^{(r)}(\theta)\right\|_2 \leq C|\theta|^{\rho^{(q)}_q - r},\quad r = 1,2.$




\end{proof}

\begin{lemma}\label{lemma:d_differ_K_piecewise_cont_fourier_coef_decay_rate} Under Assumption \ref{assump:model}, \ref{assump:semiparametric_long_memory}, \ref{assump:pervasive_short_mem}, \ref{assump: spec_den_specific_form}(a), \ref{assump:bdd_error_eval}, and \ref{assump:gap_dominance}
    \begin{align*}
        \left\|\mathbf{K}_{n i, h}\right\| = \mathcal{O}\left(|h|^{-1-\rho_q^{(q)}}\right),
    \end{align*} where $\rho_q^{(q)}$ is defined in \eqref{eq:defn_rho_q_m}.
\end{lemma}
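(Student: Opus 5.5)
Write $\rho:=\rho^{(q)}_q\in(0,1)$. The plan is to use only the pointwise derivative bounds of Lemma~\ref{lemma:K_holder_factor_wise},
\[
\left\|\underline{\mathbf{K}}_{ni}^{(r)}(\theta)\right\|_2\le C|\theta|^{\rho-r},\qquad r=0,1,2,
\]
where $r=0$ is read as boundedness together with the H\"older bound $\|\underline{\mathbf{K}}_{ni}(\theta)-\underline{\mathbf{K}}_{ni}(0)\|\le C|\theta|^{\rho}$. We also use the $2\pi$-periodicity of $\underline{\mathbf{K}}_{ni}$, which follows from the contour-integral representation used in the proof of Lemma~\ref{lemma:d_differ_K_fourier_coef_decay_rate_old}. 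Since the derivatives blow up only at $\theta=0$, $\underline{\mathbf{K}}_{ni}$ is $C^2$ on $\Pi\setminus\{0\}$. The lemma then reduces to a standard estimate on Fourier coefficients of a function with an algebraic cusp of order $\rho$ at a single point.

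\textbf{Step 1: split the integral.} Fix $|h|\ge 2$ and write
\[
2\pi\mathbf{K}_{ni,h}=\int_{|\theta|<1/|h|}\underline{\mathbf{K}}_{ni}(\theta)e^{-\iota h\theta}\,d\theta+\int_{1/|h|\le|\theta|\le\pi}\underline{\mathbf{K}}_{ni}(\theta)e^{-\iota h\theta}\,d\theta=:J_1+J_2 .
\]
In $J_1$, replace $\underline{\mathbf{K}}_{ni}(\theta)$ by $\underline{\mathbf{K}}_{ni}(\theta)-\underline{\mathbf{K}}_{ni}(0)$. The constant piece $\underline{\mathbf{K}}_{ni}(0)$ integrates to zero over $\Pi$ for $h\neq 0$. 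What is left over from it, $-\underline{\mathbf{K}}_{ni}(0)\int_{|\theta|\ge 1/|h|}e^{-\iota h\theta}\,d\theta$, is handled together with $J_2$: both are integrated by parts over the outer region with the function $\underline{\mathbf{K}}_{ni}(\theta)-\underline{\mathbf{K}}_{ni}(0)$. After this reduction, the inner piece is bounded by
\[
\int_{|\theta|<1/|h|}C|\theta|^{\rho}\,d\theta\lesssim |h|^{-1-\rho}.
\]

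\textbf{Step 2: integrate by parts twice on the outer region.} Let $F=\underline{\mathbf{K}}_{ni}-\underline{\mathbf{K}}_{ni}(0)$. On $\{1/|h|\le|\theta|\le\pi\}$, integrate by parts twice. By periodicity, the contributions at $\pm\pi$ cancel. The remaining boundary terms sit at $\theta=\pm1/|h|$:
\begin{itemize}
\item the first integration by parts gives $|h|^{-1}\|F(\pm1/|h|)\|\lesssim |h|^{-1}|h|^{-\rho}$;
\item the second gives $|h|^{-2}\|F'(\pm1/|h|)\|\lesssim |h|^{-2}|h|^{1-\rho}=|h|^{-1-\rho}$.
\end{itemize}
The leftover integral is
\[
|h|^{-2}\int_{|\theta|\ge1/|h|}C|\theta|^{\rho-2}\,d\theta\lesssim |h|^{-2}|h|^{1-\rho}=|h|^{-1-\rho},
\]
and it converges at the lower limit because $\rho<1$. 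All three contributions are $\mathcal{O}(|h|^{-1-\rho})$, and every constant is independent of $n$ and $T$, since the constants in Lemma~\ref{lemma:K_holder_factor_wise} are.

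\textbf{Main obstacle.} The difficulty is not in the Fourier analysis but in making sure the input bounds hold exactly as used. Three points need checking:
\begin{itemize}
\item Lemma~\ref{lemma:K_holder_factor_wise} lists Assumption~\ref{assump:row_boundness_G} while the present lemma does not. I would either invoke it implicitly or verify that the derivative bounds do not actually need it.
\item The bounds $\|\underline{\mathbf{K}}^{(r)}_{ni}\|\lesssim|\theta|^{\rho-r}$ must hold uniformly on all of $\Pi\setminus\{0\}$, not just near $0$. Away from $0$, $\Sigma$ is $C^2$ and the eigengap is of order $n$, so the derivatives are bounded. Near $\pm\pi$ this gives the periodic matching needed for the boundary terms to cancel.
\item $\underline{\mathbf{K}}_{ni}$ is a row vector with $\|\underline{\mathbf{K}}_{ni}\|_2\le1$, so the norm and the integral interchange without issue.
\end{itemize}
If differentiability at some point of $\Pi$ fails, for instance because of eigenvalue crossings inside the top $q$ block, I would instead work with the total projection $\sum_{j\le q}P_j$. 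This is smooth wherever the gap between the $q$th and $(q+1)$th eigenvalues persists, and its derivatives can be controlled directly via Lemma~\ref{lemma:first_second_derivative_eigenvec_projection}.
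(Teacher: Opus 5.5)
Your proposal is correct and follows essentially the paper's argument: subtract $\underline{\mathbf{K}}_{ni}(0)$, split at $|\theta|=|h|^{-1}$, integrate by parts twice on the outer region using the derivative bounds $\|\underline{\mathbf{K}}^{(r)}_{ni}(\theta)\|_2\le C|\theta|^{\rho^{(q)}_q-r}$ from Lemma~\ref{lemma:K_holder_factor_wise}, and let periodicity kill the boundary terms at $\pm\pi$. The only difference is the order of steps. The paper integrates by parts once over all of $\Pi$ before splitting, and so bounds the inner piece by $|h|^{-1}\int_{|\theta|\le|h|^{-1}}|\theta|^{\rho^{(q)}_q-1}\,d\theta$ instead of using the H\"older bound directly; both give the same $|h|^{-1-\rho^{(q)}_q}$.
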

\begin{proof} Denote $\underline{\mathbf{J}}_{n i}(\theta) = \underline{\mathbf{K}}_{n i}(\theta) - \underline{\mathbf{K}}_{n i}(0)$, $\underline{\mathbf{J}}^{(r)}_{n i}(\theta) = \underline{\mathbf{K}}^{(r)}_{n i}(\theta)$, $r = 1,2$. 
Integration by parts yields
\begin{align*}
    \mathbf{K}_{n i, h} &= \int_{\Pi} \underline{\mathbf{K}}_{n i}(\theta) e^{-\iota h \theta} d\theta = \int_{\Pi} \underline{\mathbf{J}}_{n i}(\theta) e^{-\iota h \theta} d\theta\\
    &= -\frac{\cos h\pi}{\iota h} \left(\underline{\mathbf{J}}_{n i}(\pi) - \underline{\mathbf{J}}_{n i}(-\pi)\right) + \frac{1}{\iota h}\int_{\Pi}e^{-\iota h \theta}  \underline{\mathbf{J}}^{(1)}_{n i}(\theta) d\theta\\
    &=\frac{1}{\iota h}\int_{\Pi}e^{-\iota h \theta}  \underline{\mathbf{J}}^{(1)}_{n i}(\theta) d\theta
\end{align*} since $\underline{\mathbf{J}}_{n i}(\theta)$ and $\underline{\mathbf{K}}_{n i}(\theta)$ are $2\pi-$periodic. Then by Lemma \ref{lemma:K_holder_factor_wise},
\begin{align*}
    \left\|\int_{|\theta| \leq |h|^{-1}}e^{-\iota h \theta}  \underline{\mathbf{J}}^{(1)}_{n i}(\theta) d\theta\right\|_2 &\leq C \int_{|\theta| \leq |h|^{-1}} \left\|\underline{\mathbf{J}}^{(1)}_{n i}(\theta)\right\|_2 d\theta = C \int_{|\theta| \leq |h|^{-1}} \left\|\underline{\mathbf{K}}^{(1)}_{n i}(\theta)\right\|_2 d\theta \\
    &\leq C \int_{|\theta| \leq |h|^{-1}} |\theta|^{\rho_q^{(q)} - 1} d\theta \leq C|h|^{-\rho_q^{(q)}}.
\end{align*}

Integration by parts again
\begin{align*}
    &\int_{|\theta| > |h|^{-1}}e^{-\iota h \theta}  \underline{\mathbf{J}}^{(1)}_{n i}(\theta) d\theta \\
    &= -\frac{1}{\iota h}\left[\underline{\mathbf{J}}^{(1)}_{n i}(\theta) e^{-\iota h \theta} \bigg|_{|h|^{-1}}^{\pi} + \underline{\mathbf{J}}^{(1)}_{n i}(\theta) e^{-\iota h \theta} \bigg|_{-\pi}^{-|h|^{-1}} - \int_{|\theta| > |h|^{-1}} e^{-\iota h \theta}\underline{\mathbf{J}}^{(1)}_{n i}(\theta) d\theta \right].
\end{align*} Thus, again by Lemma \ref{lemma:K_holder_factor_wise}
\begin{align*}
    \left\|\int_{|\theta| > |h|^{-1}}e^{-\iota h \theta}  \underline{\mathbf{J}}^{(1)}_{n i}(\theta) d\theta\right\|_2 &\leq |h|^{-1}\left\|\underline{\mathbf{J}}^{(1)}_{n i}(\pm \pi)\right\|_2 + |h|^{-1}\left\|\underline{\mathbf{J}}^{(1)}_{n i}(\pm |h|^{-1})\right\|_2\\
    &+ |h|^{-1}\int_{|\theta| > |h|^{-1}} \left\|\underline{\mathbf{J}}^{(2)}_{n i}(\theta)\right\|_2 d\theta\\
    &\leq |h|^{-1} + |h|^{-1}|h|^{1-\rho^{(q)}_{q}} + |h|^{-1}|h|^{1-\rho^{(q)}_{q}} \leq |h|^{-\rho^{(q)}_{q}}
\end{align*}
\end{proof}

       \subsubsection{Bias term}

        \begin{lemma}\label{lemma:bias_conv} For the process where its spectral density satisfies Assumption \ref{assump: spec_den_specific_form}, the bias term induced from the periodogram smoothing estimator vanishes as $T$ increases. 
            \begin{align}\label{eq:bias_rate}
        &\int_{\Pi}|\theta|^{2\tilde{d}} \left\|\mathbb{E}\left(\widehat{\Sigma}_{n}(\theta)\right) - \Sigma(\theta)\right\|_{op}  d\theta = n\delta_{bias}\\
        &\delta_{bias} = \mathcal{O}\left( B_T^{1-2\Delta} + \frac{1}{T B_T} +  \log T (B_T T)^{2\tilde{d}} T^{2\Delta - 1}\right)
        \end{align} where $\Delta  = \max_j d_j - \min_j d_j$. 
        If $B_T = T^{-b}$, $b \in (0,1)$, $\delta_{bias} = \mathcal{O}\left(B_T^{1-2\Delta} + \frac{1}{B_T T}\right)$.
        \end{lemma}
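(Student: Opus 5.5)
The bias bound will come from an exact representation of $\mathbb{E}\widehat{\Sigma}_n(\theta)$ in terms of the Fejér kernel. Write $\mathbb{E}\mathbf{I}_{XX}(\lambda_j)=\int_\Pi F_T(\lambda_j-\omega)\Sigma(\omega)\,d\omega$, where $F_T(x)=\frac{1}{2\pi T}\bigl|\sum_{t=0}^{T-1}e^{\iota t x}\bigr|^2$. Then
\[
\mathbb{E}\widehat{\Sigma}_n(\theta)-\Sigma(\theta)=\underbrace{\frac{2\pi}{B_TT}\sum_{j\ne 0}W\!\Bigl(\tfrac{\theta-\lambda_j}{B_T}\Bigr)\bigl(\mathbb{E}\mathbf{I}_{XX}(\lambda_j)-\Sigma(\lambda_j)\bigr)}_{(I)}+\underbrace{\frac{2\pi}{B_TT}\sum_{j\ne0}W\!\Bigl(\tfrac{\theta-\lambda_j}{B_T}\Bigr)\Sigma(\lambda_j)-\Sigma(\theta)}_{(II)}.
\]
Term $(I)$ is the periodogram (leakage) bias at Fourier frequencies. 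Term $(II)$ is the smoothing bias, which we further split into a Riemann-sum error and the kernel-smoothing error $\int W(u)\bigl(\Sigma(\theta-B_Tu)-\Sigma(\theta)\bigr)du$. Since $\Sigma=\Sigma_\chi+\Sigma_\xi$ and $\Sigma_\xi$ is Lipschitz and bounded, the $\xi$ part contributes only $O(B_T+1/(TB_T))$ per entry. The main work concerns $\Sigma_\chi$, whose entries are bounded in operator norm by $n|\omega|^{-2d}$ times smooth factors. In case (a) one writes $\Sigma_\chi=G\bar D^2G^*$; in case (b) one writes $D_nGG^*D_n^*$. In both cases, Assumption \ref{assump:row_boundness_G} together with Assumption \ref{assump:pervasive_short_mem} gives $\|\Sigma_\chi(\omega)\|_{op}\lesssim n|\omega|^{-2d}$ and, by Lemma \ref{lm:increment_bound_for_Sigma}, $\|\partial_\omega\Sigma_\chi(\omega)\|_{op}\lesssim n|\omega|^{-2d-1}$ away from $0$.

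For the smoothing bias $(II)$, split $\Pi$ into the region $|\theta|\le 2\rho B_T$ and its complement. On $|\theta|>2\rho B_T$ the window support $[\theta-\rho B_T,\theta+\rho B_T]$ avoids the origin by at least $|\theta|/2$. A mean value argument then gives $\|\Sigma(\theta-B_Tu)-\Sigma(\theta)\|_{op}\lesssim nB_T|\theta|^{-2d-1}$ (symmetry of $W$ would give a second-order gain, but first order suffices). Weighting by $|\theta|^{2\tilde d}$ and integrating over $|\theta|>2\rho B_T$ gives $nB_T\int_{2\rho B_T}^{\pi}\theta^{-2\Delta-1}d\theta\lesssim nB_T^{1-2\Delta}$. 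On $|\theta|\le 2\rho B_T$, bound both $\Sigma(\theta)$ and its local average crudely by integrating $n|\omega|^{-2d}$ against the window. This gives a contribution $\lesssim n\int_{|\theta|\le 2\rho B_T}|\theta|^{2\tilde d}\bigl(|\theta|^{-2d}+B_T^{-2d}\bigr)d\theta\lesssim nB_T^{1-2\Delta}$. The Riemann-sum error between $\frac{2\pi}{T}\sum_j$ and $\int$ (including the omitted $j=0$ term) is handled by the Lipschitz property of $W$ and local integrability of $|\omega|^{-2d}$, and costs $n/(TB_T)$.

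For the leakage term $(I)$, invoke the standard long-memory bound on the Fourier-frequency periodogram bias (Robinson-type): for $1\le |j|\le T_0$, entrywise $\bigl|\mathbb{E}I_{kl}(\lambda_j)-\sigma_{kl}(\lambda_j)\bigr|\lesssim \lambda_j^{-d_k-d_l}\,\frac{\log j}{j}$. In operator norm this becomes $n\lambda_j^{-2d}\log j/j$, using the row bound of Assumption \ref{assump:row_boundness_G} to pass from entries to the rank-$q$ structure. Next, plug this into $(I)$, multiply by $|\theta|^{2\tilde d}$, and integrate; the window integrates to $O(1)$ in $\theta$ per $j$. The result is a sum of order $\frac{n}{T}\sum_{j\le T_0}(B_TT)^{2\tilde d}\,$-weighted terms $\lambda_j^{-2d}\log j/j$, where the factor $|\theta|^{2\tilde d}\lesssim(\lambda_j+\rho B_T)^{2\tilde d}$. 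Evaluating the sum splits by $j\lesssim TB_T$ versus $j\gtrsim TB_T$. This yields the term $\log T\,(B_TT)^{2\tilde d}T^{2\Delta-1}$, plus pieces absorbed into $1/(TB_T)$.

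The main obstacle is this leakage step: making the Robinson bound uniform in the $n\times n$ operator norm with constants independent of $n$, and keeping exact track of the $|\theta|^{2\tilde d}$ weight near the origin. I would first try to reduce it to the scalar case by writing $\Sigma_\chi$ through $G$ and factorising $\mathbb{E}\mathbf{I}_{XX}$ as a filtered $G$-weighted Fejér convolution. Finally, for $B_T=T^{-b}$, compare exponents to show that $(B_TT)^{2\tilde d}T^{2\Delta-1}\log T$ is dominated by $B_T^{1-2\Delta}+1/(TB_T)$.
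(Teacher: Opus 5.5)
Your proposal is correct and follows essentially the paper's route. You use the same split into a leakage term and a smoothing/Riemann-sum/normalisation term (the paper's $A_1$ and $A_2+A_3$), bound the leakage at each Fourier frequency by $n|\lambda_s|^{-2d-1}\log T/T$ in operator norm, and handle the smoothing bias with the same near/far split at scale $B_T$ around the origin. The paper obtains that leakage bound in Lemma~\ref{lm:cum_property_long_mem}(iii) from the Fej\'er representation applied directly to operator-norm increments of $\Sigma$, so no entrywise Robinson bound or Frobenius step is needed. One small correction: the near-origin Riemann-sum error, including the omitted $j=0$ frequency, is of order $nB_T^{2\tilde d}T^{2d-1}=nB_T^{1-2\Delta}(TB_T)^{2d-1}$ rather than $n/(TB_T)$. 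This is absorbed into $nB_T^{1-2\Delta}$, so the stated rate is unaffected.
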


        \begin{proof}
        \begin{align*}
            |\theta|^{2\tilde{d}}\left\|\mathbb{E}\left(\widehat{\Sigma}_{n}(\theta)\right) - \Sigma(\theta)\right\|_{op}   &= |\theta|^{2\tilde{d}}\left\|\frac{2\pi}{B_T T}\sum_{s=-T_0, s\neq 0}^{T_0} W\left(\frac{\theta - \lambda_s}{B_T}\right)  \mathbb{E}\left(\mathbf{I}_{XX}(\lambda_s)\right) - \Sigma(\theta)\right\|_{op}  \\
            &\leq \frac{2\pi}{B_T T}|\theta|^{2\tilde{d}}\sum_{s=-T_0, s\neq 0}^{T_0} W\left(\frac{\theta - \lambda_s}{B_T}\right) \left\| \mathbb{E}\left(\mathbf{I}_{XX}(\lambda_s)\right) - \Sigma(\lambda_s)\right\|_{op}  \\
            &+ |\theta|^{2\tilde{d}} \left\|\frac{2\pi}{B_T T}\sum_{s=-T_0, s\neq 0}^{T_0}W\left(\frac{\theta - \lambda_s}{B_T}\right)  \left(\Sigma(\lambda_s) -  \Sigma(\theta)\right) \right\|_{op} \\
            &+ |\theta|^{2\tilde{d}}\left|  \frac{2\pi}{B_T T}\sum_{s=-T_0, s\neq 0}^{T_0}W\left(\frac{\theta - \lambda_s}{B_T}\right) - 1  \right| \left\|\Sigma(\theta)\right\|_{op}\\
            &\coloneqq A_1(\theta) + A_2(\theta) + A_3(\theta)
        \end{align*}

        By Lemma \ref{lm:cum_property_long_mem} (iii), we have 
        \begin{align*}
            \left\| \mathbb{E}\left(\mathbf{I}_{XX}(\lambda_s)\right) - \Sigma(\lambda_s)\right\|_{op} \leq \frac{n}{2\pi T} |\lambda_s|^{-2d-1} \log T
        \end{align*}

        Thus, applying Lemma \ref{lm:basic_summation_integral_w_theta_2_tilde_d} we have 
        \begin{align}\label{eq:A_1_rate}
            \int_{\Pi}A_1(\theta) d\theta &\leq \frac{Cn \log T}{B_T T^2} \int_{\Pi} \sum_{s=-T_0, s\neq 0}^{T_0} W\left(\frac{\theta - \lambda_s}{B_T}\right) |\lambda_s|^{-2d-1} |\theta|^{2\tilde{d}} d\theta\\
            &\leq Cn \log T (B_T T)^{2\tilde{d}} T^{2\Delta - 1}, \Delta = d - \tilde{d}
        \end{align}

        On the other hand, by Lemma \ref{lm:conv_rate_before_approx_identity}, we have 
        \begin{align*}
            A_2(\theta) &= \left\|\frac{2\pi|\theta|^{2\tilde{d}}}{B_T T}\sum_{s=-T_0, s\neq 0}^{T_0}W\left(\frac{\theta - \lambda_s}{B_T}\right)  \left(\Sigma(\lambda_s) -  \Sigma(\theta)\right) \right\|\\
                &\lesssim \int_{\Pi}  \frac{|\theta|^{2\tilde{d}}}{B_T}W\left(\frac{\theta - \lambda}{B_T}\right)\left\| \Sigma(\lambda) -  \Sigma(\theta)  \right\| d\lambda\\
                &+ \frac{n|\theta|^{-2\Delta}}{TB_T} + \frac{n|\theta|^{2\tilde{d}}}{B_T T^{1-2d}}\mathbb{1}(|\theta| \leq (2\rho + 4\pi) B_T) + \frac{n|\theta|^{-2\Delta - 1}}{T}\mathbb{1}(|\theta| > (2\rho + 4\pi) B_T)
            \end{align*}
            Take a further look inside the first term, for $\lambda, \theta \in \Pi$, by Lemma \ref{lm:increment_bound_for_Sigma},
            \begin{align*}
                    \left\| \Sigma(\lambda) -  \Sigma(\theta)  \right\| &\leq Cn\left(|\theta|^{-2d} + |\lambda|^{-2d}\right)\left|\lambda - \theta \right| + Cn\left| |\lambda|^{-2d} - |\theta|^{-2d} \right|\\
                    &+ Cn\left(|\theta|^{-2d} + |\lambda|^{-2d}\right)\mathbb{1}\{\theta \lambda < 0\}
                \end{align*}

            Thus,
            \begin{align*}
                \left\|A_2(\theta)\right\|_{L^1} &\leq C\int_{\Pi}\int_{\Pi} \frac{|\theta|^{2\tilde{d}}}{B_T}W\left(\frac{\theta - \lambda}{B_T}\right) \left\| \Sigma(\lambda) -  \Sigma(\theta)  \right\| d\lambda d\theta + \frac{Cn}{T B_T}\int_{\Pi}\left| \theta  \right|^{-2\Delta}d\theta \\
                &+\frac{C_2n}{T^{1-2d} B_T} \int_{|\theta|\leq (2\rho + 4\pi) B_T}|\theta|^{2\tilde{d}} d\theta + \frac{C_3n }{T}\int_{|\theta|>(2\rho + 4\pi)B_T}|\theta|^{-2\Delta-1} d\theta\\
                &\lesssim n\int_{\Pi}\int_{\Pi} \frac{|\theta|^{2\tilde{d}}}{B_T}W\left(\frac{\theta - \lambda}{B_T}\right)\left| |\lambda|^{-2d} - |\theta|^{-2d} \right| d\lambda d\theta\\
                &+  n\int_{\Pi}\int_{\Pi}\frac{|\theta|^{2\tilde{d}}}{B_T}W\left(\frac{\theta - \lambda}{B_T}\right)|\theta|^{-2d} \left| \lambda - \theta  \right|d\lambda d\theta\\
                &+n\int_{\Pi}\int_{\Pi}\frac{|\theta|^{2\tilde{d}}}{B_T}W\left(\frac{\theta - \lambda}{B_T}\right)|\lambda|^{-2d} \left| \lambda - \theta  \right|d\lambda d\theta\\
                &+ n\int_{\Pi}\int_{\Pi}\frac{|\theta|^{2\tilde{d}}}{B_T}W\left(\frac{\theta - \lambda}{B_T}\right)\left(|\theta|^{-2d} + |\lambda|^{-2d}\right)\mathbb{1}\{\theta \lambda < 0\} d\lambda d\theta +\frac{n}{T B_T} + \frac{n B_T^{2\tilde{d}}}{T^{1-2d}} + \frac{n}{TB_T^{\Delta}}
            \end{align*}

        By Lemma \ref{lm:calculation_W_theta_lambda_minus_theta}, the second term is of order $\mathcal{O}(B_T)$, and similarly, the third term is $\mathcal{O}(B_T)$ as well.
            
         To deal with the first term, notice that 
            \begin{align}\label{eq:continuity_translation_L1}
                &\int_{\Pi} \int_{\Pi} \frac{|\theta|^{2\tilde{d}}}{B_T}W\left(\frac{\theta - \lambda}{B_T}\right)  \left| |\lambda|^{-2d} - |\theta|^{-2d} \right| d\lambda d \theta\\\nonumber
                &\leq \int_{\Pi} \int_{\Pi} |\theta|^{2\tilde{d}} W(\nu)\left||\theta|^{-2d} -  |\theta - B_T \nu|^{-2d}\right| d\theta d\nu\\\nonumber
                &\leq \int_{\Pi}   W(\nu) \int_{\Pi} |\theta|^{2\tilde{d}} \left||\theta|^{-2d} -  |\theta - B_T \nu|^{-2d}\right| d\theta d\nu
            \end{align}
            First of all, consider a small $0< l < 1/2$,
            \begin{align*}
                \int_{0}^{2l} |\theta|^{2\tilde{d}} \left||\theta|^{-2d} -  |\theta - l|^{-2d}\right|d\theta &\leq \int_{0}^{2l} |\theta|^{-2\Delta}  d\theta + \int_{0}^{2l} |\theta|^{2\tilde{d}} |\theta - l|^{-2d}  d\theta\\
                &\leq Cl^{1-2\Delta} + l^{2\tilde{d}}\int_{0}^{2l} |\theta - l|^{-2d}  d\theta \leq Cl^{1-2\Delta}
            \end{align*}
            Also by the mean value theorem, for $\theta\in [2l,1]$, there exists a $\xi\geq \min(|\theta|, |\theta - l|)\geq {\theta}/{2}$, that
            \begin{align*}
                \int_{2l}^{1} |\theta|^{2\tilde{d}} \left||\theta|^{-2d} -  |\theta - l|^{-2d}\right|d\theta &\leq \int_{2l}^{1} 2d \xi^{-2d-1}l |\theta|^{2\tilde{d}} d\theta\\
                &\leq d2^{2d+2} l \int_{2l}^1 \theta^{-2\Delta-1} d\theta = 2l^{1-2\Delta} - 2^{2d+1}l \leq Cl^{1-2\Delta}
            \end{align*}
            The integral on $[1,\pi]$ can be simply bound by $\mathcal{O}(l)$ by the continuity on $[1,\pi]$. Likewise for $\theta\in (-\pi, 0)$. That is to say, $\int_{\Pi} |\theta|^{2\tilde{d}} \left||\theta|^{-2d} -  |\theta - l|^{-2d}\right|d\theta = \mathcal{O}(l^{1-2\Delta})$. Then 
            \begin{align*}
                \int_{\Pi}  W(\nu) \int_{\Pi} |\theta|^{2\tilde{d}} \left||\theta|^{-2d} -  |\theta - B_T \nu|^{-2d}\right| d\theta d\nu \leq CB_T^{1-2\Delta}\int_{\Pi} \nu^{1-2\Delta} d\nu = \mathcal{O}\left(B_T^{1-2\Delta}\right)
            \end{align*}

            For the fourth term $n\int_{\Pi}\int_{\Pi}\frac{|\theta|^{2\tilde{d}}}{B_T}W\left(\frac{\theta - \lambda}{B_T}\right)\left(|\theta|^{-2d} + |\lambda|^{-2d}\right)\mathbb{1}\{\theta \lambda < 0\} d\lambda d\theta$, again let \(\nu=(\theta-\lambda)/B_T\), \(\lambda=\theta-B_T\nu\). Then
\begin{align*}
&\int_{\Pi}\int_{\Pi}\frac{|\theta|^{2\tilde d}}{B_T}
W\!\left(\frac{\theta-\lambda}{B_T}\right)
\bigl(|\theta|^{-2d}+|\lambda|^{-2d}\bigr)\mathbb{1}\{\theta\lambda<0\}\,d\lambda\,d\theta\\
&\leq \int_{\mathbb R}|W(\nu)|
\int_{\Pi} |\theta|^{2\tilde d}
\bigl(|\theta|^{-2d}+|\theta-B_T\nu|^{-2d}\bigr)
\mathbb{1}\{\theta(\theta-B_T\nu)<0\}\,d\theta\,d\nu.
\end{align*}
$\mathbb{1}\{\theta(\theta-B_T\nu)<0\} \leq \mathbb{1}\{ B_T|\nu| > |\theta|  \}$, thus, again, same as the analysis for the first term, with $l = B_T|\nu|$, 
\[
\int_{\Pi} |\theta|^{2\tilde d}
\bigl(|\theta|^{-2d}+|\theta-B_T\nu|^{-2d}\bigr)
\mathbf 1\{\theta(\theta-B_T\nu)<0\}\,d\theta
\leq Cl^{1-2\Delta} = CB_T^{1-2\Delta}|\nu|^{1-2\Delta}.
\]
Therefore,
\[
\int_{\Pi}\int_{\Pi}\frac{|\theta|^{2\tilde d}}{B_T}
W\!\left(\frac{\theta-\lambda}{B_T}\right)
\bigl(|\theta|^{-2d}+|\lambda|^{-2d}\bigr)\mathbf 1\{\theta\lambda<0\}\,d\lambda\,d\theta
\leq
CB_T^{1-2\Delta}\int_{\mathbb R}|W(\nu)|\,|\nu|^{1-2\Delta}\,d\nu
\leq CB_T^{1-2\Delta}.
\]

            Summing up, we have 
            \begin{align}\label{eq:A_2_rate}
                \left\|A_2(\theta)\right\|_{L^1}\lesssim nB_T^{1-2\Delta} + \frac{n}{T B_T} + \frac{n B_T^{2\tilde{d}}}{T^{1-2d}} + nB_T + \frac{n}{TB_T^\Delta}  = \mathcal{O}\left(nB_T^{1-2\Delta} + \frac{n}{T B_T}\right)
            \end{align}

            In addition, combining with Lemma \ref{lm:rate_sum_to_int_W_only} and \ref{lm:rate_truncation_by_window},
            \begin{align}\label{eq:A_3_rate}
                \left\|A_3(\theta)\right\|_{L^1} &= \int_{\Pi}\left|  \frac{2\pi}{B_T T}\sum_{s=-T_0, s\neq 0}^{T_0}W\left(\frac{\theta - \lambda_s}{B_T}\right) - 1  \right| |\theta|^{2\tilde{d}}\left\|\Sigma(\theta)\right\|d\theta\\\nonumber
                &\leq \int_{\Pi}\left|  \frac{2\pi}{B_T T}\sum_{s=-T_0, s\neq 0}^{T_0}W\left(\frac{\theta - \lambda_s}{B_T}\right) -  \frac{1}{B_T }\int_{\Pi_1}W\left(\frac{\theta - \lambda}{B_T}\right) d\lambda  \right| \left\|\Sigma(\theta)\right\||\theta|^{2\tilde{d}}d\theta \\\nonumber
                &+ \int_{\Pi} \left|  \frac{1}{B_T}\int_{\Pi_1}W\left(\frac{\theta - \lambda}{B_T}\right) d\lambda - \frac{1}{B_T }\int_{\Pi}W\left(\frac{\theta - \lambda}{B_T}\right) d\lambda  \right|\left\|\Sigma(\theta)\right\||\theta|^{2\tilde{d}}d\theta \\\nonumber
                &+ \int_{\Pi} \left|  \frac{1}{B_T}\int_{\Pi}W\left(\frac{\theta - \lambda}{B_T}\right) d\lambda - 1  \right|\left\|\Sigma(\theta)\right\||\theta|^{2\tilde{d}}d\theta \\\nonumber
                &\leq \frac{n}{TB_T} + n\int_{\Pi}\frac{1}{B_T}\int_{-\frac{2\pi}{T}}^{\frac{2\pi}{T}} d\lambda |\theta|^{-2\Delta}d\theta + nB_T \\\nonumber
                &\leq \frac{n}{TB_T} + nB_T 
            \end{align}

            Putting (\ref{eq:A_1_rate}), (\ref{eq:A_2_rate}), (\ref{eq:A_3_rate}) together, we have the bias term
            \begin{align*}
                \int_{\Pi}|\theta|^{2\tilde{d}} \left\|\mathbb{E}\left(\widehat{\Sigma}_{n}(\theta)\right) - \Sigma(\theta)\right\|_{op}  d\theta = \mathcal{O}\left(nB_T^{1-2\Delta} + \frac{n}{T B_T} + n \log T (B_T T)^{2\tilde{d}} T^{2\Delta - 1}\right).
            \end{align*}
            Moreover, if $B_T = T^{-b}$, $b \in (0,1)$, the bias term converges to $0$ as $T\rightarrow \infty$. The dominant term in this case would be $\mathcal{O}(nB_T^{1-2\Delta} + \frac{n}{T B_T})$.

        \end{proof}

        \subsubsection{Variance Term}
        \begin{lemma} For the process where its spectral density satisfies Assumption \ref{assump: spec_den_specific_form}, the variance term induced from the periodogram smoothing estimator vanishes as T increases. \label{lm:var_conv}
            \begin{align}\label{eq:var_rate}
        var\left(\hat{\sigma}_{ij}(\theta)\right) &=\mathcal{O}\left(  \frac{|\theta|^{-4d}\log^2 T}{TB_T}\mathbb{1}(|\theta|>2\rho B_T) + \left(\frac{\log^2 T}{TB_T^{1+4d}} + \frac{\log^2 T}{T^{2-4d}B_T^{2}}\right)\mathbb{1}(|\theta|\leq 2\rho B_T) \right)
        \end{align}
        where $d$ is defined in Assumption \ref{assump: spec_den_specific_form}.
        \end{lemma}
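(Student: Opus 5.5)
The plan is to write $\hat\sigma_{ij}(\theta)=\frac{2\pi}{B_TT}\sum_{s}W\!\left(\frac{\theta-\lambda_s}{B_T}\right)I_{ij}(\lambda_s)$ and expand
\[
\operatorname{var}(\hat\sigma_{ij}(\theta))=\Big(\frac{2\pi}{B_TT}\Big)^2\sum_{s,s'}W\!\Big(\frac{\theta-\lambda_s}{B_T}\Big)W\!\Big(\frac{\theta-\lambda_{s'}}{B_T}\Big)\operatorname{cov}\big(I_{ij}(\lambda_s),I_{ij}(\lambda_{s'})\big).
\]
Since $I_{ij}(\lambda)=\frac{1}{2\pi T}d_i(\lambda)\overline{d_j(\lambda)}$ with $d_i$ the discrete Fourier transform, the covariance of periodogram ordinates splits into the two second-order products
\[
\mathbb{E}[d_i(\lambda_s)\overline{d_i(\lambda_{s'})}]\,\mathbb{E}[\overline{d_j(\lambda_s)}d_j(\lambda_{s'})]
\quad\text{and}\quad
\mathbb{E}[d_i(\lambda_s)d_j(-\lambda_{s'})]\,\mathbb{E}[\overline{d_j(\lambda_s)}\,\overline{d_i(-\lambda_{s'})}],
\]
plus a fourth-order cumulant term. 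For each of these I would invoke the multivariate long-memory cumulant bounds of Lemma~\ref{lm:cum_property_long_mem}, the extension of \cite{kim_cumulant_long_memory}. The second-order DFT cross-moments satisfy a bound of the form
\[
\big|\mathbb{E}[d_a(\lambda_s)\overline{d_b(\lambda_{s'})}]\big|\lesssim T\,|\lambda_s|^{-2d}\,\mathbb{1}(s=s')+|\lambda_s|^{-d}|\lambda_{s'}|^{-d}\log T
\]
for $s\neq s'$, uniformly in $a,b$ by Assumption~\ref{assump:row_boundness_G} and the boundedness of $\Sigma_\xi$. The fourth cumulant of the $\epsilon$-part is controlled through \eqref{eq:eps_Q_finite}, and that of the $u$-part through the finite fourth moments in Assumption~\ref{assump:model}, each giving an $O(T^{-1})$-type contribution weighted by $|\lambda|^{-d}$ powers.

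The diagonal ($s=s'$) contribution is then $(B_TT)^{-2}\sum_sW^2(\cdot)|\lambda_s|^{-4d}\log^2T$. Replacing the sum by $T\int$ gives $\frac{\log^2T}{TB_T}\cdot\frac{1}{B_T}\int W^2\!\left(\frac{\theta-\lambda}{B_T}\right)|\lambda|^{-4d}\,d\lambda$. I would split according to whether $|\theta|>2\rho B_T$.
\begin{itemize}
\item If $|\theta|>2\rho B_T$, every $\lambda$ in the window has $|\lambda|\ge|\theta|/2$, so the bound is $|\theta|^{-4d}\log^2T/(TB_T)$.
\item If $|\theta|\le 2\rho B_T$, the window reaches the origin. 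Summing $|\lambda_s|^{-4d}$ over $|s|\lesssim TB_T$ gives $T^{4d}(TB_T)^{1-4d}$ (since $4d<2$, the cases $4d<1$ and $4d\ge 1$ are handled separately). This yields the $\log^2T/(TB_T^{1+4d})$ term, with the boundary frequencies $s=\pm1$ producing the $\log^2T/(T^{2-4d}B_T^2)$ term.
\end{itemize}
The off-diagonal terms should be dominated by the diagonal ones after the double sum. The reason is that the factor $\log T\,|\lambda_s|^{-d}|\lambda_{s'}|^{-d}$ summed over $O((TB_T)^2)$ pairs and multiplied by $(B_TT)^{-2}$ stays of order $T^{-1}$ times a local power, i.e.\ smaller than the diagonal part.

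The main obstacle is the off-diagonal/leakage part near zero frequency. There the long-memory DFT covariances at distinct Fourier frequencies decay only like $\log T\,|\lambda_s|^{-d}|\lambda_{s'}|^{-d}$ (Robinson-type bounds), and summed naïvely they could exceed the diagonal order when $d$ is close to $1/2$. My first attempt would be to use the sharper Kim--Hurvich form $|\lambda_s|^{-d}|\lambda_{s'}|^{-d}\log T/\min(|s|,|s'|)^{1-2d}$ from Lemma~\ref{lm:cum_property_long_mem}, and bound the double sum via Cauchy--Schwarz against the kernel. If that fails for $4d\ge1$, I would absorb the excess into the $T^{-(2-4d)}B_T^{-2}$ term, which is exactly how that term arises. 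The heterogeneity across $i,j$ (differing $d_i$) is handled crudely by bounding every exponent by $d=\max d_{il}$.
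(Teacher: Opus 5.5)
There is a genuine gap, and it sits in the off-diagonal second-order terms, which is where the argument actually has to be made. Your skeleton is the paper's. You expand the variance as a weighted double sum of periodogram covariances and split each covariance into a fourth cumulant plus two products of second-order DFT cumulants. You then evaluate the weighted sums of $|\lambda_s|^{-4d}$ separately for $|\theta|>2\rho B_T$ and $|\theta|\le 2\rho B_T$, with the $T^{4d}$ from $s=\pm1$ producing the $\log^2T/(T^{2-4d}B_T^2)$ term.

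The bound you postulate for $s\neq s'$, namely $|\mathbb{E}[d_a(\lambda_s)\overline{d_b(\lambda_{s'})}]|\lesssim|\lambda_s|^{-d}|\lambda_{s'}|^{-d}\log T$, is false under long memory. It is missing a factor of order $T/\min(|s|,|s'|)$. For fixed $s\neq s'$ (e.g.\ $s=1$, $s'=2$), the normalized low-frequency DFTs of a process with $d>0$ stay correlated as $T\to\infty$. So the cross-moment is of order $T^{1+2d}$, not $T^{2d}\log T$, which is exactly the size allowed by Lemma~\ref{lm:cum_property_long_mem}(i).

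The same problem affects the pairs $s'=-s$, which matter when $|\theta|\lesssim\rho B_T$. For these, Lemma~\ref{lm:cum_property_long_mem}(i) only gives $|\lambda_s|^{-2d}\bigl(|\lambda_s|^{-1}+L_1(2\lambda_s)\bigr)\lesssim|\lambda_s|^{-2d}\,T\log T$. These terms and the off-diagonal ones are where the $\log^2T$ in \eqref{eq:var_rate} comes from; the $s=s'$ terms carry no logarithm, since $L_1(0)\le CT$.

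Hence your claim that the off-diagonal part is of smaller order than the diagonal has no support. Your proposed repairs do not close the gap either. The ``sharper'' form $|\lambda_s|^{-d}|\lambda_{s'}|^{-d}\log T/\min(|s|,|s'|)^{1-2d}$ is even smaller than the false bound, and it is not what Lemma~\ref{lm:cum_property_long_mem} states. ``Absorbing the excess'' into the $T^{-(2-4d)}B_T^{-2}$ term is not a computation.

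The missing idea is to use the correct off-diagonal bound together with its decay in $|\lambda_s-\lambda_t|$. For $|s|<|t|$, Lemma~\ref{lm:cum_property_long_mem}(i) and $L_1\le C\log T\,L_0$ give
\[
\bigl|\mathrm{Cum}\bigl(d_i(\lambda_s),d_i(-\lambda_t)\bigr)\bigr|\le C|\lambda_s|^{-2d}\Bigl(|\lambda_t|^{-1}+\frac{\log T}{|\lambda_s-\lambda_t|}\Bigr).
\]
After squaring, sum the $\log^2T\,|\lambda_s-\lambda_t|^{-2}$ part over $t$ using $\sum_{t\neq s}|\lambda_s-\lambda_t|^{-2}\lesssim T^2$. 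This yields the diagonal expression $\frac{1}{B_T^2T^2}\sum_sW\bigl(\frac{\theta-\lambda_s}{B_T}\bigr)|\lambda_s|^{-4d}$ multiplied by $\log^2T$. The $|\lambda_t|^{-2}$ part and the cross part follow from Lemma~\ref{lm:basic_summation_integral_summing_from_M_B_T_theta_pointwise}, applied to $\sum_tW(\cdot)|\lambda_t|^{-2}$ and $\sum_tW(\cdot)|\lambda_t|^{-1}$, together with $|\lambda_s-\lambda_t|\ge 2\pi/T$.

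So the off-diagonal contribution is shown to be at most of the same order as the diagonal one, not smaller, and that is what gives \eqref{eq:var_rate}. The fourth-cumulant term should be handled through Lemma~\ref{lm:cum_property_long_mem}(ii). Its factor $\{|\lambda_t|^{-1}+T\log^3T\}|\lambda_s|^{-2d}|\lambda_t|^{-2d}$ makes the double sum factorize into products of single sums, each controlled by Lemma~\ref{lm:basic_summation_integral_summing_from_M_B_T_theta_pointwise}.
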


        \begin{proof}
            Notice that
            \begin{align*}
                \mathbb{E}\left( \left| \hat{\sigma}_{ij}(\theta) \right|^2  \right) &= \frac{4\pi^2}{B_T^2T^2} \mathbb{E} \left( \sum_{s=-T_0, s\neq 0}^{T_0} W\left( \frac{\theta - \lambda_s}{B_T} \right) I_{ij}(\lambda_s)\overline{\sum_{t=1}^{T-1} W\left( \frac{\theta - \lambda_t}{B_T} \right)I_{ij}(\lambda_t)} \right)\\
                &=\frac{4\pi^2}{B_T^2T^2} \sum_{s=-T_0, s\neq 0}^{T_0}\sum_{t=1}^{T-1} W\left( \frac{\theta - \lambda_s}{B_T} \right) W\left( \frac{\theta - \lambda_t}{B_T} \right)\mathbb{E}\left(I_{ij}(\lambda_s) \overline{I_{ij}(\lambda_t)} \right)\\
                &= \frac{4\pi^2}{B_T^2T^2} \sum_{s=-T_0, s\neq 0}^{T_0}\sum_{t=1}^{T-1} W\left( \frac{\theta - \lambda_s}{B_T} \right) W\left( \frac{\theta - \lambda_t}{B_T} \right) cov\left(I_{ij}(\lambda_s), {I_{ij}(\lambda_t)} \right)\\
                &+\frac{4\pi^2}{B_T^2T^2} \sum_{s=-T_0, s\neq 0}^{T_0}\sum_{t=1}^{T-1} W\left( \frac{\theta - \lambda_s}{B_T} \right) W\left( \frac{\theta - \lambda_t}{B_T} \right) \mathbb{E}\left(I_{ij}(\lambda_s) \right)\mathbb{E}\left(\overline{I_{ij}(\lambda_t)} \right)\\
                &= \frac{4\pi^2}{B_T^2T^2} \sum_{s=-T_0, s\neq 0}^{T_0}\sum_{t=1}^{T-1} W\left( \frac{\theta - \lambda_s}{B_T} \right) W\left( \frac{\theta - \lambda_t}{B_T} \right) cov\left(I_{ij}(\lambda_s), {I_{ij}(\lambda_t)} \right) + \left|\mathbb{E}\left( \hat{\sigma}_{ij}(\theta)   \right)\right|^2
            \end{align*}
            Then 
            \begin{align*}
                var\left(\hat{\sigma}_{ij}(\theta)\right) &= \mathbb{E}\left( \left| \hat{\sigma}_{ij}(\theta) \right|^2  \right) - \left|\mathbb{E}\left( \hat{\sigma}_{ij}(\theta)   \right)\right|^2\\
                &= \frac{4\pi^2}{B_T^2T^2} \sum_{s=-T_0, s\neq 0}^{T_0}\sum_{t=-T_0, t\neq 0}^{T_0} W\left( \frac{\theta - \lambda_s}{B_T} \right) W\left( \frac{\theta - \lambda_t}{B_T} \right) cov\left(I_{ij}(\lambda_s), {I_{ij}(\lambda_t)} \right)
            \end{align*}
            and that
            \begin{align*}
                cov\left(I_{ij}(\lambda_s), {I_{ij}(\lambda_t)} \right)  = \frac{1}{4\pi^2 T^2} &[ cum\left( d_i(\lambda_s), d_j(-\lambda_s),d_i(-\lambda_t), d_j(\lambda_t)   \right)\\
                &+ cum\left( d_i(\lambda_s), d_i(-\lambda_t) \right)cum\left( d_j(-\lambda_s), d_j(\lambda_t) \right)\\
                &+ cum\left( d_i(\lambda_s), d_j(\lambda_t) \right)cum\left( d_j(-\lambda_s), d_i(-\lambda_t) \right)]
            \end{align*}
            Thus, we could denote 
            \begin{align*}
                \left| var\left(\hat{\sigma}_{ij}(\theta)\right)   \right| \leq P_1 + P_2 + P_3
            \end{align*}
            where denote $N_0 = \{k: k\neq 0, k = -T_0, \dots, T_0\}$,
            \begin{align*}
                P_1 &= \frac{1}{B_T^2T^4}\sum_{s \in N_0}\sum_{t\in N_0}W\left( \frac{\theta - \lambda_s}{B_T} \right) W\left( \frac{\theta - \lambda_t}{B_T} \right)\bigg| \text{Cum}\left( d_i(\lambda_s), d_j(-\lambda_s),d_i(-\lambda_t), d_j(\lambda_t)   \right)  \bigg|\\
                P_2 &= \frac{1}{B_T^2T^4}\sum_{s \in N_0}\sum_{t\in N_0}W\left( \frac{\theta - \lambda_s}{B_T} \right) W\left( \frac{\theta - \lambda_t}{B_T} \right)\bigg| \text{Cum}\left( d_i(\lambda_s), d_i(-\lambda_t) \right)\text{Cum}\left( d_j(-\lambda_s), d_j(\lambda_t) \right)  \bigg|\\
                P_3 &= \frac{1}{B_T^2T^4}\sum_{s \in N_0}\sum_{t\in N_0}W\left( \frac{\theta - \lambda_s}{B_T} \right) W\left( \frac{\theta - \lambda_t}{B_T} \right)\bigg| \text{Cum}\left( d_i(\lambda_s), d_j(\lambda_t) \right)\text{Cum}\left( d_j(-\lambda_s), d_i(-\lambda_t) \right)  \bigg|
            \end{align*}

        From Lemma \ref{lm:cum_property_long_mem} (ii), for $\left|\lambda\right| \leq \left|u\right|$, 
        \[
        \bigg| \text{Cum}\left( d_{q_1}(\lambda), d_{q_2}(-\lambda),d_{q_1}(-u), d_{q_2}(u)   \right)  \bigg| \leq C \left\{\left|u\right|^{-1} + T\log^3 T \right\} \left|\lambda\right|^{-2d} \left|u\right|^{-2d} .
        \]
        Then apply to $P_1$, we have, 
        \begin{align*}
            P_1 &= \frac{1}{B_T^2T^4}\sum_{s\in N_0}\sum_{|t|\geq |s|}W\left( \frac{\theta - \lambda_s}{B_T} \right) W\left( \frac{\theta - \lambda_t}{B_T} \right)\left| \text{Cum}\left( d_i(\lambda_s), d_j(-\lambda_s),d_i(-\lambda_t), d_j(\lambda_t)   \right)  \right|\\
            &+ \frac{1}{B_T^2T^4}\sum_{s\in N_0}\sum_{|t| < |s|}W\left( \frac{\theta - \lambda_s}{B_T} \right) W\left( \frac{\theta - \lambda_t}{B_T} \right)\left| \text{Cum}\left(d_j(\lambda_t), d_i(-\lambda_t) , d_j(-\lambda_s), d_i(\lambda_s) \right)  \right|\\
            &\leq  \frac{1}{B_T^2T^4}\sum_{s\in N_0}\sum_{|s|\leq |t|} W\left( \frac{\theta - \lambda_s}{B_T} \right) W\left( \frac{\theta - \lambda_t}{B_T} \right) C\left\{\left|\lambda_t\right|^{-1} + T\log^3 T \right\} \left|\lambda_s\right|^{-2d} \left|\lambda_t\right|^{-2d} \\
            &+ \frac{1}{B_T^2T^4}\sum_{s\in N_0}\sum_{|t| < |s|} W\left( \frac{\theta - \lambda_s}{B_T} \right) W\left( \frac{\theta - \lambda_t}{B_T} \right) C\left\{\left|\lambda_s\right|^{-1} + T\log^3 T \right\} \left|\lambda_t\right|^{-2d} \left|\lambda_s\right|^{-2d}\\
            &\lesssim \frac{1}{B_T^2T^4}\sum_{s\in N_0}\sum_{t\in N_0} W\left( \frac{\theta - \lambda_s}{B_T} \right) W\left( \frac{\theta - \lambda_t}{B_T} \right) \left\{\left|\lambda_t\right|^{-1}+ \left|\lambda_s\right|^{-1} + T\log^3 T \right\} \left|\lambda_s\right|^{-2d} \left|\lambda_t\right|^{-2d}\\
            &\lesssim \frac{1}{B_T^2T^4}\sum_{s\in N_0} W\left( \frac{\theta - \lambda_s}{B_T} \right)\left|\lambda_s\right|^{-2d} \sum_{t\in N_0} W\left( \frac{\theta - \lambda_t}{B_T} \right)  \left|\lambda_t\right|^{-2d-1}\\
            &+ \frac{1}{B_T^2T^4}\sum_{s\in N_0} W\left( \frac{\theta - \lambda_s}{B_T} \right)\left|\lambda_s\right|^{-2d-1} \sum_{t\in N_0} W\left( \frac{\theta - \lambda_t}{B_T} \right)  \left|\lambda_t\right|^{-2d}\\
            &+ \frac{T\log^3 T}{B_T^2T^4}\sum_{s\in N_0} W\left( \frac{\theta - \lambda_s}{B_T} \right)\left|\lambda_s\right|^{-2d} \sum_{t\in N_0} W\left( \frac{\theta - \lambda_t}{B_T} \right)  \left|\lambda_t\right|^{-2d}\\
            &\lesssim 2\left(\frac{1}{B_T T^2}\sum_{s\in N_0} W\left( \frac{\theta - \lambda_s}{B_T} \right)\left|\lambda_s\right|^{-2d}\right) \left(\frac{1}{B_T T^2} \sum_{t\in N_0} W\left( \frac{\theta - \lambda_t}{B_T} \right)  \left|\lambda_t\right|^{-2d-1}\right)\\
            &+ T\log^3 T \left(\frac{1}{B_T T^2}\sum_{s\in N_0} W\left( \frac{\theta - \lambda_s}{B_T} \right)\left|\lambda_s\right|^{-2d}\right)^2 
        \end{align*}

        From Lemma \ref{lm:basic_summation_integral_summing_from_M_B_T_theta_pointwise}, we have
        \begin{align*}
            \frac{1}{B_T T^2}\sum_{s\in N_0} W\left( \frac{\theta - \lambda_s}{B_T} \right)\left|\lambda_s\right|^{-2d} &\leq \frac{C}{B_T T^2} \sum_{s \in M_s(\theta, B_T)}|\lambda_s|^{-2d}\\
            &\leq \frac{C|\theta|^{-2d}}{T}\mathbb{1}(|\theta|>2\rho B_T) + \frac{C}{TB_T^{2d}}\mathbb{1}(|\theta|\leq 2\rho B_T)
        \end{align*}
        and 
        \begin{align*}
            \frac{1}{B_T T^2} \sum_{s\in N_0} W\left( \frac{\theta - \lambda_s}{B_T} \right)  \left|\lambda_s\right|^{-2d-1} &\leq \frac{C}{B_T T^2} \sum_{s \in M_s(\theta, B_T)}|\lambda_s|^{-2d-1}\\
            &\leq \frac{C|\theta|^{-2d-1}}{T}\mathbb{1}(|\theta|>2\rho B_T) + \frac{C}{B_T T^{1-2d}}\mathbb{1}(|\theta|\leq 2\rho B_T)
        \end{align*}
        where $M_s(\theta, B_T)$ is defined as in Lemma \ref{lm:basic_summation_integral_summing_from_M_B_T_theta_pointwise}.

        Thus,
        \begin{align*}
            &P_1 \leq \left(\frac{C|\theta|^{-4d-1}}{T^2} + \frac{C|\theta|^{-4d}\log^3 T}{T}\right) \mathbb{1}\left( |\theta| > 2\rho B_T  \right) + \left(\frac{C}{B_T^{1+2d}T^{2-2d}} + \frac{C\log^3 T}{B_T^{4d}T}\right)\mathbb{1}(|\theta|<2\rho B_T)
        \end{align*}

        For $P_2$, we have
        \begin{align*}
            P_2 &= \frac{1}{B_T^2T^4}\sum_{s\in N_0}\sum_{t\in N_0}W\left( \frac{\theta - \lambda_s}{B_T} \right) W\left( \frac{\theta - \lambda_t}{B_T} \right)\left| \text{Cum}\left( d_i(\lambda_s), d_i(-\lambda_t) \right)\text{Cum}\left( d_j(-\lambda_s), d_j(\lambda_t) \right)  \right|\\
            &= P_{21} + P_{22} + P_{23}
        \end{align*}
            where
            \begin{align*}
                P_{21} &= \frac{1}{B_T^2T^4}\sum_{l\in N_0} W^2\left( \frac{\theta - \lambda_l}{B_T} \right)\left| \text{Cum}\left( d_i(\lambda_l), d_i(-\lambda_l) \right)  \right| \left| \text{Cum}\left( d_j(-\lambda_l), d_j(\lambda_l) \right)  \right|\\
                P_{22} &= \frac{1}{B_T^2T^4}\sum_{l\in N_0} W\left( \frac{\theta - \lambda_l}{B_T} \right) W\left( \frac{\theta + \lambda_l}{B_T} \right)\left| \text{Cum}\left( d_i(\lambda_l), d_i(\lambda_l) \right)  \right| \left| \text{Cum}\left( d_j(\lambda_l), d_j(\lambda_l) \right)  \right|\\
                P_{23} &= \frac{1}{B_T^2T^4} \sum_{s\in N_0}\sum_{|s|<|t|} W\left( \frac{\theta - \lambda_s}{B_T} \right) W\left( \frac{\theta - \lambda_t}{B_T} \right) \left|  \text{Cum}\left( d_i(\lambda_s), d_i(-\lambda_t) \right) \right| \left| \text{Cum}\left( d_j(-\lambda_s), d_j(\lambda_t) \right) \right|\\
                P_{24} &= \frac{1}{B_T^2T^4} \sum_{s\in N_0}\sum_{|t|<|s|} W\left( \frac{\theta - \lambda_s}{B_T} \right) W\left( \frac{\theta - \lambda_t}{B_T} \right) \left|  \text{Cum}\left( d_i(-\lambda_t), d_i(\lambda_s) \right) \right| \left| \text{Cum}\left( d_j(\lambda_t), d_j(-\lambda_s) \right) \right|\\
                &= \frac{1}{B_T^2T^4} \sum_{t\in N_0}\sum_{|s|<|t|}W\left( \frac{\theta - \lambda_s}{B_T} \right) W\left( \frac{\theta - \lambda_t}{B_T} \right)\left| \text{Cum}\left( d_j(\lambda_s), d_j(-\lambda_t) \right) \right| \left|  \text{Cum}\left( d_i(-\lambda_s), d_i(\lambda_t) \right) \right| 
            \end{align*}

            First of all, by Lemma \ref{lm:basic_summation_integral_summing_from_M_B_T_theta_pointwise} we have
            \begin{align*}
                &\sum_{s\in N_0} W\left( \frac{\theta - \lambda_s}{B_T} \right) \left|\lambda_s\right|^{-4d} \leq C\sum_{s\in M_s(\theta, B_T)} \left|\lambda_s\right|^{-4d}\\
                &\lesssim(B_T T)|\theta|^{-4d}\mathbb{1}(|\theta|>2\rho B_T) + \left(TB_T^{1-4d} + T^{4d}\right)\mathbb{1}(|\theta|\leq 2\rho B_T)
            \end{align*}

            by Lemma \ref{lm:cum_property_long_mem} (i), and that $W(\cdot) \leq C, L_1(0) \leq CT$, we yield
            \begin{align*}
                &P_{21} \lesssim \frac{1}{B_T^2T^4} \sum_{l\in N_0} W\left( \frac{\theta - \lambda_l}{B_T} \right) \left[ \left|\lambda_l\right|^{-2d}\left( \left|\lambda_l\right|^{-1} + L_1(0)\right) \right]^2\\
                &\lesssim \frac{1}{B_T^2T^4} \sum_{l\in N_0} W\left( \frac{\theta - \lambda_l}{B_T} \right) \left|\lambda_l\right|^{-4d} T^2\\
                &\leq \frac{|\theta|^{-4d}}{B_T T}\mathbb{1}\left(|\theta|>2\rho B_T\right) + \left(\frac{1}{TB_T^{1+4d}} + \frac{1}{T^{2-4d}B_T^{2}}\right)\mathbb{1}(|\theta|\leq 2\rho B_T)
            \end{align*}

            Secondly, $L_1(x)\leq C\log T L_0(x)\leq CT\log T$,
            \begin{align*}
                &P_{22} \lesssim \frac{1}{B_T^2T^4} \sum_{l\in N_0} W\left( \frac{\theta - \lambda_l}{B_T} \right) \left[ \left|\lambda_l\right|^{-2d}\left( \left|\lambda_l\right|^{-1} + L_1(2\lambda_l)\right) \right]^2\\
                &\lesssim  \frac{1}{B_T^2T^4} \sum_{l\in N_0} W\left( \frac{\theta - \lambda_l}{B_T} \right) \left|\lambda_l\right|^{-4d} T^2 \log^2T\\
                &\lesssim \frac{|\theta|^{-4d}\log^2T}{B_T T}\mathbb{1}\left(|\theta|>2\rho B_T\right) + \left(\frac{\log^2T}{TB_T^{1+4d}} + \frac{\log^2T}{T^{2-4d}B_T^{2}}\right)\mathbb{1}(|\theta|\leq 2\rho B_T)
            \end{align*}
            
            Thirdly, by Lemma \ref{lm:basic_summation_integral_summing_from_M_B_T_theta_pointwise} we have
            \begin{align*}
                &\sum_{s\in N_0} W\left( \frac{\theta - \lambda_s}{B_T} \right)\left| \lambda_s \right|^{-2} \leq CB_T T |\theta|^{-2}\mathbb{1}(|\theta|>2\rho B_T) + CT^2 \mathbb{1}(|\theta|\leq 2\rho B_T) \\
                &\sum_{s\in N_0} W\left( \frac{\theta - \lambda_s}{B_T} \right)\left| \lambda_s \right|^{-1} \leq CB_T T |\theta|^{-1}\mathbb{1}(|\theta|>2\rho B_T) + CT \log(B_T T) \mathbb{1}(|\theta|\leq 2\rho B_T)
            \end{align*}
            Then since $L_1(\lambda)\leq C \log T L_0(\lambda),\left|\lambda_s - \lambda_t\right| \geq |\lambda_1|, s\neq t$, we have
            \begin{align*}
                P_{23} &\leq \frac{1}{B_T^2T^4} \sum_{s\in N_0}\sum_{|s|<|t|} W\left( \frac{\theta - \lambda_s}{B_T} \right) W\left( \frac{\theta - \lambda_t}{B_T} \right) \left[ \left| \lambda_s \right|^{-2d} \left( \left| \lambda_t \right|^{-1} + \frac{\log T}{\left|\lambda_s - \lambda_t\right|}  \right) \right]^2\\
                &\leq \frac{1}{B_T^2T^4} \sum_{s\in N_0}\sum_{|s|<|t|} W\left( \frac{\theta - \lambda_s}{B_T} \right) W\left( \frac{\theta - \lambda_t}{B_T} \right) \left| \lambda_s \right|^{-4d}\left| \lambda_t \right|^{-2}\\
                &+ \frac{2\log T}{B_T^2T^4} \sum_{s\in N_0}\sum_{|s|<|t|}W\left( \frac{\theta - \lambda_s}{B_T} \right) W\left( \frac{\theta - \lambda_t}{B_T} \right) \left| \lambda_s \right|^{-4d} \frac{\left| \lambda_t \right|^{-1}}{\left|\lambda_s - \lambda_t\right|}\\
                &+ \frac{\log^2 T}{B_T^2T^4} \sum_{s\in N_0}\sum_{|s|<|t|}W\left( \frac{\theta - \lambda_s}{B_T} \right) W\left( \frac{\theta - \lambda_t}{B_T} \right) \left| \lambda_s \right|^{-4d} \frac{1}{\left|\lambda_s - \lambda_t\right|^2}\\
                &\leq \frac{1}{B_T^2T^4} \sum_{s\in N_0} \sum_{t\in N_0} W\left( \frac{\theta - \lambda_s}{B_T} \right) W\left( \frac{\theta - \lambda_t}{B_T} \right) \left| \lambda_s \right|^{-4d}\left| \lambda_t \right|^{-2} \\
                &+ \frac{CT\log T}{B_T^2T^4} \sum_{s\in N_0} \sum_{t\in N_0}W\left( \frac{\theta - \lambda_s}{B_T} \right) W\left( \frac{\theta - \lambda_t}{B_T} \right) \left| \lambda_s \right|^{-4d}\left| \lambda_t \right|^{-1}\\
                &+ \frac{\log^2 T}{B_T^2T^4} \sum_{s\in N_0} \sum_{t-s=-T_0-s, t-s \neq 0, -2s}^{T_0-s}W\left( \frac{\theta - \lambda_s}{B_T} \right) W\left( \frac{\theta - \lambda_t}{B_T} \right) \left| \lambda_s \right|^{-4d} |\lambda_{t-s}|^{-2}\\
                &\lesssim \left( \frac{1}{B_T T^2} \sum_{s\in N_0} W\left( \frac{\theta - \lambda_s}{B_T} \right)\left| \lambda_s \right|^{-4d} \right) \left( \frac{1}{B_T T^2} \sum_{t\in N_0} W\left( \frac{\theta - \lambda_t}{B_T} \right)\left| \lambda_t \right|^{-2}  \right)\\
                &+ T\log T \left( \frac{1}{B_T T^2} \sum_{s\in N_0} W\left( \frac{\theta - \lambda_s}{B_T} \right)\left| \lambda_s \right|^{-4d} \right) \left( \frac{1}{B_T T^2} \sum_{t\in N_0} W\left( \frac{\theta - \lambda_t}{B_T} \right)\left| \lambda_t \right|^{-1}  \right)\\
                &+ \frac{\log^2 T}{B_T T^2} \left( \frac{1}{B_T T^2} \sum_{s\in N_0} W\left( \frac{\theta - \lambda_s}{B_T} \right)\left| \lambda_s \right|^{-4d} \right) \sum_{\Delta_{t,s}=-T_0-s,\Delta_{t,s} \neq 0, -2s }^{T_0-s} \frac{T^2}{\Delta_{t,s}^2}\\
                &\lesssim \frac{|\theta|^{-4d-2}}{T^2}\mathbb{1}(|\theta|>2\rho B_T) + \left(\frac{1}{TB_T^{1+4d}} + \frac{1}{T^{2-4d}B_T^{2}}\right)\mathbb{1}(|\theta|\leq 2\rho B_T)\\
                &+\frac{|\theta|^{-4d-1}\log T}{T}\mathbb{1}(|\theta|>2\rho B_T) + \left(\frac{1}{TB_T^{1+4d}} + \frac{1}{T^{2-4d}B_T^{2}}\right)\mathbb{1}(|\theta|\leq 2\rho B_T) (\log TB_T \log T)\\
                &+ \frac{|\theta|^{-4d}\log^2 T}{TB_T}\mathbb{1}(|\theta|>2\rho B_T) + \left(\frac{1}{TB_T^{1+4d}} + \frac{1}{T^{2-4d}B_T^{2}}\right)\mathbb{1}(|\theta|\leq 2\rho B_T) (\log^2 T)\\
                &\lesssim \frac{|\theta|^{-4d}\log^2 T}{TB_T}\mathbb{1}(|\theta|>2\rho B_T) + \left(\frac{1}{TB_T^{1+4d}} + \frac{1}{T^{2-4d}B_T^{2}}\right)\mathbb{1}(|\theta|\leq 2\rho B_T) (\log^2 T)
            \end{align*}

            Likewise, $P_{24}$ has the same upperbound as $P_{23}$. Putting $P_{21}, P_{22}, P_{23}, P_{24}$ together we have
            \begin{align*}
                P_2 \lesssim   \frac{|\theta|^{-4d}\log^2 T}{TB_T}\mathbb{1}(|\theta|>2\rho B_T) + \left(\frac{\log^2 T}{TB_T^{1+4d}} + \frac{\log^2 T}{T^{2-4d}B_T^{2}}\right)\mathbb{1}(|\theta|\leq 2\rho B_T)  
            \end{align*}

            For $P_3$, by Lemma \ref{lm:cum_property_long_mem} (i)
            \begin{align*}
                P_3 &= \frac{1}{B_T^2T^4}\sum_{s\in N_0}\sum_{t\in N_0}W\left( \frac{\theta - \lambda_s}{B_T} \right) W\left( \frac{\theta - \lambda_t}{B_T} \right)\left| \text{Cum}\left( d_i(\lambda_s), d_j(\lambda_t) \right)\text{Cum}\left( d_j(-\lambda_s), d_i(-\lambda_t) \right)  \right|\\
                &\leq \frac{1}{B_T^2T^4} \sum_{|s|= |t|}W\left( \frac{\theta - \lambda_s}{B_T} \right) W\left( \frac{\theta - \lambda_t}{B_T} \right)\left[|\lambda_s|^{-2d} \left( |\lambda_t|^{-1} + L_1(|\lambda_s + \lambda_t|) \right)\right]^2 \text{ (Lemma \ref{lm:cum_property_long_mem} (i)) }\\
                &+ \frac{1}{B_T^2T^4} \sum_{|s| < |t|}W\left( \frac{\theta - \lambda_s}{B_T} \right) W\left( \frac{\theta - \lambda_t}{B_T} \right)\left[|\lambda_s|^{-2d} \left( |\lambda_t|^{-1} + L_1(|\lambda_s + \lambda_t|) \right)\right]^2 \\
                &+ \frac{1}{B_T^2T^4} \sum_{|s| > |t|}W\left( \frac{\theta - \lambda_s}{B_T} \right) W\left( \frac{\theta - \lambda_t}{B_T} \right)\left[|\lambda_t|^{-2d} \left( |\lambda_s|^{-1} + L_1(|\lambda_s + \lambda_t|) \right)\right]^2\\
                &\leq \frac{|\theta|^{-4d}\log^2T}{B_T T}\mathbb{1}\left(|\theta|>2\rho B_T\right) + \left(\frac{\log^2T}{TB_T^{1+4d}} + \frac{\log^2T}{T^{2-4d}B_T^{2}}\right)\mathbb{1}(|\theta|\leq 2\rho B_T) \text{ (similar to $P_{21}, P_{22})$}\\
                &+ \frac{2}{B_T^2T^4} \sum_{|s|< |t|}W\left( \frac{\theta - \lambda_s}{B_T} \right) W\left( \frac{\theta - \lambda_t}{B_T} \right)\left[|\lambda_s|^{-2d} \left( |\lambda_t|^{-1} + L_1(|\lambda_s + \lambda_t|) \right)\right]^2 \text{(by symmetry)}\\
                &\lesssim \frac{|\theta|^{-4d}\log^2T}{B_T T}\mathbb{1}\left(|\theta|>2\rho B_T\right) + \left(\frac{\log^2T}{TB_T^{1+4d}} + \frac{\log^2T}{T^{2-4d}B_T^{2}}\right)\mathbb{1}(|\theta|\leq 2\rho B_T)\\
                &+ \frac{1}{B_T^2T^4} \sum_{|s|< |t|}W\left( \frac{\theta - \lambda_s}{B_T} \right) W\left( \frac{\theta - \lambda_t}{B_T} \right)|\lambda_s|^{-4d} |\lambda_t|^{-2}\\
                &+ \frac{2}{B_T^2T^4} \sum_{|s| <|t|}W\left( \frac{\theta - \lambda_s}{B_T} \right) W\left( \frac{\theta - \lambda_t}{B_T} \right)|\lambda_s|^{-4d} |\lambda_t|^{-1} \frac{\log T}{|\lambda_s + \lambda_t|}\\
                &+ \frac{1}{B_T^2T^4} \sum_{|s|< |t|}W\left( \frac{\theta - \lambda_s}{B_T} \right) W\left( \frac{\theta - \lambda_t}{B_T} \right)|\lambda_s|^{-4d} \frac{\log^2 T}{|\lambda_s + \lambda_t|^2}\\
                &\lesssim \frac{1 + \log T + \log^2 T}{B_T^2T^4} \left( \sum_{s=1}^{T-1} W\left( \frac{\theta - \lambda_s}{B_T} \right)|\lambda_s|^{-4d}  \right) \left( \sum_{t=1}^{T-1} W\left( \frac{\theta - \lambda_t}{B_T} \right)|\lambda_t|^{-2} \right)\\
                &\lesssim \frac{\log^2 T}{T^2} |\theta|^{-4d-2} \mathbb{1}(|\theta|>2\rho B_T) + \left(\frac{1}{TB_T^{1+4d}} + \frac{1}{T^{2-4d}B_T^{2}}\right)\mathbb{1}(|\theta|\leq 2\rho B_T)\log^2 T
            \end{align*}

            Therefore, 
            \begin{align*}
                var\left(\hat{\sigma}_{ij}(\theta)\right)   &\leq P_1 + P_2 + P_3 \\
                &\lesssim  \frac{|\theta|^{-4d}\log^2 T}{TB_T}\mathbb{1}(|\theta|>2\rho B_T) + \left(\frac{\log^2 T}{TB_T^{1+4d}} + \frac{\log^2 T}{T^{2-4d}B_T^{2}}\right)\mathbb{1}(|\theta|\leq 2\rho B_T) 
            \end{align*}
                
            
        \end{proof}

\section{Lemmas Used in Appendix \ref{sec:proof_lemma_for_main_lemma_1}}\label{sec:proof_lemma_for_lemma_2}
\subsection{Lemmas for Eigenvalues and Eigenvectors}
\begin{lemma}\label{lemma:first_second_derivative_eigenvec_projection}
Let $B(\theta) \in \mathbb{C}^{n\times n}$ be a twice continuously differentiable Hermitian matrix on $\Pi$. Suppose that its first eigengap is bounded below: there exists a constant $c>0$ such that
\[
\lambda_1(B(\theta))-\lambda_2(B(\theta))\geq c f_B(n)
\]
for all $\theta\in \Pi$. Let
\[
P_B(\theta) \coloneqq \mathbf{p}_1^*(B(\theta))\mathbf{p}_1(B(\theta)),
\]
where $\mathbf{p}_1(B(\theta))$ is the row eigenvector associated with the largest eigenvalue $\lambda_1(B(\theta))$. Then
\begin{align}\label{eq:first_derivative_eigenvec_characterization}
    \|P^{(1)}_B(\theta)\|_2
\le C\frac{\|B^{(1)}(\theta)\|_2}{f(n)}, \quad \lambda^{(1)}_1(B(\theta)) = \mathbf{p}_1(B(\theta)) B^{(1)}(\theta) \mathbf{p}_1^*(B(\theta))
\end{align}
and
\begin{align}\label{eq:second_derivative_eigenvec_characterization}
&\|P^{(2)}_B(\theta)\|_2
\le C\left(
\frac{\|B^{(2)}(\theta)\|_2}{f_B(n)}
+
\frac{\|B^{(1)}(\theta)\|_2^2}{f_B(n)^2}
\right), \\\nonumber
&\left|\lambda^{(2)}_1(B(\theta))\right| \leq C\left(
\|B^{(2)}(\theta)\|_2
+
\frac{\|B^{(1)}(\theta)\|_2^2}{f_B(n)}
\right)\nonumber
\end{align}
where $\cdot^{(r)}$ means the $r-$derivatives, $r = 1,2$, $C$ is the notation for constant independent of $n, T$.
\end{lemma}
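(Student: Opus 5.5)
The plan is to use the Riesz projection representation of $P_B(\theta)$, as already done in the proof of Lemma~\ref{lemma:d_differ_K_fourier_coef_decay_rate_old}. Fix $\theta_0$. Let $\Gamma_{\theta_0}$ be the circle centred at $\lambda_1(B(\theta_0))$ with radius $r = c f_B(n)/2$. By the eigengap assumption, every other eigenvalue of $B(\theta_0)$ lies at distance at least $r$ from $\Gamma_{\theta_0}$. For $\theta$ near $\theta_0$, Weyl's inequality and continuity of $B$ keep $\lambda_1(B(\theta))$ inside $\Gamma_{\theta_0}$ and $\lambda_2,\dots,\lambda_n$ outside it. On that neighbourhood,
\[
P_B(\theta) = \frac{1}{2\pi\iota}\oint_{\Gamma_{\theta_0}} R(z,\theta)\,dz, \qquad R(z,\theta) = (z\mathbf{I}-B(\theta))^{-1}.
\]
Since $B(\theta)$ is Hermitian, $\|R(z,\theta)\|_2 = 1/\mathrm{dist}(z,\mathrm{spec}(B(\theta))) \le 2/(cf_B(n))$ for $z\in\Gamma_{\theta_0}$ and $\theta$ close enough to $\theta_0$ (shrinking the neighbourhood slightly if needed). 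Because $B$ is $C^2$, we may differentiate under the integral sign using $\partial_\theta R = R B^{(1)} R$ and $\partial_\theta^2 R = 2 R B^{(1)} R B^{(1)} R + R B^{(2)} R$.

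The projection bounds then follow from a crude length-times-sup estimate. The contour has length $2\pi r = \pi c f_B(n)$. Hence
\[
\|P_B^{(1)}(\theta_0)\| \le \tfrac{1}{2\pi}\,\pi c f_B(n)\,\bigl(2/(cf_B(n))\bigr)^2\,\|B^{(1)}\| = C\|B^{(1)}\|/f_B(n).
\]
Likewise,
\[
\|P_B^{(2)}(\theta_0)\| \le C f_B(n)\left(\|B^{(1)}\|^2/f_B(n)^3 + \|B^{(2)}\|/f_B(n)^2\right),
\]
which is exactly \eqref{eq:second_derivative_eigenvec_characterization}. The constants depend only on $c$, which is independent of $n$ and $T$. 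The statement's $f(n)$ is read as $f_B(n)$.

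For the eigenvalue derivatives, write $\lambda_1(B(\theta)) = \mathrm{tr}(B(\theta)P_B(\theta))$. This is differentiable because $P_B$ is $C^2$ by the contour formula. Differentiating gives
\[
\lambda_1^{(1)} = \mathrm{tr}(B^{(1)}P_B) + \mathrm{tr}(BP_B^{(1)}).
\]
Since $P_B^2 = P_B$, we have $P_B P_B^{(1)} P_B = 0$. Together with $BP_B = P_B B = \lambda_1 P_B$, this gives $\mathrm{tr}(BP_B^{(1)}) = \lambda_1\,\mathrm{tr}(P_B^{(1)}) = \lambda_1\,\partial_\theta\,\mathrm{tr}(P_B) = 0$, because $\mathrm{tr}\,P_B \equiv 1$. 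Therefore $\lambda_1^{(1)} = \mathrm{tr}(B^{(1)}P_B) = \mathbf{p}_1 B^{(1)} \mathbf{p}_1^*$, the Hellmann--Feynman formula. Differentiating once more gives $\lambda_1^{(2)} = \mathrm{tr}(B^{(2)}P_B) + \mathrm{tr}(B^{(1)}P_B^{(1)})$. The first term is at most $\|B^{(2)}\|$, since $P_B$ has rank one. The second is at most $2\|B^{(1)}\|\,\|P_B^{(1)}\|$, since $P_B^{(1)} = P_B^{(1)}P_B + P_BP_B^{(1)}$ has rank at most two. Plugging in the first-derivative bound yields $C(\|B^{(2)}\| + \|B^{(1)}\|^2/f_B(n))$.

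The main obstacle is making the contour argument uniform. The radius must scale with $f_B(n)$, not be a fixed constant, so that the $f_B(n)$ factors cancel correctly. One must also check that a single contour serves a whole neighbourhood, so that differentiation under the integral is legitimate. Both points follow from Weyl's inequality ($|\lambda_j(B(\theta)) - \lambda_j(B(\theta_0))| \le \|B(\theta)-B(\theta_0)\|$) and continuity of $B$. Since derivatives are local, no global contour is needed.
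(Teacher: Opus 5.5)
Your proof is correct and reaches the stated bounds by a more self-contained route than the paper's. The paper cites resolvent perturbation theory for each step:
\begin{itemize}
\item a reduced-resolvent bound $\|P_B^{(1)}\|_2\le\|B^{(1)}\|_2\|S\|_2$ with $\|S\|_2\le(\lambda_1-\lambda_2)^{-1}$;
\item Kato's second-order formula for $P_B^{(2)}$;
\item Hellmann--Feynman for $\lambda_1^{(1)}$;
\item a bound on $\lambda_1^{(2)}$ obtained by differentiating $\mathbf{p}_1B^{(1)}\mathbf{p}_1^*$, using $\|\mathbf{p}_1^{(1)}\|_2\le\|B^{(1)}\|_2\|S\|_2$.
\end{itemize}
You instead differentiate the Riesz integral over a circle whose radius is proportional to $f_B(n)$, so the powers of $f_B(n)$ come out of a length-times-sup estimate. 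You then get the eigenvalue derivatives from $\lambda_1=\mathrm{tr}(BP_B)$ and $\mathrm{rank}\,P_B^{(1)}\le 2$.

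Your route buys three things over the paper's.
\begin{itemize}
\item It applies directly to a $C^2$ family, whereas Kato's coefficient formulas are derived for families analytic in the parameter.
\item It never differentiates $\mathbf{p}_1$ itself, so no smooth choice of phase is needed. The paper's bound on $\mathbf{p}_1^{(1)}$ tacitly assumes one.
\item It yields exactly the stated form $C(\|B^{(2)}\|_2/f_B(n)+\|B^{(1)}\|_2^2/f_B(n)^2)$. The paper's final displayed step for $P_B^{(2)}$ drops the factors of $\|S\|_2$ and writes $C(\|B^{(2)}\|_2+\|B^{(1)}\|_2^2)$.
\end{itemize}
The paper's version is shorter only because it outsources these facts.

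One minor imprecision: on $\Gamma_{\theta_0}$, the bound $\|R(z,\theta)\|_2\le 2/(cf_B(n))$ holds exactly only at $\theta=\theta_0$. The eigenvalue $\lambda_1(B(\theta_0))$ is the centre of the circle, at distance exactly $r$ from it, and it moves as $\theta$ varies; shrinking the neighbourhood does not restore the same constant. For nearby $\theta$, use a slightly larger constant such as $4/(cf_B(n))$. That is all that differentiating under the integral requires, and the derivative bounds themselves are evaluated at $\theta_0$, where the sharp estimate holds.
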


\begin{proof}

By Theorem 3 in \cite{First-Order_Perturbation_Theory}, denote the reduced resolvent matrix of $B(\theta)$ w.r.t $\lambda_1(\theta)$ to be $S(\theta)$, then in the Hermitian case, $\left\|S(\theta)\right\|_2 \leq (\lambda_1(B(\theta))-\lambda_2(B(\theta)))^{-1} \leq Cf^{-1}(n)$,
\begin{align*}
    &\|P^{(1)}_B(\theta)\|_2 \leq {\|B^{(1)}(\theta)\|_2}{\left\|S(\theta)\right\|_2}
\leq \frac{\|B^{(1)}(\theta)\|_2}{\lambda_1(B(\theta))-\lambda_2(B(\theta))} \leq C\frac{\|B^{(1)}(\theta)\|_2}{f_B(n)},\\
&\|\mathbf{p}_1^{(1)}(B(\theta))\|_2 \leq {\|B^{(1)}(\theta)\|_2}{\left\|S(\theta)\right\|_2} \leq C\frac{\|B^{(1)}(\theta)\|_2}{f_B(n)}.
\end{align*}

The first derivative of the eigenvalue is given by Theorem 1 in \cite{First-Order_Perturbation_Theory}. It is also known as the Hellmann–Feynman theorem.

For the second derivatives, by (2.14) in Chapter II, Section 2.1 \cite{kato}, 
\begin{align*}
    \|P^{(2)}_B(\theta)\|_2 \leq \left\|B^{(2)}(\theta)\right\|_2\left\|S(\theta)\right\|_2 + \left\|B^{(1)}(\theta)\right\|_2^2\left\|S(\theta)\right\|_2^2 \leq C\left(
\|B^{(2)}(\theta)\|_2
+
\|B^{(1)}(\theta)\|_2^2
\right)
\end{align*}
Notice that although \cite{kato} states the theorem when $\theta = 0$, it could be trivially extended to any $\theta$.

Taking the derivative of the first order of the eigenvalue \eqref{eq:first_derivative_eigenvec_characterization} yields
\begin{align*}
    \left|\lambda^{(2)}_1(B(\theta))\right| \leq 2 \left\|B^{(1)}(\theta)\right\|_2\|\mathbf{p}_1^{(1)}(B(\theta))\|_2 + \left\|B^{(2)}(\theta)\right\|_2 \leq C\left(
\|B^{(2)}(\theta)\|_2
+
\frac{\|B^{(1)}(\theta)\|_2^2}{f_B(n)}
\right).
\end{align*}
\end{proof}

\begin{lemma}\label{lemma:bound_for_first_second_order_U}
    For Hermitian matrix functions $U(\theta), A(\theta) \in \mathbb{C}^{n\times n}$ satisfying
    \[
    U(\theta) = \frac{A(\theta)}{\lambda_1(A(\theta))}
    \] the first eigengap of $A(\theta)$ is bounded below: there exists a constant $c>0$ such that
\[
\lambda_1(A(\theta))-\lambda_2(A(\theta))\geq c f_A(n)
\]
for all $\theta\in \Pi$.
    then
    \begin{align}
        &\left\|U^{(1)}(\theta)\right\|_2 \leq \frac{\left\|A^{(1)}(\theta)\right\|_2}{\lambda_1(A(\theta))}\left[1 + \frac{\left\|A(\theta)\right\|_2}{\lambda_1(A(\theta))}\right] = \frac{2\left\|A^{(1)}(\theta)\right\|_2}{\lambda_1(A(\theta))} \\
        &\left\|U^{(2)}(\theta)\right\|_2\leq
C\left\{
\frac{\left\|A^{(2)}(\theta)\right\|_2}{\lambda_1(A(\theta))}
+
\frac{\left\|A^{(1)}(\theta)\right\|_2^2}{\lambda_1(A(\theta))^2}
+
\frac{\left\|A^{(1)}(\theta)\right\|_2^2}{\lambda_1(A(\theta))f_A(n)}
\right\} 
    \end{align}
\end{lemma}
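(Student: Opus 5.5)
Writing $\lambda(\theta)\coloneqq\lambda_1(A(\theta))$, I will differentiate the quotient $U=A/\lambda$ directly. The needed derivatives of $\lambda$ come from Lemma~\ref{lemma:first_second_derivative_eigenvec_projection} with $B=A$ and $f_B(n)=f_A(n)$. The gap assumption $\lambda_1(A)-\lambda_2(A)\ge c f_A(n)>0$ makes the top eigenvalue simple, so $\lambda$ is twice differentiable wherever $A$ is. Hellmann--Feynman, i.e.\ \eqref{eq:first_derivative_eigenvec_characterization}, gives $\lambda^{(1)}=\mathbf p_1 A^{(1)}\mathbf p_1^*$, hence $|\lambda^{(1)}|\le\|A^{(1)}\|_2$. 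The second part of \eqref{eq:second_derivative_eigenvec_characterization} gives $|\lambda^{(2)}|\le C(\|A^{(2)}\|_2+\|A^{(1)}\|_2^2/f_A(n))$.

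\textbf{First derivative.} The quotient rule gives
\[
U^{(1)}=\frac{A^{(1)}}{\lambda}-\frac{\lambda^{(1)}A}{\lambda^2}.
\]
Taking norms,
\[
\|U^{(1)}\|_2\le \frac{\|A^{(1)}\|_2}{\lambda}\Bigl(1+\frac{\|A\|_2}{\lambda}\Bigr).
\]
To turn the bracket into $2$, I use $\|A\|_2=\lambda_1(A)$, which holds when $A$ is positive semidefinite. This is the case in every application: $A_k=|\theta|^{2d_k}R_k$ with $R_k\succeq0$. In general $\|A\|_2=\max(\lambda_1,|\lambda_n|)$, so I will state that the equality step assumes $A\succeq 0$ (or $|\lambda_n(A)|\le\lambda_1(A)$). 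Without that assumption, the inequality with $\|A\|_2/\lambda$ still holds as written.

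\textbf{Second derivative.} Differentiating once more,
\[
U^{(2)}=\frac{A^{(2)}}{\lambda}-\frac{2\lambda^{(1)}A^{(1)}}{\lambda^2}-\frac{\lambda^{(2)}A}{\lambda^2}+\frac{2(\lambda^{(1)})^2A}{\lambda^3}.
\]
I bound the four terms in turn, using $\|A\|_2/\lambda\le 1$ and $|\lambda^{(1)}|\le\|A^{(1)}\|_2$.
\begin{itemize}
\item The first term is $\|A^{(2)}\|_2/\lambda$.
\item The second is at most $2\|A^{(1)}\|_2^2/\lambda^2$.
\item The fourth is at most $2\|A^{(1)}\|_2^2/\lambda^2$.
\item The third is at most $|\lambda^{(2)}|/\lambda\le C\|A^{(2)}\|_2/\lambda+C\|A^{(1)}\|_2^2/(\lambda f_A(n))$.
\end{itemize}
Summing these gives the stated three-term bound.

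\textbf{Main obstacle.} The only delicate step is justifying the smoothness and second-derivative bound for $\lambda_1(A(\theta))$. This is handled entirely by the uniform gap, through Lemma~\ref{lemma:first_second_derivative_eigenvec_projection}. It requires $A$ to be $C^2$ on the region considered, which means $\theta\neq0$ in the applications. The rest of the argument is routine quotient-rule bookkeeping.
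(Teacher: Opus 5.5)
Your proposal is correct and is essentially the paper's proof. The paper also expands $U^{(2)}$ by the quotient rule into the same four terms, then bounds $|\lambda_1^{(1)}(A)|$ and $|\lambda_1^{(2)}(A)|$ via Hellmann--Feynman and Lemma~\ref{lemma:first_second_derivative_eigenvec_projection}. Your caveat is well taken: the equality $1+\|A\|_2/\lambda_1(A)=2$ and the treatment of $\|A\|_2/\lambda_1(A)$ as bounded in the second-derivative estimate both silently require $\|A\|_2=\lambda_1(A)$ (e.g.\ $A\succeq 0$). The paper never states this, but it does hold for the matrices $A_k(\theta)=|\theta|^{2d_k}R_k(\theta)$ to which the lemma is applied.
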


\begin{proof}
    \begin{align*}
&\left\|U^{(2)}(\theta)\right\|_2\\
&\leq
\frac{\left\|A^{(2)}(\theta)\right\|_2}{\lambda_1(A(\theta))}
+
\frac{2\left\|A^{(1)}(\theta)\right\|_2\left|\lambda_1^{(1)}(A(\theta))\right|}{\lambda_1(A(\theta))^2}
+
\frac{\left\|A(\theta)\right\|_2\left|\lambda_1^{(2)}(A(\theta))\right|}{\lambda_1(A(\theta))^2}
+
\frac{2\left\|A(\theta)\right\|_2\left|\lambda_1^{(1)}(A(\theta))\right|^2}{\lambda_1(A(\theta))^3} 
\end{align*}
Then by \eqref{eq:second_derivative_eigenvec_characterization} we obtain the bound.
\end{proof}

\subsection{Successively Projected Vectors $h_j^{(k)}(\theta)$}

In the following two lemmas, Assumption \ref{assump:model}, \ref{assump:semiparametric_long_memory}, \ref{assump:pervasive_short_mem}, \ref{assump: spec_den_specific_form} case(a) and \ref{assump:bdd_error_eval} hold, let $G(\theta) \coloneqq (g_1(\theta), \dots, g_q(\theta))$, $P_j(\theta) \coloneqq \mathbf{p}_j^*(\theta)\mathbf{p}_j(\theta)$, $\Pi_{i}(\theta) \coloneqq I -  P_i(\theta)$, $h_j^{(m)}(\theta) \coloneqq \left(\prod_{i=1}^{m}\Pi_{i}(\theta)\right)g_j(\theta)$, $h_j^{(0)}(\theta) \coloneqq g_j(\theta)$, $H_{j}^{(m)}(\theta) \coloneqq h_j^{(m)}(\theta)\left(h_j^{(m)}(\theta)\right)^*$. 

\begin{lemma}\label{lemma:holder_successive_h}
    Suppose $P_m$ is $\rho_m$-H\"older continuous function, 
    then $h_{j}^{(m)}(\theta)$ and $H_{j}^{(m)}(\theta)$ are $\rho_m$-H\"older continuous.
\end{lemma}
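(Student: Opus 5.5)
The approach is a telescoping product estimate combined with the Lipschitz regularity of the columns $g_j$. Two structural facts are taken for granted. First, how the lemma is used in the proof of Lemma~\ref{lemma:K_holder_factor_wise} (``under Statement 2(k-1), Lemma~\ref{lemma:holder_successive_h} holds for $m=k-1$'') indicates that the hypothesis is intended to cover all of $P_1,\dots,P_m$, each $\rho_i$-H\"older with $\rho_1\ge\cdots\ge\rho_m$. I will therefore prove: if $P_i$ is $\rho_i$-H\"older for every $i\le m$ with nonincreasing exponents, then $h_j^{(m)}$ and $H_j^{(m)}$ are $\rho_m$-H\"older, with a constant $C\sqrt n$ for $h$ and $Cn$ for $H$, where $C$ is independent of $n,T$. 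Second, each $\Pi_i(\theta)=I-P_i(\theta)$ is an orthogonal projection, so $\|\Pi_i(\theta)\|_2\le 1$ for every $\theta$, and $\|\Pi_i(\theta_1)-\Pi_i(\theta_2)\|_2=\|P_i(\theta_1)-P_i(\theta_2)\|_2\le C|\theta_1-\theta_2|^{\rho_i}$.

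\textbf{Step 1: the product of projections.} Write $Q_m(\theta)=\prod_{i=1}^m\Pi_i(\theta)$. The standard telescoping identity gives
\[
Q_m(\theta_1)-Q_m(\theta_2)=\sum_{\ell=1}^m\Big(\prod_{i=\ell+1}^{m}\Pi_i(\theta_1)\Big)\big(\Pi_\ell(\theta_1)-\Pi_\ell(\theta_2)\big)\Big(\prod_{i=1}^{\ell-1}\Pi_i(\theta_2)\Big).
\]
Every outer factor has norm at most $1$. Hence $\|Q_m(\theta_1)-Q_m(\theta_2)\|_2\le C\sum_{\ell\le m}|\theta_1-\theta_2|^{\rho_\ell}$. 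Since $|\theta_1-\theta_2|\le 2\pi$ and $\rho_\ell\ge\rho_m$, each term is at most $C|\theta_1-\theta_2|^{\rho_m}$, so $Q_m$ is $\rho_m$-H\"older.

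\textbf{Step 2: the vectors $h_j^{(m)}$.} Split
\[
h_j^{(m)}(\theta_1)-h_j^{(m)}(\theta_2)=\big(Q_m(\theta_1)-Q_m(\theta_2)\big)g_j(\theta_1)+Q_m(\theta_2)\big(g_j(\theta_1)-g_j(\theta_2)\big).
\]
Assumption~\ref{assump:row_boundness_G} gives $\|g_j(\theta)\|_2^2\le\sum_i\|G_i(\theta)\|^2\le Cn$. Assumption~\ref{assump:pervasive_short_mem} gives $\|g_j(\theta_1)-g_j(\theta_2)\|_2\le\sqrt n L_G|\theta_1-\theta_2|$. Together these yield $\|h_j^{(m)}(\theta_1)-h_j^{(m)}(\theta_2)\|_2\le C\sqrt n\,|\theta_1-\theta_2|^{\rho_m}$. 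Here the Lipschitz exponent $1$ is absorbed into the H\"older exponent $\rho_m\le1$, again because $\Pi$ is bounded. The same split also gives the uniform bound $\|h_j^{(m)}\|_2\le C\sqrt n$.

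\textbf{Step 3: the rank-one matrices $H_j^{(m)}$.} Use $xx^*-yy^*=(x-y)x^*+y(x-y)^*$, which gives
\[
\|H_j^{(m)}(\theta_1)-H_j^{(m)}(\theta_2)\|_2\le\big(\|h_j^{(m)}(\theta_1)\|_2+\|h_j^{(m)}(\theta_2)\|_2\big)\,\|h_j^{(m)}(\theta_1)-h_j^{(m)}(\theta_2)\|_2\le Cn\,|\theta_1-\theta_2|^{\rho_m}.
\]

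\textbf{Main obstacle: the point $\theta=0$.} There the $P_i$ are defined only by continuous extension, as in Statement~1 of the proof of Lemma~\ref{lemma:K_holder_factor_wise}. I will handle this by interpreting the H\"older hypothesis on all of $\Pi$, including $0$, using those extended values. With that reading, the argument above goes through unchanged and also gives $h_j^{(m)}(0)=\lim_{\theta\to0}h_j^{(m)}(\theta)$. The remaining care needed is to check that the constants never pick up powers of $n$ beyond $\sqrt n$ for $h$ and $n$ for $H$; this follows because the only $n$-dependence enters through $\|g_j\|_2$ and $L_G\sqrt n$.
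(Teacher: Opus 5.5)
Your proposal is correct and is essentially the paper's argument. The paper inducts on $m$, peeling off one factor $I-P_m$ at a time with the same three-term triangle inequality, while you unroll that induction into a single telescoping identity for $\prod_{i\le m}\Pi_i$. The ingredients are identical: $\|\Pi_i\|_2\le 1$, $\|g_j\|_2\le C\sqrt n$, the Lipschitz bound on $g_j$, absorbing larger exponents into $\rho_m$ on the bounded set $\Pi$, and the rank-one difference bound for $H_j^{(m)}$. Your explicit strengthening of the hypothesis, to all $P_i$ with $i\le m$ H\"older with nonincreasing exponents, is exactly what the paper's induction tacitly uses.
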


\begin{proof}
    Here we use induction to prove that $h_j^{(m)}(\theta)$ is H\"older continuous. 

For $m = 1$, given any $\theta_1, \theta_2$, \begin{align}\label{eq:holder_rate_h_1_1}
    &\left\|h_j^{(1)}(\theta_1) - h_j^{(1)}(\theta_2)\right\|_2 = \left\| (I - P_1(\theta_1))g_j(\theta_1) - (I - P_1(\theta_2))g_j(\theta_2)  \right\|\\\nonumber
    &\leq \left\|g_j(\theta_1) - g_j(\theta_2)\right\|_2 + \left\|P_1(\theta_1) - P_1(\theta_2)\right\|_2\left\| g_j(\theta_1) \right\|_2 + \left\| g_j(\theta_1) - g_j(\theta_2) \right\|_2 \left\| P_1(\theta_2) \right\|_2\\\nonumber
    &\leq C_1\sqrt{n}|\theta_1 - \theta_2| + C_2\sqrt{n}|\theta_1 - \theta_2|^{\rho_1} \leq C\sqrt{n}|\theta_1 - \theta_2|^{\rho_1}.\nonumber
\end{align}


Thus, it only remains to prove that if $h_{j}^{(m-1)}(\theta)$ is $\beta_{m-1}$-H\"older continuous, $h_{j}^{(m)}(\theta)$ is $\beta_{m}$-H\"older continuous.

Since $h_j^{(m)}(\theta) = \Pi_{m}(\theta)h_j^{(m-1)}(\theta) = \left(I - P_{m}(\theta)\right)h_j^{(m-1)}(\theta)$, we have
\begin{align}\label{eq:holder_rate_h_j_k}
    &\left\|h_j^{(m)}(\theta_1) - h_j^{(m)}(\theta_2)\right\|_2 \\\nonumber
    &\leq \left\|h_j^{(m-1)}(\theta_1) - h_j^{(m-1)}(\theta_2)\right\|_2 + \left\|P_{m}(\theta_1) - P_{m}(\theta_2)\right\|_2\left\| h_j^{(m-1)}(\theta_1) \right\|_2 \\\nonumber
    &+ \left\| h_j^{(m-1)}(\theta_1) - h_j^{(m-1)}(\theta_2) \right\|_2 \left\| P_{m}(\theta_2) \right\|_2\\\nonumber
    &\leq C\left\|g_j(\theta_1)\right\|_2|\theta_1 - \theta_2|^{\beta_m} \leq C\sqrt{n}|\theta_1 - \theta_2|^{\beta_m}
\end{align}
$h_j^{(m)}(\theta)$ is $\beta_{m}$-H\"older continuous because $\beta_m \leq \beta_{m-1}$.

Since $\left\| h_j^{(m)}(\theta) \right\|_2 \leq \left\|g_j(\theta)\right\|_2 \leq C\sqrt{n}$, $\left\|H_j^{(m)}(\theta_1) - H_j^{(m)}(\theta_2)\right\|_2 \leq Cn|\theta_1 - \theta_2|^{\beta_m}$.


\end{proof}

\begin{lemma} \label{lemma:upperbnd_h_j_m}
Suppose $P_l$ is $\rho_l$-H\"older continuous function, and Statement \ref{stmt:Pk_limit}(l) holds for all $l = 1, \dots, m$. For $j \leq m$, 
    \begin{align}\label{eq:h_j_m_upperbnd}
        \left\|h_j^{(m)}(\theta_1)-h_j^{(m)}(\theta_2)\right\|_2
\leq
C\sqrt{n}\left[
|\theta_1-\theta_2|^{\rho_j}
+
\left(|\theta_1|^{\rho_j}+|\theta_2|^{\rho_j}\right)
|\theta_1-\theta_2|^{\rho_m}
\right],
    \end{align}
    and
    \begin{align}\label{eq:h_j_m_upperbnd_at_zero}
        \left\|h_j^{(m)}(\theta)\right\|_2 \leq C\sqrt{n}|\theta|^{\rho_j}
    \end{align}
\end{lemma}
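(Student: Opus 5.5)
The plan is to exploit the factorization $h_j^{(m)}(\theta)=\Big(\prod_{i=j+1}^{m}\Pi_i(\theta)\Big)h_j^{(j)}(\theta)$. The term $h_j^{(j)}$ vanishes at the origin, and the projector product in front of it is $\rho_m$-Hölder. The zero-bound \eqref{eq:h_j_m_upperbnd_at_zero} is proved first, and the increment bound \eqref{eq:h_j_m_upperbnd} is then read off from a product-rule splitting.

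For \eqref{eq:h_j_m_upperbnd_at_zero}, Statement~1($j$) gives $P_j(0)=h_j^{(j-1)}(0)(h_j^{(j-1)}(0))^*/\|h_j^{(j-1)}(0)\|_2^2$, and hence $h_j^{(j)}(0)=(I-P_j(0))h_j^{(j-1)}(0)=0$. Each $P_l$ with $l\le j$ is $\rho_l$-Hölder, and $\rho_l$ is nonincreasing in $l$, so Lemma~\ref{lemma:holder_successive_h} applies with exponent $\rho_j$. It yields $\|h_j^{(j)}(\theta)-h_j^{(j)}(0)\|_2\le C\sqrt n|\theta|^{\rho_j}$, with the $\sqrt n$ coming from $\|g_j\|_2\le C\sqrt n$ (Assumption~\ref{assump:row_boundness_G}). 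Since each $\Pi_i$ is an orthogonal projector with norm at most $1$, we get $\|h_j^{(m)}(\theta)\|_2\le\|h_j^{(j)}(\theta)\|_2\le C\sqrt n|\theta|^{\rho_j}$.

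For the increment, write $Q(\theta)=\prod_{i=j+1}^m\Pi_i(\theta)$ and use
\[
h_j^{(m)}(\theta_1)-h_j^{(m)}(\theta_2)=Q(\theta_1)\big(h_j^{(j)}(\theta_1)-h_j^{(j)}(\theta_2)\big)+\big(Q(\theta_1)-Q(\theta_2)\big)h_j^{(j)}(\theta_2).
\]
The first term is bounded by $C\sqrt n|\theta_1-\theta_2|^{\rho_j}$ via Lemma~\ref{lemma:holder_successive_h}. For the second term, telescope the product: each factor has norm at most $1$ and $\|\Pi_i(\theta_1)-\Pi_i(\theta_2)\|_2=\|P_i(\theta_1)-P_i(\theta_2)\|_2\le C|\theta_1-\theta_2|^{\rho_i}$. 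Using $\rho_i\ge\rho_m$ for $i\le m$ and restricting to the bounded domain $\Pi$, this gives $\|Q(\theta_1)-Q(\theta_2)\|_2\le C|\theta_1-\theta_2|^{\rho_m}$. Combining this with the zero-bound $\|h_j^{(j)}(\theta_2)\|_2\le C\sqrt n|\theta_2|^{\rho_j}$ produces the term $C\sqrt n|\theta_2|^{\rho_j}|\theta_1-\theta_2|^{\rho_m}$. Symmetrizing, by swapping the roles of $\theta_1$ and $\theta_2$ or simply bounding $|\theta_2|^{\rho_j}$ by the sum, gives \eqref{eq:h_j_m_upperbnd}. If $j=m$, $Q=I$ and only the first term is present. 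All constants depend only on $q$, the Hölder constants of the $P_l$, and the constant in Assumption~\ref{assump:row_boundness_G}, so they are uniform in $n$.

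The main obstacle is the zero-bound step. It requires that $h_j^{(j)}$ be Hölder at $\theta=0$ itself, not merely on $\theta\neq0$, and therefore that the extension $P_l(0)$ supplied by Statement~1($l$) be compatible with the Hölder statement for $P_l$. Here I would invoke the continuous extension at $0$ established in Statement~1 (as in \eqref{eq:P_holder_at_0}), so that the Hölder bound of Lemma~\ref{lemma:holder_successive_h} holds on all of $\Pi$, including pairs with one point at $0$. A secondary point is the monotonicity $\rho_{l}\le\rho_{l-1}$, which is built into the definition \eqref{eq:defn_rho_q_m}. It is needed both to apply Lemma~\ref{lemma:holder_successive_h} with the smallest exponent and to collapse the telescoped exponents to $\rho_m$.
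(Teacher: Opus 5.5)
Your proof is correct and follows essentially the same route as the paper. You use the same factorization $h_j^{(m)}=\bigl(\prod_{i=j+1}^{m}\Pi_i\bigr)h_j^{(j)}$, the same zero-bound from $h_j^{(j)}(0)=0$ (Statement~1($j$)) combined with Lemma~\ref{lemma:holder_successive_h}, and the same splitting (yours pairs the projector increment with $h_j^{(j)}(\theta_2)$ where the paper uses $h_j^{(j)}(\theta_1)$), while your explicit telescoping with $\rho_i\ge\rho_m$ on the bounded domain $\Pi$ simply spells out the paper's remark that a product of bounded H\"older functions is H\"older with the minimum exponent.
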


\begin{proof}
    For $j \leq m$, $h_j^{(m)}(\theta) = \prod_{i = j+1}^{m}\Pi_i(\theta) h_j^{(j)}(\theta)$, then
\begin{align}\label{eq:h_j_m_decomp}
    \left\|h_j^{(m)}(\theta_1) - h_j^{(m)}(\theta_2)\right\|_2 &\leq \left\|\prod_{i = j+1}^{m}\Pi_i(\theta_1) h_j^{(j)}(\theta_1) - \prod_{i = j+1}^{m}\Pi_i(\theta_2) h_j^{(j)}(\theta_2)\right\|_2\\\nonumber
    &\leq \left\|\prod_{i = j+1}^{m}\Pi_i(\theta_1) - \prod_{i = j+1}^{m}\Pi_i(\theta_2)\right\|_2 \left\|h_j^{(j)}(\theta_1)\right\|_2 + \left\|h_j^{(j)}(\theta_1) - h_j^{(j)}(\theta_2)\right\|_2\nonumber
\end{align}

Notice that the first term, using the same logic as decomposing and triangular inequality used in Lemma \ref{lemma:holder_successive_h}, we have that the product of bounded H\"older functions is H\"older with the minimum exponent, so $\left\|\prod_{i = j+1}^{m}\Pi_i(\theta_1) - \prod_{i = j+1}^{m}\Pi_i(\theta_2)\right\|_2 \leq C|\theta_1 - \theta_2|^{\rho_m}$. Meanwhile, since Statement \ref{stmt:Pk_limit}(j) holds, \begin{align*}
    h_j^{(j)}(0) = \left(I - P_j(0)\right)h_j^{(j-1)}(0)= \left(I - \frac{h_j^{(j-1)}(0)\left(h_j^{(j-1)}(0)\right)^*}{\left\|h_j^{(j-1)}(0)\right\|_2^2}\right)h_j^{(j-1)}(0) = 0.
\end{align*}
Furthermore, since $P_j$ is $\rho_j$-H\"older continuous, by Lemma \ref{lemma:holder_successive_h}, $h_j^{(j)}(\theta)$ is $\rho_j$-H\"older continuous and
\begin{align*}
    &\left\|h_j^{(j)}(\theta)\right\|_2 = \left\|h_j^{(j)}(\theta) - h_j^{(j)}(0)\right\|_2 \leq C\sqrt{n}|\theta|^{\rho_j}, \\
    &\left\|h_j^{(m)}(\theta)\right\|_2 \leq \left\|\prod_{i = j+1}^{m}\Pi_i(\theta_1)\right\|_2 \left\|h_j^{(j)}(\theta_1)\right\|_2\leq C\sqrt{n}|\theta|^{\rho_j}.
\end{align*}
Plugging back into \eqref{eq:h_j_m_decomp}, and by symmetry yields the final results.
\end{proof}

\subsection{Successively Projected Vectors $h_{j,k}^{(r)}(\theta)$}

In the following two lemmas, Assumption \ref{assump:model}, \ref{assump:semiparametric_long_memory}, \ref{assump:pervasive_short_mem}, \ref{assump: spec_den_specific_form} case(a) and \ref{assump:bdd_error_eval} hold, let $G(\theta) \coloneqq (g_1(\theta), \dots, g_q(\theta))$, $P_j(\theta) \coloneqq \mathbf{p}_j^*(\theta)\mathbf{p}_j(\theta)$, $\Pi_{i}(\theta) \coloneqq I -  P_i(\theta)$, $h_{j,m}^{(r)}(\theta) \coloneqq \left(\prod_{i=1}^{m}\Pi_{i}(\theta)\right)g_j(\theta)$, $h_j^{(0)}(\theta) \coloneqq g_j(\theta)$, $H_{j,m}^{(r)}(\theta) \coloneqq h_{j,m}^{(r)}(\theta)\left(h_{j,m}^{(r)}(\theta)\right)^*$. 

\begin{lemma}\label{lemma:holder_successive_h}
    Suppose $P_m$ is $\rho_m$-H\"older continuous function, 
    then $h_{j,m}^{(r)}(\theta)$ and $H_{j,m}^{(r)}(\theta)$ are $\rho_m$-H\"older continuous.
\end{lemma}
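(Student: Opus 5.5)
The plan is to argue by induction on $m$, with the bound made uniform in the superscript $r$. In the notation fixed just before the statement, $h_{j,m}^{(r)}(\theta)$ is defined as $\left(\prod_{i=1}^{m}\Pi_i(\theta)\right)g_j(\theta)$ for every $r$, and $H_{j,m}^{(r)}=h_{j,m}^{(r)}\bigl(h_{j,m}^{(r)}\bigr)^*$. So one estimate covers all admissible $r$ at once. The target is the quantitative form
\[
\bigl\|h_{j,m}^{(r)}(\theta_1)-h_{j,m}^{(r)}(\theta_2)\bigr\|_2\le C\sqrt n\,|\theta_1-\theta_2|^{\rho_m},
\qquad
\bigl\|H_{j,m}^{(r)}(\theta_1)-H_{j,m}^{(r)}(\theta_2)\bigr\|_2\le C n\,|\theta_1-\theta_2|^{\rho_m},
\]
with $C$ independent of $n$ and $T$. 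I will also carry along the uniform size bound $\|h_{j,m}^{(r)}(\theta)\|_2\le\|g_j(\theta)\|_2\le C\sqrt n$. It follows from $\|\Pi_i(\theta)\|_2\le 1$, since each $\Pi_i$ is an orthogonal projection, together with the row bound of Assumption~\ref{assump:row_boundness_G}.

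For the base case $m=0$, $h_{j,0}^{(r)}=g_j$ is $\sqrt n L_G$-Lipschitz by \eqref{eq:lipschitz_short_memory}. On the bounded set $\Pi$, Lipschitz implies $\rho$-H\"older for any $\rho\le1$, so this case is immediate. For the inductive step I use $h_{j,m}^{(r)}=\Pi_m h_{j,m-1}^{(r)}=(I-P_m)h_{j,m-1}^{(r)}$ and the telescoping split
\[
h_{j,m}^{(r)}(\theta_1)-h_{j,m}^{(r)}(\theta_2)=\Pi_m(\theta_2)\bigl(h_{j,m-1}^{(r)}(\theta_1)-h_{j,m-1}^{(r)}(\theta_2)\bigr)-\bigl(P_m(\theta_1)-P_m(\theta_2)\bigr)h_{j,m-1}^{(r)}(\theta_1).
\]
The first term is bounded by $C\sqrt n|\theta_1-\theta_2|^{\rho_{m-1}}$ by the induction hypothesis and $\|\Pi_m\|_2\le1$. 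The second term is bounded by $C|\theta_1-\theta_2|^{\rho_m}\cdot C\sqrt n$, using the hypothesis on $P_m$ and the size bound. Because the exponents are nonincreasing, $\rho_m\le\rho_{m-1}$ (as in \eqref{eq:defn_rho_q_m}, and the paper notes $\rho_q^{(q)}\le\cdots\le\rho_1^{(q)}$), and $|\theta_1-\theta_2|\le 2\pi$, the two bounds combine to $C\sqrt n|\theta_1-\theta_2|^{\rho_m}$. For $H$, I write $H(\theta_1)-H(\theta_2)=(h_1-h_2)h_1^*+h_2(h_1-h_2)^*$, abbreviating $h_{j,m}^{(r)}(\theta_k)$ as $h_k$. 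Each factor is then bounded by either the H\"older estimate or the $C\sqrt n$ size bound, which gives $Cn|\theta_1-\theta_2|^{\rho_m}$.

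The main obstacle is the induction hypothesis itself. The statement names only $P_m$, but the step at level $m$ also needs $P_1,\dots,P_{m-1}$ to be H\"older with exponents $\rho_i\ge\rho_m$. I will resolve this as the lemma is used inside the proof of Lemma~\ref{lemma:K_holder_factor_wise}: there, Statement~2$(i)$ has already been established for every $i\le m$, so all the preceding $P_i$ are $\rho_i$-H\"older, and monotonicity of $\rho_i$ reduces everything to exponent $\rho_m$. I will also check that the constants do not accumulate with $n$ across the $m\le q$ steps. Since $q$ is fixed, the constant grows by at most a factor depending on $q$, and the only $n$-dependence is the single factor $\sqrt n$ (respectively $n$ for $H$) inherited from $\|g_j\|_2$.
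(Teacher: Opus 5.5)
Your argument is correct and is essentially the paper's proof. It uses the same induction through $h_{j,m}=(I-P_m)h_{j,m-1}$, the same split of the increment into a previous-level term plus $\|P_m(\theta_1)-P_m(\theta_2)\|_2\,\|h_{j,m-1}\|_2$, the monotonicity $\rho_m\le\rho_{m-1}$ to unify exponents, and the $C\sqrt n$ size bound to pass to $H_{j,m}$. Starting at $m=0$ rather than $m=1$ changes nothing.

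Your remark that the step also needs $P_1,\dots,P_{m-1}$ to be H\"older makes explicit a hypothesis the paper uses only tacitly; it is supplied by Statement 2$(i)$ for $i<m$ wherever the lemma is invoked. Note also that, despite the superscript $(r)$ in the preceding definition, both your argument and the paper's only cover the undifferentiated vectors $h_{j,m}^{(0)}$.
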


\begin{proof}
    Here we use induction to prove that $h_j^{(m)}(\theta)$ is H\"older continuous. 

For $m = 1$, given any $\theta_1, \theta_2$, \begin{align}\label{eq:holder_rate_h_1_1}
    &\left\|h_{j,1}^{(0)}(\theta_1) - h_{j,1}^{(0)}(\theta_2)\right\|_2 = \left\| (I - P_1(\theta_1))g_j(\theta_1) - (I - P_1(\theta_2))g_j(\theta_2)  \right\|\\\nonumber
    &\leq \left\|g_j(\theta_1) - g_j(\theta_2)\right\|_2 + \left\|P_1(\theta_1) - P_1(\theta_2)\right\|_2\left\| g_j(\theta_1) \right\|_2 + \left\| g_j(\theta_1) - g_j(\theta_2) \right\|_2 \left\| P_1(\theta_2) \right\|_2\\\nonumber
    &\leq C_1\sqrt{n}|\theta_1 - \theta_2| + C_2\sqrt{n}|\theta_1 - \theta_2|^{\rho_1} \leq C\sqrt{n}|\theta_1 - \theta_2|^{\rho_1}.\nonumber
\end{align}


Thus, it only remains to prove that if $h_{j,m-1}^{(0)}(\theta)$ is $\rho_{m-1}$-H\"older continuous, $h_{j}^{(m)}(\theta)$ is $\rho_{m}$-H\"older continuous.

Since $h_{j,m}^{(0)}(\theta) = \Pi_{m}(\theta)h_{j,m-1}^{(0)}(\theta) = \left(I - P_{m}(\theta)\right)h_{j,m-1}^{(0)}(\theta)$, we have
\begin{align}\label{eq:holder_rate_h_j_k}
    &\left\|h_{j,m}^{(0)}(\theta_1) - h_{j,m}^{(0)}(\theta_2)\right\|_2 \\\nonumber
    &\leq \left\|h_{j,m-1}^{(0)}(\theta_1) - h_{j,m-1}^{(0)}(\theta_2)\right\|_2 + \left\|P_{m}(\theta_1) - P_{m}(\theta_2)\right\|_2\left\| h_{j,m-1}^{(0)}(\theta_1) \right\|_2 \\\nonumber
    &+ \left\| h_{j,m-1}^{(0)}(\theta_1) - h_{j,m-1}^{(0)}(\theta_2) \right\|_2 \left\| P_{m}(\theta_2) \right\|_2\\\nonumber
    &\leq C\left\|g_j(\theta_1)\right\|_2|\theta_1 - \theta_2|^{\rho_m} \leq C\sqrt{n}|\theta_1 - \theta_2|^{\rho_m}
\end{align}
$h_{j,m}^{(0)}(\theta)$ is $\rho_{m}$-H\"older continuous because $\rho_m \leq \rho_{m-1}$.

Since $\left\| h_{j,m}^{(0)}(\theta) \right\|_2 \leq \left\|g_j(\theta)\right\|_2 \leq C\sqrt{n}$, $\left\|H_{j,m}^{(0)}(\theta_1) - H_{j,m}^{(0)}(\theta_2)\right\|_2 \leq Cn|\theta_1 - \theta_2|^{\rho_m}$.


\end{proof}

\begin{lemma} \label{lemma:upperbnd_h_j_m}
Suppose $P_l$ is $\rho_l$-H\"older continuous function, and Statement \ref{stmt:Pk_limit}(l) holds for all $l = 1, \dots, m$. For $j \leq m$, 
    \begin{align}\label{eq:h_j_m_upperbnd}
        \left\|h_{j,m}^{(0)}(\theta_1)-h_{j,m}^{(0)}(\theta_2)\right\|_2
\leq
C\sqrt{n}\left[
|\theta_1-\theta_2|^{\rho_j}
+
\left(|\theta_1|^{\rho_j}+|\theta_2|^{\rho_j}\right)
|\theta_1-\theta_2|^{\rho_m}
\right],
    \end{align}
    and
    \begin{align}\label{eq:h_j_m_upperbnd_at_zero}
        \left\|h_{j,m}^{(0)}(\theta)\right\|_2 \leq C\sqrt{n}|\theta|^{\rho_j}
    \end{align}
\end{lemma}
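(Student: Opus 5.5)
The statement is the second version of Lemma~\ref{lemma:upperbnd_h_j_m}, written in the $h_{j,m}^{(0)}$ notation. I would prove it exactly as the first version, using the factorization that holds for $j\le m$:
\[
h_{j,m}^{(0)}(\theta)=\Big(\prod_{i=j+1}^{m}\Pi_i(\theta)\Big)h_{j,j}^{(0)}(\theta).
\]
Here the $j$th column has already been projected by $\Pi_1,\dots,\Pi_j$, and only the later projections remain. The argument then has two ingredients: the value of the inner vector at zero, and Hölder control of the outer product of projections.

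First, I show that $h_{j,j}^{(0)}$ vanishes at zero and is small nearby. By Statement~\ref{stmt:Pk_limit}($j$), $P_j(0)$ is the rank-one projection onto $h_{j,j-1}^{(0)}(0)$. Hence
\[
h_{j,j}^{(0)}(0)=(I-P_j(0))\,h_{j,j-1}^{(0)}(0)=0 ,
\]
as in \eqref{eq:h_j_m_0}. Next, $P_j$ is $\rho_j$-Hölder, so Lemma~\ref{lemma:holder_successive_h} makes $h_{j,j}^{(0)}$ $\rho_j$-Hölder with constant $C\sqrt n$. This uses $\|g_j\|_2\le C\sqrt n$ and the Lipschitz bound $\|g_j(\theta_1)-g_j(\theta_2)\|\le \sqrt n L_G|\theta_1-\theta_2|$ from Assumption~\ref{assump:pervasive_short_mem}. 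Combining the two facts gives
\[
\|h_{j,j}^{(0)}(\theta)\|_2=\|h_{j,j}^{(0)}(\theta)-h_{j,j}^{(0)}(0)\|_2\le C\sqrt n|\theta|^{\rho_j}.
\]
Each $\Pi_i$ is an orthogonal projection, so its norm is at most $1$. Applying this to the factorization yields \eqref{eq:h_j_m_upperbnd_at_zero}.

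Second, I prove the increment bound with the standard split:
\begin{align*}
\|h_{j,m}^{(0)}(\theta_1)-h_{j,m}^{(0)}(\theta_2)\|_2
&\le \Big\|\prod_{i=j+1}^m\Pi_i(\theta_1)-\prod_{i=j+1}^m\Pi_i(\theta_2)\Big\|_2\,\|h_{j,j}^{(0)}(\theta_1)\|_2\\
&\quad+\|h_{j,j}^{(0)}(\theta_1)-h_{j,j}^{(0)}(\theta_2)\|_2 .
\end{align*}
The second term is at most $C\sqrt n|\theta_1-\theta_2|^{\rho_j}$ by Lemma~\ref{lemma:holder_successive_h}.

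For the first term, I telescope the difference of products one factor at a time. Each difference $\Pi_i(\theta_1)-\Pi_i(\theta_2)=P_i(\theta_2)-P_i(\theta_1)$ is bounded by $C|\theta_1-\theta_2|^{\rho_i}$, and all remaining factors have norm at most $1$. Since $\rho_i$ is nonincreasing in $i$ and $|\theta_1-\theta_2|\le 2\pi$, every such term is at most $C|\theta_1-\theta_2|^{\rho_m}$. Multiplying by $\|h_{j,j}^{(0)}(\theta_1)\|_2\le C\sqrt n|\theta_1|^{\rho_j}$ gives the product term.

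Third, I symmetrize. Repeating the split with the roles of $\theta_1$ and $\theta_2$ interchanged produces the same bound with $|\theta_2|^{\rho_j}$ in place of $|\theta_1|^{\rho_j}$. Bounding by the sum of the two versions gives the factor $(|\theta_1|^{\rho_j}+|\theta_2|^{\rho_j})$ stated in \eqref{eq:h_j_m_upperbnd}.

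The main obstacle is the monotonicity $\rho_i\ge\rho_m$ for $i\le m$. This is what lets the telescoped differences collapse to the single exponent $\rho_m$. It holds for the recursively defined $\rho^{(q)}_m$ in \eqref{eq:defn_rho_q_m}, since the minimum there includes $\rho^{(q)}_{m-1}$. For general Hölder exponents I would simply replace $\rho_m$ by $\min_{j<i\le m}\rho_i$.

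A second point needs care. When $P_j(0)$ is defined only as a limit, the Hölder bound for $P_j$ must also hold at $\theta=0$, and this is what justifies evaluating $h_{j,j}^{(0)}(0)$ above. The first version of the proof handles this through the continuous extension of $U_j$ at zero, and I would invoke that same extension here.
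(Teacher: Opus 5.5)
Your proposal is correct and follows the paper's proof essentially step for step. The shared steps are the factorization $h_{j,m}^{(0)}=\bigl(\prod_{i=j+1}^{m}\Pi_i\bigr)h_{j,j}^{(0)}$, the vanishing $h_{j,j}^{(0)}(0)=0$ from Statement 1($j$), the $\rho_j$-H\"older bound on $h_{j,j}^{(0)}$ from the preceding lemma, the same two-term split, and symmetrization. Your explicit remark that collapsing the telescoped differences to the single exponent $\rho_m$ requires $\rho_i\ge\rho_m$ for $j<i\le m$ (true for the recursive $\rho^{(q)}_i$) makes precise what the paper's phrase ``product of bounded H\"older functions is H\"older with the minimum exponent'' leaves implicit.
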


\begin{proof}
    For $j \leq m$, $h_{j,m}^{(0)}(\theta) = \prod_{i = j+1}^{m}\Pi_i(\theta) h_{j,j}^{(0)}(\theta)$, then
\begin{align}\label{eq:h_j_m_decomp}
    \left\|h_{j,m}^{(0)}(\theta_1) - h_{j,m}^{(0)}(\theta_2)\right\|_2 &\leq \left\|\prod_{i = j+1}^{m}\Pi_i(\theta_1) h_{j,j}^{(0)}(\theta_1) - \prod_{i = j+1}^{m}\Pi_i(\theta_2) h_{j,j}^{(0)}(\theta_2)\right\|_2\\\nonumber
    &\leq \left\|\prod_{i = j+1}^{m}\Pi_i(\theta_1) - \prod_{i = j+1}^{m}\Pi_i(\theta_2)\right\|_2 \left\|h_{jj}^{(0)}(\theta_1)\right\|_2 + \left\|h_{jj}^{(0)}(\theta_1) - h_{jj}^{(0)}(\theta_2)\right\|_2\nonumber
\end{align}

Notice that the first term, using the same logic as decomposing and triangular inequality used in Lemma \ref{lemma:holder_successive_h}, we have that the product of bounded H\"older functions is H\"older with the minimum exponent, so $\left\|\prod_{i = j+1}^{m}\Pi_i(\theta_1) - \prod_{i = j+1}^{m}\Pi_i(\theta_2)\right\|_2 \leq C|\theta_1 - \theta_2|^{\rho_m}$. Meanwhile, since Statement \ref{stmt:Pk_limit}(j) holds, \begin{align*}
    h_{j,j}^{(0)}(0) = \left(I - P_j(0)\right)h_{j,j-1}^{(0)}(0)= \left(I - \frac{h_{j,j-1}^{(0)}(0)\left(h_{j,j-1}^{(0)}(0)\right)^*}{\left\|h_{j,j-1}^{(0)}(0)\right\|_2^2}\right)h_{j,j-1}^{(0)}(0) = 0.
\end{align*}
Furthermore, since $P_j$ is $\rho_j$-H\"older continuous, by Lemma \ref{lemma:holder_successive_h}, $h_{j,j}^{(0)}(\theta)$ is $\rho_j$-H\"older continuous and
\begin{align*}
    &\left\|h_{j,j}^{(0)}(\theta)\right\|_2 = \left\|h_{j,j}^{(0)}(\theta) - h_{j,j}^{(0)}(0)\right\|_2 \leq C\sqrt{n}|\theta|^{\rho_j}, \\
    &\left\|h_{j,m}^{(0)}(\theta)\right\|_2 \leq \left\|\prod_{i = j+1}^{m}\Pi_i(\theta_1)\right\|_2 \left\|h_{j,j}^{(0)}(\theta_1)\right\|_2\leq C\sqrt{n}|\theta|^{\rho_j}.
\end{align*}
Plugging back into \eqref{eq:h_j_m_decomp}, and by symmetry yields the final results.
\end{proof}

\subsection{Increment bounds for $\Sigma(\theta)$}
\begin{lemma}\label{lm:increment_bound_for_Sigma}
    Under Assumption \ref{assump:model}, \ref{assump:semiparametric_long_memory}, \ref{assump:pervasive_short_mem}, \ref{assump: spec_den_specific_form}, and \ref{assump:bdd_error_eval}, 
    \begin{enumerate}
        \item If $d_j$ differs across factors, 
        \begin{align}\label{eq:increment_bound_for_Sigma_factors}
            \left\| \Sigma(u_1) - \Sigma(u_2)  \right\| &\leq n\left( C_1|u_1|^{-2d} + C_2|u_2|^{-2d}\right) \left|u_1 - u_2\right| + C_3 n \left| |u_1|^{-2d} - |u_2|^{-2d} \right|
        \end{align}
        \item If $d_j$ differs across rows, \begin{align}\label{eq:increment_bound_for_Sigma_rows}
            \left\| \Sigma(u_1) - \Sigma(u_2)  \right\| &\leq n\left( C_1|u_1|^{-2d} + C_2|u_2|^{-2d}\right) \left|u_1 - u_2\right| + C_3 n \left| |u_1|^{-2d} - |u_2|^{-2d} \right|\\\nonumber
            &+ 2n \left(|u_2|^{-2d} + |u_1|^{-2d}\right)\left|sin \frac{\pi d}{2}\right|\mathbb{1}\left\{u_1u_2<0\right\} \nonumber
        \end{align}
    \end{enumerate}
    Furthermore, if $|u_1|, |u_2| \geq |\pi/h|$, $h$ is a nonzero integer, we have 
    \begin{align}\label{eq:increment_bound_for_Sigma_away_from_zero}
        \left\| \Sigma(u_1) - \Sigma(u_2)  \right\| &\leq Cn\left|u_1 - u_2\right|\left(   |u_1|^{-2d} + |u_2|^{-2d} + |u_1|^{-2d-1} + |u_2|^{-2d-1}\right)\\\nonumber
        &+ 2n \left(|u_2|^{-2d} + |u_1|^{-2d}\right)\nonumber
    \end{align}
\end{lemma}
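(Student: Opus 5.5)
The plan is to work directly from the two structural forms of Assumption~\ref{assump: spec_den_specific_form} and split $\Sigma = \Sigma_\chi + \Sigma_\xi$. The idiosyncratic part is handled once for both cases. By Assumption~\ref{assump:bdd_error_eval}, $\Sigma_\xi$ is Lipschitz with constant independent of $n$, so
\[
\|\Sigma_\xi(u_1)-\Sigma_\xi(u_2)\| \le C|u_1-u_2| \le Cn|u_1|^{-2d}|u_1-u_2|,
\]
using $|u_1|^{-2d}\ge \pi^{-2d}$ on $\Pi$. Everything therefore reduces to bounding $\Sigma_\chi$.

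\emph{Case (a), factors.} Write $\Sigma_\chi(u)=G(u)\bar D^2(u)G^*(u)$ and telescope:
\begin{align*}
\Sigma_\chi(u_1)-\Sigma_\chi(u_2) &= \big(G(u_1)-G(u_2)\big)\bar D^2(u_1)G^*(u_1) + G(u_2)\big(\bar D^2(u_1)-\bar D^2(u_2)\big)G^*(u_1)\\
&\quad + G(u_2)\bar D^2(u_2)\big(G(u_1)-G(u_2)\big)^*.
\end{align*}
Assumption~\ref{assump:pervasive_short_mem} gives $\|G\|\le C\sqrt n$ and $\|G(u_1)-G(u_2)\|\le \sqrt n L_G|u_1-u_2|$. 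The first and third terms are then bounded by $Cn(|u_1|^{-2d}+|u_2|^{-2d})|u_1-u_2|$, after replacing each $|1-e^{-\iota u}|^{-2d_l}$ by $C|u|^{-2d}$. The middle term is at most $Cn\max_l\big||1-e^{-\iota u_1}|^{-2d_l}-|1-e^{-\iota u_2}|^{-2d_l}\big|$. Since $|1-e^{-\iota u}|=2|\sin(u/2)|$ is even, this difference is controlled by $\big||u_1|^{-2d_l}-|u_2|^{-2d_l}\big|$ plus a smooth remainder of size $C|u_1-u_2|$. The one sloppy point is that the exponents $d_l$ vary; the plan is to use the paper's convention of bounding each entry by the worst exponent, which gives \eqref{eq:increment_bound_for_Sigma_factors}.

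\emph{Case (b), rows.} Here $\Sigma_\chi = D_n G G^* D_n^*$, and $D_n$ carries the complex phase $(1-e^{-\iota u})^{-d_i}=|1-e^{-\iota u}|^{-d_i}e^{\iota d_i\phi(u)}$ with $\phi(u)=\arg(1-e^{-\iota u})^{-1}$. This phase tends to $\pm\pi/2\cdot d_i$ as $u\to0^\pm$, so it jumps across zero, and this jump is what produces the new indicator term. The same three-term telescoping applies, now with $D_n(u_1)-D_n(u_2)$ in the middle. For $u_1u_2>0$ the phase is Lipschitz away from $0$ and we recover the case (a) bound. For $u_1u_2<0$ I would use the crude bound $\|D_n(u_1)-D_n(u_2)\|\le \||D_n(u_1)|-|D_n(u_2)|\|+\max_i|D_n(u_i)|\,|e^{\iota d_i\phi(u_1)}-e^{\iota d_i\phi(u_2)}|$. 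The phase difference is at most $2|\sin(\pi d/2)|$, since the two phases are near $\pm\pi d_i/2$ and $\phi$ is odd. Multiplying by $\|G\|^2\le Cn$ yields the indicator term in \eqref{eq:increment_bound_for_Sigma_rows}.

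\emph{The bound away from zero.} For \eqref{eq:increment_bound_for_Sigma_away_from_zero}, with $|u_1|,|u_2|\ge\pi/|h|$, apply the mean value theorem to $|u|^{-2d}$ on the segment when $u_1,u_2$ have the same sign. The derivative is bounded by $2d\,\xi^{-2d-1}$, where $\xi\ge\min(|u_1|,|u_2|)$, which gives the $|u_i|^{-2d-1}|u_1-u_2|$ terms. When the signs differ, just bound $\|\Sigma(u_1)\|+\|\Sigma(u_2)\|\le 2n(|u_1|^{-2d}+|u_2|^{-2d})$, which is the additive term. I expect the main obstacle to be bookkeeping in case (b): making the phase discontinuity rigorous uniformly in $i$ and being careful that $|u|^{-2d_i}\le C|u|^{-2d}$ only holds near zero. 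I would restrict to $|u|\le\pi/3$ first and absorb the remaining frequencies into constants by continuity.
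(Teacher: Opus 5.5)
Your outline follows the paper's proof. You peel off $\Sigma_\xi$ by its Lipschitz bound, telescope $\Sigma_\chi$ into three terms, trace the extra indicator term in case (b) to the sign jump in the phase of $(1-e^{-\iota u})^{-d_i}$, and get the bound away from zero from the mean value theorem. For that last step you split by sign and use the triangle inequality when $u_1u_2<0$. This is a harmless variant of the paper, which applies the mean value theorem to $\bigl||u_1|^{-2d}-|u_2|^{-2d}\bigr|$ regardless of sign (via $\bigl||u_1|-|u_2|\bigr|\le|u_1-u_2|$) and bounds the phase-factor difference by $2$. Neither argument actually uses $|u_i|\ge\pi/|h|$.

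What is missing is the step you yourself flag as ``sloppy.'' It is not a convention; it is the one nontrivial point in the paper's proof of case (a). The middle term requires
\[
\max_l \bigl||u_1|^{-2d_l}-|u_2|^{-2d_l}\bigr| \le C\bigl||u_1|^{-2d}-|u_2|^{-2d}\bigr|,
\]
and this does not follow from $|u|^{-2d_l}\le C|u|^{-2d}$, because a bound on each of two quantities says nothing about their difference. The same inequality, with exponents $d_i$, is needed in case (b). There it is what makes $(\|D_n(u_1)\|+\|D_n(u_2)\|)\max_i\bigl||u_1|^{-d_i}-|u_2|^{-d_i}\bigr|$ collapse to $C\bigl||u_1|^{-2d}-|u_2|^{-2d}\bigr|$; your sentence ``for $u_1u_2>0$ \dots we recover the case (a) bound'' silently relies on it. The paper proves it by showing that $x\mapsto b^{-x}-a^{-x}$ is increasing for $0<b<a\le 1$, plus a ratio argument when one frequency exceeds $1$.

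A one-line version valid on all of $\Pi$: for $|u_1|\le|u_2|$ set $t=|u_1|/|u_2|\in(0,1]$. Since $t^{2d_l}\ge t^{2d}$ and $|u_1|\le\pi$,
\[
|u_1|^{-2d_l}-|u_2|^{-2d_l}=|u_1|^{-2d_l}\bigl(1-t^{2d_l}\bigr)\le \pi^{2(d-d_l)}|u_1|^{-2d}\bigl(1-t^{2d}\bigr)=\pi^{2(d-d_l)}\bigl(|u_1|^{-2d}-|u_2|^{-2d}\bigr).
\]
This also shows that $|u|^{-2d_l}\le \pi^{2(d-d_l)}|u|^{-2d}$ holds on all of $\Pi$, not just near zero. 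So your restriction to $|u|\le\pi/3$ is unnecessary. In any case, ``absorb by continuity'' alone would not cover a pair with one frequency near $0$ and the other far away; that needs the extra remark that $|u_1-u_2|$ is then bounded below.

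Two minor points. The remainder coming from $|1-e^{-\iota u}|=|u|\,\frac{\sin(u/2)}{u/2}$ is of size $C|u_2|^{-2d}|u_1-u_2|$ rather than $C|u_1-u_2|$, though it is still absorbed into the first term. The phase is exactly $d_i\bigl(u/2-\operatorname{sgn}(u)\pi/2\bigr)$, linear on each side of $0$, so it is Lipschitz up to $0$ and not merely ``away from $0$.'' That is what you need, since $u_1,u_2$ may both be near $0$.
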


\begin{proof}
    \begin{enumerate}
        \item If $d_j$ differs across factors,
        let $\widetilde{D}(u) = diag\left( |u|^{ - d_l} \right), l = 1,...,q$, then
\begin{align*}
    \Sigma^{\chi}(u_2) - \Sigma^{\chi}\left(u_1  \right) = G(u_2)\widetilde{D}(u_2)G^{\ast}(u_2) - G(\theta_1)\widetilde{D}(u_1)G^{\ast}(u_1).
\end{align*}
By Assumption \ref{assump:pervasive_short_mem}, we have
\begin{align}\label{eq:d_differ_shocks_decomp}
    \left\| \Sigma^{\chi}(u_2) - \Sigma^{\chi}\left( u_1  \right) \right\|_{op} &\leq \left\| G(u_2) - G(u_1)  \right\| \left\|  \widetilde{D}^2(u_2)\right\| \left\|G^{\ast}(u_2)\right\|\\\nonumber
    &+ \left\|G(u_1)\right\| \left\|G^{\ast}(u_2)\right\| \left\|  \widetilde{D}^2(u_2) - \widetilde{D}^2(u_1)\right\|\\\nonumber
    &+ \left\| G^{\ast}(u_2) - G^{\ast}(u_1)  \right\| \left\|  \widetilde{D}^2(u_1)\right\| \left\|G(u_1)\right\|\nonumber
\end{align}
By Assumption \ref{assump:pervasive_short_mem}, the first and the third terms are controlled with $n|u_2 - u_1||u_2|^{-2d}$, $n|u_2 - u_1||u_1|^{-2d}$. As for the second term, notice that 
\begin{align*}
    \left\|  \widetilde{D}^2(u_2) - \widetilde{D}^2(u_1)\right\| = \max_{j =1,...,q} \left| |u_1|^{-2d_j} - |u_2|^{-2d_j} \right|
    \end{align*}
    Define the function $f_{a, b}(x) = |a^{-x} - b^{-x}|$, $x>0$, $a,b \in (0,1)$.
        Without loss of generality, assume $a > b \in (0,1)$, we have $a^{-x}<b^{-x}$, hence $f_{a, b}(x) = b^{-x} - a^{-x}$, and that
\[
f'_{a, b}(x)= -\ln(b)\,b^{-x} + \ln(a)\,a^{-x}.
\]
Since $0<b<a\leq1$, we have $-\ln(b)>-\ln(a)\geq 0$ and $b^{-x}>a^{-x}$, so $f'_{a, b}(x)>0$ for all $x\ge 0$.
Therefore, $f_{a, b}(x)$ is increasing on $[0,\infty)$, and for any $d_j\le d$,
\[
\bigl|a^{-2d_j}-b^{-2d_j}\bigr|
= f_{a, b}(2d_j)\le f_{a, b}(2d)
= \bigl|a^{-2d}-b^{-2d}\bigr|.
\] When $a<b$ the argument is symmetric; if $a=b$ both sides are $0$.
Thus, when $|u_1|, |u_2| \leq 1$ we have \[
\left\|  \widetilde{D}^2(u_2) - \widetilde{D}^2(u_1)\right\| \leq \left| |u_1|^{-2d} - |u_2|^{-2d} \right|.
\]
When $|u_1|\leq 1 < |u_2| \leq \pi$, $\max_{j = 1,\dots, q}\left| \left| u_1/u_2 \right|^{-2d_j} - 1\right| \leq \left| \left| u_1/u_2 \right|^{-2d} - 1\right|$ and that
\begin{align*}
&\left\|  \widetilde{D}^2(u_2) - \widetilde{D}^2(u_1)\right\| \leq \max_{j = 1,\dots, q}\left| \left| u_1/u_2 \right|^{-2d_j} - 1\right| \max_{j = 1,\dots, q} |\theta|^{-2d_j}\\
&\leq \left| \left| u_1/u_2 \right|^{-2d} - 1\right| |u_2|^{-2\tilde{d}} = \left| |u_1|^{-2d} - |u_2|^{-2d} \right| |u_2|^{2\Delta} \leq \left| |u_1|^{-2d} - |u_2|^{-2d} \right|.
\end{align*}
When $|u_2|\leq 1 < |u_1| \leq \pi$, the argument is symmetric. When $|u_1|, |u_2| \leq (1,\pi]$, $f_{a,b}(\cdot)$ is continuous. Then $\left\|  \widetilde{D}^2(u_2) - \widetilde{D}^2(u_1)\right\| \leq \max_{j = 1,\dots, q}\left| |u_1|^{-2d_j} - |u_2|^{-2d_j} \right| \leq C\left| |u_1| - |u_2| \right| \leq C \left| u_1 - u_2 \right| \leq C \left| u_1 - u_2 \right|\left(|u_1|^{-2d} \wedge |u_2|^{-2d}\right)$.
Plugging back into (\ref{eq:d_differ_shocks_decomp}), and by Assumption \ref{assump:bdd_error_eval}, $\left\| \Sigma_{\xi}(u_1) - \Sigma_{\xi}(u_2)  \right\| \leq C|u_1 - u_2|$ we have for $u_1, u_2 \in \Pi$, 
\begin{align*}
     \left\| \Sigma(u_1) - \Sigma(u_2)  \right\| \leq n\left( C_1|u_1|^{-2d} + C_2|u_2|^{-2d}\right) \left|u_1 - u_2\right| + C_3 n \left| |u_1|^{-2d} - |u_2|^{-2d} \right|
\end{align*}
\item If $d_j$ differs across rows. Using notations from Assumption \ref{assump: spec_den_specific_form}, we have 
\begin{align*}
    \Sigma_{\chi}(u_2) - \Sigma_{\chi}\left( u_1  \right) &= D_n(u_2)G(\theta_2) G^{\ast}(u_2)D_n^{\ast}(u_2) - D_n(u_1)G(u_1) G^{\ast}(u_1)D_n^{\ast}(u_1)\\
    &= n D_n(u_2) S(u_2) D_n^{\ast}(u_2) - n D_n(u_1) S(u_1) D_n^{\ast}(u_1).
\end{align*}
Then
\begin{align}\label{eq:d_differ_rows_decomp}
    \left\| \Sigma_{\chi}(u_2) - \Sigma_{\chi}\left( u_1  \right) \right\|_{op} &\leq n\left\| D_n(u_2) - D_n(u_1)  \right\| \left\|  S(u_2)\right\| \left\|D_n^{\ast}(u_2)\right\|\\\nonumber
    &+ n\left\|D_n(u_1)\right\| \left\|D_n^{\ast}(u_2)\right\| \left\|  S(u_2) - S(u_1)\right\|\\\nonumber
    &+ n\left\| D_n^{\ast}(u_2) - D_n^{\ast}(u_1)  \right\| \left\|  D_n(u_1)\right\| \left\|S(u_1)\right\|\nonumber
\end{align}
By Assumption \ref{assump:pervasive_short_mem}, 
the second term is controlled with $n|u_2 - u_1||u_1|^{- d} |u_2|^{- d} \leq n|u_2 - u_1|\left(|u_1|^{- 2d} + |u_2|^{- 2d}\right)/2$, while the first and the third term is controlled with $n \left\|D_n(u_1)\right\| \left\|  D_n(u_2) - D_n(u_1)\right\|$, $n \left\|D_n(u_2)\right\|\left\|  D_n(u_2) - D_n(u_1)\right\|$.

Notice that $1-e^{-\iota \theta} = e^{-\iota \theta/2}(e^{\iota \theta/2} - e^{-\iota \theta/2}) = 2 \sin \frac{\theta}{2} \iota e^{-\iota \theta/2}$, $|1-e^{-\iota \theta}| = \sqrt{(1-\cos \theta)^2 + \sin^2 \theta} = \sqrt{2(1-\cos \theta)} = 2 \left|\sin \frac{\theta}{2}\right|$. For $\theta \in [-\pi, \pi)$, we always have $\sin(\theta/2)$ and $\theta$ share the same sign. Then
\begin{align}\label{eq:1_exp_iota_theta_property}
    1-e^{-\iota \theta} &=  2\left|\sin \frac{\theta}{2}\right|\iota e^{-\iota \frac{\theta}{2}} sgn(\theta) = 2\left|\sin \frac{\theta}{2}\right| e^{-\iota (\frac{\theta}{2} - \frac{\pi}{2}sgn(\theta))} \\\nonumber
    &= |\theta|  \frac{\sin (\theta/2)}{\theta/2} e^{-\iota (\frac{\theta}{2} - \frac{\pi}{2}sgn(\theta))}\nonumber
\end{align}
and
\begin{align*}
    (1-e^{-\iota \theta})^{-d_j} = |\theta|^{-d_j} l_j(\theta), \quad l_j(\theta) = e^{-\frac{\pi}{2} sgn(\theta) \iota d} \left(\frac{\sin (\theta/2)}{\theta/2}\right)^{-d} e^{\iota d \theta/2}.
\end{align*}

Thus,
\begin{align*}
    &\left\|  D_n(u_2) - D_n(u_1)\right\| = \left\|diag\left(|u_2|^{-d_j}l_j(u_2) - |u_1|^{-d_j}l_j(u_1)\right)\right\|\\
    &\leq \left\| diag\left[ \left(|u_2|^{-d_j} - |u_1|^{-d_j}\right) l_j(u_2)\right]  \right\| +  \left\| diag\left[ |u_1|^{-d_j} \left(l_j(u_2) - l_j(u_1)\right)\right]  \right\|\\
    &\leq \max_{j = 1, \dots, n} \left| |u_2|^{-d_j} - |u_1|^{-d_j} \right| \left|l_j(u_2)\right| + \max_{j = 1, \dots, n} |u_1|^{-d_j} \left| l_j(u_2) - l_j(u_1)  \right|
\end{align*}

Further denote $l^{(1)}_j(\theta) = e^{-\frac{\pi}{2} sgn(\theta) \iota d_j}$, $l^{(2)}_j(\theta) = \left(\frac{\sin (\theta/2)}{\theta/2}\right)^{-d_j} e^{\iota d_j \theta/2}$, then
\begin{align*}
    \max_{j = 1,\dots,n}\left| l_j(u_1) - l_j(u_2) \right| &\leq \max_{j = 1,\dots,n}  \left| l_j^{(1)}(u_1) - l_j^{(1)}(u_2) \right|\cdot \left| l_j^{(2)}(u_1) \right|\\
    &+ \max_{j = 1,\dots,n} \left|l_j^{(1)}(u_2)\right| \cdot \left| l_j^{(2)}(u_1) - l_j^{(2)}(u_2) \right|\\
    &\leq  \max_{j = 1,\dots,n}  \left| l_j^{(1)}(u_1) - l_j^{(1)}(u_2) \right| + |u_1 - u_2|
\end{align*}
since $\left|l^{(1)}_j(\theta)\right|, \left|l^{(2)}_j(\theta)\right| \leq 1$, $l^{(2)}_j(\theta)$ is continuous over $\Pi$.

Notice that if $sgn(u_1) = sgn(u_2)$, $\left| l_j^{(1)}(u_1) - l_j^{(1)}(u_2) \right| = 0$, and if $sgn(u_1) + sgn(u_2) = 0$,
\begin{align*}
    \left| l_j^{(1)}(u_1) - l_j^{(1)}(u_2) \right| = \left| e^{-\frac{\pi}{2} sgn(\theta_1) \iota d_j} - e^{\frac{\pi}{2} sgn(\theta_1) \iota d_j} \right| = 2\left|sin \frac{\pi d_j}{2}\right|
\end{align*}
For $d_j < 0.5$, $sin \frac{\pi d_j}{2}$ increases with $d_j$, $\max_{j = 1,\dots,n} \left| l_j^{(1)}(u_1) - l_j^{(1)}(u_2) \right| = 2\left|sin \frac{\pi d}{2}\right|$. Thus,
\begin{align*}
    \left\|  D_n(u_2) - D_n(u_1)\right\| &\leq \max_{j = 1, \dots, n} \left| |u_2|^{-d_j} - |u_1|^{-d_j} \right| \\
    &+ \max_{j = 1, \dots, n} |u_1|^{-d_j} 
    \left(2\left|sin \frac{\pi d}{2}\right|\mathbb{1}\left\{u_1u_2<0\right\} + |u_1 - u_2|\right)
\end{align*}

Symmetrically, 
\begin{align*}
    \left\|  D_n(u_2) - D_n(u_1)\right\| &\leq \max_{j = 1, \dots, n} \left| |u_2|^{-d_j} - |u_1|^{-d_j} \right| \\
    &+ \max_{j = 1, \dots, n} |u_2|^{-d_j} 
    \left(2\left|sin \frac{\pi d}{2}\right|\mathbb{1}\left\{u_1u_2<0\right\} + |u_1 - u_2|\right)
\end{align*}
holds as well. Analogous to the analysis for $d_j$ differs across factors, when $|u_1|, |u_2| \leq 1$, 
\begin{align*}
&\text{first term } + \text{ third term in (\ref{eq:d_differ_rows_decomp})} \\
&\leq Cn\left(|u_1|^{-d} + |u_2|^{-d}\right)\left||u_1|^{-d} - |u_2|^{-d}\right| + n \left(|u_2|^{-2d} + |u_1|^{-2d}\right)\max_j\left| l_j(u_2) - l_j(u_1)  \right|\\
&= Cn\left||u_1|^{-2d} - |u_2|^{-2d}\right| + n \left(|u_2|^{-2d} + |u_1|^{-2d}\right) \left(2\left|sin \frac{\pi d}{2}\right|\mathbb{1}\left\{u_1u_2<0\right\} + |u_1 - u_2|\right);
\end{align*}
When $|u_1| \in (1,\pi], |u_2| \leq 1$, 
\begin{align*}
    &\text{first term } + \text{ third term in (\ref{eq:d_differ_rows_decomp})} \\&\leq Cn\left(|u_1|^{-\tilde{d}} + |u_2|^{-d}\right)\left||u_1|^{-d} - |u_2|^{-d}\right||u_1|^{\Delta}+ n \left(|u_2|^{-2d} + |u_1|^{-2d}\right)\max_{j}\left| l_j(u_2) - l_j(u_1)  \right|\\
    &\leq Cn|u_1|^{2\Delta} \left(|u_1|^{-d} + |u_2|^{-d}\right)\left||u_1|^{-d} - |u_2|^{-d}\right| + n \left(|u_2|^{-2d} + |u_1|^{-2d}\right)\max_j\left| l_j(u_2) - l_j(u_1)  \right|\\
    &\leq Cn\left||u_1|^{-2d} - |u_2|^{-2d}\right| + n \left(|u_2|^{-2d} + |u_1|^{-2d}\right)\max_j\left| l_j(u_2) - l_j(u_1)  \right|\\
    &\leq Cn\left||u_1|^{-2d} - |u_2|^{-2d}\right| + n \left(|u_2|^{-2d} + |u_1|^{-2d}\right)\left(2\left|sin \frac{\pi d}{2}\right|\mathbb{1}\left\{u_1u_2<0\right\} + |u_1 - u_2|\right)
\end{align*}
When $|u_1|, |u_2| \in (1,\pi]$, 
\begin{align*}
    &\text{first term } + \text{ third term in (\ref{eq:d_differ_rows_decomp})} \\
    &\leq Cn\left(|u_1|^{-d} + |u_2|^{-d}\right)\left||u_1| - |u_2|\right| + n \left(|u_2|^{-2d} + |u_1|^{-2d}\right)\max_j\left| l_j(u_2) - l_j(u_1)  \right|\\
    &\leq Cn\left(|u_1|^{-2d} + |u_2|^{-2d}\right)\left|u_1 - u_2\right| + n \left(|u_2|^{-2d} + |u_1|^{-2d}\right)\max_j\left| l_j(u_2) - l_j(u_1)  \right|
\end{align*}
Plugging back into (\ref{eq:d_differ_rows_decomp}), we obtain the final result.
    \end{enumerate}

Furthermore, if $|u_1|, |u_2| \geq |\pi/h|$, $h$ is a nonzero integer, by Mean Value Theorem, \[
\left||u_1|^{-2d} - |u_2|^{-2d}\right| \leq C (|u_1|^{-2d-1} \wedge |u_2|^{-2d-1})\left|u_1 - u_2\right|,
\]
and that $\left| l_j(u_2) - l_j(u_1)  \right| \leq 2$.
Together with \eqref{eq:increment_bound_for_Sigma_factors} and \eqref{eq:increment_bound_for_Sigma_rows}, this implies \eqref{eq:increment_bound_for_Sigma_away_from_zero}.
    
\end{proof}


\subsection{Summation and Integration rates}
\begin{lemma}\label{lm:basic_summation_lambda_s}
For any $a \in (0,2), a \neq 1$,
    \begin{align*}
        \frac{1}{T^2} \sum_{s=-T_0, s\neq 0}^{T_0} |\lambda_s|^{-a}  = \mathcal{O}\left( \max\left\{ T^{-1}, T^{a-2} \right\} \right)
    \end{align*}
    and that \begin{align*}
        \frac{1}{T^2} \sum_{s=-T_0, s\neq 0}^{T_0} |\lambda_s|^{-1}  = \mathcal{O}\left(\frac{\log T}{T} \right)
    \end{align*}
\end{lemma}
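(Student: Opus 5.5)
The plan is to reduce everything to a one-sided sum over positive integers and compare it with elementary $p$-series. Recall $\lambda_s = 2\pi s/T$ and $T_0=\lfloor (T-1)/2\rfloor$, so $|\lambda_s|^{-a} = (2\pi)^{-a} T^{a} |s|^{-a}$. By symmetry in $s$,
\[
\frac{1}{T^2}\sum_{s=-T_0, s\neq 0}^{T_0}|\lambda_s|^{-a} = \frac{2(2\pi)^{-a} T^{a}}{T^2}\sum_{s=1}^{T_0} s^{-a}.
\]
The whole question is therefore the growth of the partial sum $S_a(T_0)=\sum_{s=1}^{T_0}s^{-a}$ with $T_0\le T/2$.

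I will bound $S_a$ by comparison with the integral of the monotone function $x^{-a}$, giving $S_a(N)\le 1+\int_1^N x^{-a}\,dx$. This splits into three regimes.

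If $a\in(1,2)$, the series converges and $S_a(N)\le 1+\frac{1}{a-1}=\mathcal{O}(1)$. The sum is then of order $T^{a-2}$. Since $a>1$, this dominates $T^{-1}$, so the bound is $\mathcal{O}(\max\{T^{-1},T^{a-2}\})$.

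If $a\in(0,1)$, we get $S_a(N)\le 1+\frac{N^{1-a}}{1-a}=\mathcal{O}(T^{1-a})$. The sum is then $\mathcal{O}(T^{a-2}T^{1-a})=\mathcal{O}(T^{-1})$, which again is the maximum of the two rates because $a-2<-1$.

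If $a=1$, we get $S_1(N)\le 1+\log N=\mathcal{O}(\log T)$, so the sum is $\mathcal{O}(T^{-1}\log T)$, which is the second claim.

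There is no genuine obstacle. The only care needed is to keep the constants uniform in $T$: they depend on $a$ only through $1/|a-1|$ and $(2\pi)^{-a}$. One should also note that excluding $s=0$ is exactly what makes the sums finite. The endpoint $a=1$ is separated out because the integral comparison produces a logarithm there rather than a power.
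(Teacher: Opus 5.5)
Your proof is correct and follows essentially the same route as the paper: reduce by symmetry to $T^{a-2}\sum_{s=1}^{T_0}s^{-a}$, bound the partial sum by $1$ plus an integral of $t^{-a}$, and read off the three regimes $a<1$, $a>1$, $a=1$. The only difference is cosmetic: the paper compares with $\int_1^{T_0+1}t^{-a}\,dt$ and controls the error by a telescoping sum, while you use the standard bound $1+\int_1^{T_0}x^{-a}\,dx$ directly.
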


\begin{proof}
    When $a > 0$, 
            \begin{align*}
                \left| \sum_{s=1}^{T_0} \frac{1}{s^a}  - \int_{1}^{T_0 +1} \frac{1}{t^a} d t  \right| &=  \left| \sum_{s=1}^{T_0} \int_{s}^{s+1} \frac{1}{s^a} dt  - \sum_{s=1}^{T_0} \int_{s}^{s+1} \frac{1}{t^a} d t  \right| \\
                &\leq \sum_{s=1}^{T_0} \int_{s}^{s+1} \left|  \frac{1}{s^a} - \frac{1}{t^a}  \right| dt\\
                &\leq \sum_{s=1}^{T_0} \int_{s}^{s+1} \frac{1}{s^a} - \frac{1}{(s+1)^a} dt\\
                &= \sum_{s=1}^{T_0} \frac{1}{s^a} - \frac{1}{(s+1)^a}\\
                &= 1-\frac{1}{T_0^a} \leq 1
            \end{align*}
            Then
            \begin{align*}
            \sum_{s=1}^{T_0} \frac{1}{s^a} &\leq 1 + \int_{1}^{T_0+1} \frac{1}{t^a} d t = \frac{(T_0+1)^{1-a}}{1-a} + \frac{a}{a-1}  = \mathcal{O}\left( \max\left\{ T^{1-a}, 1 \right\} \right), \text{ if } a \neq 1, a > 0.
            \end{align*}
            and \[ \sum_{s=1}^{T_0} \frac{1}{s} \leq 1 + \log T_0 \leq C\log T \].
            Then if $a\in (0,2), a\neq 1$,
            \begin{align*}
        \frac{1}{T^2} \sum_{s=-T_0, s\neq 0}^{T_0} |\lambda_s|^{-a}  \leq 2 T^{a-2} \sum_{s=1}^{T_0} s^{-a} = \mathcal{O}\left( \max\left\{ T^{-1}, T^{a-2} \right\} \right)
    \end{align*}
    and that \begin{align*}
        \frac{1}{T^2} \sum_{s=-T_0, s\neq 0}^{T_0} |\lambda_s|^{-1}  = \mathcal{O}\left(\frac{\log T}{T^{2-a}} \right)
    \end{align*}
\end{proof}

\begin{lemma}\label{lm:basic_summation_integral_summing_from_M_B_T_theta_pointwise}
  Let 
  \begin{align*}
M_s(\theta,B_T) &\coloneqq
\Bigl\{ s\in\mathbb{Z}\setminus\{0\}: \lambda_s = \frac{2\pi s}{T} \in \left( \theta-\rho B_T, \theta+\rho B_T\right)\Bigr\},\\
M(\theta,B_T) &\coloneqq
\Bigl\{ \lambda \in \Pi: \lambda \in \left( \theta-\rho B_T, \theta+\rho B_T\right)\Bigr\}\\
\widetilde{M}_s(\theta,B_T) &\coloneqq
\Bigl\{ s\in\mathbb{Z}\setminus\{0\}: I_s \cap M(\theta, B_T) \neq \emptyset \Bigr\}
\end{align*}where $I_s = [\lambda_s, \lambda_{s+1}),$ for $s = 1,..., T_0-1$, $I_s = [\lambda_{s-1}, \lambda_s),$ for $s = -T_0 +1, \cdots, -1$. $\rho$ is a constant independent from $T$ and $n$. Usually, it is defined by Assumption \ref{assump:W_kernel}, the compact support of kernel $W(\cdot)$. Then for any $a\in(0,2)$, 
\begin{enumerate}
    \item If $|\theta|>2\rho B_T$, 
    \[
    \sum_{s\in M_s(\theta,B_T)} |\lambda_s|^{-a} \lesssim (B_T T) |\theta|^{-a}
    \]\\
    If $|\theta|>2(\rho + 2\pi) B_T$, 
    \[
    \sum_{s\in \widetilde{M}_s(\theta,B_T)} |\lambda_s|^{-a} \lesssim (B_T T) |\theta|^{-a}
    \]
    \item If $|\theta|\le 2\rho B_T$,
    \begin{align*}
        \sum_{s\in M_s(\theta,B_T)} |\lambda_s|^{-a} \lesssim \begin{cases}
T\,B_T^{\,1-a}, & 0<a<1,\\
T\log(B_TT), & a=1,\\
T^{a}, & 1<a<2,
\end{cases}
    \end{align*}\\
    If $|\theta| \leq 2(\rho + 2\pi) B_T$, 
    \begin{align*}
        \sum_{s\in \widetilde{M}_s(\theta,B_T)} |\lambda_s|^{-a} \lesssim \begin{cases}
T\,B_T^{\,1-a}, & 0<a<1,\\
T\log(B_TT), & a=1,\\
T^{a}, & 1<a<2,
\end{cases}
    \end{align*}
\end{enumerate}

\end{lemma}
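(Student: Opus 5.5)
The plan is to reduce both sums to counting lattice points $\lambda_s = 2\pi s/T$ in an interval of length $2\rho B_T$ (or slightly longer, for $\widetilde{M}_s$), and then to compare $|\lambda_s|^{-a}$ either with $|\theta|^{-a}$ or with the one-dimensional sums of Lemma \ref{lm:basic_summation_lambda_s}. First I would note the cardinality bound. The window $(\theta-\rho B_T,\theta+\rho B_T)$ has length $2\rho B_T$, and consecutive Fourier frequencies are spaced $2\pi/T$ apart. Hence $\#M_s(\theta,B_T)\le \rho B_T T/\pi + 1 \lesssim B_T T$, using $B_T T\to\infty$. For $\widetilde{M}_s$, each $I_s$ that meets $M(\theta,B_T)$ has its endpoint $\lambda_s$ within $2\pi/T$ of the window. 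So $\widetilde{M}_s(\theta,B_T)\subseteq M_s(\theta, B_T + 2\pi/(T))$ up to one extra index, and since $2\pi/T \le 2\pi B_T$ once $TB_T\ge 1$, this is contained in the enlarged window of half-width $(\rho+2\pi)B_T$. The $\widetilde{M}_s$ statements therefore follow from the $M_s$ statements with $\rho$ replaced by $\rho+2\pi$. This explains the thresholds $2(\rho+2\pi)B_T$, and I would only argue the $M_s$ case in detail.

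For case 1, $|\theta|>2\rho B_T$: every $\lambda_s$ in the window satisfies $|\lambda_s|\ge |\theta|-\rho B_T \ge |\theta|/2$. Therefore $|\lambda_s|^{-a}\le 2^a|\theta|^{-a}$, and summing over at most $\lesssim B_T T$ indices gives $(B_TT)|\theta|^{-a}$.

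For case 2, $|\theta|\le 2\rho B_T$: the window lies inside $\{|\lambda|\le 3\rho B_T\}$, so the sum is bounded by $\sum_{1\le |s|\le K}|2\pi s/T|^{-a}=2(T/2\pi)^a\sum_{s=1}^{K}s^{-a}$ with $K=\lfloor 3\rho B_T T/(2\pi)\rfloor$. Here I would reuse the sum-versus-integral comparison from the proof of Lemma \ref{lm:basic_summation_lambda_s}, which gives $\sum_{s\le K}s^{-a}\lesssim \max(K^{1-a},1)$ for $a\neq1$ and $\lesssim \log K$ for $a=1$. This yields three subcases:
\begin{itemize}
\item $0<a<1$: $T^aK^{1-a}\asymp T^a(B_TT)^{1-a}=TB_T^{1-a}$;
\item $a=1$: $T\log(B_TT)$;
\item $1<a<2$: the partial sum is bounded by a constant, giving $T^a$.
\end{itemize}

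The only step I expect to need care is the inclusion for $\widetilde{M}_s$ near the edges $\pm\pi$ and at $s=\pm1$, where the convention for $I_s$ changes. I would handle these by noting that they add $O(1)$ indices, each with $|\lambda_s|\ge 2\pi/T$. Such an index contributes at most $T^a$, which is absorbed in every subcase of case 2 because $T^a\le TB_T^{1-a}$ when $a<1$ and $B_TT\ge1$. In case 1 each extra index contributes only $O(|\theta|^{-a})$.
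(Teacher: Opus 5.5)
Your proposal is correct and follows the paper's proof: a count of at most $\lesssim B_TT$ frequencies in the window, the bound $|\lambda_s|\ge|\theta|/2$ in case 1, and in case 2 the bound $|\lambda_s|\le 3\rho B_T$ combined with the sum--integral comparison of Lemma \ref{lm:basic_summation_lambda_s}; for $\widetilde{M}_s$ you use $|\lambda_s-\theta|\le\rho B_T+2\pi/T\le(\rho+2\pi)B_T$ (via $TB_T\ge1$), which is the same enlargement the paper uses. Your closing paragraph on extra edge indices is unnecessary, since that inclusion holds for every $s\in\widetilde{M}_s$ whichever convention defines $I_s$. Also, $M_s(\theta,B_T+2\pi/T)$ has half-width $\rho B_T+2\pi\rho/T$ rather than $\rho B_T+2\pi/T$, though your next clause already corrects this.
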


\begin{proof}
    \begin{enumerate}
        \item If $|\theta|>2\rho B_T$, for $s \in M(\theta,B_T)$, we have $|\lambda_s| \geq |\theta| - |\lambda_s - \theta| \geq |\theta| - \frac{1}{2}|\theta| = \frac{1}{2}|\theta|$. Then
        \[
        \sum_{s\in M_s(\theta,B_T)} |\lambda_s|^{-a} \leq 2^a |\theta|^{-a} \left| M_s(\theta,B_T)\right| \lesssim B_T T |\theta|^{-a}.
        \]

        Meanwhile, for all $\theta \in \Pi$ and $s \in \widetilde{M}_s(\theta, B_T)$, there exists such a $\lambda^* \in I_s \cap M(\theta, B_T)$, and that $|\lambda_s - \theta| \leq |\lambda_s - \lambda^*| + |\lambda^* - \theta| \leq \frac{2\pi}{T} + \rho B_T$. Since $TB_T \geq 1$, we further write $|\lambda_s - \theta| \leq (2\pi + \rho)B_T$, for all $s \in \widetilde{M}_s(\theta, B_T)$ and $\theta \in \Pi$. Consequently, $\left| \widetilde{M}_s(\theta,B_T)\right| \leq \frac{T}{2\pi}*2(2\pi + \rho) B_T \lesssim TB_T$.

        If $|\theta| > (2\rho + 4\pi)B_T$, $|\lambda_s| \geq |\theta| - |\lambda_s - \theta| \geq |\theta| - (2\pi + \rho)B_T \geq |\theta| - \frac{1}{2}|\theta| = \frac{1}{2}|\theta|$. Then
        \[
        \sum_{s\in \widetilde{M}_s(\theta,B_T)} |\lambda_s|^{-a} \leq 2^a |\theta|^{-a} \left| \widetilde{M}_s(\theta,B_T)\right| \lesssim B_T T |\theta|^{-a}.
        \]

        \item If $|\theta|\leq 2\rho B_T$, for $s \in M_s(\theta,B_T)$, we have $|\lambda_s| \leq |\theta| + |\lambda_s - \theta| \leq 2\rho B_T + \rho B_T = 3\rho B_T$. That is, we only consider the case when $|s| \leq \frac{3\rho}{2\pi} B_T T, s\neq 0$. Therefore, following the same calculation in Lemma \ref{lm:basic_summation_lambda_s}, we have when $a \neq 1$
        \[
        \sum_{s\in M_s(\theta,B_T)} |\lambda_s|^{-a} \lesssim T^a \sum_{|s| \leq \frac{3\rho}{2\pi} B_T T, s\neq 0} |s|^{-a} \lesssim T^a \max\{(B_T T)^{1-a}, 1\} \leq \max \{TB_T^{1-a}, T^a\}
        \]
        and when $a = 1$, 
        \[
        \sum_{s\in M_s(\theta,B_T)} |\lambda_s|^{-a} \lesssim T \sum_{|s| \leq \frac{3\rho}{2\pi} B_T T, s\neq 0} |s|^{-1} \lesssim T \log B_T T .
        \]

        If $|\theta| \leq (2\rho + 4\pi)B_T$, $|\lambda_s| \leq |\theta| + |\lambda_s - \theta| \leq (2\rho + 4\pi)B_T + (\rho + 2\pi)B_T \leq 3(\rho + 2\pi)B_T$, then $|s| \leq 3\left({\rho}/{2\pi} + 1\right) B_T T, s \neq 0$. Similarly,
        \begin{align*}
            \sum_{s \in \widetilde{M}_s(\theta, B_T)}|\lambda_s|^{-a} \lesssim \max\{TB_T^{1-a}, T^a\} \text{ if } a \neq 1;\;
            \sum_{s \in \widetilde{M}_s(\theta, B_T)}|\lambda_s|^{-1} \lesssim T \log B_T T
        \end{align*}
    \end{enumerate}
\end{proof}

\begin{lemma}\label{lm:basic_summation_integral_w_theta_2_tilde_d}
For any $p > 1 + q >1$,
    \begin{align*}
        \frac{1}{B_T T^2} \int_{\Pi} |\theta|^{q} \sum_{s=-T_0, s\neq 0}^{T_0} W\left(\frac{\theta - \lambda_s}{B_T}\right) |\lambda_s|^{-p} d\theta = \mathcal{O}\left( T^{p - q - 2} + B_T^{q} T^{p-2} \right)
    \end{align*}
\end{lemma}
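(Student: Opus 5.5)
The strategy is to swap the integral with the finite sum and, for each frequency $\lambda_s$, integrate $|\theta|^{q}$ against the rescaled window; this reduces the claim to one-dimensional sums handled exactly as in Lemma~\ref{lm:basic_summation_lambda_s}. By Tonelli (all terms are nonnegative) we write the left-hand side as
\[
\frac{1}{B_T T^2}\sum_{s\neq 0}|\lambda_s|^{-p}\int_{\Pi}|\theta|^{q}\,W\!\left(\frac{\theta-\lambda_s}{B_T}\right)d\theta .
\]
By Assumption~\ref{assump:W_kernel}, $W$ is bounded and supported on $[-\rho,\rho]$, so the inner integral runs only over $|\theta-\lambda_s|\le \rho B_T$. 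On that set $|\theta|\le |\lambda_s|+\rho B_T$, and hence
\[
\int_{\Pi}|\theta|^{q}W\!\left(\frac{\theta-\lambda_s}{B_T}\right)d\theta \;\lesssim\; B_T\left(|\lambda_s|+\rho B_T\right)^{q}\;\lesssim\; B_T\left(|\lambda_s|^{q}+B_T^{q}\right),
\]
where the last step uses $(a+b)^q\le C_q(a^q+b^q)$ for $q\ge 0$. The factor $B_T$ cancels the prefactor $1/B_T$, and the left-hand side is bounded by
\[
\frac{C}{T^2}\sum_{s\neq 0}|\lambda_s|^{q-p}\;+\;\frac{C B_T^{q}}{T^2}\sum_{s\neq0}|\lambda_s|^{-p}.
\]

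Each sum is then evaluated with $\lambda_s=2\pi s/T$, which gives $\sum_{s\neq 0}|\lambda_s|^{-a}\asymp T^{a}\sum_{s=1}^{T_0}s^{-a}$.

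\textbf{First sum.} Here $a=p-q>1$, so $\sum_s s^{-a}$ converges. The first term is therefore $\mathcal O(T^{p-q-2})$.

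\textbf{Second sum.} Here $a=p>1$ as well, so this term is $\mathcal O(B_T^{q}T^{p-2})$.

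\textbf{Edge cases.} If $p\ge 2$, the sum $\sum_s s^{-a}$ is still convergent, so the bound $T^{a}\cdot O(1)$ remains valid. The condition $p-q>1$ is what keeps the first sum from producing a $\log T$ or $T^{1-a}$ factor. This is exactly where the hypothesis $p>1+q$ enters.

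The main point requiring care is the bound on the inner integral near the origin. When $|\lambda_s|\lesssim B_T$, the weight $|\theta|^q$ is not comparable to $|\lambda_s|^q$, and using $|\lambda_s|^{q}$ alone would give the wrong rate. Splitting into $|\lambda_s|^q+B_T^q$ resolves this: the $B_T^q$ part is precisely what generates the second term $B_T^qT^{p-2}$, since the small frequencies $|s|\lesssim 1$ dominate $\sum|\lambda_s|^{-p}$. Summing the two contributions gives $\mathcal O(T^{p-q-2}+B_T^qT^{p-2})$.
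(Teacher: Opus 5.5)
Your proof is correct and follows the paper's argument: swap the sum and the integral, bound each window integral by $B_T\bigl(|\lambda_s|^q + B_T^q\bigr)$, and then bound $\frac{1}{T^2}\sum_{s\neq 0}|\lambda_s|^{-a}$ by $O(T^{a-2})$ for $a = p-q > 1$ and for $a = p > 1$. Your version is slightly more careful than the paper's, since $(a+b)^q \le C_q(a^q+b^q)$ and the direct convergence of $\sum_s s^{-a}$ for $a>1$ cover all $q>0$ and $p\ge 2$, whereas the paper uses subadditivity of $x\mapsto x^q$ (valid only for $q\le 1$) and Lemma~\ref{lm:basic_summation_lambda_s}, which is stated only for $a\in(0,2)$.
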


\begin{proof}
Notice that since $\left|\lambda_s + uB_T\right|^{q} \leq \left|\lambda_s\right|^{q} + B_T^{q} |u|^{2\tilde{d}}$,
\begin{align*}
    \int_{\Pi} W\left(\frac{\theta - \lambda_s}{B_T}\right) |\theta|^{q} d\theta &\leq B_T \int_{\Pi} W(u) \left|\lambda_s + uB_T\right|^{q} du\\
    &\leq B_T \int_{\Pi} W(u) du \left|\lambda_s\right|^{q} + B_T^{1+q} \int_{\Pi} W(u) u^q du \\
    &\leq B_T |\lambda_s|^{q} + B_T^{1+q}
\end{align*}
Thus,
    \begin{align*}
        LHS &\leq \frac{1}{B_T T^2} \sum_{s=-T_0, s\neq 0}^{T_0}  \int_{\Pi} W\left(\frac{\theta - \lambda_s}{B_T}\right) |\theta|^{q} d\theta |\lambda_s|^{-p}\\
        &\leq \frac{1}{T^2} \sum_{s=-T_0, s\neq 0}^{T_0} |\lambda_s|^{q-p} + \frac{B_T^q}{T^2} \sum_{s=-T_0, s\neq 0}^{T_0} |\lambda_s|^{-p}\\
        &\leq T^{p-q-2} + B_T^q T^{p-2}
    \end{align*}
\end{proof}

        \begin{lemma}\label{lm:conv_rate_before_approx_identity} Under Assumption \ref{assump:model}, \ref{assump: spec_den_specific_form}, \ref{assump:bdd_error_eval}, and \ref{assump:pervasive_short_mem}, for almost every $\theta \in \Pi,$
            \begin{align*}
            &\left\|\frac{2\pi|\theta|^{2\tilde{d}}}{B_T T}\sum_{s=-T_0, s\neq 0}^{T_0}W\left(\frac{\theta - \lambda_s}{B_T}\right)  \left(\Sigma(\lambda_s) -  \Sigma(\theta)\right) \right\|\\
                &\lesssim \int_{\Pi} \left\| \frac{|\theta|^{2\tilde{d}}}{B_T}W\left(\frac{\theta - \lambda}{B_T}\right)  \left(\Sigma(\lambda) -  \Sigma(\theta)\right)  \right\| d\lambda\\
                &+ \frac{n|\theta|^{-2\Delta}}{TB_T} + \frac{n|\theta|^{2\tilde{d}}}{B_T T^{1-2d}}\mathbb{1}(|\theta| \leq (2\rho + 4\pi) B_T) + \frac{n|\theta|^{-2\Delta - 1}}{T}\mathbb{1}(|\theta| > (2\rho + 4\pi) B_T)
            \end{align*}
        \end{lemma}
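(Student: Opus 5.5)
The plan is to compare the Riemann sum over the Fourier frequencies $\lambda_s$ with the corresponding integral in $\lambda$. The tool is a cell-by-cell mean value argument on the intervals $I_s$ of Lemma~\ref{lm:basic_summation_integral_summing_from_M_B_T_theta_pointwise}. Write the left-hand side as
\[
\frac{2\pi}{B_TT}\sum_{s} W\Big(\frac{\theta-\lambda_s}{B_T}\Big)F_\theta(\lambda_s),\qquad F_\theta(\lambda)\coloneqq|\theta|^{2\tilde d}\big(\Sigma(\lambda)-\Sigma(\theta)\big).
\]
Since $2\pi/T=|I_s|$, we have $\frac{2\pi}{T}g(\lambda_s)=\int_{I_s}g(\lambda_s)\,d\lambda$. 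The difference between the sum and $\int_{\Pi}\frac{1}{B_T}W(\frac{\theta-\lambda}{B_T})F_\theta(\lambda)\,d\lambda$ therefore splits into three pieces:
\begin{itemize}
\item[(i)] $\sum_s\int_{I_s}\frac1{B_T}\big|W(\frac{\theta-\lambda_s}{B_T})-W(\frac{\theta-\lambda}{B_T})\big|\,\|F_\theta(\lambda_s)\|\,d\lambda$;
\item[(ii)] $\sum_s\int_{I_s}\frac1{B_T}W(\frac{\theta-\lambda}{B_T})\,\|F_\theta(\lambda_s)-F_\theta(\lambda)\|\,d\lambda$;
\item[(iii)] a boundary piece from the gap $(-2\pi/T,2\pi/T)$ near zero, which the $I_s$ do not cover.
\end{itemize}
The main integral then gives the first term on the right-hand side, after moving the norm inside the integral. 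Only indices $s\in\widetilde M_s(\theta,B_T)$ contribute, because $W$ is supported on $[-\rho,\rho]$.

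For (i), use the Lipschitz property of $W$ (Assumption~\ref{assump:W_kernel}): $|W(x)-W(y)|\le C|\lambda-\lambda_s|/B_T\le C/(TB_T)$. Bound $\|F_\theta(\lambda_s)\|$ by $Cn|\theta|^{2\tilde d}(|\lambda_s|^{-2d}+|\theta|^{-2d})$ using Lemma~\ref{lemma:div_component_eval_specific_form}. Summing over $\widetilde M_s$ with Lemma~\ref{lm:basic_summation_integral_summing_from_M_B_T_theta_pointwise} at $a=2d$ then splits into two regimes.
\begin{itemize}
\item For $|\theta|>(2\rho+4\pi)B_T$, the sum is $\lesssim TB_T|\theta|^{-2d}$. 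Multiplying by $\frac{2\pi}{T}\cdot\frac1{TB_T^2}$ gives order $n|\theta|^{-2\Delta}/(TB_T)$.
\item For small $|\theta|$, the sum is $\lesssim TB_T^{1-2d}+T^{2d}$. This gives the same $n|\theta|^{-2\Delta}/(TB_T)$ term, plus $n|\theta|^{2\tilde d}/(B_T T^{1-2d})$ on the indicator set.
\end{itemize}

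For (ii), invoke Lemma~\ref{lm:increment_bound_for_Sigma}. When $|\theta|>(2\rho+4\pi)B_T$, every $\lambda\in I_s$ and $\lambda_s$ sit at distance $\gtrsim|\theta|$ from zero and on the same side, so the sign-jump indicator vanishes. The estimate \eqref{eq:increment_bound_for_Sigma_away_from_zero}, or the mean value theorem on $|\cdot|^{-2d}$, then gives $\|\Sigma(\lambda_s)-\Sigma(\lambda)\|\lesssim n|\theta|^{-2d-1}/T$. Integrating against $\frac1{B_T}W$, which has unit mass, and multiplying by $|\theta|^{2\tilde d}$ yields $n|\theta|^{-2\Delta-1}/T$ on $\{|\theta|>(2\rho+4\pi)B_T\}$. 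When $|\theta|\le(2\rho+4\pi)B_T$, I do not use increments. Instead I bound $\|F_\theta(\lambda_s)\|+\|F_\theta(\lambda)\|$ crudely by the sum of the $|\lambda_s|^{-2d}$ and $|\lambda|^{-2d}$ terms, times $n|\theta|^{2\tilde d}$. Lemma~\ref{lm:basic_summation_integral_summing_from_M_B_T_theta_pointwise} again reduces this to $n|\theta|^{2\tilde d}(B_T^{-2d}+T^{2d}/(TB_T))$, which is dominated by the listed indicator term together with $n|\theta|^{-2\Delta}/(TB_T)$. Piece (iii) involves an interval of length $4\pi/T$ and a kernel value bounded by $C/B_T$, with integrand $\lesssim n|\theta|^{2\tilde d}(|\lambda|^{-2d}+|\theta|^{-2d})$. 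Integrability of $|\lambda|^{-2d}$ gives $n|\theta|^{2\tilde d}T^{2d-1}/B_T+n|\theta|^{-2\Delta}/(TB_T)$, which again fits the stated bound. I would also check that this piece is nonzero only when $|\theta|\lesssim B_T$.

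The main obstacle is the small-$|\theta|$ regime in (ii). There the increments of $\Sigma$ are not controlled: the singularity at zero and, in case (b), the sign jump $\mathbb{1}\{u_1u_2<0\}$ both enter. The plan is to abandon increment bounds there and absorb everything into the indicator term via the $T^{a}$ branch of Lemma~\ref{lm:basic_summation_integral_summing_from_M_B_T_theta_pointwise}. This requires careful bookkeeping of which of $TB_T^{1-2d}$ and $T^{2d}$ dominates, and I would expect the stated form to hold only up to constants and the a.e.\ qualifier.
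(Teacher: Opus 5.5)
\textbf{Verdict: genuine gap.} Piece (ii) on the small-frequency set $\{|\theta|\le(2\rho+4\pi)B_T\}$ does not give the stated bound.

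\textbf{What is fine.} Pieces (i) and (iii), and piece (ii) on $\{|\theta|>(2\rho+4\pi)B_T\}$, are sound. Piece (iii) can even be skipped. The right-hand side has the norm inside $\int_\Pi$, so it suffices to compare the sum with the integral over the union of the cells and use monotonicity. You do need an analogous $O(1/T)$ edge term near $\pm\pi$, where the cells do not tile $\Pi$ exactly; it costs $n|\theta|^{-2\Delta}/(TB_T)$.

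\textbf{Why the small-$|\theta|$ step fails.} You bound $\|\Sigma(\lambda_s)\|+\|\Sigma(\lambda)\|$ separately. This discards the fact that $|\lambda-\lambda_s|\le 2\pi/T$, so no factor of $1/T$ can appear. The resulting term $n|\theta|^{2\tilde d}B_T^{-2d}$ is not dominated by the listed terms. After dividing by $n|\theta|^{2\tilde d}$, you would need
\[
B_T^{-2d}\lesssim \frac{|\theta|^{-2d}}{TB_T}+\frac{T^{2d-1}}{B_T}.
\]
This fails for two reasons:
\begin{itemize}
\item $T^{2d-1}/B_T=B_T^{-2d}(TB_T)^{2d-1}=o(B_T^{-2d})$.
\item $|\theta|^{-2d}/(TB_T)\ge B_T^{-2d}$ only when $|\theta|\le B_T(TB_T)^{-1/(2d)}$.
\end{itemize}
At $|\theta|\asymp B_T$ your bound exceeds the claimed right-hand side by a factor of order $TB_T\to\infty$. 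So the domination you assert is false, and the pointwise inequality as stated does not follow. Absorbing the term into the integral term instead would need a lower bound on $\|\Sigma(\lambda)-\Sigma(\theta)\|$, which you do not have. Your weaker bound would still integrate to $O(nB_T^{1-2\Delta})$ in Lemma \ref{lemma:bias_conv}, but it does not prove this lemma.

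\textbf{The fix: the obstacle you anticipated does not exist.} In (ii) the increment is $\Sigma(\lambda_s)-\Sigma(\lambda)$ with $\lambda\in I_s$, not an increment relative to $\theta$. The cells exclude $[-\lambda_1,\lambda_1)$, so for every $\theta$ the points $\lambda_s$ and $\lambda$ lie on the same side of zero with $|\lambda|\ge|\lambda_s|\ge 2\pi/T$. Hence the sign-jump term in Lemma \ref{lm:increment_bound_for_Sigma} vanishes, and the mean value theorem gives, uniformly in $\theta$,
\[
\|\Sigma(\lambda_s)-\Sigma(\lambda)\|\lesssim n|\lambda-\lambda_s|\,|\lambda_s|^{-2d-1}.
\]
Using $W\le C$ and $\int_{I_s}|\lambda-\lambda_s|\,d\lambda\lesssim T^{-2}$,
\[
\sum_s\int_{I_s}\frac{|\theta|^{2\tilde d}}{B_T}W\Big(\frac{\theta-\lambda}{B_T}\Big)\|\Sigma(\lambda_s)-\Sigma(\lambda)\|\,d\lambda\lesssim\frac{n|\theta|^{2\tilde d}}{B_TT^{2}}\sum_{s\in\widetilde M_s(\theta,B_T)}|\lambda_s|^{-2d-1}.
\]
Now apply Lemma \ref{lm:basic_summation_integral_summing_from_M_B_T_theta_pointwise} with $a=2d+1\in(1,2)$:
\begin{itemize}
\item On the small-frequency set, the $T^{a}$ branch gives $T^{2d+1}$, hence exactly $n|\theta|^{2\tilde d}/(B_TT^{1-2d})$.
\item Off that set, the sum is $\lesssim TB_T|\theta|^{-2d-1}$, hence $n|\theta|^{-2\Delta-1}/T$.
\end{itemize}
This is the paper's argument. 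It handles both regimes at once, and it is where the indicator term in the statement comes from.
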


        \begin{proof}
            Let 
            \begin{align*}
                H_{\theta}(\lambda) = \frac{|\theta|^{2\tilde{d}}}{B_T}W\left(\frac{\theta - \lambda}{B_T}\right)  \left(\Sigma(\lambda) -  \Sigma(\theta)\right)
            \end{align*}
            and $I_s = [\lambda_s, \lambda_{s+1}),$ for $s = 1,..., T_0-1$, $I_s = [\lambda_{s-1}, \lambda_s),$ for $s = -T_0 +1, \cdots, -1$. Denote $\Pi_1 = \Pi \setminus \bigl[-\lambda_1, \lambda_1\bigr)$, we have $\Pi_1 = \cup_{s=-T_0, s\neq 0}^{T_0} I_s \cup [-\pi, -\lambda_{T_0}) \cup [\lambda_{T_0}, \pi)$.
            
            Following almost the same analysis as shown in Lemma \ref{lm:basic_summation_lambda_s}, we have
            \begin{align*}
                &\left\|\frac{2\pi}{T} \sum_{s=-T_0, s\neq 0}^{T_0} H_{\theta}( \lambda_s) - \int_{\Pi_1} H_{\theta}(\lambda) d\lambda \right\| \\
                &= \left\| \sum_{s=-T_0, s\neq 0}^{T_0}\int_{I_s} H_{\theta}( \lambda_s)d \lambda - \sum_{s=-T_0, s\neq 0}^{T_0}\int_{I_s} H_{\theta}(\lambda) d\lambda - \int_{-\lambda_{T_0}}^{-\lambda_1} H_{\theta}(\lambda) d\lambda - \int_{\lambda_1}^{\lambda_{T_0}} H_{\theta}(\lambda) d\lambda \right\|\\
                &\leq \sum_{s=-T_0, s\neq 0}^{T_0}\int_{I_s}\left\| H_{\theta}(\lambda_s) - H_{\theta}(\lambda) \right\| d\lambda + \int_{-\pi}^{-\lambda_{T_0}} \left\|H_{\theta}(\lambda)\right\| d\lambda + \int_{\lambda_{T_0}}^{\pi} \left\|H_{\theta}(\lambda)\right\| d\lambda
            \end{align*}

            For the edge term, here we will only derive a crude bound as it is not the dominant term in the remainder. Remind that $T_0 = \lfloor \frac{T-1}{2} \rfloor$, $\lambda_{T_0} = \frac{2\pi T_0}{T}$, then $\pi - \lambda_{T_0} \leq 2\pi/T$. Hence,
            \begin{align*}
                \int_{\lambda_{T_0}}^{\pi} \left\|H_{\theta}(\lambda)\right\| d\lambda &\leq \frac{|\theta|^{2\tilde{d}}}{B_T} \int_{\lambda_{T_0}}^{\pi} \left\|\Sigma(\lambda)\right\| d\lambda + \frac{|\theta|^{2\tilde{d}}}{B_T}\left\|\Sigma(\theta)\right\| (\pi - \lambda_{T_0} ) \\
                &\leq \frac{|\theta|^{2\tilde{d}}}{B_T} n \pi^{-2d} (\pi - \lambda_{T_0}) + \frac{|\theta|^{2\tilde{d}}}{B_T} n |\theta|^{-2d}(\pi - \lambda_{T_0} ) \leq \frac{n |\theta|^{-2\Delta}}{B_T T}.
            \end{align*}

            Notice that since $W(\cdot)$ has compact support on $(-\rho, \rho)$ by Assumption \ref{assump:W_kernel}, then $H_{\theta}(\lambda)$ can be written as
            \begin{align*}
                H_{\theta}(\lambda) = \frac{|\theta|^{2\tilde{d}}}{B_T}  W\left(\frac{\theta - \lambda}{B_T}\right)  \left(\Sigma(\lambda) -  \Sigma(\theta)\right) \mathbb{1}(|\theta - \lambda| \leq \rho B_T)
            \end{align*}

            Meanwhile,
            \begin{align*}
                H_{\theta}(\lambda_s) - H_{\theta}(\lambda) &= \frac{|\theta|^{2\tilde{d}}}{B_T} \left[ W\left(\frac{\theta - \lambda_s}{B_T}\right) - W\left(\frac{\theta - \lambda}{B_T}\right) \right]\left( \Sigma(\lambda_s) -  \Sigma(\theta) \right)\\
                &+  \frac{|\theta|^{2\tilde{d}}}{B_T} W\left(\frac{\theta - \lambda}{B_T}\right) \mathbb{1} \left( |\theta - \lambda| \leq \rho B_T  \right)\left( \Sigma(\lambda_s) -  \Sigma(\lambda) \right)\\
                &\coloneqq P_1 + P_2
            \end{align*}
            By the Lipschitz continuity of $W(\cdot)$ we have $\left|W\left(\frac{\theta - \lambda_s}{B_T}\right) - W\left(\frac{\theta - \lambda}{B_T}\right)\right| \leq L |\lambda_s - \lambda|/B_T$. Furthermore, if $|\theta - \lambda_s| > \rho B_T$ and $|\theta - \lambda| > \rho B_T$, $\left|W\left(\frac{\theta - \lambda_s}{B_T}\right) - W\left(\frac{\theta - \lambda}{B_T}\right)\right|  = 0$. Thus, we could write
            \begin{align}\label{eq:increment_kernel_W}
                \left|W\left(\frac{\theta - \lambda_s}{B_T}\right) - W\left(\frac{\theta - \lambda}{B_T}\right)\right| \leq L\frac{|\lambda_s - \lambda|}{B_T}\left[\mathbb{1}\left( |\theta - \lambda_s| \leq \rho B_T  \right) + \mathbb{1}\left( |\theta - \lambda| \leq \rho B_T  \right)\right]
            \end{align}

            Then
            \begin{align*}
                \sum_{s=-T_0, s\neq 0}^{T_0}\int_{I_s}\left\| P_1 \right\| d\lambda &\leq \frac{|\theta|^{2\tilde{d}}}{B_T} \sum_{s=-T_0, s\neq 0}^{T_0}\int_{I_s}\frac{|\lambda_s - \lambda|}{B_T}\mathbb{1}\left( |\theta - \lambda_s| \leq \rho B_T  \right) \left\| \Sigma(\lambda_s) -  \Sigma(\theta)  \right\| d\lambda \\
                &+ \frac{|\theta|^{2\tilde{d}}}{B_T} \sum_{s=-T_0, s\neq 0}^{T_0}\int_{I_s}\frac{|\lambda_s - \lambda|}{B_T}\mathbb{1}\left( |\theta - \lambda| \leq \rho B_T  \right) \left\| \Sigma(\lambda_s) -  \Sigma(\theta)  \right\| d\lambda\\
                &\coloneqq P_{1,1} + P_{1,2}
            \end{align*}
            Applying $\int_{I_s} |\lambda_s - \lambda| d\lambda = \mathcal{O}\left(T^{-2}\right)$, the first term becomes
            \begin{align*}
                P_{1,1} \leq \frac{|\theta|^{2\tilde{d}}}{B_T^2T^2}\sum_{s=-T_0, s\neq 0}^{T_0}\mathbb{1}\left( |\theta - \lambda_s| \leq \rho B_T  \right) \left\| \Sigma(\lambda_s) -  \Sigma(\theta)  \right\|
            \end{align*}
            Denote 
            \[
            M_s(\theta,B_T):=
\Bigl\{ s\in\mathbb{Z}\setminus\{0\}: \lambda_s = \frac{2\pi s}{T} \in \left( \theta-\rho B_T, \theta+\rho B_T\right)\Bigr\}
            \]
            $|M_s(\theta, B_T)| \leq TB_T$, then by Lemma \ref{lm:basic_summation_integral_summing_from_M_B_T_theta_pointwise}
            \begin{align*}
                &\sum_{s=-T_0, s\neq 0}^{T_0}\mathbb{1}\left( |\theta - \lambda_s| \leq \rho B_T  \right) \left\| \Sigma(\lambda_s) -  \Sigma(\theta)  \right\| = \sum_{s \in M_s(\theta, B_T)}\left\| \Sigma(\lambda_s) -  \Sigma(\theta)  \right\|\\
                &\leq \sum_{s \in M_s(\theta, B_T)} \left\| \Sigma(\lambda_s)  \right\| + \sum_{s \in M_s(\theta, B_T)} \left\| \Sigma(\theta)  \right\| \leq Cn\sum_{s \in M_s(\theta, B_T)} |\lambda_s|^{-2d} + Cn|\theta|^{-2d}|M_s(\theta, B_T)|\\
                &\leq \begin{cases}
                    2CnB_T T |\theta|^{-2d} , & |\theta| > 2\rho B_T\\
                    CnT B_T^{1-2d} + CnB_T T |\theta|^{-2d} , & |\theta| < 2\rho B_T
                \end{cases} \\
                &\leq CnB_T T |\theta|^{-2d}
            \end{align*}
            where the last inequality is because for $|\theta| < 2\rho B_T$, $CnB_T T |\theta|^{-2d}$ dominates. Thus, $P_{1,1} \leq Cn|\theta|^{-2\Delta}/(B_T T)$.

            On the other hand, let 
            \begin{align*}
            &M(\theta,B_T):=
\Bigl\{ \lambda \in \Pi: \lambda \in \left( \theta-\rho B_T, \theta+\rho B_T\right)\Bigr\}\\
            &\widetilde{M}_s(\theta,B_T):=
\Bigl\{ s\in\mathbb{Z}\setminus\{0\}: I_s \cap M(\theta, B_T) \neq \emptyset \Bigr\}
            \end{align*}
            then
            \begin{align}\label{eq:P_1_2_lemma}
                P_{1,2} &= \frac{|\theta|^{2\tilde{d}}}{B_T} \sum_{s=-T_0, s\neq 0}^{T_0}\int_{I_s}\frac{|\lambda_s - \lambda|}{B_T}\mathbb{1}\left( |\theta - \lambda| \leq \rho B_T  \right) d\lambda \left\| \Sigma(\lambda_s) -  \Sigma(\theta)  \right\| \\\nonumber
                &=\frac{|\theta|^{2\tilde{d}}}{B_T} \sum_{s=-T_0, s\neq 0}^{T_0}\int_{I_s \cap M(\theta, B_T)}\frac{|\lambda_s - \lambda|}{B_T} d\lambda \left\| \Sigma(\lambda_s) -  \Sigma(\theta)  \right\|\\\nonumber
                &= \frac{|\theta|^{2\tilde{d}}}{B_T} \sum_{s \in \widetilde{M}_s(\theta, B_T)} \int_{I_s \cap M(\theta, B_T)}\frac{|\lambda_s - \lambda|}{B_T} d\lambda \left\| \Sigma(\lambda_s) -  \Sigma(\theta)  \right\|\\\nonumber
                &\leq \frac{|\theta|^{2\tilde{d}}}{B_T} \sum_{s \in \widetilde{M}_s(\theta, B_T)} \int_{I_s}\frac{|\lambda_s - \lambda|}{B_T} d\lambda \left\| \Sigma(\lambda_s) -  \Sigma(\theta)  \right\|
                \leq \frac{|\theta|^{2\tilde{d}}}{B_T^2 T^2}\sum_{s \in \widetilde{M}_s(\theta, B_T)}  \left\| \Sigma(\lambda_s) -  \Sigma(\theta)  \right\|\nonumber
            \end{align} 
            Notice that $\widetilde{M}_s(\theta,B_T)$ is almost the same as $M_s(\theta,B_T)$, with potentially two less than $2\pi/T$ segments, $|\widetilde{M}_s(\theta,B_T)| \leq CT(4\pi/T + 2\rho B_T) \leq CTB_T$. 
            By Lemma \ref{lm:basic_summation_integral_summing_from_M_B_T_theta_pointwise}, we have 
            \[
            \sum_{s \in \widetilde{M}_s(\theta, B_T)} |\lambda_s|^{-2d} \lesssim \begin{cases}
                (B_T T)|\theta|^{-2d}, & |\theta|> (2\rho + 4\pi)B_T\\
                T B_T^{1-2d}, & |\theta| \leq  (2\rho + 4\pi)B_T
            \end{cases}
            \]
            Then same as before, we have $P_{1,2} \leq Cn|\theta|^{-2\Delta}/(B_T T)$. Altogether we have $\sum_{s=-T_0, s\neq 0}^{T_0}\int_{I_s}\left\| P_1 \right\| d\lambda \leq Cn|\theta|^{-2\Delta}/(B_T T)$.

            Meanwhile, under Assumption \ref{assump: spec_den_specific_form}, by Lemma \ref{lm:increment_bound_for_Sigma}, we have for $\lambda \in I_s$,
            \begin{align*}
                \left\| \Sigma(\lambda_s) - \Sigma(\lambda)  \right\| &\leq \left\| \Sigma_{\chi}(\lambda_s) - \Sigma_{\chi}(\lambda)  \right\| + \left\| \Sigma_{\xi}(\lambda_s) - \Sigma_{\xi}(\lambda)  \right\|\\
                &\leq Cn\left| \lambda_s - \lambda \right| \left( |\lambda_s|^{-2d} + |\lambda|^{-2d} +  |\lambda_s|^{-2d-1} + |\lambda|^{-2d-1} \right) + C\left| \lambda_s - \lambda \right|\\
                &\leq Cn\left| \lambda_s - \lambda \right| |\lambda_s|^{-2d-1}
            \end{align*}
            where the last inequality is because $|\lambda|^{-2d} < \{ |\lambda_s|^{-2d}, |\lambda|^{-2d-1} \} < |\lambda_s|^{-2d-1}$, for $\lambda \in I_s$.

            Then 
            \begin{align*}
                \sum_{s=-T_0, s\neq 0}^{T_0}\int_{I_s}\left\| P_2 \right\| d\lambda &\leq \frac{|\theta|^{2\tilde{d}}}{B_T} \sum_{s=-T_0, s\neq 0}^{T_0}\int_{I_s}\mathbb{1}\left( |\theta - \lambda| \leq \rho B_T  \right) \left\| \Sigma(\lambda_s) -  \Sigma(\lambda)  \right\| d\lambda \\
                &\leq Cn \frac{|\theta|^{2\tilde{d}}}{B_T} \sum_{s=-T_0, s\neq 0}^{T_0}\int_{I_s}\mathbb{1}\left( |\theta - \lambda| \leq \rho B_T  \right) \left| \lambda_s - \lambda \right| d\lambda |\lambda_s|^{-2d-1}\\
                &\leq Cn \frac{|\theta|^{2\tilde{d}}}{B_T}  \sum_{s\in \widetilde{M}_s(\theta, B_T)} \int_{I_s} \left| \lambda_s - \lambda \right| d\lambda |\lambda_s|^{-2d-1}\\
                &\leq Cn \frac{|\theta|^{2\tilde{d}}}{B_T T^2}  \sum_{s\in \widetilde{M}_s(\theta, B_T)} |\lambda_s|^{-2d-1}\\
                &\leq \left\{\begin{aligned}
                    &\frac{Cn|\theta|^{2\tilde{d}}}{B_T T^{1-2d}}, & |\theta| \leq (2\rho + 4\pi) B_T \\
                    &\frac{Cn|\theta|^{-2\Delta - 1}}{T}, & |\theta| > (2\rho + 4\pi) B_T
                \end{aligned}\right.
            \end{align*}

            Finally, we have
            \begin{align*}
                &\left\|\frac{2\pi}{T} \sum_{s=-T_0, s\neq 0}^{T_0} H_{\theta}( \lambda_s) - \int_{\Pi_1} H_{\theta}(\lambda) d\lambda \right\| \\
                &\leq \sum_{s=-T_0, s\neq 0}^{T_0}\int_{I_s}\left\| H_{\theta}(\lambda_s) - H_{\theta}(\lambda) \right\| d\lambda + \int_{-\pi}^{-\lambda_{T_0}} \left\|H_{\theta}(\lambda)\right\| d\lambda + \int_{\lambda_{T_0}}^{\pi} \left\|H_{\theta}(\lambda)\right\| d\lambda\\
                &\leq \sum_{s=-T_0, s\neq 0}^{T_0}\int_{I_s}\left\| P_1 \right\| d\lambda + \sum_{s=-T_0, s\neq 0}^{T_0}\int_{I_s}\left\| P_2 \right\| d\lambda + \frac{Cn|\theta|^{-2\Delta}}{B_T T}\\
                &\leq \frac{Cn|\theta|^{-2\Delta}}{B_T T} + \left\{\begin{aligned}
                    &\frac{Cn|\theta|^{2\tilde{d}}}{B_T T^{1-2d}}, & |\theta| \leq (2\rho + 4\pi) B_T \\
                    &\frac{Cn|\theta|^{-2\Delta - 1}}{T}, & |\theta| > (2\rho + 4\pi) B_T
                \end{aligned}\right.
            \end{align*}
            
        \end{proof}

        \begin{lemma}\label{lm:calculation_W_theta_lambda_minus_theta}
        For $t\in (0,0.5), \alpha \in (0,1]$,
            \begin{align*}
                \int_{\Pi}\int_{\Pi}\frac{1}{B_T}W\left(\frac{\theta - \lambda}{B_T}\right)|\theta|^{-2t} \left| \lambda - \theta  \right|^{\alpha}d\lambda d\theta = \mathcal{O}\left( B_T^{\alpha} \right)
            \end{align*}
        \end{lemma}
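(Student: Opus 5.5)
The plan is to integrate first in $\lambda$ at fixed $\theta$, using the substitution $\nu=(\theta-\lambda)/B_T$, and only then integrate in $\theta$. For fixed $\theta\in\Pi$, set $\lambda=\theta-B_T\nu$, so that $d\lambda = B_T\,d\nu$. This gives
\[
\int_{\Pi}\frac{1}{B_T}W\left(\frac{\theta-\lambda}{B_T}\right)|\lambda-\theta|^{\alpha}\,d\lambda
\le \int_{\mathbb{R}} W(\nu)\,B_T^{\alpha}|\nu|^{\alpha}\,d\nu .
\]
The inequality holds because enlarging the $\lambda$-range to all of $\mathbb{R}$ can only increase a nonnegative integrand. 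By Assumption \ref{assump:W_kernel}, $W$ has compact support on $[-\rho,\rho]$ and is bounded, being Lipschitz with compact support. Hence $\int W(\nu)|\nu|^{\alpha}d\nu\le \rho^{\alpha}\int W = \rho^{\alpha}$. Alternatively, since $|\nu|^\alpha\le 1+\nu^2$, this quantity is also bounded by $1+\int\nu^2W<\infty$. Either way, the inner integral is bounded by $C B_T^{\alpha}$ uniformly in $\theta$.

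By Tonelli's theorem, which applies because the integrand is nonnegative and measurable, we may integrate in $\lambda$ first. The double integral is then at most
\[
C B_T^{\alpha}\int_{\Pi}|\theta|^{-2t}\,d\theta = C B_T^{\alpha}\,\frac{2\pi^{1-2t}}{1-2t}.
\]
This is finite precisely because $t<1/2$ makes $|\theta|^{-2t}$ integrable near zero, and the resulting constant is independent of $T$ and $n$. This yields $\mathcal{O}(B_T^{\alpha})$.

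There is no real obstacle in this argument. The only point needing care is the order of integration. If one first substitutes in $\theta$ instead of $\lambda$, the singular weight $|\theta|^{-2t}$ becomes entangled with the kernel. Keeping the weight attached to the outer variable $\theta$ avoids this, and it is exactly why the bound requires no condition relating $t$ and $\alpha$ beyond $t<1/2$. The same reasoning covers the case in the bias proof where the weight is $|\theta|^{2\tilde d-2d}=|\theta|^{-2\Delta}$ with $\Delta<1/2$.
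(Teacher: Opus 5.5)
Your proof is correct and follows the same route as the paper: substitute $\nu=(\theta-\lambda)/B_T$ in the inner integral, bound it by $B_T^{\alpha}\int W(\nu)|\nu|^{\alpha}\,d\nu$, and then use the integrability of $|\theta|^{-2t}$ for $t<1/2$. The paper additionally splits the $\theta$-range at $|\theta|\le 2\pi B_T$, obtaining $B_T^{1-2t+\alpha}+B_T^{\alpha}$; that split is unnecessary, and your uniform-in-$\theta$ bound on the inner integral is cleaner and gives an explicit constant.
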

        \begin{proof}
            Let $\nu = (\theta - \lambda)/B_T,$ $W(\cdot)$ is bounded, then
            \begin{align*}
                LHS &= B_T^{\alpha} \int_{0}^{2\pi B_T} \int_{0}^{\frac{\theta}{B_T}} \theta^{-2t} W(\nu) \nu^{\alpha} d\nu d\theta + B_T^{\alpha} \int_{2\pi B_T}^{2\pi } \int_{0}^{2\pi} \theta^{-2t} W(\nu) \nu^{\alpha} d\nu d\theta\\
                &\lesssim B_T^{1-2t+\alpha} + B_T^{\alpha} \lesssim B_T^{\alpha}.
            \end{align*}
        \end{proof}

        \begin{lemma}\label{lm:rate_sum_to_int_W_only} Assumption \ref{assump:W_kernel} holds.
            \begin{align*}
                \left|  \frac{2\pi}{B_T T}\sum_{s=-T_0, s\neq 0}^{T_0}W\left(\frac{\theta - \lambda_s}{B_T}\right)  -  \int_{\Pi_1}\frac{1}{B_T}W\left(\frac{\theta - \lambda}{B_T}\right) d\lambda \right| = \mathcal{O}\left(\frac{1}{T B_T}\right)
            \end{align*}
        \end{lemma}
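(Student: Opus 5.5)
The plan is to compare the Riemann sum with the integral cell by cell, using the partition of $\Pi_1$ already introduced in the proof of Lemma~\ref{lm:conv_rate_before_approx_identity}. Recall that $I_s=[\lambda_s,\lambda_{s+1})$ for $s\ge1$ and $I_s=[\lambda_{s-1},\lambda_s)$ for $s\le -1$. These cells have length $2\pi/T$, and $\Pi_1$ is their union plus the two edge pieces $[-\pi,-\lambda_{T_0})$ and $[\lambda_{T_0},\pi)$.

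Writing $\frac{2\pi}{T}W\bigl(\frac{\theta-\lambda_s}{B_T}\bigr)=\int_{I_s}W\bigl(\frac{\theta-\lambda_s}{B_T}\bigr)\,d\lambda$, the difference in the statement is bounded by
\[
\frac{1}{B_T}\sum_{s\in N_0}\int_{I_s}\Bigl|W\Bigl(\frac{\theta-\lambda_s}{B_T}\Bigr)-W\Bigl(\frac{\theta-\lambda}{B_T}\Bigr)\Bigr|\,d\lambda+\frac{1}{B_T}\int_{\Pi_1\setminus\cup_s I_s}W\Bigl(\frac{\theta-\lambda}{B_T}\Bigr)\,d\lambda .
\]

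For the edge term, $W$ is bounded and each edge piece has length at most $\pi-\lambda_{T_0}\le 2\pi/T$. The edge term is therefore $\mathcal{O}(1/(TB_T))$ directly.

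For the main sum, I will use the kernel increment bound \eqref{eq:increment_kernel_W}, which follows from the Lipschitz continuity and compact support of $W$ in Assumption~\ref{assump:W_kernel}. Since $|\lambda_s-\lambda|\le 2\pi/T$ on $I_s$, each integrand is at most $\frac{L}{B_T}\cdot\frac{2\pi}{T}\bigl[\mathbb 1(|\theta-\lambda_s|\le\rho B_T)+\mathbb 1(|\theta-\lambda|\le\rho B_T)\bigr]$. Integrating over $I_s$ gives an extra factor $2\pi/T$, so each cell contributes $\mathcal{O}(1/(B_T^2T^2))$. Only cells with $s\in M_s(\theta,B_T)\cup\widetilde M_s(\theta,B_T)$ contribute. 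From the proof of Lemma~\ref{lm:basic_summation_integral_summing_from_M_B_T_theta_pointwise}, this set has cardinality $\mathcal{O}(TB_T)$, using $TB_T\ge1$. The main sum is thus $\mathcal{O}\bigl(TB_T\cdot\frac{1}{B_T}\cdot\frac{1}{B_T T^2}\bigr)=\mathcal{O}(1/(TB_T))$, uniformly in $\theta$.

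The only delicate step is the counting of active cells uniformly in $\theta$, including near $\pm\pi$ and near zero where a cell is excluded. The counting argument does not depend on the location of $\theta$, so these regions cause no difficulty. Combining the two bounds gives the claimed rate.
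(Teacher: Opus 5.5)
Your proposal is correct and follows the paper's own proof essentially step for step. Both arguments compare the sum and the integral cell by cell on the $I_s$, apply the Lipschitz/compact-support increment bound \eqref{eq:increment_kernel_W} with $\int_{I_s}|\lambda_s-\lambda|\,d\lambda=\mathcal{O}(T^{-2})$, and count the active cells via $|M_s(\theta,B_T)|,|\widetilde M_s(\theta,B_T)|=\mathcal{O}(TB_T)$ uniformly in $\theta$. The one difference is that you explicitly bound the mismatch between $\cup_s I_s$ and $\Pi_1$ near $\pm\pi$, which the paper drops by writing that step as an equality; since it has length $\mathcal{O}(1/T)$ (as a gap, or as an overhang past $\pm\pi$ if $I_{\pm T_0}$ are taken at full length $2\pi/T$), it contributes only $\mathcal{O}(1/(TB_T))$.
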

        \begin{proof}
            \begin{align*}
                &\left|  \frac{2\pi}{B_T T}\sum_{s=-T_0, s\neq 0}^{T_0}W\left(\frac{\theta - \lambda_s}{B_T}\right)  -  \int_{\Pi_1}\frac{1}{B_T}W\left(\frac{\theta - \lambda}{B_T}\right) d\lambda \right|\\
                & =\left|\sum_{s=-T_0, s\neq 0}^{T_0} \int_{I_s}\frac{1}{B_T}W\left(\frac{\theta-\lambda_s}{B_T}\right) d \lambda - \sum_{s=-T_0, s\neq 0}^{T_0} \int_{I_s} \frac{1}{B_T} W\left(\frac{\theta-\lambda}{B_T}\right) d \lambda\right| \\
                & \leq \sum_{s=-T_0, s\neq 0}^{T_0} \int_{I_s} \frac{1}{B_T}\left|W\left(\frac{\theta-\lambda_s}{B_T}\right)-W\left(\frac{\theta -\lambda}{B_T}\right)\right| d \lambda \\
                & \leq \frac{L}{B_T^2} \sum_{s=-T_0, s\neq 0}^{T_0}\int_{I_s}|\lambda_s - \lambda| d\lambda \mathbb{1}\left(|\theta - \lambda_s| \leq \rho B_T\right)\\
                &+ \frac{L}{B_T^2} \sum_{s=-T_0, s\neq 0}^{T_0}\int_{I_s}|\lambda_s - \lambda| \mathbb{1}\left(|\theta - \lambda| \leq \rho B_T\right) d\lambda \; \;\text{ (by (\ref{eq:increment_kernel_W}))}
            \end{align*}
            The first term $\leq C|M_s(\theta, B_T)| /(T^2 B_T^2)  = \frac{C}{TB_T}$.
            Analogous to the proof for $P_{1,2}$ in Lemma \ref{lm:conv_rate_before_approx_identity} (\ref{eq:P_1_2_lemma}), we have the second term $\leq C|\widetilde{M}_s(\theta, B_T)| /(T^2 B_T^2)  = \frac{C}{TB_T}$. 
        \end{proof}

        \begin{lemma}\label{lm:rate_truncation_by_window} Assumption \ref{assump:model}, \ref{assump:semiparametric_long_memory}, \ref{assump:pervasive_short_mem}, and \ref{assump:W_kernel} hold. For $T$ sufficiently large such that $B_T \leq \frac{\pi}{2\rho}$,
            \begin{align*}
                \int_{\Pi}\left|  \int_{\Pi} \frac{1}{B_T} W\left(\frac{\theta - \lambda}{B_T}\right) d\lambda - 1  \right|\left\|\Sigma(\theta)\right\||\theta|^{2\tilde{d}} d\theta = \mathcal{O}\left(nB_T\right)
            \end{align*}
        \end{lemma}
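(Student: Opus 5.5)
The plan is to split the integrand by where the window mass goes. Put
\[
\phi_T(\theta)\coloneqq\int_{\Pi}\frac{1}{B_T}W\!\left(\frac{\theta-\lambda}{B_T}\right)d\lambda,
\]
and note that $\int_{\mathbb{R}}\frac{1}{B_T}W\bigl(\frac{\theta-\lambda}{B_T}\bigr)d\lambda=1$ by Assumption~\ref{assump:W_kernel}.

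Because $W$ is supported on $[-\rho,\rho]$, the only loss comes from $\lambda$ outside $\Pi$. For every $\theta$ with $|\theta|\le \pi-\rho B_T$, the window $\{\lambda:|\lambda-\theta|\le\rho B_T\}$ lies entirely in $\Pi$. There $\phi_T(\theta)=1$ exactly, and the integrand vanishes. The condition $B_T\le \pi/(2\rho)$ guarantees that this set is nonempty and that it contains a neighbourhood $\{|\theta|\le \pi/2\}$ of the singularity at zero. This is the key point: the long-memory blow-up of $\|\Sigma(\theta)\|$ never meets the boundary defect.

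It therefore remains to handle the edge set $E_T=\{\theta\in\Pi:\ \pi-\rho B_T<|\theta|\le\pi\}$, whose Lebesgue measure is at most $2\rho B_T$. On $E_T$ we have $0\le\phi_T(\theta)\le 1$, since $W\ge 0$. Hence $|\phi_T(\theta)-1|\le 1$, which is enough, because the measure of $E_T$ already supplies the factor $B_T$. Also $|\theta|\ge \pi/2$ on $E_T$, so $|\theta|^{2\tilde d}\le \pi^{2\tilde d}$. By Lemma~\ref{lemma:div_component_eval_specific_form} and Assumption~\ref{assump:bdd_error_eval}, $\|\Sigma(\theta)\|\le \lambda_1^{\chi}(\theta)+\Lambda_\xi^+\le n\beta_0^\chi(\theta)+\Lambda_\xi^+\le Cn$ for $|\theta|\ge\pi/2$, because $|\theta|^{-2d}$ is bounded away from zero frequency. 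Combining these three bounds gives
\[
\int_{E_T}|\phi_T(\theta)-1|\,\|\Sigma(\theta)\|\,|\theta|^{2\tilde d}\,d\theta\le 2\rho B_T\cdot Cn\,\pi^{2\tilde d}=\mathcal{O}(nB_T),
\]
which is the claim.

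I expect no real obstacle. The one point to check carefully is the orientation of the window: one must confirm that the missing mass of $\phi_T$ is confined to a $\rho B_T$-neighbourhood of $\pm\pi$, and is not created near $0$. One could instead interpret the estimator periodically, wrapping the window around the circle. In that case $\phi_T\equiv 1$ and the bound is trivially true, so the argument above covers the worst case.
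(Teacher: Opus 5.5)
Your proof is correct and follows the paper's argument: the defect $\phi_T(\theta)-1$ vanishes for $|\theta|<\pi-\rho B_T$, so only the edge set $E_T$ of measure at most $2\rho B_T$ contributes, and there the defect is bounded by $1$ and the weighted spectral norm is $\mathcal{O}(n)$. The paper writes the last step as $\|\Sigma(\theta)\||\theta|^{2\tilde d}\le Cn|\theta|^{-2\Delta}$ integrated over $E_T$, which is equivalent to your direct bound using $|\theta|\ge\pi/2$. Like yours, it also tacitly relies on Assumption~\ref{assump:bdd_error_eval} and the spectral form of Assumption~\ref{assump: spec_den_specific_form}, neither of which is among the lemma's listed hypotheses.
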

        \begin{proof}
            \begin{align*}
                &\int_{\Pi}\left|  \int_{\Pi} \frac{1}{B_T} W\left(\frac{\theta - \lambda}{B_T}\right) d\lambda - 1  \right|\left\|\Sigma(\theta)\right\||\theta|^{2\tilde{d}} d\theta \\
                & = \int_{\Pi}\left|  \int_{\Pi} \frac{1}{B_T} W\left(\frac{\theta - \lambda}{B_T}\right) d\lambda - \int_{-\rho}^{\rho} W(\nu) d\nu  \right|\left\|\Sigma(\theta)\right\||\theta|^{2\tilde{d}} d\theta\\
                &= Cn\int_{\Pi}\left|  \int_{\frac{\theta - \pi}{B_T}}^{\frac{\theta + \pi}{B_T}} W(\nu) d\nu - \int_{-\rho}^{\rho} W(\nu) d\nu  \right||\theta|^{-2\Delta} d\theta
            \end{align*}
            Take a look at the inner term, we notice that when $B_T<\pi/\rho$, for $\theta \in (-\pi + \rho B_T, \pi - \rho B_T)$, $\left|  \int_{\frac{\theta - \pi}{B_T}}^{\frac{\theta + \pi}{B_T}} W(\nu) d\nu - \int_{-\rho}^{\rho} W(\nu) d\nu  \right| = 0$. Let's denote $E_T = \Pi \ (-\pi + \rho B_T, \pi - \rho B_T) = (-\pi, -\pi + \rho B_T] \cup [\pi - \rho B_T, \pi]$. Thus, 
            \begin{align*}
                &Cn\int_{\Pi}\left|  \int_{\frac{\theta - \pi}{B_T}}^{\frac{\theta + \pi}{B_T}} W(\nu) d\nu - \int_{-\rho}^{\rho} W(\nu) d\nu  \right||\theta|^{-2\Delta} d\theta \leq Cn\int_{E_T}\left\| W(\nu) \right\|_{L_1}|\theta|^{-2\Delta} d\theta\\
                &\leq Cn\int_{E_T}|\theta|^{-2\Delta} d\theta \leq Cn\int_{-\pi}^{-\pi + \rho B_T}|\theta|^{-2\Delta} d\theta + Cn\int_{\pi - \rho B_T}^{\pi}|\theta|^{-2\Delta} d\theta
            \end{align*}
            For $B_T \leq \frac{\pi}{2\rho}$, $\int_{-\pi}^{-\pi + \rho B_T}|\theta|^{-2\Delta} d\theta$ and  $\int_{\pi - \rho B_T}^{\pi}|\theta|^{-2\Delta} d\theta \leq C\rho B_T$.
        \end{proof}

\subsection{Properties of the periodogram under long memory}        
In this section, we will extend Lemma 1 in \cite{kim_cumulant_long_memory} to the multivariate case. Before that, let's first introduce some definitions borrowed from \cite{kim_cumulant_long_memory}.
Define functions $L_0$ and $L_1$ as 
\begin{align}\label{eq:defn_L}
    L_m(\lambda) = \begin{cases}
        e^{-m}T, & \text{ if } |\lambda| < \frac{e^m}{T}\\
        |\lambda|^{-1}\{ \log (T|\lambda|)  \}^m, & \text{ if } \frac{e^m}{T} <|\lambda| \leq \pi,
    \end{cases} \text{ for } \lambda \in \Pi, m = 0,1.
\end{align}
$L_m(\lambda)$ is symmetric, and periodically extended to $\mathbb{R}$. 
Further denote $H(\lambda) = \sum_{t=1}^T e^{-\iota t \lambda}$, discrete Fourier transform $\mathbf{d}(\lambda) = \sum_{t=1}^T X_{t}exp(-\iota \lambda t)$, $d_i(\lambda)$ is the $i_{th}$ entry of $\mathbf{d}(\lambda)$, $\lambda \in \Pi$. 

Here are some properties of $L_m(\lambda)$ and $H(\lambda)$. 
\begin{lemma}\label{lemma:property_L}
    For every integer $T$, 
    \begin{enumerate}[(i)]
        \item $L_1(\lambda)\leq C \log T L_0(\lambda)$
        \item $|H(\lambda)| \leq C L_0(\lambda)$
        \item $|H(\lambda)| \leq 1/|\lambda|$
        \item $\int_0^{\pi} L_0^2(\lambda) d\lambda \leq CT$
        \item $\int_{0}^{\pi} L_0(b_1 + \lambda) L_0(b_2 - \lambda) d\lambda \leq CL_1(b_1 + b_2)$, $b_1, b_2 \in \mathbb{R}$
        \item $2\pi H(\alpha - \gamma) = \int_{\Pi} H(\alpha - \beta)H(\beta - \gamma)d\beta$
    \end{enumerate}
\end{lemma}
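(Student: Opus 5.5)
The plan is to verify each item directly from the closed forms. We have $|H(\lambda)| = |\sin(T\lambda/2)/\sin(\lambda/2)|$. For $L_0$, we have $L_0(\lambda)=\min\{T,|\lambda|^{-1}\}$ (since $e^0=1$). Throughout, $\lambda$ is reduced modulo $2\pi$ into $\Pi$, consistent with the periodic extension of $L_m$.

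\emph{Items (i)--(iv).} For (i) we split into three ranges.
\begin{itemize}
\item If $|\lambda|<1/T$, then $L_1=e^{-1}T\le T=L_0$.
\item If $1/T\le|\lambda|<e/T$, then $L_1=e^{-1}T\le |\lambda|^{-1}=L_0$.
\item If $|\lambda|\ge e/T$, then $L_1=|\lambda|^{-1}\log(T|\lambda|)\le \log(\pi T)\,L_0(\lambda)\le C\log T\,L_0(\lambda)$.
\end{itemize}
For (ii) and (iii) we use $|H(\lambda)|\le T$ (triangle inequality) together with $|\sin(\lambda/2)|\ge |\lambda|/\pi$ on $\Pi$. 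These give $|H(\lambda)|\le \pi/|\lambda|$ and hence $|H|\le \pi\min\{T,|\lambda|^{-1}\}=\pi L_0$. So (iii) holds up to the absolute constant $\pi$, which we absorb into $C$ wherever (iii) is used. Item (iv) is an explicit computation:
\[
\int_0^{1/T}T^2\,d\lambda+\int_{1/T}^{\pi}\lambda^{-2}\,d\lambda\le 2T .
\]

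\emph{Item (vi).} Expand the product as
\[
H(\alpha-\beta)H(\beta-\gamma)=\sum_{t,s=1}^T e^{-\iota t\alpha+\iota s\gamma}\,e^{\iota (t-s)\beta}.
\]
Integrating over $\beta\in\Pi$ gives $2\pi\mathbb{1}(t=s)$, and summing over $t=s$ returns $2\pi H(\alpha-\gamma)$. Fubini applies because the sums are finite.

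\emph{Item (v), the main step.} Substitute $u=b_1+\lambda$ and set $c=b_1+b_2$, reduced modulo $2\pi$ into $\Pi$. Since $L_0\ge 0$ is $2\pi$-periodic, the integral is at most $\int_{\Pi}L_0(u)L_0(c-u)\,du$.
\begin{itemize}
\item If $|c|\le e/T$, Cauchy--Schwarz and (iv) bound this by $CT\asymp L_1(c)$.
\item If $|c|>e/T$, split $\Pi$ into three regions: $A=\{|u|\le|c|/2\}$, $B=\{|c-u|\le |c|/2\}$, and the rest $R$.
\end{itemize}
On $A$, the factor $L_0(c-u)\le 2/|c|$, and $\int_A L_0(u)\,du\le 1+\log(T|c|/2)$. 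So this piece is $\lesssim |c|^{-1}(1+\log(T|c|))\lesssim L_1(c)$, using $\log(T|c|)\ge1$. Region $B$ is symmetric. On $R$, both $|u|$ and $|c-u|$ (taken modulo $2\pi$) are $\gtrsim |c|$, so the integrand is at most $L_0(u)\cdot C/|c|$ restricted to $|u|\gtrsim|c|$. That gives at most $C|c|^{-1}\log(\pi/|c|)$. This is the delicate point: the log here must be compared with $\log(T|c|)$. I would instead bound the $R$ piece by $\int_{|u|>|c|/2}C|u|^{-1}|c-u|^{-1}du$. Using $|c-u|\ge |u|/3$ whenever $|c-u|\ge|c|/2$ and $|u|\ge |c|/2$, this is $\lesssim\int_{|c|/2}^{\infty}u^{-2}du\lesssim |c|^{-1}\le L_1(c)$. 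Combining the three regions gives (v).

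I expect the bookkeeping of the periodic wrap-around near $u=\pm\pi$ in (v) to be the only real obstacle. It is handled by working with the distance to $2\pi\mathbb{Z}$ throughout, which is how $L_0$ is defined after periodic extension.
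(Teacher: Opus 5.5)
Your proposal is correct, but it takes a different route from the paper. The paper checks only (i) directly, by the same three-range computation you give. It derives (iii) from (ii) and otherwise cites the literature: (ii) from Nordman--Lahiri or Dahlhaus, (iv) and (v) from Lemma~2 of Dahlhaus or Kim, and (vi) as ``easy to check''. You instead prove every item from the closed forms $|H(\lambda)|=|\sin(T\lambda/2)/\sin(\lambda/2)|$ and $L_0=\min\{T,|\lambda|^{-1}\}$, which buys two things.

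First, your Jordan-inequality bound shows that (iii) holds only with a constant. As literally stated, $|H(\lambda)|\le 1/|\lambda|$ is false: for odd $T$, $|H(\pi)|=1>1/\pi$. The paper's one-line derivation from (ii) also only yields $C/|\lambda|$. Every downstream use, e.g.\ Lemma~\ref{lm:cum_property_long_mem}(i), carries a generic $C$, so your constant $\pi$ is harmless.

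Second, your treatment of (v) exposes the one nontrivial point hidden behind the citation. On $R$, the crude bound $L_0(c-u)\lesssim|c|^{-1}$ only gives $|c|^{-1}\log(\pi/|c|)$, which is not $O(L_1(c))$ when $|c|\approx e/T$. One must use the decay of both factors, as you do.

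When you write this up, drop the abandoned first bound. Also add the short wrap-around check behind $\mathrm{dist}(c-u,2\pi\mathbb{Z})\ge|u|/3$ on $R$:
\begin{itemize}
\item If $|u|\le 3|c|/2$, it follows from $\mathrm{dist}(c-u,2\pi\mathbb{Z})>|c|/2$, which holds by definition of $R$.
\item If $|u|>3|c|/2$, then $|c|<2\pi/3$. Hence $|c-u|\ge|u|-|c|>|u|/3$, and $2\pi-|c-u|\ge 2\pi-|c|-|u|>\pi/3\ge|u|/3$.
\end{itemize}
Two cosmetic points: $\int_A L_0$ is missing a factor $2$ for the two-sided region, and $\log(\pi T)\le C\log T$ in (i) needs $T\ge 2$, a restriction the paper also omits.
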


\begin{proof}
    (i) For $|\lambda| \leq 1/T$, $L_1(\lambda) = e^{-1} L_0(\lambda)$; $1/T \leq |\lambda| \leq e/T$, $L_1(\lambda) = e^{-1}T = e^{-1}T|\lambda| L_0(\lambda) \leq L_0(\lambda)$; $e/T \leq |\lambda| \leq \pi$, $L_1(\lambda) = |\lambda|^{-1} \log T|\lambda| \leq L_0(\lambda) \log(T\pi) \leq C \log T L_0(\lambda)$. \\
    (ii) See equation (20) in \cite{nordman_lahiri} or equation (6) in \cite{Dahlhaus}.\\
    (iii) By (ii) and the definition of $L_0(\cdot)$.\\
    (iv),(v) See Lemma 2 in \cite{Dahlhaus} or Lemma 2 in \cite{kim_cumulant_long_memory}.\\
    (vi) It is easy to check, or see Equation 4 in \cite{Dahlhaus}.
\end{proof}

        \begin{lemma}\label{lm:cum_property_long_mem}
  Let $1 \leq k_1 \leq k_2 \leq T$ , and $a_1, a_2, \dots, a_l \in \{\pm \lambda_{k_1}, \pm \lambda_{k_2}\}$, $|a_1| \leq \dots \leq |a_l|$ with $2 \leq l \leq 4$. Suppose $\{X_t\}$ is a real-valued vector stationary linear process and that Assumption \ref{assump:model}, \ref{assump:semiparametric_long_memory}, \ref{assump:pervasive_short_mem}, and \ref{assump:bdd_error_eval} hold. Then, for a generic constant $C > 0$ not depending on $n$ or $1 \leq k_1 \leq k_2 \leq T$, it holds that for $d = \max_i d_i$ 
  \begin{itemize}
    \item[(i)] $\left| \text{Cum} \left( d_{i}(a_1), d_{j}(a_2) \right) \right| \leq C |a_1|^{-2d} \left( |a_2|^{-1} + L_1(a_1 + a_2) \right).$
    
    \item[(ii)] $\left| \text{Cum} \left( d_{q_1}(a_1), \dots, d_{q_4}(a_4) \right) \right| \leq C\left\| Q^{Z}(0,0,0) \right\|_{op} \left\{ |a_4|^{d-1} |a_{3}|^{-d} + T[\log T]^{3} \right\} \prod_{j=1}^{4} |a_j|^{-d}.$ where $\{Z_t = (u_t, \epsilon_t)^{T}; t\in\mathbb{Z}\}$ is second–order white noise, with
nonsingular covariance matrix $\Gamma_{Z}$ and finite fourth–order moments. $Q^{Z}(0,0,0)$ is the fourth-order cumulant of $Z_{t}$. The definition of $\|Q\|_{op}$ follows from $\|Q\|_{2,2,2,2}$ in \cite{lim2006singularvalueseigenvaluestensors}.
    
    \item[(iii)] If, in addition, Assumption \ref{assump: spec_den_specific_form} holds,
    \begin{align}\label{eq:expectation_periodogram_spectral_density}
        \left\|  \text{Cum}(\mathbf{d}(a_1), \mathbf{d}(-a_1)) - 2\pi T\Sigma(a_1)  \right\|_{op} \leq Cn |a_1|^{-2d-1} \log T
    \end{align}
\end{itemize}

\end{lemma}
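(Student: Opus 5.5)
The plan is to follow the scheme of Lemma 1 in \cite{kim_cumulant_long_memory}, working with the linear representation of $X_t$ and reducing every cumulant of discrete Fourier transforms to an integral against the transfer functions and the kernel $H$. Write $X_t = \sum_m \Psi_m Z_{t-m}$ with $Z_t = (u_t',\epsilon_t')'$ and transfer function $\Psi(\lambda) = [B(\lambda), B_\xi(\lambda)]$, where $B(\lambda)$ has entries $b_{il}(e^{-\iota\lambda})$. Then
\[
d_i(a) = \frac{1}{2\pi}\int_{\Pi} \Psi_{i\cdot}(\beta) H(a-\beta)\, \mathbf{z}(\beta)\, d\beta ,
\]
with $\mathbf{z}$ the (formal) Fourier transform of $Z$. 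Using multilinearity of cumulants, $k$-th order cumulants of $d_{q_1}(a_1),\dots,d_{q_k}(a_k)$ become $(k-1)$-fold integrals of $\prod_j \Psi_{q_j\cdot}(\beta_j)$ contracted against the $k$-th order cumulant tensor of $Z$, times $\prod_j H(a_j-\beta_j)$, with the constraint $\sum\beta_j \equiv 0$. By Assumptions \ref{assump:semiparametric_long_memory}, \ref{assump:pervasive_short_mem}, \ref{assump:row_boundness_G} and \ref{assump:bdd_error_eval}, each row satisfies $\|\Psi_{i\cdot}(\beta)\|_2 \le C|\beta|^{-d}$. Moreover, $\|\Psi_{i\cdot}(\beta)-\Psi_{i\cdot}(a)\|$ is controlled by the increment structure of Lemma \ref{lm:increment_bound_for_Sigma} applied row-wise, i.e.\ by $|\beta-a|\,|\cdot|^{-d-1}$ away from the origin. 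These bounds are uniform in $i$ and $n$; they play the role of the scalar filter bounds in \cite{kim_cumulant_long_memory}.

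For (i), the second-order cumulant equals $\frac{1}{2\pi}\int \Psi_{i\cdot}(\beta)\Gamma_Z\Psi_{j\cdot}^*(\beta) H(a_1-\beta)H(a_2+\beta)\,d\beta$. I split this into the piece where $\Psi_{i\cdot}(\beta)\Gamma_Z\Psi^*_{j\cdot}(\beta)$ is replaced by its value at $a_1$, and the remainder.
\begin{itemize}
\item The replaced piece is handled by Lemma \ref{lemma:property_L}(vi), which gives a multiple of $|a_1|^{-2d}H(a_1+a_2)$. For Fourier frequencies this is either $0$ or $T$; it is bounded by $L_1(a_1+a_2)$ when $a_1\neq -a_2$, and by the diagonal term otherwise.
\item The remainder is bounded by splitting the $\beta$-range into $|\beta|<|a_1|/2$, the neighbourhood of $a_1$, and the rest. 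On each region I use Lemma \ref{lemma:property_L}(ii),(iii),(v) and the increment bound. This produces $|a_1|^{-2d}(|a_2|^{-1}+L_1(a_1+a_2))$, exactly as in the scalar case.
\end{itemize}

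For (iii), I specialize (i) to $a_2=-a_1$. Writing $\mathrm{Cum}(\mathbf{d}(a_1),\mathbf{d}(-a_1)) - 2\pi T\Sigma(a_1)$ as $\frac{1}{2\pi}\int (\Sigma(\beta)-\Sigma(a_1))|H(a_1-\beta)|^2 d\beta$ uses Lemma \ref{lemma:property_L}(vi) together with $\Sigma = \Psi\Gamma_Z\Psi^*/(2\pi)$. The operator norm is then bounded using Lemma \ref{lm:increment_bound_for_Sigma}, which carries the factor $n$, and $|H|^2\le CL_0^2$. Splitting at $|\beta-a_1|\lessgtr|a_1|/2$ yields $Cn|a_1|^{-2d-1}\log T$, where the logarithm comes from $\int L_0$ over the far region.

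For (ii), the fourth cumulant of $Z$ factors because $Z$ is white noise. The cumulant becomes a triple integral of the contraction of $Q^Z(0,0,0)$ with the four row vectors $\Psi_{q_j\cdot}(\beta_j)$, which is bounded by $\|Q^Z\|_{op}\prod_j\|\Psi_{q_j\cdot}(\beta_j)\|_2\le C\|Q^Z\|_{op}\prod_j|\beta_j|^{-d}$, times $\prod_j |H(a_j-\beta_j)|$. After this, the argument is purely scalar, and the integral bound of \cite{kim_cumulant_long_memory} (iterated use of Lemma \ref{lemma:property_L}(v) and localization near each $a_j$) gives $\{|a_4|^{d-1}|a_3|^{-d}+T\log^3T\}\prod_j|a_j|^{-d}$.

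I expect the main obstacle to be the remainder in (i)/(iii) near $\beta\approx -a_1$ and near the origin. For heterogeneous $d$ across rows, the phase factor $l_j$ jumps at zero (the $\mathbb{1}\{u_1u_2<0\}$ term in Lemma \ref{lm:increment_bound_for_Sigma}). I would handle this by bounding that region crudely by $\int_{|\beta|<|a_1|/2}|\beta|^{-2d}L_0(a_1-\beta)L_0(a_2+\beta)d\beta \lesssim |a_1|^{-2d}|a_2|^{-1}$, using $L_0(a_1-\beta)\lesssim|a_1|^{-1}$ there, which avoids needing continuity across zero.
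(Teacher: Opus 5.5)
Your architecture matches the paper's proof. It uses the same linear representation in $Z_t=(u_t',\epsilon_t')'$; the paper makes this rigorous via \cite{hosoya}, Lemma A2.1, rather than through a formal Fourier transform of $Z$. It uses the same $n$-free row bound $\|\Psi_{i\cdot}(\beta)\|_2\le C|\beta|^{-d}$, which, as in the paper, needs Assumption~\ref{assump:row_boundness_G}. Part (ii) proceeds by contraction against $\|Q^Z(0,0,0)\|_{op}$ followed by the scalar argument of \cite{nordman_lahiri} and \cite{kim_cumulant_long_memory}. Part (iii) combines Lemma~\ref{lemma:property_L}(vi), Lemma~\ref{lm:increment_bound_for_Sigma} and $|H|^2\le CL_0^2$.

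The one ingredient you should drop is the row-wise increment bound for $\Psi_{i\cdot}$ that you assert is ``uniform in $i$ and $n$''. It is not available:
\begin{itemize}
\item Lemma~\ref{lm:increment_bound_for_Sigma} is an operator-norm bound carrying a factor $n$.
\item The Lipschitz condition in Assumption~\ref{assump:pervasive_short_mem} is $\sqrt{n}L_G$ in operator norm, so increments of a single row of $G$ are only controlled up to $\sqrt{n}$.
\item Nothing is assumed about the regularity of $B_\xi$ itself, only about $\Sigma_\xi$.
\end{itemize}
Deriving the bound from Lemma~\ref{lm:increment_bound_for_Sigma} would put a factor $n$ into (i), whose constant must be $n$-free.

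Fortunately (i) needs neither increments nor the replacement by $\sigma_{ij}(a_1)$. The paper splits at $|\omega|\lessgtr|a_1|/2$ and uses $|\sigma_{ij}(\omega)|\le\sqrt{\sigma_{ii}(\omega)\sigma_{jj}(\omega)}\le C|\omega|^{-2d}$ throughout.
\begin{itemize}
\item On the inner region it uses $|H(\lambda)|\le 1/|\lambda|$ and $|a_2+\omega|\ge|a_2|/2$, giving $C|a_1|^{-2d}|a_2|^{-1}$.
\item On the outer region it uses $|\omega|^{-2d}\le C|a_1|^{-2d}$ with Lemma~\ref{lemma:property_L}(v), giving $C|a_1|^{-2d}L_1(a_1+a_2)$.
\end{itemize}
This is exactly the crude bound in your last paragraph, applied over the whole range.

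For (iii), your split at $|\beta-a_1|\lessgtr|a_1|/2$ with a crude bound on the far region is a legitimate simplification. It avoids the sign-change indicator term of Lemma~\ref{lm:increment_bound_for_Sigma} in the row-heterogeneous case, which the paper carries through and integrates explicitly. However, the $\log T$ comes from the near region, not the far one. There the mean value theorem gives $\|\Sigma(a_1-\omega)-\Sigma(a_1)\|\le Cn|\omega||a_1|^{-2d-1}$, and $\int_{1/T\le|\omega|\le|a_1|/2}L_0^2(\omega)|\omega|\,d\omega\asymp\log(T|a_1|)$. The far region contributes only $Cn|a_1|^{-2d-1}$. The final bound is unaffected.
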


\begin{proof}
This lemma generalizes Lemma 1 of \cite{kim_cumulant_long_memory} in two respects: first, it treats multivariate processes; second, it is developed within a factor–model framework that includes idiosyncratic noise in addition to the long–memory common component. 

    (i) 
    \begin{align*}
        Cum\left( d_{i}(a_1), d_{j}(a_2)  \right) &= \sum_{t_1, t_2} e^{-\iota (a_1 t_1 + a_2t_2)}  Cum \left( X_{it_1},  X_{jt_2} \right)\\
        &= \sum_{t_1, t_2} e^{-\iota (a_1 t_1 + a_2t_2)} \int_{\Pi} exp(\iota \omega (t_1-t_2))\sigma_{ij}(\omega) d\omega
    \end{align*}
    Plugging in the definition of $H(\lambda)$, 
    \begin{align*}
        Cum\left( d_{i}(a_1), d_{j}(a_2)  \right) &= \int_{\Pi} H(a_1 - \omega) H(a_2 + \omega) \sigma_{ij}(\omega) d\omega\\
        &= \int_{|\omega| \leq |a_1|/2} H(a_1 - \omega) H(a_2 + \omega) \sigma_{ij}(\omega) d\omega \\
        &+ \int_{|\omega| > |a_1|/2} H(a_1 - \omega) H(a_2 + \omega) \sigma_{ij}(\omega) d\omega
    \end{align*}
Notice that for $|\omega| \leq |a_1|/2$, $\frac{1}{|a_1 - \omega|} \leq \frac{2}{|a_1|}$, $\frac{1}{|a_2 + \omega|} \leq \frac{2}{|a_2|}$, and $\sigma_{ii}(\omega) \leq |\omega|^{-2d} g_{ii}(\theta) + \sigma^{\xi}_{ii}(\omega) \leq C|\omega|^{-2d} $, $|\sigma_{ij}(\omega)| \leq \sqrt{\sigma_{ii}(\omega)\sigma_{jj}(\omega)} \leq C|\omega|^{-2d}$. 
    Then applying Lemma \ref{lemma:property_L} (iii) yields
    \begin{align*}
        &\left|\int_{|\omega| \leq |a_1|/2} H(a_1 - \omega) H(a_2 + \omega) \sigma_{ij}(\omega) d\omega \right| \leq C \int_{|\omega| \leq |a_1|/2}  \frac{1}{|a_1 - \omega|}  \frac{1}{|a_2 + \omega|} |\omega|^{-2d} d\omega\\
        &\leq C |a_1|^{-1} |a_2|^{-1} \int_{|\omega| \leq |a_1|/2} |\omega|^{-2d} d\omega \leq C |a_1|^{-2d} |a_2|^{-1} 
    \end{align*}

On the other hand, applying Lemma \ref{lemma:property_L} (ii) 
\begin{align*}
    &\left|\int_{|\omega| > |a_1|/2} H(a_1 - \omega) H(a_2 + \omega) \sigma_{ij}(\omega) d\omega \right|\\
    &\leq C \int_{|\omega| > |a_1|/2} |\omega|^{-2d} \left| H(a_1 - \omega) \right| \left| H(a_2 + \omega) \right| d\omega \\
    &\leq C |a_1|^{-2d} \int_{|\omega| > |a_1|/2} L_0(a_1 - \omega)  L_0(a_2 + \omega) d\omega\\
    &\leq C |a_1|^{-2d} \int_{\Pi}L_0(a_1 - \omega)  L_0(a_2 + \omega) d\omega\\
    &\leq C |a_1|^{-2d} L_1(a_1 + a_2)
\end{align*}

Thus,
\begin{align*}
    \left|  Cum\left( d_{i}(a_1), d_{j}(a_2)  \right)  \right| \leq C|a_1|^{-2d}\left(  |a_2|^{-1} + L_1(a_1 + a_2) \right)
\end{align*}

(ii) 
\begin{align}\label{eq:relation_cum_dft_X}
    Cum\left( d_{q_1}(a_1), \dots, d_{q_4}(a_4) \right) &= \sum_{t_1, t_2, t_3, t_4} e^{-\iota \sum_{l=1}^4 a_l t_l} Cum \left( X_{q_1 t_1}, X_{q_2 t_2}, X_{q_3 t_3},X_{q_4 t_4}\right)
\end{align}
Let $Q_{q_1, q_2, q_3, q_4}^X(t_2-t_1, t_3-t_1, t_4-t_1)$ 
be the joint fourth cumulant of $X_{q_1 t_1}, X_{q_2 t_2}, X_{q_3 t_3},X_{q_4 t_4}$, and let $\widetilde{Q}_{q_1, q_2, q_3, q_4}^X(\omega_1, \omega_2, \omega_3)$ be the fourth-order spectral density such that 
\begin{align}\label{eq:defn_fourth_order_cum}
    \widetilde{Q}_{q_1, q_2, q_3, q_4}^X(\omega_1, \omega_2, \omega_3) = (2\pi)^{-3} \sum_{u_1, u_2, u_3} Q_{q_1, q_2, q_3, q_4}^X(u_1, u_2, u_3)e^{ -\iota \sum_{i=1}^3 u_i \lambda_i }
\end{align}

By \eqref{eq:xi_representation} in Assumption \ref{assump:model}, we may write the model as 
\begin{align*}
    X_t = \chi_t + \xi_t = B(L)u_t + B_{\xi}(L)\epsilon_t = Y(L)Z_t = \sum_{m\in \mathbb{Z}}\mathbf{Y}_m Z_{t-m}
\end{align*}
where $Y(L) = (B(L), B_{\xi}(L)) \in \mathbb{C}^{n \times (q+n)}$, $\mathbf{Y}_m \in \mathbb{C}^{n \times (q+n)}$,   $\{Z_t = (u_t^\top, \epsilon_t^\top)^{\top}; t\in\mathbb{Z}\}$ is second–order white noise, with
nonsingular covariance matrix $\Gamma_{Z}$ and finite fourth–order moments. Then the spectral density of $X_t$ can be written as 
\begin{align*}
    \Sigma(\theta) = \frac{1}{2\pi} \mathbf{Y}(\theta) \Gamma_{Z} \mathbf{Y}^{*}(\theta)
\end{align*}
where $\mathbf{Y}(\theta) = \sum_m \mathbf{Y}_m e^{-\iota \theta m} = \{y_{ij}(\theta)\}_{i = 1,\dots, n; j = 1, \dots, n+q}$, $y_{ij}(\theta) = \sum_m \mathbf{Y}_{m,i,j} e^{-\iota \theta m}$.

Let $\mathbf{y}_i(\theta) = (y_{i1}(\theta), \dots, y_{i, n+q}(\theta)) = (b_{i1}(\theta), \dots, b_{iq}(\theta), b_{\xi,i1}(\theta) ,\dots,   b_{\xi,in}(\theta))$, $i = 1,\dots, n$. Thus, according to Assumption \ref{assump: spec_den_specific_form},
\begin{align*}
    \left\|\mathbf{y}_i(\theta)\right\|^2  = \sum_{j = 1}^q |b_{ij}(\theta)|^2 + \sum_{l = 1}^n |b_{\xi,il}(\theta)|^2 \leq |\theta|^{-2d} \sum_{j = 1}^q |g_{ij}(\theta)|^2 + \sum_{l = 1}^n |b_{\xi, il}(\theta)|^2
\end{align*}

By Assumption \ref{assump:bdd_error_eval}, we have $\left\|B_{\xi}(\theta)\right\|_{op} \leq C_{\xi}$, where $C_{\xi}$ does not depend on $n$ and $T$. Let $B_{\xi,i}(\theta)$ to be the $i_{th}$ row of $B_{\xi}(\theta)$, then $\left\| B_{\xi,i}(\theta) \right\|_2^2 = e_i^\top B_{\xi}(\theta)B_{\xi}^*(\theta)e_i \leq \lambda_{1}(B_{\xi}(\theta)B_{\xi}^*(\theta)) =\left\|B_{\xi}(\theta)\right\|_{op}^2 \leq C^2_{\xi} $. That is, for $i  = 1,\dots,n$, $\sum_{l = 1}^n |b_{\xi,il}(\theta)|^2 \leq C^2_{\xi}$.

By Assumption \ref{assump:row_boundness_G}, we have $\left\|\mathbf{y}_i(\theta)\right\|^2 \leq C_1|\theta|^{-2d} + C_{\xi}^2 \leq C|\theta|^{-2d}$, where $C$ is independent of $n, T$. Thus, by Parseval's theorem, we have $\sum_{m \in \mathbb{Z}}|Y_{m,i,j}|^2 = \frac{1}{2\pi} \int_{\Pi} |y_{ij}(\theta)|^2 d\theta \leq \int_{\Pi}|\theta|^{-2d} d\theta < \infty$.

From \cite{hosoya} Lemma A2.1, specifically the last equation of the proof, we have 
\begin{align}\label{eq:hosoya_cum_X_Z}
    Q_{q_1q_2q_3q_4}^{X}(u_1, u_2, u_3) = &\sum_{\alpha_1,\alpha_2,\alpha_3,\alpha_4=1}^{n+q} \iiint \left\{   \prod_{j=1}^3 exp(\iota u_j v_j)\right\} y_{q_1 \alpha_1}(v_1 + v_2 + v_3)  \\\nonumber
    &\times y_{q_2 \alpha_2}(-v_1)y_{q_3 \alpha_3}(-v_2)y_{q_4 \alpha_4}(-v_3) \widetilde{Q}_{\alpha_1, \alpha_2, \alpha_3, \alpha_4}^Z (v_1, v_2, v_3) dv_1 dv_2 dv_3\nonumber
\end{align}
if $\sum_{m =0}^{\infty}|Y_{m, i,j }|^2 < \infty$ and if $\sum_{j_1, j_2, j_3  = -\infty}^{\infty} |Q_{\alpha_1, \alpha_2, \alpha_3, \alpha_4}^Z (j_1, j_2, j_3)| < \infty$.

Importantly, although \cite{hosoya} formulates $Y(L)$ as an one-sided filter, the argument essentially only requires the square summability over all $\mathbb{Z}$, that is, $\sum_{m \in \mathbb{Z}}|Y_{m,i,j}|^2 < \infty$. Thus, the result can be extended to the two-sided filter trivially. Thus, (\ref{eq:hosoya_cum_X_Z}) holds in our setting regardless of whether $Y(L)$ is one-sided.

Plugging (\ref{eq:hosoya_cum_X_Z}) back into (\ref{eq:relation_cum_dft_X}), 
\begin{align*}
    &Cum\left( d_{q_1}(a_1), \dots, d_{q_4}(a_4) \right) \\
    &= \iiint \prod_{i=1}^3 H(a_i-v_i)H(\sum_{j=1}^3 v_j + a_4) \sum_{\alpha_1,\alpha_2,\alpha_3,\alpha_4=1}^{n+q} y_{q_1 \alpha_1}(v_1 + v_2 + v_3)\\
    &\times y_{q_2 \alpha_2}(-v_1)y_{q_3 \alpha_3}(-v_2)y_{q_4 \alpha_4}(-v_3) \widetilde{Q}_{\alpha_1,\alpha_2,\alpha_3,\alpha_4}^{Z}(v_1 + v_2 + v_3, v_2, v_3) dv_1 dv_2 dv_3
\end{align*}
Notice that $Z_t$ is a stationary white noise, then 
\begin{align*}
    Q^Z_{\alpha_1\alpha_2\alpha_3\alpha_4}(u_1, u_2, u_3) = Q^Z_{\alpha_1\alpha_2\alpha_3\alpha_4}(0,0,0) \mathbb{1}(u_1=u_2=u_3=0)
\end{align*}
and analogous to (\ref{eq:defn_fourth_order_cum}), 
\begin{align*}
    \widetilde{Q}^Z_{\alpha_1\alpha_2\alpha_3\alpha_4}(v_1, v_2, v_3) &=(2\pi)^{-3} \sum_{u_1, u_2, u_3}^{\infty} Q_{\alpha_1\alpha_2\alpha_3\alpha_4}^Z (u_1, u_2, u_3) e^{-\iota \sum_{j=1}^3 u_j v_j}  \\ 
    &=(2\pi)^{-3} Q^Z_{\alpha_1\alpha_2\alpha_3\alpha_4}(0,0,0)
\end{align*}
Let $\omega_i = a_i - v_i$, we have
\begin{align}
    &Cum\left( d_{q_1}(a_1), \dots, d_{q_4}(a_4) \right) \\\nonumber
    &=(2\pi)^{-3} \iiint H\left(\sum_{i=1}^4 a_i - \sum_{i=1}^3 \omega_i\right)\prod_{i=1}^3 H(w_i) M(\omega_1,\omega_2,\omega_3)d\omega_1d\omega_2d\omega_3\\\nonumber
    &M(\omega_1,\omega_2,\omega_3) =  \sum_{\alpha_1,\alpha_2,\alpha_3,\alpha_4=1}^{n+q} Q^Z_{\alpha_1\alpha_2\alpha_3\alpha_4}(0,0,0)y_{q_1 \alpha_1}\left(  \sum_{i=1}^3 (a_i-\omega_i)\right) \prod_{i=1}^3 y_{q_{i+1}\alpha_{i+1}}(\omega_i - a_i)\nonumber
\end{align}
This is the multivariate extension to \cite{yajima} Equation (6) in the proof of Lemma 3.

Notice that 
\begin{align*}
&\left|M(\omega_1,\omega_2,\omega_3)\right| \\
    &=\left|\sum_{\alpha_1,\alpha_2,\alpha_3,\alpha_4=1}^{n+q} Q^Z_{\alpha_1\alpha_2\alpha_3\alpha_4}(0,0,0)y_{q_1 \alpha_1}\left(  \sum_{i=1}^3 (a_i-\omega_i)\right) \prod_{i=1}^3 y_{q_{i+1}\alpha_{i+1}}(\omega_i - a_i)\right|\\
    &=\left|\bigg\langle Q^{Z}(0,0,0),\, y_{q_1}\left(\sum_{i=1}^3 (a_i-\omega_i)\right)\otimes y_{q_2}(\omega_1-a_1)\otimes y_{q_3}(\omega_2-a_2)\otimes y_{q_4}(\omega_3-a_3)\bigg\rangle\right|\\
    &\leq \left\| Q^{Z}(0,0,0) \right\|_{op} \left\|y_{q_1}\left(\sum_{i=1}^3 (a_i-\omega_i)\right)   \right\|_2 \left\| y_{q_2}(\omega_1-a_1) \right\|_2\left\|  y_{q_3}(\omega_2-a_2) \right\|_2  \left\|  y_{q_4}(\omega_3-a_3) \right\|_2
\end{align*}
where $y_{q_j}\left(u\right) = \left(y_{q_j,1}\left(u\right),\dots,  y_{q_j,n+q}\left(u\right)   \right)$. 


The second equality follows from the definition of the inner product for two tensors of the same dimension, for example, see page 3 in \cite{defn_inner_prod_tensor}, or Section 2 in \cite{tensor}.
The third inequality follows from \[
\begin{aligned}
\left|
\left\langle Q, u_1\otimes u_2\otimes u_3\otimes u_4 \right\rangle
\right|
&=
\prod_{j=1}^4 \|u_j\|_2
\left|
\left\langle Q, \frac{u_1}{\|u_1\|_2}\otimes \frac{u_2}{\|u_2\|_2}\otimes \frac{u_3}{\|u_3\|_2}\otimes \frac{u_4}{\|u_4\|_2} \right\rangle
\right| \\
&\leq \prod_{j=1}^4 \|u_j\|_2 \sup_{\|v_1\|_2=\cdots=\|v_4\|_2=1}
\left|
\left\langle
Q,\,
v_1\otimes v_2\otimes v_3\otimes v_4
\right\rangle
\right|. \\
&\leq 
\prod_{j=1}^4 \|u_j\|_2 \, \|Q\|_{op}.
\end{aligned}
\]
where the definition of $\|Q\|_{op}$ follows from $\|Q\|_{2,2,2,2}$ in \cite{lim2006singularvalueseigenvaluestensors}. 

By Assumption \ref{assump:model} (i), and $u_t \in \mathbb{R}^{q}$, $\left\|Q^{u}(0,0,0)\right\|_2 = \mathcal{O}(1)$. Meanwhile, for any $v_j = (a^{\top}_j, b^{\top}_j)^{\top}$, $\left\|v_j\right\|_2 = 1$,  $j = 1,2,3,4$, since 
\begin{align*}
    &cum\left(v_1^{\top}Z_{\alpha_1,t}, v_2^{\top}Z_{\alpha_2,t}, v_3^{\top}Z_{\alpha_3,t}, v_4^{\top}Z_{\alpha_4,t}\right)\\ 
    &= cum\left(a_1^{\top}u_{\alpha_1,t}, a_2^{\top}u_{\alpha_2,t}, a_3^{\top}u_{\alpha_3,t}, a_4^{\top}u_{\alpha_4,t}\right) + cum\left(b_1^{\top}{\epsilon}_{\alpha_1,t}, b_2^{\top}{\epsilon}_{\alpha_2,t}, b_3^{\top}{\epsilon}_{\alpha_3,t}, b_4^{\top}{\epsilon}_{\alpha_4,t}\right)\\
    &\leq \left\langle
Q^u(0,0,0),
a_1\otimes a_2\otimes a_3\otimes a_4
\right\rangle + \left\langle
Q^{\epsilon}(0,0,0),
b_1\otimes b_2\otimes b_3\otimes b_4
\right\rangle \\
&\leq \left\|Q^u(0,0,0)\right\|_{op}\prod_{j=1}^4 \|a_j\|_2 + \left\|Q^{\epsilon}(0,0,0)\right\|_{op}\prod_{j=1}^4 \|b_j\|_2\\
&\leq \left\|Q^u(0,0,0)\right\|_{op} + \left\|Q^{\epsilon}(0,0,0)\right\|_{op}
\end{align*} where the first inequality is by multilinearity of cumulants, and the definition of the inner product for two tensors of the same dimension. Taking the supremum over all unit vectors $v_j$ yields
\begin{align}\label{eq:upnd_Z_Q}
    \left\|Q^Z(0,0,0)\right\|_{op} \leq \left\|Q^u(0,0,0)\right\|_{op} + \left\|Q^{\epsilon}(0,0,0)\right\|_{op}
\end{align}

By \eqref{eq:eps_Q_finite}, \eqref{eq:upnd_Z_Q} implies $\left\|Q^Z(0,0,0)\right\|_{op} = \mathcal{O}(1)$. Thus,
\begin{align*}
    \left| Cum\left( d_{q_1}(a_1), \dots, d_{q_4}(a_4) \right) \right| &\leq C \iiint_{\Pi^3} \left| H\left(\sum_{i=1}^4 a_i - \sum_{i=1}^3 \omega_i\right) \right|\left\|y_{q_1}\left(\sum_{i=1}^3 (a_i-\omega_i)\right)   \right\|_2\\
    &\times \prod_{i=1}^3 \left[ \left|H(\omega_i)\right|\left\|y_{q_{i+1}}(\omega_i - a_i)\right\|_2   \right] d\omega_1d\omega_2d\omega_3
\end{align*}
Using $||y_i (\theta)||_2^2 \leq C |\theta|^{-2d}$, the rest of the proof is essentially the same as Lemma 3 in \cite{nordman_lahiri}. The only thing we found is the inequality below can be tightened in the way that for $B_j^c = \{\omega_j: |\omega_j - a_j|\leq |a_j|/8\}$, $B = \cap_{j=1}^3 B_j^c$,
\begin{align}\label{eq:to_prove_4th_order_in_B}
    &\int_{B_3^c} \left\|y_{q_1}\left(\sum_{i=1}^3 (a_i-\omega_i)\right)   \right\|_2 \left\|y_{q_{4}}(\omega_3 - a_3)\right\|_2 d \omega_3, \text{ while }\omega_2 \in B_2^c, \omega_3 \in B_3^c\\\nonumber
    &\leq \left( \int_{B_3^c}\left\|y_{q_1}\left(\sum_{i=1}^3 (a_i-\omega_i)\right)   \right\|_2^2 d\omega_3 \right)^{\frac{1}{2}} \left( \int_{B_3^c}\left\|y_{q_{4}}(\omega_3 - a_3)\right\|_2^2 d\omega_3 \right)^{\frac{1}{2}}\\\nonumber
    &\leq C\left( \int_{B_3^c}\left|\sum_{i=1}^3 (a_i-\omega_i)   \right|^{-2d} d\omega_3 \right)^{\frac{1}{2}} \left( \int_{B_3^c}\left|\omega_3 - a_3\right|^{-2d} d\omega_3 \right)^{\frac{1}{2}}\\\nonumber
\end{align}

For $\omega = (\omega_1, \omega_2, \omega_3) \in B$, let $v_i = a_i-\omega_i$, $i = 1,2,3$, $u = v_1 + v_2 + v_3$, then 
\begin{align*}
    \int_{B_3^c}\left|\sum_{i=1}^3 (a_i-\omega_i)   \right|^{-2d} d\omega_3 &= \int_{|v_3| \leq |a_3|/8} |v_1 + v_2 + v_3|^{-2d} dv_3\\
    &= \int_{v_1 +v_2 - |a_3|/8}^{v_1 +v_2 + |a_3|/8} |u|^{-2d} du\\
    &\leq C\left( (v_1 + v_2 +|a_3|/8)^{1-2d} - \max\left(v_1 + v_2 -|a_3|/8, 0\right)^{1-2d}   \right)\\
    &\leq C(|v_1| + |v_2| +|a_3|/8)^{1-2d}\\
    &\leq C(|a_1| + |a_2| +|a_3|)^{1-2d} \leq C|a_3|^{1-2d}
\end{align*}
since $|a_1|\leq |a_2|\leq |a_3|$.
Meanwhile, $\int_{B_3^c}\left|\omega_3 - a_3\right|^{-2d} d\omega_3 \leq C|a_3|^{1-2d}$. Thus, (\ref{eq:to_prove_4th_order_in_B}) $\leq C|a_3|^{1-2d}$. For small $|a_3|$ such a bound is tighter than $|a_3|^{1/2 - d}$, which is what \cite{nordman_lahiri} obtained in their Lemma 3. 
The rest of the proof remains the same, and in the end we achieve the upper bound of $\left| \text{Cum} \left( d_{q_1}(a_1), \dots, d_{q_4}(a_4) \right) \right| \leq C \left\{ |a_4|^{d-1} |a_{3}|^{-d} + T[\log T]^{3} \right\} \prod_{j=1}^{4} |a_j|^{-d}$.\\

(iii) For simplicity, write $a_1 = \lambda_j$ for some fixed $1\leq j \leq \lfloor T/2 \rfloor$. Remind that 
\begin{align*}
    &Cum\left( \mathbf{d}(a_1), \mathbf{d}(-a_1)  \right) = \int_{\Pi} H(a_1 - \omega) H(-a_1 + \omega) \Sigma(\omega) d\omega\\
    &2\pi T = 2\pi H(0) = \int_{\Pi} H(\alpha - \omega)H(\omega - \alpha)d\omega
\end{align*}
Let the remainder be $\widetilde{R}(a_1)$, then
\begin{align*}
    \widetilde{R}(a_1)&=Cum\left( \mathbf{d}(a_1), \mathbf{d}(-a_1)  \right) - 2\pi T \Sigma(a_1)\\
    &= \int_{\Pi} H(a_1 - \omega) H(-a_1 + \omega) \Sigma(\omega) d\omega - \int_{\Pi} H(a_1 - \omega)H(\omega - a_1)d\omega \Sigma(a_1)\\
    &= \int_{\Pi} H(a_1 - \omega) H(-a_1 + \omega) \left(\Sigma(\omega) - \Sigma(a_1)\right) d\omega\\
    &= \int_{\Pi} H(\omega) H(-\omega) \left(\Sigma(a_1 - \omega) - \Sigma(a_1)\right) d\omega\\
    &= \int_{\Pi} H(\omega) H(-\omega) \left(\Sigma_{\chi}(a_1 - \omega) - \Sigma_{\chi}(a_1)\right) d\omega\\
    &+ \int_{\Pi} H(\omega) H(-\omega) \left(\Sigma_{\xi}(a_1 - \omega) - \Sigma_{\xi}(a_1)\right) d\omega\\
    &\coloneqq \widetilde{R}_{\chi}(a_1) + \widetilde{R}_{\xi}(a_1)
\end{align*}
Notice that
\begin{align}\label{eq:R_xi_a1}
    \left\|\widetilde{R}_{\xi}(a_1)\right\| &\leq \int_{\Pi} L_0^2(\omega) \left\|\Sigma_{\xi}(a_1 - \omega) - \Sigma_{\xi}(a_1)\right\| d\omega\\\nonumber
    &\leq \int_{\Pi} L_0^2(\omega) |\omega| d\omega\\\nonumber
    &\leq C \int_{0}^{\frac{1}{T}} L_0^2(\omega)\omega d\omega + \int_{\frac{1}{T}}^{\pi} \omega^{-2}\omega d\omega\\\nonumber
    &\leq C\frac{1}{T} \int_{\Pi} L_0^2(\omega)d\omega + \log T \leq C \log T\nonumber
\end{align}
The last inequality is from Lemma \ref{lemma:property_L}.

Now let's deal with $\widetilde{R}_{\chi}(a_1)$. 
\begin{enumerate}
    \item When $d_j$ differs across shocks but remains the same across rows. Let $d = \max_{j = 1,\dots, q} d_j$
    
    According to Assumption \ref{assump:pervasive_short_mem}, and Lemma \ref{lm:increment_bound_for_Sigma}, we have
\begin{align*}
    &\left\| \Sigma_{\chi}(a_1 - \omega) - \Sigma_{\chi}(a_1)  \right\| \\
    &\leq n|\omega| \left( |a_1 - \omega|^{-2d} +  |a_1|^{-2d} \right) + n |a_1|^{-2d} \left| \left| 1-\frac{\omega}{a_1} \right|^{-2d} - 1  \right|
\end{align*}
The last inequality is because for a function $\phi_u(t) = \left|u^{-2t} - 1\right|, t\geq 0$, it is easy to check that for all $u\geq 0$, $\phi_u(t)$ is increasing in $t$, thus, $\max_{j=1,\dots, q} \phi_u(d_j) \leq \phi_u(\max_{j=1,\dots, q} d_j)$. Then same as in the proof of Lemma 1 (iii) in \cite{kim_cumulant_long_memory}, we have
\begin{align*}
    n\int_{\Pi} L_0^2(\omega) |a_1|^{-2d} \left| \left| 1-\frac{\omega}{a_1} \right|^{-2d} - 1  \right| d\omega \leq n |a_1|^{-2d-1} \log T.
\end{align*}
Meanwhile, similar to (\ref{eq:R_xi_a1}),
\begin{align*}
    n\int_{\Pi} L_0^2(\omega) |\omega|  d\omega |a_1|^{-2d} \leq n|a_1|^{-2d} \log T
\end{align*}
As for the rest of the term, we split it into several segments, for $|\omega| < \frac{\pi}{T}$, $|a_1 - \omega| \asymp |a_1|$, $L_0(\omega) \leq T$,
\begin{align*}
    n\int_{-\frac{\pi}{T}}^{\frac{\pi}{T}} L_0^2(\omega) |\omega| |a_1 - \omega|^{-2d}  d\omega \leq Cn T^2 |a_1|^{-2d} \int_{0}^{\frac{\pi}{T}} \omega d\omega \leq C n|a_1|^{-2d}.
\end{align*}
For $\frac{\pi}{T}<|\omega|<\frac{a_1}{2}$, $|a_1 - \omega| \asymp |a_1|$, $L_0(\omega) \leq |\omega|^{-1}$,
\begin{align*}
    n\int_{\frac{\pi}{T}}^{a_1/2} L_0^2(\omega) |\omega| |a_1 - \omega|^{-2d}  d\omega \leq n |a_1|^{-2d} \int_{\frac{\pi}{T}}^{a_1/2} \omega^{-1} d\omega \leq Cn |a_1|^{-2d} \log T.
\end{align*}
For $\frac{1}{2}a_1<|\omega|<\frac{3}{2}a_1$, $L_0(\omega) \leq |\omega|^{-1} \leq C |a_1|^{-1}$,
\begin{align*}
    n\int_{a_1/2}^{3a_1/2} L_0^2(\omega) |\omega| |a_1 - \omega|^{-2d}  d\omega &\leq n |a_1|^{-1} \int_{a_1/2}^{3a_1/2} |a_1 - \omega|^{-2d} d\omega\\ 
    &\leq Cn |a_1|^{-1} \int_{0}^{a_1/2} |u|^{-2d} du \leq Cn |a_1|^{-2d} .
\end{align*}
For $\frac{3}{2}a_1<|\omega|<\pi$, $|a_1 - \omega|^{-2d} \leq C|\omega|^{-2d}$, 
\begin{align*}
    n\int_{3a_1/2}^{\pi} L_0^2(\omega) |\omega| |a_1 - \omega|^{-2d}  d\omega \leq C n \int_{3a_1/2}^{\pi} |\omega|^{-2} |\omega| |\omega|^{-2d}  d\omega \leq C n |a_1|^{-2d}
\end{align*}
Altogether, 
\begin{align}\label{eq:eqn_L_w_a1_w_2d}
    n\int_\Pi L_0^2(\omega) |\omega| |a_1 - \omega|^{-2d}  d\omega \leq Cn|a_1|^{-2d}\log T.
\end{align}
Thus,
\begin{align*}
        \left\|\widetilde{R}_{\chi}(a_1)\right\| \leq n\int_{\Pi} L_0^2(\omega)\left\| \Sigma_{\chi}(a_1 - \omega) - \Sigma_{\chi}(a_1)  \right\| d \omega \leq Cn|a_1|^{-2d-1}\log T
    \end{align*}

\item When $d_j$ differs across rows but remains the same across shocks. Let $d = \max_{j = 1,\dots, n} d_j$.
 According to Assumption \ref{assump:pervasive_short_mem}, and Lemma \ref{lm:increment_bound_for_Sigma}, we have
 \begin{align*}
     \left\| \Sigma_{\chi}(a_1 - \omega) - \Sigma_{\chi}(a_1)  \right\| &\leq Cn|\omega|\left(|a_1 - \omega|^{-2d} + |a_1|^{-2d}\right) + Cn |a_1|^{-2d} \left| \left| 1-\frac{\omega}{a_1} \right|^{-2d} - 1  \right|\\
     &+ n\left(|a_1 - \omega|^{-2d} + |a_1|^{-2d}\right) \sin\frac{\pi d}{2} \mathbb{1}\{a_1(a_1 - \omega) < 0\}\\
     &\leq C_1n\left(|a_1 - \omega|^{-2d} + |a_1|^{-2d}\right) |\omega| \\
     &+ C_2n\left(|a_1 - \omega|^{-2d}+ |a_1|^{-2d}\right)\mathbb{1}\{a_1(a_1 - \omega) < 0\} \\
     &+ C_3n |a_1|^{-2d} \left| \left| 1-\frac{\omega}{a_1} \right|^{-2d} - 1  \right|
 \end{align*}
Again, $n\int_{\Pi} L_0^2(\omega) |a_1|^{-2d} \left| \left| 1-\frac{\omega}{a_1} \right|^{-2d} - 1  \right| d\omega \leq n |a_1|^{-2d-1} \log T$. By (\ref{eq:eqn_L_w_a1_w_2d}), the first term is again $\leq Cn|a_1|^{-2d}\log T$.

For the second term, $\mathbb{1}\{a_1(a_1 - \omega)<0\} = \mathbb{1}\{ |\omega| > |a_1|\}$, then
\begin{align*}
    &n \int_{\Pi} L_0^2(\omega) \left(|a_1 - \omega|^{-2d} + |a_1|^{-2d}\right)\mathbb{1}\{a_1(a_1 - \omega) < 0\} d\omega\\
    &\leq n \int_{|\omega| > |a_1|} L_0^2(\omega) |a_1 - \omega|^{-2d} d\omega + n |a_1|^{-2d} \int_{|\omega| > |a_1|} L_0^2(\omega) d\omega
\end{align*}
From the definition of $L_0(\cdot)$, we have $n |a_1|^{-2d} \int_{|\omega| > |a_1|} L_0^2(\omega) d\omega \leq n |a_1|^{-2d} \int_{|\omega| > |a_1|} |\omega|^{-2} d\omega \leq Cn|a_1|^{-2d-1}$.

Meanwhile, when $|\omega| \geq |a_1|$, $L_0(\omega) \leq |\omega|^{-1}$, 
\begin{align*}
    n \int_{|\omega| \geq |a_1|} L_0^2(\omega) |a_1 - \omega|^{-2d} d\omega &\leq n \int_{|\omega| \geq |a_1|} |\omega|^{-2} |a_1 - \omega|^{-2d} d\omega =:n(I_1+I_2)
\end{align*}
and 
\begin{align*}
I_1 &=\int_{-\pi}^{-|a_1|}|\omega|^{-2}|a_1-\omega|^{-2d}\,d\omega
\lesssim |a_1|^{-2d}\int_{-\pi}^{-|a_1|}|\omega|^{-2}\,d\omega
\lesssim |a_1|^{-2d-1},\\
I_2 &= \int_{|a_1|}^{\pi}\omega^{-2}|\omega-a_1|^{-2d}\,d\omega =\int_0^{\pi-|a_1|}(|a_1|+u)^{-2}u^{-2d}\,du\\
&\lesssim |a_1|^{-2}\int_0^{|a_1|}u^{-2d}\,du+\int_{|a_1|}^{\pi - |a_1|}u^{-2d-2}\,du\\
&\leq C|a_1|^{-2d-1}
\end{align*}

Thus,
\begin{align*}
        \left\|\widetilde{R}_{\chi}(a_1)\right\| &\leq n\int_{\Pi} L_0^2(\omega)\left\| \Sigma_{\chi}(a_1 - \omega) - \Sigma_{\chi}(a_1)  \right\| d \omega \\
        &\leq C_1n |a_1|^{-2d} \log T + C_2n|a_1|^{-2d-1} + C_3n|a_1|^{-2d-1}\log T\\
        &\leq Cn|a_1|^{-2d-1}\log T
    \end{align*}
\end{enumerate}
    Therefore, $\left\|\widetilde{R}(a_1)\right\| \leq Cn|a_1|^{-2d-1}\log T$.
\end{proof}


\bibliographystyle{johd}
\bibliography{bib}


\end{document}